\documentclass{amsart}

\usepackage{amssymb}
\usepackage{amsmath}
\usepackage{amsthm}
\usepackage{tikz}
\usepackage[linktocpage=true]{hyperref}
\usetikzlibrary{arrows}

\newtheorem{theorem}{Theorem}[section]
\newtheorem{lemma}[theorem]{Lemma}
\newtheorem{proposition}[theorem]{Proposition}
\newtheorem{corollary}[theorem]{Corollary}

\theoremstyle{definition}
\newtheorem{definition}[theorem]{Definition}
\newtheorem{situation}[theorem]{Situation}
\newtheorem{example}[theorem]{Example}
\newtheorem{question}[theorem]{Question}
\newtheorem{construction}[theorem]{Construction}
\newtheorem{claim}[theorem]{Claim}

\theoremstyle{remark}
\newtheorem{remark}[theorem]{Remark}

\newcommand{\sh}{\mathcal}
\newcommand{\modu}{\mathcal}
\newcommand{\obj}{\mathbf}
\newcommand{\cat}{\mathfrak}
\newcommand{\spec}{\mathrm{Spec}\mathop{}}
\newcommand{\ga}{\mathbb{G}_a}
\newcommand{\gm}{\mathbb{G}_m}
\newcommand{\tline}{\ell}
\newcommand{\down}{\flat}
\newcommand{\up}{\sharp}
\newcommand{\kukp}{\ddagger}
\newcommand{\zuzp}{\dagger}
\newcommand{\dsd}{\textup{\textexclamdown}}
\newcommand{\pr}{\operatorname{pr}}

\begin{document}
	
	\title[Marked nodal curves with vector fields]{Marked nodal curves with vector fields}
	
	\author[Adrian Zahariuc]{Adrian Zahariuc}
	\date{}
	\address{Department of Mathematics and Statistics, University of Windsor, 401 Sunset Ave, Windsor, ON, N9B 3P4, Canada}
	\email{adrian.zahariuc@uwindsor.ca}
	\subjclass[2020]{14H10, 14M27}
	
	\maketitle
	
	\begin{abstract}
		We discuss two operations on nodal curves with (logarithmic) vector fields, which resemble the `stabilization' construction in Knudsen's proof that $\overline{\modu M}_{g,n+1}$ is the universal curve over $\overline{\modu M}_{g,n}$. We prove that both operations work in families (commute with base change). We also construct inverse operations under suitable assumptions, which allow us to prove a technical result somewhat similar to Knudsen's. 
		
		As an application, we prove that the Losev--Manin compactification of the space of configurations of $n$ points on ${\mathbb P}^1 \backslash \{0,\infty\}$ modulo scaling degenerates isotrivially to a compactification of the space of configurations of $n$ points on ${\mathbb A}^1$ modulo translation, and the natural group actions fit together globally.
	\end{abstract}
	
	\section{Introduction}\label{section: introduction}
	
	A \emph{logarithmic vector field} on a (connected but possibly reducible, reduced, projective) curve $C$ with only nodal singularities over an algebraically closed field ${\mathbb K}$ is a global section of $\omega_C^\vee$, the dual of the dualizing sheaf. Concretely, using the well-known description of $\omega_C$ in terms of meromorphic differentials and residues, we see that a logarithmic vector field on $C$ corresponds to a global section $v \in \Gamma(T_{\tilde{C}})$, where $\nu:\tilde{C} \to C$ is the normalization of $C$, such that for any node $p \in C$,
	\begin{itemize}
		\item $v$ vanishes at $q_1$ and $q_2$, where $\nu^{-1}(p) = \{q_1,q_2\}$, and
		\item $c_1+c_2 = 0$, where $c_i \in {\mathbb K}$ is the image of $v$ under the natural map
		$$ \Gamma(T_{\tilde{C}}(-q_i)) \to T_{\tilde{C}}(-q_i) \otimes {\sh O}_{\tilde{C}}/{\sh O}_{\tilde C}(-q_i) = T_{q_i,\tilde{C}} \otimes T^\vee_{q_i,\tilde{C}} = {\mathbb K}. $$
	\end{itemize}
	If the arithmetic genus of $C$ is at least $2$, the vector field must be identically $0$ on many irreducible components of $C$ (though not all in general), and there are strong restrictions on what situations can occur. The higher genus case is thus somewhat artificial, though we have decided to include it since many of the constructions below still go through. The adjective `logarithmic' will typically be omitted, and we will write just `field' or `vector field'.
	
	\subsection{Bubbling up in the presence of vector fields}\label{subsection: Bubbling up and bubbling down} One direction in Knudsen's proof that $\overline{\modu M}_{g,n+1}$ is the universal curve over $\overline{\modu M}_{g,n}$ amounts the procedure called stabilization in \cite{[Kn83]}. In the case of a single curve, stabilization works as follows. (We are discussing this procedure out of context; perhaps `prestabilization' would be a better name in our setup.) Given a projective nodal curve $C$ over an algebraically closed field ${\mathbb K}$, with several nonsingular distinct marked points $w_1,\ldots,w_m \in C({\mathbb K})$, imagine inserting a new marked point at $x \in C({\mathbb K})$. If $x$ is singular or $x=w_i$ for some $i$, then we deem this configuration `degenerate'. In this situation, stabilization inserts a new component $\Sigma \simeq {\mathbb P}^1$ at $x$, and produces non-degenerate data $C',w'_1,\ldots,w'_m,x'$, by placing $x'$ (and $w'_i$, if $x=w_i$) on $\Sigma$. (In the main body of the paper, we will instead use different notation, but for now the prime notation is adequate.) Here are two related reasons why stabilization is important:
	\begin{itemize}
		\item it \emph{works in families}: it can be generalized to families of curves in a manner which commutes with base change (and this commutativity satisfies the suitable `cocycle conditions'); and
		\item  iterating the stabilization construction in families suitably intertwined with base changes produces important moduli spaces. For instance, in this way it is possible to obtain $\overline{M}_{0,n}$ (or even $\overline{\modu M}_{g,n}$ given $\overline{\modu M}_g$), or the Fulton-MacPherson compactifications of configuration spaces of curves.
	\end{itemize} 
	
	In this paper, we will discuss a related (now two-step) procedure when a vector field on the nodal curve is given. 
		
	\subsubsection*{Knudsen stabilization with vector fields} The first step of the procedure operates in the same setup as Knudsen's stabilization, with the extra data of a vector field $\phi$ on $C$, which vanishes at $w_1,\ldots,w_m$. The meaning of degenerate is the same as in the usual Knudsen stabilization, and fixing it entails inserting a ${\mathbb P}^1$ component under the same circumstances. It can be shown that $\phi$ lifts uniquely to a vector field $\phi'$ on the curve $C'$ obtained by Knudsen stabilization, if we require $\phi'(w'_i) = 0$ for $i=1,\ldots,m$. (This boils down to the fact that, given $r \in {\mathbb K}^\times$, there exists a unique meromorphic $1$-form on ${\mathbb P}^1$ with poles with residues $r$ at $0$ and $-r$ at $\infty$.)
	
	\subsubsection*{Inflating at zero vector} The second operation is applied after the first, so it presumes milder degeneracy to begin with: the point $x$ is nonsingular and distinct from $w_1,\ldots,w_m$. We consider such a situation `degenerate' if $\phi(x) = 0$. Then we insert a ${\mathbb P}^1$ component at $x$, and fix the degeneracy by placing $x'$ on ${\mathbb P}^1$ and lifting $\phi$ to a field $\phi'$ on $C'$ such that $\phi'(w'_i) = 0$, but  $\phi'(x') \neq 0$.
	
	\begin{remark}\label{remark: first remark}
		Here are a few remarks regarding this construction.
		\begin{enumerate}
			\item Indeed, $x$ behaves differently from $w_1,\ldots,w_m$. In applications, we will get around this by thinking of $x$ as part of a different set of markings $x_1,\ldots,x_n$.
			\item The properties above don't determine $\phi'$ uniquely, although they do determine it up to automorphisms of $C'$ that fix all the rest of the data and the map $C' \to C$ which contracts the new component, if there is a new component. Still, a canonical choice exists in a certain sense. A good analogy is with the following question: in Knudsen stabilization, where do we place $x'$ on the new ${\mathbb P}^1$ component? It seems that all points on ${\mathbb P}^1$ except two are equally good, yet, in \cite[\S2]{[Kn83]}, there is a definite formula for $x'$.
			\item The new field $\phi'$ depends nonlinearly on the old field $\phi$. For instance, $\phi = 0$ everywhere is possible (or, more frequently in applications, $\phi = 0$ on the component containing $x$), so $\phi'(x') \neq 0$ rules out linearity.
		\end{enumerate}
	\end{remark}
	
	The main result concerning these operations, which forms the technical core of the paper and will be stated more precisely and proved in \S\ref{section: Bubbling up and the inductive construction}, is the following.
	
	\begin{theorem}[{Theorems \ref{theorem: summary of first bubbling up} and \ref{theorem: summary of second bubbling up}}]\label{theorem: main theorem abstract}
		`Knudsen stabilization with vector fields' and `Inflating at zero vector' work in families: they can be generalized to families in a manner which commutes with base change.
	\end{theorem}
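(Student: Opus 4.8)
The plan is to reduce both operations to statements about line bundles and their sections over the base, so that base change compatibility becomes a consequence of cohomology-and-base-change theorems. I would set up both constructions functorially: given a family of nodal curves $\pi : \mathcal{C} \to S$ with sections $w_1,\dots,w_m$ (and $x$) and a vector field $\phi$, i.e. a section of $\pi_* \mathcal{H}om(\omega_{\mathcal{C}/S}, \mathcal{O}_{\mathcal{C}})$ vanishing along the $w_i$, I would first handle the underlying Knudsen stabilization on curves, which is already known to commute with base change. The novelty is the vector field, so the core of the argument is showing that the lift $\phi'$ and (for the second operation) the non-vanishing condition $\phi'(x') \neq 0$ are constructed in a base-change-compatible way.

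\medskip

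\noindent\textbf{Main steps.} First I would make precise the sheaf-theoretic meaning of a vector field in families, namely a section of $(\pi_* \mathcal{T}_{\mathcal{C}/S})$ where $\mathcal{T}_{\mathcal{C}/S} = \mathcal{H}om(\omega_{\mathcal{C}/S}, \mathcal{O}_{\mathcal{C}})$ is the dual of the relative dualizing sheaf, with the prescribed vanishing along the marked sections encoded as a subsheaf. Second, I would analyze the local model at the inserted $\mathbb{P}^1$: the uniqueness claim in the single-curve case rests on the fact that a $1$-form on $\mathbb{P}^1$ with prescribed simple poles and opposite residues at $0,\infty$ is unique, which is exactly a statement that a certain space of sections is one-dimensional. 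In families, this becomes the claim that the relevant higher direct image sheaf is locally free of the expected rank and that its formation commutes with base change; I would verify this by checking the cohomology vanishes in the complementary degree fiberwise (Grauert / cohomology-and-base-change). Third, I would glue: the lifted field $\phi'$ is determined on the old part of the curve by pullback along the contraction $\mathcal{C}' \to \mathcal{C}$ and on the new $\mathbb{P}^1$-bundle component by the unique extension with the prescribed residue, and I would check these agree on the overlap (the node), producing a genuine global section of $\pi'_* \mathcal{T}_{\mathcal{C}'/S}$. Because each ingredient — the contraction morphism, the residue at the node, the unique local extension — is already base-change compatible, the glued section is too.

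\medskip

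\noindent\textbf{The non-vanishing condition and the second operation.} For inflating at the zero vector, the extra subtlety is enforcing $\phi'(x') \neq 0$ in families. I would phrase $\phi'(x')$ as the pullback $x'^* \phi'$, a section of $x'^* \mathcal{T}_{\mathcal{C}'/S}$, i.e. a section of a line bundle on $S$, and show that the construction arranges this section to be nowhere vanishing (a unit), which is manifestly preserved under base change. The residue data at the new node is what feeds this nonzero value, so the bookkeeping of residues across the inserted $\mathbb{P}^1$ must be done carefully and compatibly with the normalization sequence $0 \to \mathcal{O}_{\mathcal{C}'} \to \nu_* \mathcal{O}_{\tilde{\mathcal{C}}'} \to \mathcal{O}_{\text{nodes}} \to 0$.

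\medskip

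\noindent\textbf{Expected main obstacle.} The hardest part, I expect, is not any single cohomological vanishing but rather establishing the constructions canonically enough that base change compatibility is automatic rather than checked case-by-case. Concretely, the difficulty is that the inserted component and the field on it are only canonical up to the automorphisms noted in the Remark, so I must either rigidify (pin down $x'$ and the residue by explicit formulas, as Knudsen does) or work with the quotient stack; either way, I would need the local analysis at the node to patch with the global contraction in a way that respects arbitrary base change $T \to S$, including non-reduced $T$, which is where the cohomology-and-base-change hypotheses (flatness of $\pi$, the relevant $R^1$ vanishing fiberwise) must be verified with care.
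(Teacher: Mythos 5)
There are two genuine gaps in your plan, and they sit exactly where the paper's real work lies. The first is the cohomological input you invoke: you propose to obtain base-change compatibility of the lifted field from Grauert/cohomology-and-base-change applied to direct images along $\pi$, such as $\pi_*\bigl(\mathcal{H}om(\omega_{C/S},\mathcal{O}_C)(-w_1-\cdots-w_m)\bigr)$, ``checking the cohomology vanishes in the complementary degree fiberwise.'' That vanishing is false: by Serre duality, $H^1\bigl(C_s,\omega_{C_s}^\vee(-\sum_i w_i)\bigr)\cong H^0\bigl(C_s,\omega_{C_s}^{\otimes 2}(\sum_i w_i)\bigr)^\vee$, which is nonzero for every $g\geq 1$, and already for $g=0$ once $m\geq 4$; so these pushforwards along $\pi$ genuinely need not commute with base change and your argument collapses at its central step. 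The paper never pushes forward along $\pi$: it pushes forward along the contraction $f:C_\up\to C$, whose fibers are points or $\mathbb{P}^1$'s, so the needed vanishing $R^1f_*$ of the relevant twist does hold (part \ref{item 1: proposition: construction of logarithmic differential using duality} of Proposition \ref{proposition: construction of logarithmic differential using duality}); the comparison map $f_*\bigl((f^\dsd\mathcal{L})(-D)\bigr)\to\mathcal{L}(-E)$ is manufactured by Grothendieck duality (Lemma \ref{lemma: upper shriek of invertible is invertible}, Definition \ref{definition: dual shriek dual}, Proposition \ref{proposition: pushing forward vector fields in general}), and its compatibility with base change is deduced from uniqueness — sections are determined by their restriction to the dense open where $f$ is an isomorphism (Lemmas \ref{lemma: dense f-trivial} and \ref{lemma: injective restrictions on prestable curves}) — not from cohomology and base change over $S$. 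Relatedly, your gluing step is not a sheaf-level operation in families: the ``new $\mathbb{P}^1$ component'' exists only over the degeneracy locus in $S$, which may be a nowhere dense, possibly non-reduced closed subscheme, so $C_\up$ admits no decomposition over $S$ into an old part and a new part along which one could glue; only the uniqueness half of your argument (injectivity of restriction to the dense open) survives, not the existence half.

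The second gap concerns ``inflating at zero vector.'' Your local model there is not well-posed: as the Remark in \S\ref{subsection: Bubbling up and bubbling down} points out, the conditions $\phi'(w'_i)=0$ and $\phi'(x')\neq 0$ do \emph{not} determine $\phi'$ uniquely, even for a single curve over a field, so ``the unique extension with the prescribed residue'' does not exist, and fiberwise uniqueness — the engine that was supposed to make your construction automatically base-change compatible — is unavailable. You flag this rigidification problem yourself in your last paragraph but defer it; it is not a technical verification to be done ``with care,'' it is the missing idea. The paper's resolution is that the canonical choice is hard-wired into a global presentation: $C_\up=\mathbb{P}\bigl(\operatorname{Coker}(\mathcal{O}_C\xrightarrow{(-\sigma,1)}\mathcal{L}\oplus\mathcal{O}_C(x))\bigr)$ with the specific sign $(-\sigma,1)$, and $\sigma_\up$ is defined by an explicit formula through the identification $f_*f^\dsd\mathcal{L}\cong\mathcal{K}(-x)$ of Construction \ref{construction: second bubbling up construction}; functoriality is then proved directly (Proposition \ref{proposition: second bubbling up is functorial}), with the nondegeneracy $x_\up^*\sigma_\up$ nowhere vanishing checked in Lemma \ref{lemma: second bubbling up eliminated degeneracy}. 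Without an explicit, globally defined rigidification of this kind, no amount of local analysis at the node will yield a construction that commutes with arbitrary (in particular non-reduced) base change.
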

	
	Knudsen's original stabilization admits an inverse operation called contraction. Similarly, we will show that under suitable circumstances, our two operations admit inverse operations (Theorem \ref{theorem: the main universal curve theorem}), which will allow us to prove a technical result (Theorem \ref{theorem: universal curve theorem simplified statement}) reminiscent of Knudsen's theorem that $\overline{\modu M}_{g,n+1}$ is the universal curve over $\overline{\modu M}_{g,n}$. Very roughly, instead of $n$-marked curves, we will consider curves with a vector field and $m+n$ markings, the first $m$ of which are vanishing points of the vector field and the remaining $n$ give stability; then, the `universal curve' in this situation is isomorphic to the analogous space with $m$ markings of the first kind, and $n+1$ markings of the second kind, by Theorem \ref{theorem: universal curve theorem simplified statement}. 
	
	Due to obvious numerical constraints, the range of meaningful geometric applications of these technicalities is surely limited, though it does include a few very interesting examples. In this paper, we will focus on the applications given in Theorems \ref{theorem: Pn bar as moduli space} and \ref{theorem: isotrivial degeneration theorem} below, and to a much lesser extent the application mentioned in \S \ref{example: examples with initial data: a genus 1 example}, though a few other interesting setups are possible. These applications require and illustrate the technicalities above essentially in full generality.
		
	\subsection{Application: configurations on a line modulo scaling \emph{or} translation}\label{subsection: Configurations on a line modulo scaling or translation} Consider the following open ended (and seemingly unrelated) problems: 
	\begin{enumerate}
		\item compactifying the space of configurations of $n$ not necessarily distinct points on a punctured line modulo scaling; and 
		\item compactifying the space of configurations of $n$ not necessarily distinct points on a line modulo translation. 
	\end{enumerate}
	
	Below are two arguably optimal answers to these problems. We first discuss the constructions concretely over an algebraically closed field ${\mathbb K}$, and then over ${\mathbb Z}$.
	In what follows, ${\mathbb G}_m$ and ${\mathbb G}_a$ will denote the multiplicative, respectively additive group scheme over a ground ring $R \in \{{\mathbb K},{\mathbb Z}\}$ that will be clear from context. Recall that their formal definitions using functors of points are ${\mathbb G}_m(T) = (\Gamma(T,{\sh O}_T^\times), \times)$ and ${\mathbb G}_a(T) = (\Gamma(T,{\sh O}_T), +)$ respectively, and thus their respective underlying schemes are $\mathrm{Spec} \mathop{}R[x,y]/(xy-1)$ and ${\mathbb A}^1_R$.
	
	\subsubsection*{The Losev--Manin space} An $n$-marked Losev-Manin string is a genus $0$ nodal curve $C$ over ${\mathbb K}$ whose dual graph is a chain, with $n+2$ smooth points $p_0,p_\infty,x_1,\ldots,x_n \in C({\mathbb K})$, such that $p_0$ and $p_\infty$ live on components at opposite ends of the chain, $p_0 \neq p_\infty$ (if there is just one component), $x_i \neq p_0,p_\infty$ for all $i$, and all components contain at least one of the points $x_1,\ldots,x_n$. (However, $x_i=x_j$ is allowed. The concentric circles in Figure \ref{figure: Losev-Manin string} represent overlapping markings.)
	\begin{figure}[h]
		\begin{center}
			\begin{tikzpicture}[scale = 0.7]
				\draw [thick] (-1.2,-1) -- (2,1); 
				\draw [thick] (0,1) -- (3,-1);
				\draw [thick] (1.2,-1) -- (4,1);
				\draw [thick] (2.4,1) -- (5,-1);
				\draw [thick] (3.6,-1) -- (6,1);
				\draw [thick] (4.8,1) -- (7,-1);
				\filldraw[fill=white] (-0.4,-0.5) circle (3pt) node[anchor=north] {$0$};
				\filldraw[fill=white] (6.45,-0.5) circle (3pt) node[anchor=west] {$\infty$};
				\fill[black] (5.68,0.2) circle (3pt);
				\fill[black] (4.48,-0.27) circle (3pt);
				\fill[black] (4,-0.68) circle (3pt);
				\fill[black] (3.6,0.71) circle (3pt);
				\draw[black] (3.6,0.71) circle (5pt);
				\draw[black] (3.6,0.71) circle (7pt);
				\fill[black] (2.72,0.75) circle (3pt);
				\fill[black] (1.5,0) circle (3pt);
				\fill[black] (0.08,-0.2) circle (3pt);
				\fill[black] (0.71,0.2) circle (3pt);
				\draw[black] (0.08,-0.2) circle (5pt);
				\draw[black] (0.71,0.2) circle (5pt);
			\end{tikzpicture}
			\caption{A $12$-marked Losev-Manin string.}
			\label{figure: Losev-Manin string}
		\end{center}
	\end{figure}
	Note that there is a unique ${\mathbb G}_m$-action on $C$ which fixes $p_0$ and $p_\infty$, and acts with weight $1$ on each irreducible component.
	
	The Losev-Manin space $\overline{L}_{n,{\mathbb K}}$ is a variety which parametrizes the $n$-marked Losev-Manin strings. It is shown in \cite{[LM00]} that it is a smooth irreducible toric projective variety of dimension $n-1$. There is an action of ${\mathbb G}_m^n$ on $\overline{L}_{n,{\mathbb K}}$ such that, on ${\mathbb K}$-points,
	$$ (c_1,\ldots,c_n) \cdot (C,p_0,p_\infty,x_1,\ldots,x_n) =  (C,p_0,p_\infty,c_1 \cdot x_1,\ldots,c_n \cdot x_n), $$
	where $\cdot$ on the right hand side is the ${\mathbb G}_m$-action on $C$ above. The action restricts to a trivial one on the diagonal ${\mathbb G}_m \hookrightarrow {\mathbb G}_m^n$, and thus restricts to the toric ${\mathbb G}_m^{n-1}$-action on $\{1\} \times {\mathbb G}_m^{n-1} \subset {\mathbb G}_m^n$. Moreover, the open stratum $ L_{n,{\mathbb K}} \simeq {\mathbb G}_m^n/{\mathbb G}_m \hookrightarrow \overline{L}_{n,{\mathbb K}} $ where the string has a single component can be thought of as the space of configurations of $n$ points (not necessarily distinct) on ${\mathbb P}^1 \backslash \{0,\infty\}$ modulo scaling.
	
	The construction of the Losev-Manin space in \cite[\S1.3 and \S2.1]{[LM00]} clearly goes through over $\spec {\mathbb Z}$. From now on, we will typically denote this finite type scheme over ${\mathbb Z}$ by $\overline{L}_n$, and the projective variety above is then $\overline{L}_{n, {\mathbb K}} = \spec {\mathbb K} \times_{\spec {\mathbb Z}} \overline{L}_n$.
	
	\subsubsection*{A modular equivariant compactification of ${\mathbb G}_a^n/{\mathbb G}_a$} A remarkable compactification of the space of configurations of $n$ \emph{distinct} points on ${\mathbb A}^1$ modulo translation is the moduli space $\smash{ \overline{Q}_n }$ of `stable scaled marked curves' constructed as a projective variety by Ma'u and Woodward in \cite{[MW10]}, after Ziltener introduced it in a symplectic setting \cite{[Zi06], [Zi14]}. The moduli space $\smash{ \overline{Q}_n }$ plays an important role in the context of gauged stable maps \cite{[Wo15], [GSW17], [GSW18]}. 
	
	We will construct a related space $\overline{P}_n$ which compactifies the space of configurations of $n$ not necessarily distinct points on ${\mathbb A}^1$ modulo translation. This space will turn out to be an equivariant compactification of ${\mathbb G}_a^{n-1}$ in the sense of \cite{[HT99]}. Besides this additional feature and simplicity, Problem \ref{problem: degeneration of primitive linear system} (discussed at the end of the paper) suggests that the version in which the points may coincide is the suitable object to consider in certain applications.
	
	An \emph{$n$-marked ${\mathbb G}_a$-rational tree} is a connected projective curve $C$ over ${\mathbb K}$  of arithmetic genus $0$ with at worst nodal singularities, with a ${\mathbb G}_a$-action which operates trivially or `by translation' on each irreducible component of $C$ (i.e. $a \cdot [X:Y] = [X+aY:Y]$ in suitable coordinates), and $n+1$ nonsingular points $p_\infty,x_1,\ldots,x_n \in C({\mathbb K})$, such that $p_\infty$ is fixed by the ${\mathbb G}_a$-action on $C$, but $x_1,\ldots,x_n$ are not. (The condition that ${\mathbb G}_a$ acts trivially or by translation on each component is actually automatic in characteristic $0$.) The $n$-marked ${\mathbb G}_a$-rational tree is \emph{stable} if any irreducible component of $C$ which doesn't contain any of the points $x_1,\ldots,x_n$, either intersects at leasts $3$ other irreducible components of $C$, or contains $p_\infty$ and intersects at least $2$ other irreducible components of $C$. 
	\begin{figure}[h]
		\begin{center}
			\begin{tikzpicture}[scale = 0.7]
				\draw [thick] (-2,0) -- (7,0); 
				
				\filldraw[fill=white] (6.5,0) circle (3pt) node[anchor=south] {$\infty$};
				\fill[black] (3.6,1) circle (3pt);
				\draw[black] (3.6,1) circle (5pt);
				\draw[black] (3.6,1) circle (7pt);
				
				\draw [thick] (3.6,0.3) -- (3.6,2.3);
				\draw [thick] (4.3,0.3) -- (4.3,2.3);
				
				\fill[black] (4.3,0.6) circle (3pt);
				\fill[black] (4.3,1.4) circle (3pt);
				
				\draw[thick] (3.2,2) -- (6.7,2);
				\draw [thick] (5.3,-0.3) -- (5.3,2.3);
				\draw [thick] (5,1) -- (6.7,1);
				\fill[black] (6.2,1) circle (3pt);
				
				\draw [thick] (0.4,1) -- (2.8,1);
				\draw[thick] (0.4,2) -- (2.8,2);
				\draw [thick] (1.3,-0.3) -- (1.3,2.3);
				
				\fill[black] (2,2) circle (3pt);
				\fill[black] (2,1) circle (3pt);
				\draw[black] (2,1) circle (5pt);
				
				\draw [thick] (0,-0.3) -- (0,2.3);
				\fill[black] (0,1.4) circle (3pt);
				
				\draw [thick] (-1,-0.3) -- (-1,2.3);
				\fill[black] (-1,1) circle (3pt);
				\fill[black] (-1,2) circle (3pt);
				\draw[black] (-1,2) circle (5pt);
				
			\end{tikzpicture}
			\caption{A stable $13$-marked ${\mathbb G}_a$-rational tree.}
			\label{figure: stable n-marked Ga-rational tree}
		\end{center}
	\end{figure}
	Note that all marked points $x_1,\ldots,x_n$ live on tail components of $C$ (leaves of the dual tree), and that the action is trivial on all irreducible components of $C$ except the tails if $C$ is reducible.
	
	There exists an (irreducible) projective ${\mathbb G}_a^{n-1}$-variety $\overline{P}_{n,{\mathbb K}}$ which parametrizes stable $n$-marked ${\mathbb G}_a$-rational trees, cf. Remark \ref{remark: Pn bar as a moduli space low tech}. We use $G$-variety in the sense of \cite[Definition 2.1]{[HT99]}. The ${\mathbb G}_a^n$-action on $\overline{P}_{n,{\mathbb K}}$ comes from the ${\mathbb G}_a$-action on the stable $n$-marked ${\mathbb G}_a$-rational trees in precisely the same way the ${\mathbb G}_m^n$-action on $\overline{L}_{n,{\mathbb K}}$ comes from the ${\mathbb G}_m$-action on $n$-marked Losev-Manin strings. Again, we may restrict to $\{0\} \times {\mathbb G}_a^{n-1} \subset {\mathbb G}_a^n$ to mimic modding out the trivial diagonal action. Similarly to $\overline{Q}_n$, $\overline{P}_{n,{\mathbb K}}$ is mildly singular for $n \geq 4$. The open stratum 
	$ P_{n,{\mathbb K}} \simeq {\mathbb G}_a^n/{\mathbb G}_a \hookrightarrow \overline{P}_{n,{\mathbb K}} $ 
	where the rational trees have a single component is the space of configurations of $n$ points on ${\mathbb A}^1$ modulo translation.
	
	A similar space can be defined over $\spec {\mathbb Z}$, and one of our main goals is to show that this space is a fine moduli space (as opposed to accomplishing just some naive parametrization). In this context, the additional structure on the curves will be given by a vector field rather than a ${\mathbb G}_a$-action (though please see Remark \ref{remark: Pn bar as a moduli space low tech}).
	
	\begin{theorem}\label{theorem: Pn bar as moduli space}
		Let $F$ be the functor which associates to each noetherian scheme $S$ the set of all collections of data as follows, modulo isomorphism: 
		\begin{itemize}
			\item a genus $0$ prestable curve $\pi:C \to S$;
			\item smooth sections $x_1,\ldots,x_n,p_\infty:S \to C$ of $\pi$ (possibly not disjoint); and 
			\item an ${\sh O}_C$-module homomorphism $\phi:\omega_{C/S} \to {\sh O}_C$,
		\end{itemize}  
		such that: 
		\begin{enumerate}
			\item $\phi$ factors through the inclusion ${\sh O}_C(-2p_\infty(S)) \to {\sh O}_C$;
			\item $x_i^*\phi:x_i^*\omega_{C/S} \to {\sh O}_S$ is an isomorphism for $i=1,\ldots,n$; and
			\item the natural stability condition holds: for any geometric point $\overline{s} \to S$, 
			\begin{enumerate}
				\item with the possible exception of the component which contains $p_{\infty,\overline{s}}$, no irreducible component of $C_{\overline{s}}$ intersects exactly two other components;
				\item any irreducible component of $C_{\overline{s}}$ which intersects exactly one other irreducible component contains at least one of the points $x_{1,\overline{s}},\ldots,x_{n,\overline{s}}$ but not the point $p_{\infty,\overline{s}}$. 
			\end{enumerate}
		\end{enumerate}  
		Then $F$ is represented by a projective local complete intersection flat geometrically integral scheme (over $\spec {\mathbb Z}$).
	\end{theorem}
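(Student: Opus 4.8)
The plan is to exhibit $\overline{P}_n$ as an $(n-1)$-fold iterated universal curve over $\spec\mathbb{Z}$, in direct analogy with the construction of $\overline{M}_{0,n}$ from a point, and then to read off projectivity, flatness, the local complete intersection property, and geometric integrality from this tower. Write $F_n$ for the functor $F$ with parameter $n$.

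First I would check that the moduli problem is rigid, which is what makes a fine moduli scheme (rather than a stack) possible. An automorphism of an object of $F_n$ must fix every $x_i$ and $p_\infty$ and preserve $\phi$ exactly. On any component carrying some $x_i$ the field is nonvanishing there by (2), so fixing $x_i$ and preserving $\phi$ forces the identity; stability conditions (3a) and (3b) ensure that every other component either meets at least three nodes, hence is pinned by those three points, or is the $p_\infty$-component, which is pinned by $p_\infty$ together with its (at least two) nodes. Thus for $n\geq 1$ the objects of $F_n$ have no nontrivial automorphisms. Since prestable curves equipped with sections and with a homomorphism $\omega_{C/S}\to\mathcal{O}_C$ satisfy fppf descent, $F_n$ is a sheaf with no automorphisms, and representability by a scheme will follow as soon as we present it as a tower of projective curves.

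Next I would set up the inductive step. I would identify objects of $F_n$ with those of ${\cat V}_{0,m,n}$, the only change being that the two simple zeros $w$ of the $\mathbb{G}_m$ (Losev--Manin) picture are replaced by the single double zero of $\phi$ at $p_\infty$ imposed by (1). This double zero is a rigid feature that both operations of Theorem \ref{theorem: main theorem abstract} carry along unchanged, since `Knudsen stabilization with vector fields' and `inflating at zero vector' insert new components away from $p_\infty$. With this dictionary, Theorem \ref{theorem: universal curve theorem simplified statement} applies at every stage (here $g=0$ and the inequality $2g+2n+m\geq 3$ holds for all $n\geq 1$ once $m$ accounts for $p_\infty$) and yields an equivalence between $F_{n+1}$ and the universal curve over $F_n$: adjoining the marking $x_{n+1}$ is the same as choosing an arbitrary section of the universal curve and then correcting it by the two operations so that $x_{n+1}^*\phi$ becomes an isomorphism and stability is restored. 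Writing $\pi_n\colon C_n\to\overline{P}_n$ for the universal family, this is exactly the statement $\overline{P}_{n+1}=C_n$.

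Finally I would run the induction from the base case $\overline{P}_1=\spec\mathbb{Z}$ (for $n=1$ stability forces a single smooth component, and $(\mathbb{P}^1,x_1,p_\infty,\phi)$ is rigid with trivial automorphisms, so $F_1\cong h_{\spec\mathbb{Z}}$), propagating the geometric properties along the tower $\spec\mathbb{Z}=\overline{P}_1\leftarrow\overline{P}_2\leftarrow\cdots$. Projectivity and flatness over $\mathbb{Z}$ are inherited because each $\pi_n$ is a projective flat family of prestable curves, polarized by a suitable twist of $\omega_{C_n/\overline{P}_n}$, and these properties compose; the local complete intersection property over $\mathbb{Z}$ follows because a family of nodal curves is an lci morphism, lci morphisms compose, and $\spec\mathbb{Z}$ is the lci base case. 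I expect the obstacle to be twofold. The first difficulty is the precise identification in the inductive step: one must verify that the double zero at $p_\infty$ genuinely occupies the role of the $w$-markings of ${\cat V}_{0,m,n}$ and that the polarizing bundle remains ample after each operation, so that Theorem \ref{theorem: universal curve theorem simplified statement} truly applies rather than merely by analogy. The second and more delicate difficulty is geometric integrality over $\spec\mathbb{Z}$: each fiber of $\pi_n$ is a connected reduced genus-$0$ curve whose generic member is $\mathbb{P}^1$, so flatness over the integral base $\overline{P}_n$ already makes $C_n$ reduced with a single component dominating the base, and the work is to exclude spurious components lying over the boundary or over individual primes $p$, which I would handle by spreading out the geometrically integral generic fiber and using flatness to control all fibers uniformly.
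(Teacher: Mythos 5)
Your overall mechanism --- realizing $\overline{P}_{n+1}$ as the universal curve over $\overline{P}_n$ and climbing a tower from $\spec\mathbb{Z}$ --- is indeed the mechanism underlying the paper's proof, but the paper deliberately does \emph{not} run it on $F_n$ itself, and the point where you diverge is exactly where your argument has a gap. Theorem \ref{theorem: universal curve theorem simplified statement} is proved for the categories of Definition \ref{definition: categories of curves}, in which $\phi$ is only required to vanish \emph{simply} at $w_1$; your $F_n$ is the subcategory where $\phi$ vanishes \emph{doubly} along $p_\infty=w_1$. To invoke the theorem ``with the dictionary'' you must show that both $\up$ functors and both $\down$ functors preserve this subcategory, and your stated reason --- that the two operations ``insert new components away from $p_\infty$'' --- is false. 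Construction \ref{construction: first bubbling up construction} (Knudsen stabilization) inserts a bubble precisely \emph{at} $p_\infty$ whenever the new section $x$ meets $p_\infty$, which happens over a nonempty locus of any reasonable base: separating $x$ from the $w$-marking is the entire purpose of that operation, and after it both $x_\up$ and $p_{\infty,\up}$ lie on the new component. The fact you need is still true (the residue of the lifted field at the new node is $0$, which forces the lift to vanish identically on the bubble, hence to vanish doubly at $p_{\infty,\up}$; and dually for the contraction functors, including the case where the contracted component carries $p_\infty$), but it is a statement that must be proved, in families, not read off from the shape of the operations. This is precisely what the paper's NCR formalism supplies: it runs your tower for the \emph{unrestricted} category ${\cat V}_{0,1,n}$ over $\spec\mathbb{Z}[t]$, where Theorem \ref{theorem: universal curve theorem simplified statement} applies verbatim (Theorem \ref{theorem: terminal object theorem most explicit version}), proves once and for all that the NCR morphism of Definition \ref{definition: coresidue and anticoresidue morphisms} is functorial and unchanged by each bubbling step (item \ref{item: third item in corollary: terminal objects are nice} of Proposition \ref{proposition: future terminal objects are nice}), and then obtains $F$ as the fiber over $(t)\in\spec\mathbb{Z}[t]$ of the NCR morphism of the terminal object; all four asserted properties of $\overline{P}_n$ are then quoted from Proposition \ref{proposition: future terminal objects are nice}. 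The double-zero condition is never built into the categories at all --- it is cut out functorially at the very end.

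Two secondary gaps. First, your base case needs more than rigidity: $F_1\cong h_{\spec\mathbb{Z}}$ requires showing that \emph{every} family over \emph{every} $S$ is isomorphic to the constant family $(\mathbb{P}^1_S,0,\infty,\partial/\partial x)$, i.e.\ that the conic bundle is trivial and the residual $\Gamma(S,{\sh O}_S^\times)$-ambiguity is killed by matching the vector fields; this is the content of (the $t=0$ case of) Lemma \ref{lemma: base case of induction in theorem: the main degeneration as a moduli space theorem} and does not follow from the absence of automorphisms of geometric fibers. Second, even identifying $F_n$ with a subcategory of ${\cat V}_{0,1,n}$ requires checking that the combinatorial stability condition (3), in the presence of (1) and (2), is equivalent to $\pi$-ampleness of $\omega_{C/S}(p_\infty+2x_1+\cdots+2x_n)$; this is not a tautology --- ampleness alone permits a component carrying some $x_i$ and meeting exactly two other components, which (3a) forbids, and ruling such components out uses the opposite-residue matching of $\phi$ across nodes together with the double zero at $p_\infty$. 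Your sketch of geometric integrality, by contrast, is essentially the paper's own argument (Remark \ref{remark: main constuction has integral bases and curves} and Proposition \ref{proposition: future terminal objects are nice}) and is fine as far as it goes.
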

	
	By geometrically integral, we simply mean that all geometric fibers are integral (equivalently, all fibers are geometrically integral in the sense of \cite[\href{https://stacks.math.columbia.edu/tag/020H}{Tag 020H}]{[stacks]}). 
	
	From now on, $\overline{P}_n$ will denote the moduli space in Theorem \ref{theorem: Pn bar as moduli space}, and the projective variety above will turn out to be $\overline{P}_{n,{\mathbb K}} = \spec {\mathbb K} \times_{\spec {\mathbb Z}} \overline{P}_n$.
	
	\subsubsection*{Relation between $\overline{L}_n$ and $\overline{P}_n$} Our main result is that the two compactification problems discussed above are in fact related (or at least, the proposed answers are): we will show that $\overline{L}_n$ degenerates to $\overline{P}_n$, and the actions fit together globally. 
	
	First, let's review the fact that ${\mathbb G}_m$ degenerates isotrivially to ${\mathbb G}_a$, e.g. \cite[3.1]{[KM78]}. Consider the commutative cocommutative Hopf ${\mathbb Z}[t]$-algebra $H = {\mathbb Z}[t,x]_{1+tx}$ with the structure described below, where the maps are always uniquely determined by the property on the right and the requirement that they are ${\mathbb Z}[t]$-algebra homomorphisms, and $x$ is always the element $x \in H$.
	\begin{center}
		\begin{tabular}{rll}
			(multiplication) & $H \otimes_{{\mathbb Z}[t]}H \to H$  & $x \otimes 1\mapsto x$ and $1 \otimes x \mapsto x$ \\
			(comultiplication) & $H \to H \otimes_{{\mathbb Z}[t]}H$ & $x \mapsto x \otimes 1 + 1 \otimes x + t x \otimes x$ \\
			(unit) & ${\mathbb Z}[t] \to H$ &  \\
			(counit) & $H \to {\mathbb Z}[t]$ & $x \mapsto 0$ \\
			(antipode) & $H \to H$ & $x \mapsto \displaystyle -\frac{x}{1+tx}$ \\
		\end{tabular} 
	\end{center}
	The verification of the axioms is a tedious exercise left to the patient reader. 
	
	Recall that for any ring $R$, $R[y,y^{-1}]$ has a natural $R$-Hopf algebra structure with comultiplication characterized by $y \mapsto y \otimes y$. Note that
	\begin{equation}\label{equation: Hopf algebra isomorphism}
		{\mathbb Z}[t,t^{-1}] \otimes_{{\mathbb Z}[t]} H = {\mathbb Z}[t,t^{-1},x]_{1+tx} \simeq {\mathbb Z}[t,t^{-1},y,y^{-1}]
	\end{equation}
	as ${\mathbb Z}[t,t^{-1}]$-Hopf algebras. Indeed, the isomorphism is given by $y \mapsto 1+tx$. 
	
	Then $G = \spec H$ with projection $\gamma:G \to \spec {\mathbb Z}[t]$ via the unit in $H$ is a flat group scheme over $\spec {\mathbb Z}[t]$. Note that $G_{{\mathbb Z}[t,t^{-1}]} \simeq {\mathbb G}_{m,{\mathbb Z}[t,t^{-1}]}$ by \eqref{equation: Hopf algebra isomorphism}, and that
	\begin{equation*}
		G_{\overline{z}} \simeq 
		\begin{cases}
			{\mathbb G}_{m,{\overline{z}}} & \text{if $t \notin z$,} \\
			{\mathbb G}_{a,{\overline{z}}} & \text{if $t \in z$,}
		\end{cases}
	\end{equation*}
	for any geometric point $\overline{z} \to \spec {\mathbb Z}[t]$ with the corresponding usual point denoted by $z \in \spec {\mathbb Z}[t]$. We write $G^k_{{\mathbb Z}[t]} = \underbrace{G \times_{{\mathbb Z}[t]} \cdots \times_{{\mathbb Z}[t]} G}_{\text{$k$ copies of $G$}}$  for any integer $k \geq 0$.
	
	\begin{theorem}\label{theorem: isotrivial degeneration theorem} For any positive integer $n$, there exists a flat projective geometrically integral local complete intersection morphism $\xi:X \to \spec {\mathbb Z}[t]$, and an action of $G^n_{\mathbb Z[t]}$ on $X$ over $\spec {\mathbb Z}[t]$ such that for any geometric point $\overline{z} \to \spec {\mathbb Z}[t]$ (with the corresponding point denoted by $z \in \spec {\mathbb Z}[t]$),
		\begin{itemize} 
			\item if $t \notin z$, then $X_{\overline{z}}$ is isomorphic to $\overline{L}_{n,\overline{z}}$, and the induced action of $G_{\overline{z}}^n$ on $X_{\overline{z}}$ is isomorphic to the canonical action of ${\mathbb G}_{m,{\overline{z}}}^n$ on $\overline{L}_{n,{\overline{z}}}$;
			\item if $t \in z$, then $X_{\overline{z}}$ is isomorphic to $\overline{P}_{n,\overline{z}}$, and the induced action of $G_{\overline{z}}^n$ on $X_{\overline{z}}$ is isomorphic to the canonical action of ${\mathbb G}_{a,{\overline{z}}}^n$ on $\overline{P}_{n,{\overline{z}}}$.
		\end{itemize}
	\end{theorem}
	
	\begin{example}\label{example: the main degeneration for n=3}
		Recall that $\overline{L}_{3,{\mathbb C}}$ is the blowup of ${\mathbb P}^2$ at $3$ non-collinear points. It can be checked that $\overline{P}_{3,{\mathbb C}}$ is the blowup of ${\mathbb P}^2$ at $3$ collinear points. 
		\begin{figure}[h]
			\begin{center}
				\begin{tikzpicture}[scale=1]
					\draw (0,0) circle (1.6); 
					\node at (0,-2) {$\overline{L}_{3,{\mathbb C}}$};
					
					\draw (-1,1) to node[below] {$E_1$} (1,1);
					\draw (-1,-1) to node[above] {$F_1$}(1,-1);
					\draw (-0.3,1.3) to node[right] {$F_2$} (-1.3,-0.3);
					\draw (-0.3,-1.3) to node[right] {$E_3$} (-1.3,0.3);
					\draw (0.3,1.3) to node[left] {$F_3$} (1.3,-0.3);
					\draw (0.3,-1.3) to node[left] {$E_2$} (1.3,0.3);
					
					\def\lxa{-0.8}; \def\lya{1}; \fill (\lxa,\lya) circle (1pt);
					\def\lxb{0.8}; \def\lyb{-1}; \fill (\lxb,\lyb) circle (1pt);
					\def\lxc{-1.12}; \def\lyc{0}; \fill (\lxc,\lyc) circle (1pt);
					\def\lxd{1}; \def\lyd{0.5}; \fill (\lxd,\lyd) circle (1pt);

					\draw (6,0) circle (1.6);
					\node at (6,-2) {$\overline{P}_{3,{\mathbb C}}$};
					
					\draw (4.7,0) to (7.3,0) node[below] {$F$};
					
					\draw (5,-1) to (5,1) node[right] {$E_1$};
					\draw (6,-1) to (6,1) node[right] {$E_2$};
					\draw (7,1) to (7,-1) node[left] {$E_3$};
					
					\def\pxa{6.5}; \def\pya{0}; \fill (\pxa,\pya) circle (1pt);
					\def\pxb{7.3}; \def\pyb{0.5}; \fill (\pxb,\pyb) circle (1pt);
					\def\pxc{5}; \def\pyc{0.5}; \fill (\pxc,\pyc) circle (1pt);
					\def\pxd{5}; \def\pyd{0}; \fill (\pxd,\pyd) circle (1pt);
					
					\def\aa{1.5}; \def\ee{0.5}
					\draw (\aa,2.5+\ee) -- (\aa, 1+\ee);
					\filldraw[fill=white] (\aa,2.3+\ee) circle (2pt) node[anchor=east] {$0$};	
					\fill[black] (\aa,2.05+\ee) circle (2pt) node[anchor=west] {$1$};
					\fill[black] (\aa,1.75+\ee) circle (2pt) node[anchor=west] {$2$};
					\fill[black] (\aa,1.45+\ee) circle (2pt) node[anchor=west] {$3$};
					\filldraw[fill=white] (\aa,1.2+\ee) circle (2pt) node[anchor=east] {$\infty$};
					
					\draw [dashed] (\aa,\ee +0.9) to (\lxd,\lyd);
					
					\def\aa{7.5}; \def\ee{3};
					\draw (\aa,-1.5+\ee) -- (\aa, 0+\ee);
					\fill[black] (\aa,-0.2+\ee) circle (2pt) node[anchor=west] {$1$};
					\fill[black] (\aa,-0.6+\ee) circle (2pt) node[anchor=west] {$2$};
					\fill[black] (\aa,-1.0+\ee) circle (2pt) node[anchor=west] {$3$};
					\filldraw[fill=white] (\aa,-1.3+\ee) circle (2pt) node[anchor=east] {$\infty$};
					
					\draw [dashed] (\aa,\ee -1.6) to (\pxb,\pyb);
					
					\def\aa{-3}; \def\ee{0.5};
					\draw (\aa,1+\ee) -- (\aa+1,2.5+\ee);
					\draw (\aa+0.5,2.5+\ee) -- (\aa+1.5,1+\ee); 
					\filldraw[fill=white] (\aa + 0.2,1.3+\ee) circle (2pt) node[anchor=east] {$0$};
					\filldraw[fill=white] (\aa + 1.3,1.3+\ee) circle (2pt) node[anchor=west] {$\infty$};
					\fill[black] (\aa+0.4,1.6+\ee) circle (2pt) node[anchor=east] {$2$};
					\fill[black] (\aa+0.6,1.9+\ee) circle (2pt) node[anchor=east] {$3$};
					\fill[black] (\aa+1.03,1.7+\ee) circle (2pt) node[anchor=west] {$1$};
					
					\draw [dashed] (\aa+0.75,\ee +0.9) to (\lxa,\lya);
					
					\def\bb{0.3}; \def\aa{3.4}; \def\ee{3};
					\draw (\aa+\bb,-1.5+\ee) -- (\aa+\bb, 0+\ee);
					\filldraw[fill=white] (\aa+\bb,-1.3+\ee) circle (2pt) node[anchor=east] {$\infty$};
					\draw (\aa+\bb - 0.3, -1+\ee) -- (\aa+\bb + 1.2, -1+\ee);
					\draw (\aa+\bb - 0.3, -0.3+\ee) -- (\aa+\bb + 1.2, -0.3+\ee);
					\fill[black] (\aa+\bb+0.3,-0.3+\ee) circle (2pt) node[anchor=north] {$2$};
					\fill[black] (\aa+\bb+0.8,-0.3+\ee) circle (2pt) node[anchor=north] {$3$};
					\fill[black] (\aa+\bb+0.5,-1+\ee) circle (2pt) node[anchor=north] {$1$};
					
					\draw [dashed] (\aa+1,\ee -1.5) to (\pxc,\pyc);
					
					\def\aa{0.5}; \def\ee{-4.3};
					\draw (\aa,1+\ee) -- (\aa+1,2.5+\ee);
					\draw (\aa+0.5,2.5+\ee) -- (\aa+1.5,1+\ee); 
					\filldraw[fill=white] (\aa + 0.2,1.3+\ee) circle (2pt) node[anchor=east] {$0$};
					\filldraw[fill=white] (\aa + 1.3,1.3+\ee) circle (2pt) node[anchor=west] {$\infty$};
					\fill[black] (\aa+0.47,1.7+\ee) circle (2pt) node[anchor=east] {$1$};
					\fill[black] (\aa+0.9,1.9+\ee) circle (2pt) node[anchor=west] {$2$};
					\fill[black] (\aa+1.1,1.6+\ee) circle (2pt) node[anchor=west] {$3$};
					
					\draw [dashed] (\aa+0.75,\ee +2.5) to (\lxb,\lyb);
					
					\def\bb{1.4}; \def\aa{5.5}; \def\ee{-2};
					\draw (\aa+\bb,-1.5+\ee) -- (\aa+\bb, 0+\ee);
					\filldraw[fill=white] (\aa+\bb,-1.3+\ee) circle (2pt) node[anchor=east] {$\infty$};
					\draw (\aa+\bb - 0.3, -1+\ee) -- (\aa+\bb + 1.2, -1+\ee);
					\draw (\aa+\bb - 0.3, -0.6+\ee) -- (\aa+\bb + 1.2, -0.6+\ee);
					\draw (\aa+\bb - 0.3, -0.2+\ee) -- (\aa+\bb + 1.2, -0.2+\ee);
					\fill[black] (\aa+\bb+0.5,-0.2+\ee) circle (2pt) node[anchor=south] {$1$};
					\fill[black] (\aa+\bb+0.5,-0.6+\ee) circle (2pt) node[anchor=west] {$2$};
					\fill[black] (\aa+\bb+0.5,-1+\ee) circle (2pt) node[anchor=north] {$3$};
					\draw [dashed] (\aa+2,\ee +0.1) to (\pxa,\pya);
					
					\def\aa{-3}; \def\ee{-4};
					\draw (\aa,1+\ee) -- (\aa+1,2.5+\ee);
					\draw (\aa+0.5,2.5+\ee) -- (\aa+1.5,1+\ee); 
					\draw (\aa+1,1+\ee) -- (\aa+2,2.5+\ee);
					\filldraw[fill=white] (\aa + 0.2,1.3+\ee) circle (2pt) node[anchor=east] {$0$};
					\filldraw[fill=white] (\aa + 1.8,2.2+\ee) circle (2pt) node[anchor=west] {$\infty$};
					\fill[black] (\aa+0.47,1.7+\ee) circle (2pt) node[anchor=east] {$2$};
					\fill[black] (\aa+1.03,1.7+\ee) circle (2pt) node[anchor=west] {$1$};
					\fill[black] (\aa+1.47,1.7+\ee) circle (2pt) node[anchor=west] {$3$};
					
					\draw [dashed] (\aa+1.25,\ee +2.5) to (\lxc,\lyc);
					
					\def\bb{0.5}; \def\aa{3.4}; \def\ee{-2};
					\draw (\aa+\bb,-1.5+\ee) -- (\aa+\bb, 0+\ee);
					\filldraw[fill=white] (\aa+\bb,-1.3+\ee) circle (2pt) node[anchor=east] {$\infty$};
					\draw (\aa+\bb - 0.3, -1+\ee) -- (\aa+\bb + 1.2, -1+\ee);
					\draw (\aa+\bb - 0.3, -0.3+\ee) -- (\aa+\bb + 1.2, -0.3+\ee);
					\draw (\aa+\bb+0.3, -0.8+\ee) -- (\aa+\bb+0.3, -0.1+\ee);
					\fill[black] (\aa+\bb+0.3,-0.6+\ee) circle (2pt) node[anchor=east] {$2$};
					\draw (\aa+\bb+0.8, -0.8+\ee) -- (\aa+\bb+0.8, -0.1+\ee);
					\fill[black] (\aa+\bb+0.8,-0.6+\ee) circle (2pt) node[anchor=west] {$3$};
					\fill[black] (\aa+\bb+0.5,-1+\ee) circle (2pt) node[anchor=north] {$1$};	
					
					\draw [dashed] (\aa+1,\ee +0.1) to (\pxd,\pyd);
					
					\node at (3,0) {degenerates to};
					
				\end{tikzpicture}
				\caption{The degeneration of $\overline{L}_{3,{\mathbb C}}$ to $\overline{P}_{3,{\mathbb C}}$.}
				\label{figure: case n=3 example}
			\end{center}
		\end{figure}
		There exist $p_1(t),p_2(t),p_3(t) \in {\mathbb P}^2$ ($t \in {\mathbb C}$), which are collinear if and only if $t = 0$, and such that the blowup of ${\mathbb A}^1 \times {\mathbb P}^2$ at the union of the (images of the) $3$ sections $t \mapsto (t,p_i(t))$ can be identified with $X_{\mathbb C}$, with $X$ as in Theorem \ref{theorem: isotrivial degeneration theorem} for $n=3$. Let $E_1,E_2,E_3$ be the exceptional curves when ${\mathbb P}^2$ is blown up at $p_1(t),p_2(t),p_3(t)$, and let $F_i$ be the proper transform of the line through $p_j(t)$ and $p_k(t)$, for any permutation $(i,j,k)$ of $(1,2,3)$. (It would probably be more correct to write $E_{i,t}$ and $F_{i,t}$ instead of $E_i$ and $F_i$, but we hope this abuse of notation will not lead to confusion.) When $t=0$, $F_1=F_2=F_3$, and we denote this $(-2)$-curve by $F$. The table below specifies the flat limits (in $\overline{P}_{3,{\mathbb C}}$) of the strata of the blowups of ${\mathbb P}^2$ isomorphic to $\overline{L}_{3,{\mathbb C}}$, as $t \to 0$.
		\begin{center}
			\begin{tabular}{c|c|c|c}
				stratum for $t \neq 0$ ($\overline{L}_{3,{\mathbb C}}$) &  $E_i$ & $F_i$ & $E_i \cap F_j$ ($i \neq j$) \\ \hline
				union of strata for $t=0$ ($\overline{P}_{3,{\mathbb C}}$) & $E_i$ & $F \cup E_i$ & $E_i \cap F$
			\end{tabular}
		\end{center}
		On the other hand, Figure \ref{figure: case n=3 example} specifies the modular behaviour on strata  (we may `consistently' permute the indices of the markings and strata), so, together with the table above, we obtain a complete picture of the degeneration when $n=3$.
		
		The generalization of this example to any $n$, and some other combinatorial issues regarding $\overline{P}_n$ and its deformation to $\overline{L}_n$ will be discussed in a future note.
	\end{example}
	
		\begin{remark}
			Here are a few further remarks regarding Theorem \ref{theorem: isotrivial degeneration theorem}.
		\begin{enumerate}
			\item In fact, $X \backslash X_{(t)} \simeq \spec {\mathbb Z}[t,t^{-1}] \times \overline{L}_n $ over $\spec {\mathbb Z}[t,t^{-1}]$, and the restriction of the $G^n_{\mathbb Z[t]}$-action to $X \backslash X_{(t)} $ is the pullback of the action on $\overline{L}_n$ along $ X \backslash X_{(t)} \simeq \spec {\mathbb Z}[t,t^{-1}] \times \overline{L}_n \to \overline{L}_n$.
			\item Although $\overline{L}_{n,{\mathbb C}}$ and $\overline{P}_{n,{\mathbb C}}$ are homeomorphic only for $n \leq 3$ since $\overline{P}_{n,{\mathbb C}}$ is singular for $n \geq 4$, Theorem \ref{theorem: isotrivial degeneration theorem} still shows that $\overline{L}_n$ and $\overline{P}_n$ are related topologically.
			\item The Losev-Manin space $\overline{L}_{n}$ is the moduli space of weighted pointed stable curves \cite{[Ha03]} for weight $(1,1,\epsilon,\ldots,\epsilon)$. However, $\overline{P}_{n}$ doesn't resemble any of Hassett's moduli spaces. I would like to thank Valery Alexeev for raising this issue by email, which prompted me to include it here. 
			\item It is very tempting to imagine the $p_0$ and $p_\infty$ markings for $\overline{L}_n$ coalescing into the $p_\infty$ marking for $\overline{P}_n$, and then hope to understand the picture with no reference to vector fields. However, this doesn't work. In fact, the $p_0$ marking doesn't even extend to a section over the total space $X$ if $n \geq 2$.
		\end{enumerate}
	\end{remark}

	We conclude with some general comments on the proofs of Theorems \ref{theorem: Pn bar as moduli space} and \ref{theorem: isotrivial degeneration theorem}. From a `modular' perspective, the key point is that the degeneration $X$ from Theorem \ref{theorem: isotrivial degeneration theorem} admits a modular interpretation quite similar to that of $\overline{P}_n$ in Theorem \ref{theorem: Pn bar as moduli space} if the double vanishing of the vector field at $p_\infty$ is relaxed to a simple vanishing requirement (Theorem \ref{theorem: the main degeneration as a moduli space theorem} and Corollary \ref{corollary: explicit statement that Ln degenerates to Pn}), and the map to ${\mathbb A}^1$ comes from the natural trivialization of $\omega_{C/S}^\vee(-p_\infty)/\omega_{C/S}^\vee(-2p_\infty)$ (\S\ref{subsection: coresidues}), up to an unimportant sign. From a `constructive' perspective, the constructions of $\overline{P}_n$ and $X$ follow the same inductive pattern reminiscent of the inductive construction of $\overline{M}_{0,n}$, with the exception that eliminating degeneracy is done in two steps in our setup. Starting with the trivial case $n=1$, we repeatedly (1) make a base change to the universal curve at the last step; then (2) apply `Knudsen stabilization with vector fields' (Construction \ref{construction: first bubbling up construction}) and (3) `inflate at zero vector' (Construction \ref{construction: second bubbling up construction}) to eliminate degeneracy. To connect the explicit constructions with the `modular' point of view, we require the inverse operations alluded to in \S\ref{subsection: Bubbling up and bubbling down} (\S\ref{section: Bubbling down and representability}). Finally, to deal with the group actions in Theorem \ref{theorem: isotrivial degeneration theorem}, we first construct a relative action on the family of curves (which requires some careful geometry), then use the modular interpretation of $X$ to `transfer' the group action from the curves to the base by moving around the markings on the curves, as explained above for $\overline{L}_n$ and $\overline{P}_n$ individually.
	
	Another degeneration in the style of that in Theorem \ref{theorem: isotrivial degeneration theorem} is sketched in \S\ref{example: examples with initial data: a genus 1 example}, without a completely formal statement or proof.
	
	\subsection*{Conventions} All schemes in this paper, including those in the definitions of moduli functors or fibered categories, are assumed noetherian. In fact, this assumption can probably be removed with more work, but we will not attempt to do so. `${\mathbb K}$' is always an algebraically closed field. We write ${\mathbb P}{\sh F} = \operatorname{Proj}_X \operatorname{Sym} {\sh F}$, for any coherent ${\sh O}_X$-module ${\sh F}$. A prestable curve is a proper flat morphism (of schemes), whose geometric fibers are connected curves with at worst nodal singularities, e.g. \cite[Definition 2.1]{[BM96]}. We say that it is of genus $g$ if all geometric fibers have arithmetic genus $g$. If $\pi:C \to S$ is a prestable curve and $x:S \to C$ is a section, we will often abuse notation by writing $x$ instead of $x(S)$ for the scheme-theoretic image of $x$ (sections of separated morphisms are closed immersions \cite[5.4.6]{[EGAI]}).
	
	\subsection*{Acknowledgments} This project was initiated a few years ago, during my visit at the MSRI. I would like to thank Professor David Eisenbud for interesting discussions and encouragement. I am also grateful to the anonymous referee for many excellent suggestions, which greatly improved the quality of the paper.
	
	This material is based upon work supported by the National Science Foundation under Grant No. 1440140, while the author was in residence at the Mathematical Sciences Research Institute in Berkeley, California, during the Spring 2020 semester. 
	
	\includegraphics[scale=0.3]{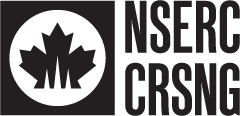} We acknowledge the support of the Natural Sciences and Engineering Research Council of Canada (NSERC), RGPIN-2020-05497. Cette recherche a \'{e}t\'{e} financ\'{e}e par le Conseil de recherches en sciences naturelles et en g\'{e}nie du Canada (CRSNG), RGPIN-2020-05497.
	
	\section{Universal curves}\label{section: universal curves}

	We start by introducing some notation.
	
	\begin{definition}\label{definition: categories of curves}
		Let $g,m,n \geq 0$ integers. An object of ${\cat V}^+_{g,m,n}$ consists of 
		\begin{itemize}
			\item a (noetherian, according to our conventions) scheme $S$;
			\item a prestable curve $\pi:C \to S$ of genus $g$;
			\item sections $w_1,\ldots,w_m:S \to C$ and $x_1,\ldots,x_n:S \to C$ of $\pi$; and
			\item an ${\sh O}_C$-module homomorphism $\phi:\omega_{C/S} \to {\sh O}_C$,
		\end{itemize} 
		satisfying the following conditions
		\begin{enumerate}
			\item\label{item: item 1 in definition: categories of curves} $\pi$ is smooth at $w_i(s)$ and $x_j(s)$, for all $s \in S$;
			\item $w_i(s) \neq w_j(s)$ if $i \neq j$, and $w_i(s) \neq x_j(s)$, for all $s \in S$;
			\item\label{item: item 3 in definition: categories of curves} $w_i^*\phi = 0$ as homomorphism $w_i^*\omega_{C/S} \to {\sh O}_S$, for all $i$;
			\item  $x_j^*\phi:x_j^*\omega_{C/S} \to {\sh O}_S$ is an isomorphism for all $j$.
		\end{enumerate}
		Note that we allow $x_i \cap x_j \neq \emptyset$. We will sometimes write ${\overline x}$ and ${\overline w}$ instead of $(x_1,\ldots,x_n)$ and $(w_1,\ldots,w_m)$. Arrows in ${\cat V}^+_{g,m,n}$ are pullbacks: an arrow 
		\[ (S',C',\pi',\overline{w}',\overline{x}',\phi') \to (S,C,\pi,\overline{w},\overline{x},\phi) \] 
		is a pair of morphisms $(h:S' \to S,r:C' \to C)$ such that the following diagrams
		\begin{center}
			\begin{tikzpicture}
				\node (nw) at (0,1) {$C'$};
				\node (ne) at (1,1) {$C$};
				\node (sw) at (0,0) {$S'$};
				\node (se) at (1,0) {$S$};
				
				\draw [->] (nw) -- node [midway, below] {$r$} (ne);
				\draw [->] (sw) -- node [midway, above] {$h$} (se);
				\draw [->] (nw) -- node [midway, left] {$\pi'$} (sw);
				\draw [->] (ne) -- node [midway, right] {$\pi$} (se);
				
				\node (nw) at (3,1) {$C'$};
				\node (ne) at (4,1) {$C$};
				\node (sw) at (3,0) {$S'$};
				\node (se) at (4,0) {$S$};
				
				\draw [->] (nw) -- node [midway, below] {$r$} (ne);
				\draw [->] (sw) -- node [midway, above] {$h$} (se);
				\draw [<-] (nw) -- node [midway, left] {$x_j'$} (sw);
				\draw [<-] (ne) -- node [midway, right] {$x_j$} (se);
				
				\node (nw) at (6,1) {$C'$};
				\node (ne) at (7,1) {$C$};
				\node (sw) at (6,0) {$S'$};
				\node (se) at (7,0) {$S$};
				
				\draw [->] (nw) -- node [midway, below] {$r$} (ne);
				\draw [->] (sw) -- node [midway, above] {$h$} (se);
				\draw [<-] (nw) -- node [midway, left] {$w_i'$} (sw);
				\draw [<-] (ne) -- node [midway, right] {$w_i$} (se);
			\end{tikzpicture}
		\end{center}
		are commutative and the first one is cartesian, and $\phi'$ corresponds to the pullback of $\phi$ under the isomorphism $C' \to C_{S'}$ induced by the first diagram.
		
		An object of ${\cat C}^+_{g,m,n}$ is defined verbatim the same as an object of ${\cat V}^+_{g,m,n}$, with the sole exception that, as part of the data, we consider an additional section $x:S \to C$ of $\pi$. No conditions involve this section in any way. Arrows are pullbacks again (as above, and another commutative diagram saying $r x' = x h$).
		
		Let ${\cat V}_{g,m,n}$ (respectively ${\cat C}_{g,m,n}$) be the full subcategory of ${\cat V}^+_{g,m,n}$ (respectively ${\cat C}^+_{g,m,n}$) consisting of objects for which 
		$ \omega_{C/S}\left( w_1+\cdots+w_m + 2x_1+\cdots+2x_n \right) $ 
		is $\pi$-ample (by condition \ref{item: item 1 in definition: categories of curves}, $x_j$ and $w_i$ are Cartier divisors on $C$).
	\end{definition}
	
	Instead of imposing condition \ref{item: item 3 in definition: categories of curves}, we could have equally well defined $\phi$ directly as a global section of $\omega_C^\vee(-w_1-\cdots-w_m)$. Depending on context, we will sometimes regard $\phi$ as a section of $\omega_C^\vee$, and sometimes as a section of $\omega_C^\vee(-w_1-\cdots-w_m)$.
	
	Unless $2g+m \leq 2$, a curve in ${\cat V}_{g,m,n}({\mathbb K})$ with a vector field that is not everywhere $0$ must be reducible, which is quite artificial, though shouldn't be discarded altogether, as illustrated by \S\ref{example: examples with initial data: positive genus example}. Examples \ref{example: some kind of relation with Mgn bar} and \ref{example: exceptions to previous example} below aim to clarify what types of curves we can expect to see in ${\cat V}_{g,m,n}({\mathbb K})$. They are slightly informal, since they will not be used in any proofs.
	
	\begin{figure}[h]
	\begin{center}
		\begin{tikzpicture}[scale = 0.95]
			
			\def\aa{0}; \def\bb{0};
			
			\draw (\aa,\bb) -- (\aa+3,\bb);
			\draw (\aa+0.5,\bb-0.5) -- (\aa+0.5,\bb+2);
			\draw (\aa,\bb+1.5) -- (\aa+1.5,\bb+1.5);
			\draw (\aa+2.5,\bb-0.5) -- (\aa+2.5,\bb+1.5);
			
			\filldraw[fill=white] (\aa+1,\bb+1.5) circle (2pt) node[above] {$y_1$};
			\filldraw[fill=white] (\aa+0.5,\bb+1) circle (2pt) node[right] {$y_2$};
			\filldraw[fill=white] (\aa+0.5,\bb+0.5) circle (2pt) node[right] {$y_3$};
			\filldraw[fill=white] (\aa+2.5,\bb+1) circle (2pt) node[right] {$y_4$};
			
			\node at (\aa+1.5,\bb-1) {in $\overline{\modu M}_{g,4}$};
			
			\def\aa{4}; \def\bb{0};
			
			\draw (\aa,\bb) -- (\aa+3,\bb);
			\draw (\aa+0.5,\bb-0.5) -- (\aa+0.5,\bb+2);
			\draw (\aa,\bb+1.5) -- (\aa+1.5,\bb+1.5);
			\draw (\aa+2.5,\bb-0.5) -- (\aa+2.5,\bb+1.5);
			
			\filldraw[fill=white] (\aa+1,\bb+1.5) circle (2pt) node[above] {$w_1$};
			\filldraw[fill=white] (\aa+0.5,\bb+1) circle (2pt) node[right] {$w_2$};
			
			\draw [very thick] (\aa,\bb+0.5) to (\aa+1.5,\bb+0.5) node[right] {$\Sigma_3$}; 
			\fill (\aa+0.3,\bb+0.5) circle (2pt) node[below] {$x_1$}; 
			\fill (\aa+0.8,\bb+0.5) circle (2pt) node[below] {$x_2$};
			\fill (\aa+1.2,\bb+0.5) circle (2pt) node[below] {$x_4$};
			
			\draw [very thick] (\aa+1.5,\bb+1) to (\aa+3,\bb+1) node[right] {$\Sigma_4$};
			\fill (\aa+2.2,\bb+1) circle (2pt) node[below] {$x_3$};
			
			\node at (\aa+1.5,\bb-1) {in ${\cat V}_{g,2,4}$};
			
			\def\aa{9}; \def\bb{0};
			
			\draw (\aa,\bb+1.5) -- (\aa+1,\bb+0.5);
			\draw (\aa+1,\bb+1.1) -- (\aa,\bb+0.1);
			\draw (\aa,\bb+0.6) -- (\aa+1,\bb-0.4);
			
			\filldraw[fill=white] (\aa+0.2,\bb+1.3) circle (2pt) node[above] {$w_1$};
			\filldraw[fill=white] (\aa+0.8,\bb-0.2) circle (2pt) node[right] {$w_2$};
			
			\fill  (\aa+0.4,\bb+1.1) circle (2pt) node[right] {$x_3$};
			\fill  (\aa+0.8,\bb+0.7) circle (2pt) node[right] {$x_2$};
			\fill  (\aa+0.5,\bb+0.1) circle (2pt) node[right] {$x_1$};
			\fill  (\aa+0.5,\bb+0.6) circle (2pt) node[left] {$x_4$};
			
			\node at (\aa+0.5,\bb-1) {in ${\cat V}_{0,2,4}$};
			
			\def\aa{11}; \def\bb{0};
			
			\draw (\aa,\bb+1.5) -- (\aa+1,\bb+0.5);
			\draw (\aa+1,\bb+1.1) -- (\aa,\bb+0.1);
			\draw (\aa,\bb+0.6) -- (\aa+1,\bb-0.4);
			
			\fill  (\aa+0.4,\bb+1.1) circle (2pt) node[right] {$x_3$};
			\fill  (\aa+0.8,\bb+0.7) circle (2pt) node[right] {$x_2$};
			\fill  (\aa+0.5,\bb+0.1) circle (2pt) node[right] {$x_1$};
			\fill  (\aa+0.5,\bb+0.6) circle (2pt) node[left] {$x_4$};
			
			\node at (\aa+0.5,\bb-1) {in ${\cat V}_{0,0,4}$};
			
		\end{tikzpicture}
	\end{center}
	\caption{The two pictures on the left show how a curve in ${\cat V}_{g,2,4}$ can be obtained from a curve in $\overline{\modu M}_{g,4}$. The two pictures on the right show two `exceptional' situations which can't be obtained by a construction of this type. (The examples are very similar, though the two vanishing points of the field are `$w$'-markings in one example but not in the other.)}
	\label{figure: relation of Mgn bar}
\end{figure}

	\begin{example}\label{example: some kind of relation with Mgn bar} Given $g,m,n,N$ such that $m < N \leq m+n$, there is a simple way to systematically produce curves in ${\cat V}_{g,m,n}({\mathbb K})$ out of curves $(Y,y_1,\ldots,y_N)$ in $\overline{\modu M}_{g,N}({\mathbb K})$. Let $\lambda:\{1,\ldots,n\} \to \{m+1,\ldots,N\}$ be a surjective function. Let 
		\[ C = Y \cup \Sigma_{m+1} \cup \cdots \cup \Sigma_N, \quad \text{where} \quad \Sigma_{m+1} \simeq \cdots \simeq \Sigma_{N} \simeq {\mathbb P}^1,  \]
		and each $\Sigma_i$ is attached transversally to $Y$ at $y_i$, for $i = m+1,\ldots,N$, $w_j = y_j$ for $j=1,\ldots,m$, and $\phi \in \Gamma(\omega_C^\vee)$ whose restriction to $Y$ is everywhere $0$, and whose restriction to $\Sigma_i \backslash \{ y_i \} \simeq {\mathbb A}^1$ is $\frac{d}{dx}$ in suitable coordinates. Finally, let us choose $x_\alpha \in \Sigma_{\lambda(\alpha)} \backslash \{y_{\lambda(\alpha)} \}$ arbitrary, for $\alpha = 1,\ldots,n$ (with automorphisms suitably taken into account, there are $n+m-N$ `moduli' for $x_1,\ldots,x_n$). Then,
		\[ (C,w_1,\ldots,w_m,x_1,\ldots,x_n,\phi) \in {\cat V}_{g,m,n}({\mathbb K}), \]
		by $\omega_Y(y_1+\cdots+y_N)$ ample, the classical description of dualizing sheaves in terms of meromorphic forms, and Definition \ref{definition: categories of curves}. An example is shown in Figure \ref{figure: relation of Mgn bar}, on the left. (As a trivial extension of this example to the case $m=N$, $n=0$, we have a functor $\overline{\modu M}_{g,m} \to {\cat V}_{g,m,0}$ by taking the vector field to be equal to $0$.)
	\end{example}
	
	\begin{example}\label{example: exceptions to previous example} 
		Can all curves in ${\cat V}_{g,m,n}({\mathbb K})$ be obtained using the construction in Example \ref{example: some kind of relation with Mgn bar}? The answer for most values of $g,m,n$ is positive if $N$ is allowed variable (the proof of Proposition \ref{proposition: normal generation in geometric case} implicitly contains the justification of this assertion), although there are a few exceptions, such as 
		\begin{enumerate}
			\item smooth genus $0$ curves with a vector field of the form $\frac{d}{dx}$ in suitable coordinates, in ${\cat V}_{0,0,n}$ and  ${\cat V}_{0,1,n}$;
			\item chains similar to those shown on the right side of Figure \ref{figure: relation of Mgn bar} (and also of the `mixed' type, with one vanishing point a `$w$'-marking and one not, which is important in the proof of Theorem \ref{theorem: isotrivial degeneration theorem}), in ${\cat V}_{0,0,n}$, ${\cat V}_{0,1,n}$, ${\cat V}_{0,2,n}$; or 
			\item genus $1$ curves (either smooth, or cycles of projective lines) with a nowhere vanishing vector field, in ${\cat V}_{1,0,n}$.
		\end{enumerate}
	\end{example}
		
	The main technical result is the following.
	
	\begin{theorem}\label{theorem: universal curve theorem simplified statement}
		If $2g+2n+m \geq 3$, then the (fibered) categories ${\cat V}_{g,m,n+1}$ and ${\cat C}_{g,m,n}$ are equivalent.
	\end{theorem}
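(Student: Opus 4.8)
The plan is to establish the equivalence ${\cat V}_{g,m,n+1} \simeq {\cat C}_{g,m,n}$ by constructing functors in both directions and checking they are mutually inverse, closely following the template of Knudsen's theorem that $\overline{\mathcal M}_{g,n+1}$ is the universal curve over $\overline{\mathcal M}_{g,n}$. The key observation is that the extra section $x = x_{n+1}$ appearing in ${\cat V}_{g,m,n+1}$ plays the role of a ``free'' marked point (on which $x^*\phi$ is an isomorphism, by condition (4)), while in ${\cat C}_{g,m,n}$ the analogous section $x$ carries no conditions at all. So the functor ${\cat V}_{g,m,n+1} \to {\cat C}_{g,m,n}$ should be a \emph{forgetting-plus-contracting} operation: given an object of ${\cat V}_{g,m,n+1}$ with sections $w_1,\ldots,w_m, x_1,\ldots,x_n,x_{n+1}$ and field $\phi$, I would forget the nonvanishing condition on $x_{n+1}$, relabel $x_{n+1}$ as the free section $x$, and then \emph{contract} (bubble down) any components that have become unstable with respect to $\omega_{C/S}(w_1+\cdots+w_m+2x_1+\cdots+2x_n)$. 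The inverse operation, ${\cat C}_{g,m,n} \to {\cat V}_{g,m,n+1}$, should be exactly the two-step \emph{stabilization/inflation} procedure described in the introduction: given a curve with a free section $x$, first apply Knudsen stabilization with vector fields at $x$ (to fix the case where $x$ is singular or collides with some $w_i$, $x_j$), then apply inflation at zero vector (to fix the case $\phi(x)=0$), arriving at a configuration where $x$ becomes a legitimate section $x_{n+1}$ satisfying condition (4).

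**The main technical input** is that these operations work in families and commute with base change, which is precisely the content of Theorem~\ref{theorem: main theorem abstract} and (presumably) of the inverse-operation result Theorem~\ref{theorem: the main universal curve theorem} cited in the text. I would invoke these to define the two functors on the level of fibered categories, making sure that the stabilization/inflation procedure applied to a family produces a genuine object of ${\cat V}_{g,m,n+1}$ --- in particular that the resulting $\omega(w+2x_1+\cdots+2x_{n+1})$ is $\pi$-ample, and that the field $\phi'$ restricts to an isomorphism on $x_{n+1}^*\omega$ while vanishing on the $w_i$. The hypothesis $2g+2n+m \geq 3$ should be exactly what is needed to guarantee stability/ampleness throughout (it is the analogue of $2g-2+n>0$ in the classical setting), and I would check the boundary of the argument against this numerical bound.

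**The two verifications** that remain are: (i) the composite ${\cat C}_{g,m,n} \to {\cat V}_{g,m,n+1} \to {\cat C}_{g,m,n}$ is naturally isomorphic to the identity, i.e.\ stabilizing/inflating and then forgetting/contracting recovers the original data; and (ii) the reverse composite is naturally isomorphic to the identity. Direction (i) should be relatively clean: contracting the bubble(s) introduced by stabilization/inflation is literally inverse to inserting them, so one recovers $(C,x)$ up to canonical isomorphism. Direction (ii) is where I expect the \textbf{main obstacle} to lie, because it requires showing that an \emph{arbitrary} object of ${\cat V}_{g,m,n+1}$ --- where $x_{n+1}$ could sit on any component and $\phi$ could be in ``general position'' --- is reconstructed by the contract-then-stabilize/inflate recipe. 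This amounts to a careful case analysis of the local picture at $x_{n+1}$ after contraction: one must verify that the unique bubble prescribed by the inverse operations matches the one that was there, including matching the field $\phi$ via the residue/$1$-form uniqueness fact noted in the introduction. The subtlety is that the inflation step does not determine $\phi'$ uniquely (cf.\ the Remark), only up to automorphisms fixing the contraction map, so I would need to pin down the canonical choice and check it is compatible with base change; the nonlinear dependence of $\phi'$ on $\phi$ warns that naive functoriality could fail, and handling this carefully is the crux of the proof.
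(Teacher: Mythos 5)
Your overall architecture is the paper's: the forward functor ${\cat C}_{g,m,n}\to{\cat V}_{g,m,n+1}$ is the composition of the two bubbling-up constructions, the inverse is a contraction, and you correctly locate the crux at the verification that bubbling up after contracting recovers the original object (this is Proposition \ref{proposition: up of down}, while the other composite, Corollary \ref{corollary: down of up}, follows from uniqueness of the contraction). However, there is a genuine gap in your description of the contraction functor. You propose to ``forget $x_{n+1}$ and contract any components that have become unstable with respect to ${\sh L}_0=\omega_{C/S}(w_1+\cdots+w_m+2x_1+\cdots+2x_n)$.'' This does not work as a single step. On a geometric fiber, take a rational tail $\Sigma_2$ whose only special points are one node and $x_{n+1}$ (this is exactly the configuration produced by `inflating at zero vector', so it is unavoidable): then $\deg {\sh L}_0|_{\Sigma_2}=-1$, so ${\sh L}_0$ is not fibrewise nef, and the Knudsen/Behrend--Manin construction of the contraction as $\mathrm{Proj}_S\bigoplus_k \pi_*{\sh L}^{\otimes k}$ (which is how such contractions are built in families) is simply not available for ${\sh L}_0$. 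Worse, the instability is \emph{nested}: if $\Sigma_2$ is attached to a component $\Sigma_1$ carrying, say, two nodes and one $w_i$ (which occurs when both degeneracies are present, i.e.\ $x$ collides with $w_i$ \emph{and} the field vanishes there), then $\Sigma_1$ has ${\sh L}_0$-degree $1$ and only becomes destabilizing after $\Sigma_2$ has been contracted. No single twist $\omega(w_1+\cdots+w_m+2x_1+\cdots+2x_n+a\,x_{n+1})$ fixes this: degree $0$ on $\Sigma_2$ forces $a=1$, but then $\Sigma_1$ still has degree $1$, so the two-component chain cannot be contracted by one semi-ample line bundle.

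The device the paper introduces precisely to repair this is the intermediate category ${\cat C}^2_{g,m,n}$ of Definition \ref{definition: more categories of curves}, sitting between ${\cat C}^1_{g,m,n}\cong{\cat C}_{g,m,n}$ and ${\cat C}^3_{g,m,n}\cong{\cat V}_{g,m,n+1}$: the equivalence is factored into two one-bubble equivalences (Theorem \ref{theorem: the main universal curve theorem}), and each bubbling down is performed separately, using the line bundle $\omega_{C/S}(w_1+\cdots+w_m+2x_1+\cdots+2x_n+(c-2)x)$ for $c=3$ and then $c=2$; each of these is fibrewise nef with degree $0$ exactly on the single component to be contracted, so the $\mathrm{Proj}$ construction of Proposition \ref{proposition: existence and uniqueness of bubbling down} applies, and commutation with base change can be proved step by step. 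Your remaining concerns (pinning down the canonical lift of the field in the inflation step and its base-change compatibility) are legitimate but are exactly what the duality machinery of the earlier sections supplies; the missing idea in your plan is the factorization itself, without which the ``down'' functor is not constructed --- neither in families nor, in one step, even over a field.
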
 
	
	As we will see in a moment, the equivalence between the two categories is constructed explicitly, and, in fact, it is given by the composition of Constructions \ref{construction: first bubbling up construction} and \ref{construction: second bubbling up construction}. Indeed, unlike in \cite{[Kn83]}, the passage from the universal curve to the family of curves with an extra marking requires not one, but two steps. To prove Theorem \ref{theorem: universal curve theorem simplified statement}, we introduce a class of curves ${\cat C}_{g,m,n}^2$ which will serve as an intermediate step between ${\cat V}_{g,m,n+1}$ and ${\cat C}_{g,m,n}$.
	
	\begin{definition}\label{definition: more categories of curves}
		For $c \in \{1,2,3\}$, let ${\cat C}_{g,m,n}^{c,+}$ be the full subcategory of ${\cat C}^+_{g,m,n}$ (cf. Definition \ref{definition: categories of curves}) which consists of objects $(S,C,\pi,\overline{w},\overline{x},x,\phi)$ such that:
		\begin{enumerate}
			\item if $c \geq 2$, $\pi$ is smooth at $x(s)$ and $w_i(s) \neq x(s)$, for all $s \in S$ and $i=1,\ldots,n$;
			\item if $c=3$, $x^*\phi:x^*\omega_{C/S} \to {\sh O}_S$ is an isomorphism.
		\end{enumerate}	
		Let ${\cat C}^c_{g,m,n}$ be the full subcategory of ${\cat C}_{g,m,n}^{c,+}$ whose objects also satisfy:
		\begin{enumerate}\setcounter{enumi}{2}
			\item\label{item: item 3 in definition: more categories of curves} $\omega_{C/S} (w_1+\cdots+w_m + 2x_1+\cdots+2x_n + (c-1)x)$ is $\pi$-ample.
		\end{enumerate}
	\end{definition}
	
	\begin{remark}\label{remark: only one category is new}
		Note that ${\cat V}_{g,m,n+1}$ is isomorphic to ${\cat C}_{g,m,n}^3$ and ${\cat C}_{g,m,n}$ to ${\cat C}_{g,m,n}^1$.
	\end{remark}

	As we will see in Proposition \ref{proposition: statement that the bubbling up functors were constructed}, the constructions in \S\ref{section: Bubbling up and the inductive construction} induce functors
		\begin{equation}\label{equation: sequence of up functors stated in advance}
			{\cat C}^1_{g,m,n} \longrightarrow {\cat C}^2_{g,m,n} \longrightarrow {\cat C}^3_{g,m,n}.
		\end{equation}
	
	\begin{theorem}\label{theorem: the main universal curve theorem}
		The functors in \eqref{equation: sequence of up functors stated in advance} give equivalences 
		$ {\cat C}_{g,m,n}^1 \simeq {\cat C}_{g,m,n}^2$ if $2g+2n+m \geq 3$, respectively ${\cat C}_{g,m,n}^2 \simeq {\cat C}_{g,m,n}^3$ if $2g+2n+m \geq 2$.
	\end{theorem}
	
	In light of Remark \ref{remark: only one category is new}, Theorem \ref{theorem: universal curve theorem simplified statement} follows from Theorem \ref{theorem: the main universal curve theorem}. 
	
	Although the results are very similar and the techniques overlap to a significant extent, there does not seem to be a clear-cut mathematical relation between Theorem \ref{theorem: universal curve theorem simplified statement}  and Knudsen's theorem that $\overline{\modu M}_{g,N+1}$ is the universal curve over $\overline{\modu M}_{g,N}$.
	
	Theorem \ref{theorem: universal curve theorem simplified statement} should be taken as a purely formal result. However, especially in the range $2g+m \leq 2$, there is reasonable amount of `elbow room' for interesting geometry, and it is usually the `exceptional' situations (Example \ref{example: exceptions to previous example}) that lead to more surprising phenomena. This will be illustrated using Theorems \ref{theorem: Pn bar as moduli space} and \ref{theorem: isotrivial degeneration theorem} (with $(g,m) = (0,1)$) and \S\ref{example: examples with initial data: a genus 1 example} (with $(g,m) = (1,0)$), though there are other setups which are at least nontrivial, even if arguably less aesthetic.
	
	\section{Contracting components of prestable curves}\label{section: Rational contractions of prestable curves}
	
	\subsection{Morphisms between prestable curves} We start by collecting some basic technicalities needed to deal with contracting (rational) irreducible components of prestable curves in families. These technicalities are quite well-known (cf. \cite[\href{https://stacks.math.columbia.edu/tag/0E7B}{Tag 0E7B}]{[stacks]} for some deeper aspects than the ones discussed here).
	
	\begin{definition}\label{definition: rational contraction}
		Let $\pi:X \to S$ and $\rho:Y \to S$ be prestable curves, and 
		$ f:X \to Y$ an $S$-morphism. We say that $f$ \emph{has property R} if $\smash{ f^\#:{\sh O}_Y \to f_*{\sh O}_X }$ is an isomorphism and $R^1f_*{\sh O}_X = 0$, and these hold universally, that is, $f_{S'}^\#:{\sh O}_{Y_{S'}} \to f_{S',*}{\sh O}_{X_{S'}} $ is an isomorphism and $R^1f_{S',*}{\sh O}_{X_{S'}} = 0$ for all $S' \to S$.
	\end{definition}

	\begin{remark}\label{remark: pushforward of pullback of line bundles}
		In the situation of Definition \ref{definition: rational contraction}, the map ${\sh L} \to f_*f^* {\sh L}$ is an isomorphism, for any ${\sh L} \in \mathrm{Pic}(Y)$. This is actually true for any morphism $f$ for which $f^\#$ is an isomorphism.
	\end{remark}
	
	\begin{lemma}\label{remark: checking rational contraction of fibers}
		If $\pi:X \to S$ and $\rho:Y \to S$ are prestable curves, and $ f:X \to Y $ is an $S$-morphism, then $f$ has property R if and only if the property in Definition \ref{definition: rational contraction} holds on the geometric fibers, i.e. $\smash{ f_{\overline{s}}^\#:{\sh O}_{Y_{\overline{s}}} \to f_{\overline{s},*}{\sh O}_{X_{\overline{s}}} }$ is an isomorphism and $R^1f_{\overline{s},*}{\sh O}_{X_{\overline{s}}} = 0$ for all geometric points $\overline{s} \to S$.
	\end{lemma}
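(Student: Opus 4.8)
The forward implication is immediate: if $f$ is a rational contraction, then specializing the universal condition in Definition \ref{definition: rational contraction} to the base change $S' = \overline{s} \to S$ recovers exactly the stated fiberwise statement.

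For the converse, the plan is to upgrade the fiberwise vanishing to an isomorphism $Rf_*\sh O_X \cong \sh O_Y$ in the derived category, and then to read off the universal statement by a base change argument. Note first that $f$ is proper (since $\pi$ is proper and $\rho$ separated), so $Rf_*\sh O_X$ is a bounded complex with coherent cohomology, concentrated in degrees $0$ and $1$ because the fibers of $f$ are curves. The engine is the following observation, which crucially uses that $X$ and $Y$ are flat over $S$ (\emph{not} that $f$ is flat, which fails). For any $g:S' \to S$, write $Y' = Y\times_S S'$, $X' = X\times_S S'$, $f':X'\to Y'$, and $g_Y:Y'\to Y$. The Cartesian square with corners $X',X,Y',Y$ satisfies $X' = X\times_Y Y'$, and I claim it is Tor-independent: since $\rho$ is flat, $\sh O_{Y'} = \sh O_Y\otimes_{\sh O_S}\sh O_{S'}$ with no higher Tor, and since $\pi$ is flat, $\sh O_X\otimes^L_{\sh O_Y}\sh O_{Y'} = \sh O_X\otimes^L_{\sh O_S}\sh O_{S'} = \sh O_{X'}$ is concentrated in degree $0$. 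Hence derived base change applies and $Lg_Y^*\, Rf_*\sh O_X \cong Rf'_*\sh O_{X'}$.

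Applying this with $S' = \overline{s}$ a geometric point gives $Li_{Y_{\overline s}}^*\,Rf_*\sh O_X \cong Rf_{\overline s,*}\sh O_{X_{\overline s}}$, which by hypothesis is $\sh O_{Y_{\overline s}}$ in degree $0$; moreover the adjunction unit $\alpha:\sh O_Y \to Rf_*\sh O_X$ restricts to $f_{\overline s}^\#$, an isomorphism. Thus the cone of $\alpha$ has vanishing derived restriction $Li_{Y_{\overline s}}^*(-)$ over every geometric fiber of $\rho$, and therefore vanishing derived fiber $(-)\otimes^L_{\sh O_Y} k(y)$ at every point $y \in Y$: choose a geometric point $\overline s$ over $\rho(y)$ and a point $\overline y \in Y_{\overline s}$ over $y$, restrict the (already vanishing) object further to $\overline y$, and descend along the faithfully flat extension $k(y)\to k(\overline y)$. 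A bounded complex with coherent cohomology all of whose derived fibers vanish is acyclic — inspect its top nonvanishing cohomology sheaf, which survives into top degree after $\otimes^L k(y)$ at a point of its support, and apply Nakayama. Hence $\alpha$ is an isomorphism and $Rf_*\sh O_X \cong \sh O_Y$; in particular $f^\#$ is an isomorphism and $R^1 f_*\sh O_X = 0$.

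Finally, feeding $Rf_*\sh O_X \cong \sh O_Y$ back into the derived base change isomorphism of the second paragraph for an arbitrary $g:S'\to S$ yields $Rf'_*\sh O_{X'} \cong Lg_Y^*\sh O_Y = \sh O_{Y'}$, which says precisely that $f_{S'}^\#$ is an isomorphism and $R^1 f_{S',*}\sh O_{X_{S'}} = 0$; this is the universal condition, so $f$ is a rational contraction. The main obstacle is the second paragraph: one must resist applying cohomology and base change along $f$ itself (where $\sh O_X$ is not flat) and instead observe that flatness over $S$ alone forces the Tor-independence needed for derived base change. The remaining ingredients — the fiberwise Nakayama criterion for complexes and the faithfully flat descent between a geometric point and the residue-field point — are routine.
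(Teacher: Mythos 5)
Your proof is correct, but it takes a genuinely different route from the one in the paper. The paper's written proof handles the converse by citing \cite[Tag 0E88]{[stacks]}, which reduces everything to the case $S=\operatorname{Spec}K$, and then only spells out the remaining elementary point: descending the isomorphism and the vanishing from the geometric fiber over $\overline{K}$ to the fiber over $K$ via flat base change along $K\to\overline{K}$ and faithful flatness. You instead prove the whole implication from scratch: the observation that the square with corners $X',X,Y',Y$ is Tor-independent \emph{because both curves are flat over $S$} (correctly sidestepping the fact that $f$ itself is not flat) lets you invoke derived base change for $Rf_*\mathcal O_X$; the hypothesis then says the cone of the unit $\mathcal O_Y\to Rf_*\mathcal O_X$ has vanishing derived restriction to every geometric fiber, and your pointwise descent along $k(y)\to k(\overline y)$ plus the Nakayama criterion for bounded coherent complexes kills the cone; feeding the resulting isomorphism $Rf_*\mathcal O_X\simeq\mathcal O_Y$ back into derived base change gives the universal statement directly. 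In effect you reprove the content of the cited Stacks lemma, and your descent step from geometric points to scheme-theoretic points is the same flat-descent idea as the paper's ``elementary case,'' just performed pointwise on $Y$ rather than after a reduction to a field base. What the two approaches buy: the paper's proof is much shorter and leans on a standard reference; yours is self-contained, makes completely explicit where flatness over $S$ (as opposed to flatness of $f$) enters, and avoids any intermediate reduction step. All the individual ingredients you use (Tor-independent base change, $R^{\geq 2}f_*=0$ from one-dimensionality of fibers, coherence from properness, compatibility of the adjunction unit with base change) are applied correctly.
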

	
	\begin{proof}
		The `only if' direction is trivial. \cite[\href{https://stacks.math.columbia.edu/tag/0E88}{Tag 0E88}]{[stacks]} reduces the `if' direction to the special case when $S$ is the spectrum of a field. For the lack of a reference, we explain this elementary case in some detail. Let $S = \spec K$, and $\alpha:X_{\overline{K}} \to X$ and $\beta:Y_{\overline{K}} \to Y$ the natural projections. The base change map  $\beta^* R^if_*{\sh O}_X \to R^if_{\overline{K},*}(\alpha^*{\sh O}_X) = R^if_{\overline{K},*}{\sh O}_{X_{\overline{K}}}$ is an isomorphism for all $i$ by the cohomology and flat base change theorem. Then $\beta^* R^1f_*{\sh O}_X = 0$, so $R^1f_*{\sh O}_X = 0$. Moreover, $\beta^*f^\#: \beta^*{\sh O}_Y \to \beta^*f_*{\sh O}_X$ is an isomorphism because it fits in a commutative diagram with $\beta^*{\sh O}_Y = {\sh O}_{Y_{\overline{K}}}$, $f^\#_{\overline{K}}$, and $\beta^*f_*{\sh O}_X \to f_{\overline{K},*}{\sh O}_{X_{\overline{K}}}$, which are all isomorphisms. It follows that $f^\#$ is an isomorphism.
	\end{proof}
	
	The following remark will not be used in any proof (so its proof will be omitted), but it is important for context and surely well-known. 
	
	\begin{remark} 
		If $f:X \to Y$ has property R and $S = \spec {\mathbb K}$, then, it can be shown that $f$ is obtained by repeatedly contracting \emph{rational tails} and/or \emph{rational bridges}. (Summary of an inductive argument: (1) unless it is an isomorphism, $f$ must contract some tail or bridge $\Sigma \simeq {\mathbb P}^1 \subset X$ by some standard $H^1$ calculations; (2) $f$ factors as $X \xrightarrow{h} X' \xrightarrow{f'} Y$, where $h:X \to X'$ is the contraction of $\Sigma$, by \cite[Lemma 2.2]{[BM96]}; and (3) $f'$ has property R by some purely formal/cohomological arguments, such as the five-term sequence of the Grothendieck spectral sequence.) Another way to look at this fact amounts to the following slightly imprecisely phrased `picture' of morphisms with property R (when $S = \spec {\mathbb K}$): $f$ is obtained by contracting several disjoint rational trees in $X$ which intersect the union of the other components of $X$ at either $1$ (tail-like) or $2$ (bridge-like) points (Figure \ref{figure: rational contraction}).
	\end{remark}
		\begin{figure}[h]
			\begin{center}
				\begin{tikzpicture}
					\draw[ultra thick] (-5,0) -- (-3,0); 
					\draw (-4.5,-0.3) -- (-4.5,1);  \draw (-5,0.4) -- (-4,0.4); \draw (-5,0.7) -- (-4,0.7); \node at (-4.8,-0.3) {$Y_1$}; 
					
					\draw[ultra thick] (-2.5,0) -- (-0.5,0); \node at (-0.4,-0.3) {$\widetilde{Y_2}$}; 
					
					\draw (-3.7,-0.3) -- (-2.2,1); \draw (-1.8,-0.3) -- (-2.9,1);
					\draw (-2.7,0.2) -- (-1.7,1);
					
					\draw (-1.5,-0.3) to[out=90,in=180] (-1.1,1) to[out=0,in=90] (-0.7,-0.3);
					
					\draw[->] (0,0.3) to node[above]{$f$} (1,0.3);
					
					\draw[ultra thick] (1.5,0) -- (3.5,0); \node at (2.5,0.3) {$Y_1$}; 
					
					\draw[ultra thick] (3,-0.3) to [out = 40,in=0] (3.6,0.7) to [out=180,in=140] (4.2,-0.3);  \node at (4.3,0) {$Y_2$}; 
					
				\end{tikzpicture}
			\end{center}
			\caption{A map $f$ with property R.}
			\label{figure: rational contraction}
		\end{figure}
	
	\begin{lemma}\label{lemma: dense f-trivial}
		In the situation of Definition \ref{definition: rational contraction}, let $ Y^\circ \subseteq Y$ consist of all $y \in Y$ such that $f^{-1}(y)$ is a singleton set-theoretically, and $X^\circ = f^{-1}(Y^\circ)$. Then:
		\begin{enumerate}
			\item\label{item 1: lemma: dense f-trivial} $Y^\circ$ is open in $Y$, and $X^\circ$ is open in $X$.
			\item\label{item 2: lemma: dense f-trivial} $Y \backslash Y^\circ$ is finite over $S$.
			\item\label{item 3: lemma: dense f-trivial} The restriction of $f$ to $X^\circ$ induces an isomorphism $X^\circ \simeq Y^\circ$. Moreover, if $U \subseteq Y$ is open, then the restriction of $f$ to $f^{-1}(U)$ induces an isomorphism $U \simeq f^{-1}(U)$ if and only if $U \subseteq Y^\circ$.
			\item\label{item 4: lemma: dense f-trivial} The formation of $Y^\circ$ and $X^\circ$ commutes with base change.
		\end{enumerate}
	\end{lemma}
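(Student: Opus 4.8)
The plan is to establish the four parts in the order (1), the isomorphism statement of (3), then (4), then (2), and finally the remaining assertions of (3), since each step relies on its predecessors. Throughout I would use that $f$ is proper: both $X$ and $Y$ are proper over $S$ and $f$ is an $S$-morphism into the separated $S$-scheme $Y$, so $f$ is proper and in particular closed. Since $\mathcal{O}_Y \to f_*\mathcal{O}_X$ is an isomorphism, Stein factorization shows that the fibers of $f$ are connected. A connected proper scheme over a field is a singleton set-theoretically exactly when it is $0$-dimensional, so $Y^\circ = \{y \in Y : \dim f^{-1}(y) = 0\}$. By upper semicontinuity of fiber dimension, the locus $Z = \{x \in X : \dim_x f^{-1}(f(x)) \geq 1\}$ is closed in $X$; as $f$ is closed, $f(Z)$ is closed in $Y$, and $Y^\circ = Y \setminus f(Z)$ is therefore open. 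Then $X^\circ = f^{-1}(Y^\circ)$ is open by continuity, which settles (1).

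Next I would prove that $f|_{X^\circ} : X^\circ \to Y^\circ$ is an isomorphism. It is proper, being the base change of $f$ along the open immersion $Y^\circ \hookrightarrow Y$, and quasi-finite since its fibers are singletons, hence finite. As pushforward commutes with restriction to opens, $f^\#$ restricts to an isomorphism $\mathcal{O}_{Y^\circ} \to (f|_{X^\circ})_*\mathcal{O}_{X^\circ}$; a finite morphism is affine, so $f|_{X^\circ}$ is the relative $\mathrm{Spec}$ of an isomorphism of structure sheaves and is thus itself an isomorphism. With this in hand, (4) follows cleanly. For $g : S' \to S$ write $f' : X' \to Y'$ for the base change. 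The preimage of $Y^\circ$ in $Y'$ is $Y^\circ \times_S S'$, and over it $f'$ is the base change of the isomorphism $f|_{X^\circ}$, hence an isomorphism with singleton fibers, giving $Y^\circ \times_S S' \subseteq (Y')^\circ$. Conversely, if $y' \in Y'$ lies over $y \notin Y^\circ$, then $(f')^{-1}(y') = f^{-1}(y) \times_{k(y)} k(y')$ has the same (positive) dimension as $f^{-1}(y)$ and so is not a singleton; thus $(Y')^\circ = Y^\circ \times_S S'$, and likewise for $X^\circ$. I expect this to be the main obstacle: the naive claim that ``singleton fiber'' is preserved under residue-field extension is false in general, since a connected $0$-dimensional fiber can break up after extending $k(y)$, and the clean way around it is precisely to invoke the isomorphism over $Y^\circ$ rather than to analyze the scheme-theoretic fibers directly.

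For (2), the closed subscheme $Y \setminus Y^\circ$ is proper over $S$, so it suffices to verify quasi-finiteness, i.e. that its geometric fibers are finite. By (4) the geometric fiber over $\overline{s} \to S$ is $Y_{\overline{s}} \setminus (Y_{\overline{s}})^\circ$, and by Lemma \ref{remark: checking rational contraction of fibers} the map $f_{\overline{s}} : X_{\overline{s}} \to Y_{\overline{s}}$ is again a rational contraction of curves over an algebraically closed field. Since $X_{\overline{s}}$ has only finitely many irreducible components, only finitely many connected subcurves are contracted, so $Y_{\overline{s}} \setminus (Y_{\overline{s}})^\circ$ is finite; a proper quasi-finite morphism is finite.

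Finally, for the remaining part of (3): if $U \subseteq Y^\circ$, then $f^{-1}(U) \subseteq X^\circ$ and $f|_{f^{-1}(U)}$ is the restriction over the open $U$ of the isomorphism $f|_{X^\circ}$, hence an isomorphism. Conversely, if $f|_{f^{-1}(U)} : f^{-1}(U) \to U$ is an isomorphism, then it is bijective on points, so every fiber over $U$ is a singleton and therefore $U \subseteq Y^\circ$. This completes all four parts.
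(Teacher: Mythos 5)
Your proof is correct and follows essentially the same route as the paper's: connectedness of fibers (via $f^\#$ being an isomorphism) plus Chevalley semicontinuity for (1), reduction to geometric fibers for (2), the principle that a proper morphism with $f^\#:\mathcal{O}_Y \to f_*\mathcal{O}_X$ an isomorphism is an isomorphism as soon as it is bijective for (3), and fiber-dimension arguments for (4). The only divergences are cosmetic -- you conclude (3) via proper-plus-quasi-finite-hence-finite-hence-affine rather than the paper's closed-continuous-bijection-is-homeomorphism argument, and your treatment of (4), which routes the subtle direction through the isomorphism over $Y^\circ$ instead of relying on connectedness of the base-changed fibers, is actually more careful than the paper's one-line set-theoretic remark.
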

	
	\begin{proof}
		The fibers of $f$ are connected by Zariski's connectedness theorem \cite[Corollaire (4.3.2)]{[EGAIII]} and hence they are either positive dimensional or singletons set-theoretically, since they are of finite type over a field. Item \ref{item 1: lemma: dense f-trivial} then follows from Chevalley's theorem \cite[Corollaire (13.1.5)]{[EGAIV]}.  Item \ref{item 4: lemma: dense f-trivial} is set-theoretic and clear, since the set underlying the preimage of an open subscheme is the set-theoretic preimage of the underlying set. Item \ref{item 4: lemma: dense f-trivial} reduces item \ref{item 2: lemma: dense f-trivial} to the case when $S$ is the spectrum of a field, when it is clear -- quasi-finiteness suffices, since $Y\backslash Y^\circ$ is proper over $S$. For item \ref{item 3: lemma: dense f-trivial}, the general observation is that, if $f:X \to Y$ is a proper morphism such that $f^\#:{\sh O}_Y \to f_*{\sh O}_X$ is an isomorphism, then $f$ is an isomorphism if and only if it is a bijection. Indeed, $f$ is a continuous closed bijection, hence a homeomorphism, and we are assuming that $f^\#$ is an isomorphism.
	\end{proof}
	
	The practical use of Lemma \ref{lemma: dense f-trivial} is that it often allows us to easily check on $X^\circ$ or $Y^\circ$ nontrivial statements about $f$ on $X$ or $Y$, and then `bootstrap' to $X$ or $Y$, using tricks such as the following.
	
	\begin{lemma}\label{lemma: injective restrictions on prestable curves}
		If $\pi:X \to S$ is a prestable curve, and $U \subseteq X$ is an open whose complement $X \backslash U$ is finite over $S$, then the restriction map $\Gamma(V,{\sh L}) \to \Gamma(U \cap V,{\sh L})$ is injective for any invertible ${\sh L}$, and any open $V \subseteq X$.
	\end{lemma}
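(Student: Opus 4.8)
The plan is to prove that the restriction map has trivial kernel, i.e.\ that a section $s \in \Gamma(V,{\sh L})$ with $s|_{U \cap V} = 0$ must vanish. Since the vanishing of a section is local on $V$ and ${\sh L}$ is locally trivial, I may pass to an open $W \subseteq V$ on which ${\sh L} \simeq {\sh O}_X$ and reduce to the following statement about the structure sheaf: a function $g \in \Gamma(W,{\sh O}_X)$ with $g|_{U \cap W} = 0$ is zero. Now I would invoke the elementary fact that, on a noetherian scheme, a function is zero as soon as its germ vanishes at every associated point: writing $g$ locally as an element of a noetherian ring $A$, if $g \neq 0$ then $Ag \simeq A/\mathrm{Ann}(g)$ is a nonzero submodule of $A$, so it has an associated prime $\mathfrak p \in \mathrm{Ass}(A)$, and $g$ localizes to a nonzero element of $A_{\mathfrak p}$. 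Hence everything comes down to showing that $Z := X \backslash U$ contains no associated point of $X$; for then the associated points of $W$ (which are exactly those of $X$ lying in $W$) all lie in $U \cap W$, where $g$ already vanishes, forcing $g = 0$.

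To control $\mathrm{Ass}(X)$ I would use the flatness of $\pi$. By the standard description of the associated points of a flat, finite-type morphism over a noetherian base (\cite{[EGAIV]}; see also \cite{[stacks]}), a point $x \in X$ is associated to $X$ if and only if $\pi(x)$ is associated to $S$ and $x$ is associated to the fibre $X_{\pi(x)}$. Because $\pi$ is a prestable curve, every fibre $X_{\pi(x)}$ is a (geometrically) reduced nodal curve of pure dimension $1$; a reduced scheme has no embedded points, so its associated points are precisely the generic points of its irreducible components. Consequently every associated point of $X$ is, viewed inside its own fibre, the generic point of a one-dimensional component, and in particular is \emph{not} a closed point of that fibre.

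Finally, the hypothesis that $Z \to S$ is finite forces each $z \in Z$ to be closed in its fibre: the fibre $Z_{\pi(z)}$ is finite over $\kappa(\pi(z))$, hence a finite set of points which are closed in $Z_{\pi(z)}$, and since $Z_{\pi(z)}$ is closed in $X_{\pi(z)}$ such a point is closed in $X_{\pi(z)}$ as well. Comparing with the previous paragraph, no point of $Z$ is an associated point of $X$, which completes the argument. The one genuinely delicate step is the computation of $\mathrm{Ass}(X)$: since $S$ is an arbitrary noetherian scheme, $X$ may be non-reduced and carry embedded points, but flatness guarantees that even these embedded points sit over $\mathrm{Ass}(S)$ as generic points of fibre components, so they too avoid the fibrewise-finite locus $Z$. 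This is exactly where the prestable-curve hypothesis (flat with reduced, pure one-dimensional fibres) is used.
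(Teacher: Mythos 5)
Your proof is correct and takes essentially the same route as the paper's: reduce to the structure sheaf, recast injectivity of restriction as the statement that no associated point lies in the complement, and then use flatness (the fibrewise description of associated points, EGA IV 3.3.1, which is the Stacks-project lemma the paper cites) together with reducedness and pure one-dimensionality of the nodal fibres to see that associated points are fibre-generic while points of $X \backslash U$ are closed in their fibres. The only cosmetic difference is that you re-derive the associated-points criterion that the paper quotes as a citation, and state the full ``if and only if'' description of $\mathrm{Ass}(X)$ where only one inclusion is needed.
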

	
	\begin{proof}
		It suffices to prove the lemma for ${\sh L} = {\sh O}_X$. By \cite[\href{https://stacks.math.columbia.edu/tag/0B3L}{Tag 0B3L}]{[stacks]}, it suffices to prove that all associated points of $V$ are in $U \cap V$. If $S$ is the spectrum of a field, then $X$ is reduced and hence $V$ is reduced, so all associated points of $V$ are generic points of irreducible components of $X$ by \cite[\href{https://stacks.math.columbia.edu/tag/05AL}{Tag 05AL}]{[stacks]} and the well-known fact that noetherian reduced schemes have no embedded components, and the claim follows in this special case. In the general case, if $x$ is an associated point of $V$, then it is also an associated point of $ (\pi|_V)^{-1}(\pi(x))$ by \cite[\href{https://stacks.math.columbia.edu/tag/05DB}{Tag 05DB}]{[stacks]}, which boils down the claim to the special case in the previous sentence, completing the proof.
	\end{proof}
	
	\subsection{Differentials} Next, we discuss the differentials of morphisms with property R. These will allow us to push forward vector fields, and turn out to be an essential operation later in the paper. The differentials will be constructed using a simple case of coherent duality. (Logarithmic structures seem a natural approach, but, with hindsight, they seem a worse option overall.) For the basic properties of the relative dualizing sheaf of a prestable curve, please see \cite[\S1]{[Kn83]} and \cite[\href{https://stacks.math.columbia.edu/tag/0E6N}{Tag 0E6N}]{[stacks]}.
	
	\begin{lemma}\label{lemma: upper shriek of invertible is invertible}
		In the situation of Definition \ref{definition: rational contraction}, if ${\sh L}$ is an invertible ${\sh O}_Y$-module, then $f^!{\sh L} \simeq \omega_{X/S} \otimes f^*\omega_{Y/S}^\vee \otimes f^*{\sh L}$ regarded as a complex in degree $0$.
	\end{lemma}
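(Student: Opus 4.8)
The plan is to deduce the formula from the basic compatibilities of Grothendieck's twisted inverse image functor applied to the factorization $\pi = \rho \circ f$. Since $X$ and $Y$ are both proper over $S$ and $\rho$ is separated, the morphism $f$ is proper, so $f^!$ is defined as the right adjoint to $Rf_*$ on the relevant derived categories of (quasi-)coherent sheaves; all schemes are noetherian by our conventions, so the standard machinery applies. Notably, I would not expect the rational contraction hypothesis of Definition \ref{definition: rational contraction} to be needed for the identity itself: the formula holds for any $S$-morphism between prestable curves, and the argument is insensitive to the fibral geometry of $f$.

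First I would record that the relative dualizing complex of a prestable curve is a shifted line bundle. Because a prestable curve $\pi:X \to S$ is flat and proper with one-dimensional Gorenstein (indeed nodal, hence local complete intersection) geometric fibers, $\pi$ is a Gorenstein morphism of relative dimension $1$, and therefore $\pi^!{\sh O}_S \simeq \omega_{X/S}[1]$, where $\omega_{X/S}$ is the invertible relative dualizing sheaf discussed in \cite[\S1]{[Kn83]}. The same holds for $\rho$, giving $\rho^!{\sh O}_S \simeq \omega_{Y/S}[1]$.

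Next I would invoke the compatibility of $f^!$ with composition, $\pi^! \simeq f^! \circ \rho^!$, to compute $f^!$ on the dualizing sheaf of $Y$:
\[ \omega_{X/S}[1] \simeq \pi^!{\sh O}_S \simeq f^!\rho^!{\sh O}_S \simeq f^!\bigl(\omega_{Y/S}[1]\bigr), \]
whence $f^!\omega_{Y/S} \simeq \omega_{X/S}$ after shifting. Finally, the projection formula for the twisted inverse image, $f^!(M) \otimes^{L} Lf^*N \simeq f^!(M \otimes^{L} N)$ valid for $N$ perfect, applied with the line bundle $N = {\sh L} \otimes \omega_{Y/S}^\vee$ and $M = \omega_{Y/S}$ (and using that $Lf^*$ agrees with $f^*$ on line bundles), yields
\[ f^!{\sh L} \simeq f^!\bigl(\omega_{Y/S} \otimes ({\sh L}\otimes\omega_{Y/S}^\vee)\bigr) \simeq f^!\omega_{Y/S}\otimes f^*({\sh L}\otimes\omega_{Y/S}^\vee) \simeq \omega_{X/S}\otimes f^*\omega_{Y/S}^\vee\otimes f^*{\sh L}. \]
Since the right-hand side is a tensor product of line bundles, it is concentrated in degree $0$, which gives the final clause of the statement.

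I expect the only real point requiring care to be the identification of the two relative dualizing complexes as line bundles placed in the correct cohomological degree, i.e.\ pinning down the shift conventions so that the isomorphism $f^!\omega_{Y/S}\simeq\omega_{X/S}$ lands in degree $0$ rather than a nonzero shift; this is precisely where the Gorenstein (nodal) nature of the fibers enters, both to guarantee that $\pi^!{\sh O}_S$ and $\rho^!{\sh O}_S$ are invertible and to fix the shift by the relative dimension $1$. Everything else is a formal manipulation with the duality pseudofunctor.
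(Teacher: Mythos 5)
Your proof is correct and takes essentially the same route as the paper: both arguments identify $\pi^!{\sh O}_S \simeq \omega_{X/S}[1]$ and $\rho^!{\sh O}_S \simeq \omega_{Y/S}[1]$ via the Gorenstein property of prestable curves, use the composition compatibility $\pi^! \simeq f^! \circ \rho^!$, and then apply the twisted-inverse-image projection formula $f^!(M \otimes^{\mathbf L} N) \simeq f^!M \otimes^{\mathbf L} Lf^*N$ (Stacks Tag 0A9T) to conclude. The only cosmetic difference is that the paper first extracts $f^!{\sh O}_Y \simeq \omega_{X/S} \otimes f^*\omega_{Y/S}^\vee$ and then tensors with $Lf^*{\sh L}$, whereas you absorb ${\sh L}$ in a single application; your observation that only properness of $f$ (not the rational-contraction hypothesis) is used matches the paper's proof as well.
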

	
	\begin{proof}
		We have $\pi^!{\sh O}_S = \omega_{X/S}[1]$ and $\rho^!{\sh O}_S = \omega_{Y/S}[1]$. Moreover, $f^!$ is the right adjoint of $Rf_*$ because $f$ is proper. By \cite[\href{https://stacks.math.columbia.edu/tag/0A9T}{Tag 0A9T}]{[stacks]},
		\begin{equation}\label{equation: random technical equation with !}
			 f^!\left(\omega_{Y/S}[1] \right) =  f^!\left(\omega_{Y/S}[1] \otimes^{{\mathbf L}}_{{\sh O}_Y} {\sh O}_Y \right) = Lf^*\omega_{Y/S}[1] \otimes^{{\mathbf L}}_{{\sh O}_X} f^!{\sh O}_Y, 
		\end{equation}
		since noetherian ensures quasi-compact and quasi-separated. Therefore,
		\begin{equation*}
			\begin{aligned}
				\omega_{X/S}[1] &= \pi^!{\sh O}_S = f^!(\rho^!{\sh O}_S) \quad \text{by \cite[\href{https://stacks.math.columbia.edu/tag/0ATX}{Tag 0ATX}]{[stacks]}} \\
				&= f^!\left(\omega_{Y/S}[1] \right) = Lf^*\omega_{Y/S}[1] \otimes^{{\mathbf L}}_{{\sh O}_X} f^!{\sh O}_Y \quad \text{by \eqref{equation: random technical equation with !}}
			\end{aligned}
		\end{equation*}
		so $f^!{\sh O}_Y \simeq \omega_{X/S} \otimes f^*\omega_{Y/S}^\vee$ as a complex in degree $0$. However, $f^!{\sh L} = Lf^*{\sh L} \otimes^{{\mathbf L}}_{{\sh O}_X} f^!{\sh O}_Y$ by another application of \cite[\href{https://stacks.math.columbia.edu/tag/0A9T}{Tag 0A9T}]{[stacks]}, and the lemma follows.
	\end{proof}
	
	By Lemma \ref{lemma: upper shriek of invertible is invertible}, we may think of $f^!{\sh L}$ as an invertible sheaf rather than an object of the derived category. Lemma \ref{lemma: upper shriek of invertible is invertible} also shows that the formation of $f^!{\sh L}$ commutes with base changes $S' \to S$, since the formation of $\omega_{X/S}$ and $\omega_{Y/S}$ commutes with base change \cite[\href{https://stacks.math.columbia.edu/tag/0E6R}{Tag 0E6R}]{[stacks]}.
	
	\begin{example}\label{example: example of f upper shriek}
		If $f:X \to Y$ over $S = \spec {\mathbb K}$ has property R, and ${\sh L}$ is a line bundle on $Y$, $f^!{\sh L}$ can be understood concretely using Lemma \ref{lemma: upper shriek of invertible is invertible}. In particular, if $\Sigma \simeq {\mathbb P}^1$ is a contracted irreducible component of $X$, then $ \deg (f^!{\sh L})|_\Sigma = \deg (\omega_X)|_\Sigma = n_\Sigma-2$, where $n_\Sigma$ is the number of nodes of $X$ on $\Sigma$. For instance, Figure \ref{figure: degrees of f upper shriek} illustrates this on the map in Figure \ref{figure: rational contraction}, for an arbitrary ${\sh L}$ on $Y$. On non-contracted components, in this example, we have $(f^!{\sh L})|_{Y_1} = {\sh L}|_{Y_1} \otimes {\sh O}_{Y_1}(p)$  and $(f^!{\sh L})|_{\widetilde{Y_2}} = \nu^*({\sh L}|_{Y_2})$ by Lemma \ref{lemma: upper shriek of invertible is invertible} again, where $\nu$ is the normalization of $Y_2$, and $p$ is the point indicated in Figure \ref{figure: degrees of f upper shriek}. 
		\begin{figure}[h]
			\begin{center}
				\begin{tikzpicture}
					\draw[ultra thick] (-5,0) -- (-3,0); 
					\draw (-4.5,-0.3) -- (-4.5,1); \node at (-4.4,1) { {\tiny{$1$}} };  
					\draw (-5,0.4) -- (-4,0.4); \node at (-5.2,0.4) { {\tiny{$-1$}} }; 
					\draw (-5,0.7) -- (-4,0.7);  \node at (-5.2,0.7) { {\tiny{$-1$}} };
					
					\draw[ultra thick] (-2.5,0) -- (-0.5,0); 
					
					\draw (-3.7,-0.3) -- (-2.2,1); \node at (-2.4,1) { {\tiny{$0$}} }; 
					\draw (-1.8,-0.3) -- (-2.9,1); \node at (-2,0.6) { {\tiny{$-1$}} };
					\draw (-2.7,0.2) -- (-1.7,1); \node at (-3,1) { {\tiny{$1$}} }; 
					
					\draw (-1.5,-0.3) to[out=90,in=180] (-1.1,1) to[out=0,in=90] (-0.7,-0.3); \node at (-0.8,1) { {\tiny{$0$}} }; 
					
					\draw[->] (0,0.3) to node[above]{$f$} (1,0.3);
					
					\draw[ultra thick] (1.5,0) -- (3.5,0); 
					
					\draw[ultra thick] (3,-0.3) to [out = 40,in=0] (3.6,0.7) to [out=180,in=140] (4.2,-0.3); 
					
					\node at (-4.8,-0.3) {$Y_1$}; 
					\node at (-0.4,-0.3) {$\widetilde{Y_2}$}; 
					\node at (2.5,0.3) {$Y_1$}; 
					\node at (4.3,0) {$Y_2$}; 
					
					\node at (-4.3,-0.2) {$p$}; 
					
				\end{tikzpicture}
			\end{center}
			\caption{The integers on the contracted components of the source specify the degrees of $f^!{\sh L}$ on the respective components.}
			\label{figure: degrees of f upper shriek}
		\end{figure}
		
		Another concrete way to understand $f^!{\sh L}$ when $S=\spec{\mathbb K}$ combines:
		\begin{enumerate}
			\item if $f:X \to Y$ is the contraction of a rational bridge, then $f^!{\sh L} = f^*{\sh L}$;
			\item if $f$ is the contraction of a rational tail $T \simeq {\mathbb P}^1 \subset X$, then $f^!{\sh L} \simeq {\sh J} \otimes f^*{\sh L}$, where ${\sh J}|_T \simeq {\sh O}_T(- 1)$, and ${\sh J}|_W = {\sh O}_W(p)$, where $W$ is the closure of $X \backslash T$, and $\{p\} = T \cap W$ (this follows from Lemma \ref{lemma: upper shriek of invertible is invertible} again);
		\end{enumerate}
		and the functoriality ($f^!g^! = (gf)^!$) of `$!$'. Note also that $\deg f^!{\sh L} = \deg {\sh L}$. 
	\end{example}
	
	\begin{situation}\label{situation: situation of logarithmic differential using duality}
		Let $\pi:X \to S$ and $\rho:Y \to S$ be prestable curves over $S$, and $ f:X \to Y$ with property R. Let $x_1,\ldots,x_n:S \to X$ be sections of $\pi$, and $y_i = f x_i :S \to Y$ the corresponding sections of $\rho$, $i=1,\ldots,n$. Assume that:
		\begin{enumerate}
			\item $\pi$ is smooth at $x_i(s)$ and $\rho$ is smooth at $y_i(s)$, for all $s \in S$, and $i=1,\ldots,n$;
			\item the sections $x_1,\ldots,x_n$ respectively $y_1,\ldots,y_n$ are pairwise disjoint.
		\end{enumerate}
	\end{situation}
	
	In Situation \ref{situation: situation of logarithmic differential using duality}, $x_i \subset X$ and $y_i \subset Y$ (cf. Conventions subsection) are effective Cartier divisors.
	
	\begin{proposition}\label{proposition: construction of logarithmic differential using duality}
		In Situation \ref{situation: situation of logarithmic differential using duality}, the following hold.
		\begin{enumerate}
			\item\label{item 1: proposition: construction of logarithmic differential using duality} The isomorphism $f^\#:{\sh O}_Y \to f_*{\sh O}_X$ restricts to an isomorphism
			\begin{equation}\label{equation: structure map restricts to isomorphism if twisted down by sections}
				{\sh O}_Y(-y_1-\cdots-y_n) \simeq f_*{\sh O}_X(-x_1-\cdots-x_n).
			\end{equation} 
			Moreover, $R^1f_*{\sh O}_X(-x_1-\cdots-x_n) = 0$.
			\item\label{item 2: proposition: construction of logarithmic differential using duality} For any invertible ${\sh O}_Y$-module ${\sh L}$, there exists an ${\sh O}_X$-module homomorphism
			\begin{equation}\label{equation: logarithmic derivative in general expressed with duality}
				\phi:f^*({\sh L}(y_1+\cdots+y_n)) \to (f^!{\sh L})(x_1+\cdots+x_n)
			\end{equation}
			such that for any open $U \subseteq Y^\circ$, $\phi|_{f^{-1}(U)}$ is the isomorphism $f^*({\sh L}(y_1+\cdots+y_n))|_{f^{-1}(U)} \simeq (f^!{\sh L})(x_1+\cdots+x_n)|_{f^{-1}(U)}$ induced by $(f|_U)^* = (f|_U)^!$.
		\end{enumerate} 
	\end{proposition}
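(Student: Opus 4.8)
The plan is to prove part (1) by comparing ideal-sheaf sequences on $X$ and $Y$, and then to deduce part (2) formally by feeding the resulting computation of $Rf_*{\sh O}_X(-x_1-\cdots-x_n)$ into the $(Rf_*,f^!)$ adjunction. Thus the cohomological content lives entirely in part (1), while part (2) is a duality-theoretic packaging.

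For part (1), write $D_X = x_1+\cdots+x_n$ and $D_Y = y_1+\cdots+y_n$; by Situation \ref{situation: situation of logarithmic differential using duality} these are Cartier divisors, and since the $x_i$ (resp.\ $y_i$) are pairwise disjoint sections through the smooth locus, $f$ restricts to an isomorphism $D_X \xrightarrow{\sim} D_Y$ (each $x_i(S) \cong S \cong y_i(S)$ compatibly, and the images stay disjoint because the $y_i$ are). Hence $f_*{\sh O}_{D_X} = {\sh O}_{D_Y}$. I would apply $f_*$ to $0 \to {\sh O}_X(-D_X) \to {\sh O}_X \to {\sh O}_{D_X} \to 0$ and use $f_*{\sh O}_X = {\sh O}_Y$ and $R^1f_*{\sh O}_X = 0$. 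The one genuinely hands-on point is to identify the middle arrow ${\sh O}_Y = f_*{\sh O}_X \to f_*{\sh O}_{D_X} = {\sh O}_{D_Y}$ with the tautological restriction: since $f^\#$ is pullback of functions and $f x_i = y_i$, restricting a pulled-back function to $D_X$ computes its value along $D_Y$, so the arrow is indeed the canonical surjection ${\sh O}_Y \twoheadrightarrow {\sh O}_{D_Y}$. Its kernel is ${\sh O}_Y(-D_Y)$, which yields \eqref{equation: structure map restricts to isomorphism if twisted down by sections}; its surjectivity forces the connecting map to vanish, whence $R^1f_*{\sh O}_X(-D_X) \hookrightarrow R^1f_*{\sh O}_X = 0$.

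For part (2), set $\mathcal G = {\sh O}_Y(-D_Y)$. Part (1) together with $R^{\geq 2}f_* = 0$ on curves gives $Rf_*{\sh O}_X(-D_X) \simeq \mathcal G$ in $D(Y)$. As $f$ is proper, $f^!$ is right adjoint to $Rf_*$, so
\[
\mathrm{Hom}_X({\sh O}_X(-D_X), f^!\mathcal G) \cong \mathrm{Hom}_Y(Rf_*{\sh O}_X(-D_X), \mathcal G) = \mathrm{Hom}_Y(\mathcal G, \mathcal G),
\]
and I define $\eta:{\sh O}_X(-D_X) \to f^!\mathcal G$ as the map corresponding to $\mathrm{id}_{\mathcal G}$ (the unit of the adjunction, transported along $Rf_*{\sh O}_X(-D_X) \cong \mathcal G$). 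By Lemma \ref{lemma: upper shriek of invertible is invertible}, $f^!\mathcal G$ is an invertible sheaf placed in degree $0$, so $\eta$ is an honest morphism of sheaves. Using $f^!\mathcal G = f^*\mathcal G \otimes f^!{\sh O}_Y$ and $f^!{\sh L} = f^*{\sh L}\otimes f^!{\sh O}_Y$, I tensor $\eta$ with the invertible sheaf ${\sh O}_X(D_X)\otimes f^*({\sh L}(D_Y))$ to obtain
\[
\phi: f^*({\sh L}(y_1+\cdots+y_n)) \to (f^!{\sh L})(x_1+\cdots+x_n).
\]

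Finally I would verify the compatibility over an open $U \subseteq Y$ on which $f$ restricts to an isomorphism. The construction of $\eta$ is local on $Y$ and the upper-shriek adjunction commutes with restriction to opens of the target; over such $U$ one has $f_U^! = f_U^*$ with $Rf_{U*}$ exact, the identification $Rf_*{\sh O}_X(-D_X)|_U \cong \mathcal G|_U$ is the tautological one, and the unit attached to $\mathrm{id}_{\mathcal G|_U}$ is precisely the canonical isomorphism ${\sh O}_X(-D_X)|_{f^{-1}(U)} \cong f_U^*\mathcal G|_U$. Hence $\phi|_{f^{-1}(U)}$ is the stated isomorphism. The main obstacle I anticipate is not a single deep step but the adjunction-and-restriction bookkeeping in part (2) — checking that $\eta$ truly is a morphism of sheaves and that it restricts correctly — together with the explicit identification of the middle arrow in part (1); everything else is formal.
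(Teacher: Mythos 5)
Your proposal is correct and takes essentially the same route as the paper's own proof: part (1) by pushing forward the ideal-sheaf sequence of the sections and identifying the middle arrow with the canonical surjection ${\mathcal O}_Y \twoheadrightarrow {\mathcal O}_{D_Y}$ (the paper writes the divisor structure sheaves as $\bigoplus_i x_{i,*}{\mathcal O}_S$, but this is cosmetic), and part (2) by transporting the resulting quasi-isomorphism $Rf_*{\mathcal O}_X(-D_X) \simeq {\mathcal O}_Y(-D_Y)$ through the $(Rf_*,f^!)$ adjunction, taking the map corresponding to the identity, and twisting by ${\mathcal O}_X(D_X)\otimes f^*({\mathcal L}(D_Y))$. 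The only organizational difference is that the paper proves $R^jf_*{\mathcal O}_X(-D)=0$ for all $j>0$ within part (1), whereas you establish the $R^1$ vanishing there and invoke the dimension bound $R^{\geq 2}f_*=0$ separately in part (2); the content is the same.
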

	
	\begin{proof}
		Let $D = \sum_{i=1}^n x_i$ and $E = \sum_{i=1}^n y_i$. If 
		\[ 0 \to {\sh O}_X(-D) \to {\sh O}_X \to \bigoplus_{i=1}^n x_{i,*}{\sh O}_S \to 0 \] 
		is pushed forward along $f$, we obtain a (solid arrow) commutative diagram
		\begin{center}
			\begin{tikzpicture}
				\matrix [column sep  = 4mm, row sep = 4mm] {
					\node (t0) {$0$}; &
					\node (t1) {$f_*{\sh O}_X(-D)$}; &
					\node (t2) {$f_*{\sh O}_X$}; &
					\node (t3) {$\displaystyle f_*\bigoplus_{i=1}^n x_{i,*}{\sh O}_S$}; &
					\node (t4) {$R^1f_*{\sh O}_X(-D)$}; &
					\node (t5) {$\cdots$};  \\
					\node (b0) {$0$}; &
					\node (b1) {${\sh O}_Y(-E)$}; &
					\node (b2) {${\sh O}_Y$}; &
					\node (b3) {$\displaystyle  \bigoplus_{i=1}^n y_{i,*}{\sh O}_S$}; &
					\node (b4) {$0.$}; & \\
				};
				\draw[->] (t0) -- (t1);
				\draw[->] (t1) -- (t2);
				\draw[->] (t2) -- (t3);
				\draw[->] (t3) -- (t4);
				\draw[->] (t4) -- (t5);
				
				\draw[->] (b0) -- (b1);
				\draw[->] (b1) -- (b2);
				\draw[->] (b2) -- (b3);
				\draw[->] (b3) -- (b4);
				
				\draw[->, dashed] (b1) -- (t1);
				\draw[->] (b2) -- node [midway, right] {$f^\#$} (t2);
				\draw[double equal sign distance] (b3) -- (t3);
			\end{tikzpicture}
		\end{center}
		Then $f_*{\sh O}_X \to f_*\bigoplus_{i=1}^n x_{i,*}{\sh O}_S$ is surjective as ${\sh O}_Y \to \bigoplus_{i=1}^n y_{i,*}{\sh O}_S$ is surjective. We claim that
		\begin{equation}\label{equation: twisted vanishing} R^jf_*{\sh O}_X(-D) = 0 \end{equation}
		for all $j > 0$. First, $R^1f_*x_{i,*}{\sh O}_S = 0$ because $R^1y_{i,*}{\sh O}_S = 0$ (as $y_i$ is finite) and $0 \to R^1f_*x_{i,*}{\sh O}_S \to R^1y_{i,*}{\sh O}_S$ is the beginning of the five-term sequence of the Grothendieck spectral sequence. Second, in the piece
		\begin{equation*} R^{j-1}f_*{\sh O}_X \to R^{j-1}f_*\bigoplus_{i=1}^nx_{i,*}{\sh O}_S \to R^jf_*{\sh O}_X(-D) \to R^jf_*{\sh O}_X \end{equation*}
		of the top row, the first map is surjective (if $j=1$, we've shown above that it is surjective; if $j=2,$ we've shown above that the second term is $0$; if $j \geq 3$, the second term is trivially $0$ \cite[Corollaire (4.2.2)]{[EGAIII]}), and the last term is $0$ by \cite[Corollaire (4.2.2)]{[EGAIII]} again or Definition \ref{definition: rational contraction}, completing the proof of \eqref{equation: twisted vanishing}. 
		
		In particular, there exists a unique ${\sh O}_Y$-module homomorphism $\gamma:{\sh O}_Y(-E) \to f_*{\sh O}_X(-D)$
		that can play the role of the dashed arrow and make the diagram commute, and this $\gamma$ is an isomorphism, completing the proof of part \ref{item 1: proposition: construction of logarithmic differential using duality}. Combining with \eqref{equation: twisted vanishing}, we obtain a quasi-isomorphism
		\begin{equation}\label{equation: isomorphism in derived category after twist} Rf_*{\sh O}_X(-D) \simeq {\sh O}_Y(-E). \end{equation}
		To explain the last step in more detail, let $0 \to {\sh O}_X(-D) \to {\sh J}^0 \to {\sh J}^1 \to \cdots$ be an injective resolution of ${\sh O}_X(-D)$ in ${\cat{Qcoh}}(X)$. We have $H^0(f_*{\sh J}^\bullet) =  {\sh O}_Y(-E)$ and $H^j(f_*{\sh J}^\bullet) =  0$ for $j \neq 0$,
		and hence the map of complexes ${\sh O}_Y(-E) \to f_*{\sh J}^\bullet = [\cdots \to 0 \to f_*{\sh J}^0 \to f_*{\sh J}^1 \to \cdots]$, with ${\sh O}_Y(-E)$ in degree $0$, which maps ${\sh O}_Y(-E) \cong \mathrm{Ker}(f_*{\sh J}^0 \to f_*{\sh J}^1)$ to $f_*{\sh J}^0$ in the natural way, is a quasi-isomorphism. Since $f$ is proper, $Rf_*$ and $f^!$ are adjoint, so there is a bijection
		\begin{equation}
			\begin{aligned}
				\mathrm{Hom}_{D^+(\cat{Qcoh}(Y))}&(Rf_*{\sh O}_X(-D), {\sh O}_Y(-E)) \simeq \\
				&\mathrm{Hom}_{D^+(\cat{Qcoh}(X))}({\sh O}_X(-D), f^!{\sh O}_{Y}(-E)),
			\end{aligned}
		\end{equation}
		and let ${\sh O}_X(-D) \to f^!{\sh O}_Y(-E)$ correspond to \eqref{equation: isomorphism in derived category after twist} under this bijection. Taking $H^0$, we may think of this as a map of ${\sh O}_X$-modules, cf. Lemma \ref{lemma: upper shriek of invertible is invertible}. Twisting by ${\sh O}_X(D) \otimes f^*({\sh L}(E))$ on both sides and keeping Lemma \ref{lemma: upper shriek of invertible is invertible} in mind, we obtain the desired homomorphism $\phi:f^*({\sh L}(E)) \to (f^!{\sh L})(D)$. The fact that $\phi$ restricts to the `obvious' isomorphism on each open subset $U \subseteq Y$ on which $f$ induces an isomorphism $f^{-1}(U) \simeq U$ follows from the fact that duality behaves naturally relative to restricting to open subschemes. 
	\end{proof}
	
	\begin{lemma}\label{lemma: critrion for log differential to be isomorphism}
		In Situation \ref{situation: situation of logarithmic differential using duality}, assume that 
		\begin{equation}\label{equation: crepant condition}
			f_{\overline{s}}^*\omega_{Y_{\overline{s}}}(y_{1,\overline{s}} + \cdots + y_{n,\overline{s}}) \simeq \omega_{X_{\overline{s}}}(x_{1,\overline{s}} + \cdots + x_{n,\overline{s}})
		\end{equation}
		for all geometric points $\overline{s} \to S$. Then there exists a unique homomorphism $\phi$ as in part \ref{item 2: proposition: construction of logarithmic differential using duality} of Proposition \ref{proposition: construction of logarithmic differential using duality}, and this $\phi$ is an isomorphism. Moreover, its formation commutes with base change.
	\end{lemma}
	
	\begin{proof}
		Let $D = \sum_{i=1}^n x_i$ and $E = \sum_{i=1}^n y_i$. By Lemma \ref{lemma: upper shriek of invertible is invertible}, we have an isomorphism
		\begin{equation}\label{equation: isomorphism of hom sheaves in lemma: critrion for log differential to be isomorphism}
			{\sh H}om(f^*({\sh L}(E)), (f^!{\sh L})(D)) \simeq {\sh H}om(f^*(\omega_{Y/S}(E)), \omega_{X/S}(D)).
		\end{equation}
		Let ${\sh H}$ be the left hand side of \eqref{equation: isomorphism of hom sheaves in lemma: critrion for log differential to be isomorphism}, so that $\phi \in \Gamma(X,{\sh H})$. We have ${\sh H}_{\overline{s}} \simeq {\sh O}_{X_{\overline{s}}}$ for all geometric points $\overline{s} \to S$, by \eqref{equation: isomorphism of hom sheaves in lemma: critrion for log differential to be isomorphism} and the assumption, hence $\phi_{\overline{s}} \in \Gamma(X_{\overline{s}},{\sh H}_{\overline{s}})$ is nowhere vanishing since it isn't identically $0$ (for instance, by Lemma \ref{lemma: dense f-trivial}), and the fact that $X_{\overline{s}}$ is connected and proper. Then $\phi$ is a nowhere vanishing section of ${\sh H}$, thus an isomorphism, proving one of the claims. Uniqueness follows from Lemmas \ref{lemma: dense f-trivial} and \ref{lemma: injective restrictions on prestable curves}. Commutativity with respect to base change follows from uniqueness, combined with items \ref{item 3: lemma: dense f-trivial} and \ref{item 4: lemma: dense f-trivial} in Lemma \ref{lemma: dense f-trivial}.
	\end{proof}
	
	\begin{definition}\label{definition: dual shriek dual}
		Let $f:X \to Y$ be a map with property R, and let ${\sh L}$ be an invertible ${\sh O}_Y$-module. Then define $f^\dsd{\sh L} = (f^!{\sh L}^\vee)^\vee$.
	\end{definition}
	
	We've seen earlier that the formation of $f^!({\sh L}^\vee)$ commutes with base change, and hence so does the formation of $f^\dsd{\sh L}$, since $f^!{\sh L}^\vee$ is invertible by Lemma \ref{lemma: upper shriek of invertible is invertible}.
	
	\begin{example}\label{example: example of f disappointed}
		This is the analogue of Example \ref{example: example of f upper shriek} for $f^{\text{<}}$. Note that the degrees on contracted components switch sign from $f^!{\sh L}$ to $f^{\text{<}}{\sh L}$, cf. Figures \ref{figure: degrees of f disappointed} and \ref{figure: degrees of f upper shriek}. In the same example, we also have $(f^{\text{<}}{\sh L})|_{Y_1} = {\sh L}|_{Y_1} \otimes {\sh O}_{Y_1}(-p) $ and $(f^{\text{<}}{\sh L})|_{\widetilde{Y_2}} = \nu^*({\sh L}|_{Y_2})$, with notation similar to that in Example \ref{example: example of f upper shriek}.
	\end{example}
	
	\begin{figure}[h]
		\begin{center}
			\begin{tikzpicture}
				\draw[ultra thick] (-5,0) -- (-3,0); 
				\draw (-4.5,-0.3) -- (-4.5,1); \node at (-4.3,1) { {\tiny{$-1$}} };  
				\draw (-5,0.4) -- (-4,0.4); \node at (-5.1,0.4) { {\tiny{$1$}} }; 
				\draw (-5,0.7) -- (-4,0.7);  \node at (-5.1,0.7) { {\tiny{$1$}} };
				
				\draw[ultra thick] (-2.5,0) -- (-0.5,0); 
				
				\draw (-3.7,-0.3) -- (-2.2,1); \node at (-2.4,1) { {\tiny{$0$}} }; 
				\draw (-1.8,-0.3) -- (-2.9,1); \node at (-1.9,1) { {\tiny{$1$}} };
				\draw (-2.7,0.2) -- (-1.7,1); \node at (-3.1,1) { {\tiny{$-1$}} }; 
				
				\draw (-1.5,-0.3) to[out=90,in=180] (-1.1,1) to[out=0,in=90] (-0.7,-0.3); \node at (-0.8,1) { {\tiny{$0$}} }; 
				
				\draw[->] (0,0.3) to node[above]{$f$} (1,0.3);
				
				\draw[ultra thick] (1.5,0) -- (3.5,0); 
				
				\draw[ultra thick] (3,-0.3) to [out = 40,in=0] (3.6,0.7) to [out=180,in=140] (4.2,-0.3); 
				
				\node at (-4.8,-0.3) {$Y_1$}; 
				\node at (-0.4,-0.3) {$\widetilde{Y_2}$}; 
				\node at (2.5,0.3) {$Y_1$}; 
				\node at (4.3,0) {$Y_2$}; 
				
				\node at (-4.3,-0.2) {$p$}; 
				
			\end{tikzpicture}
		\end{center}
		\caption{The integers on the contracted components of the source indicate the degrees of $f^{\text{<}}{\sh L}$ on the respective components.}
		\label{figure: degrees of f disappointed}
	\end{figure}
	
	\begin{proposition}\label{proposition: pushing forward vector fields in general}
		In Situation \ref{situation: situation of logarithmic differential using duality}, there exists a unique homomorphism
		\begin{equation}\label{equation: pushing forward logarithmic vector fields in general}
			\psi: f_*((f^\dsd {\sh L})(-x_1-\cdots-x_n)) \to {\sh L}(-y_1-\cdots-y_n).
		\end{equation} 
		such that, for any open $U \subseteq Y$ above which $f$ restricts to an isomorphism $f^{-1}(U) \simeq U$, $\psi|_U$ is the natural isomorphism induced by $f^\dsd{\sh L}|_U \cong f^*{\sh L}|_U$. 
		
		If condition \eqref{equation: crepant condition} holds for all geometric points $\overline{s} \to S$, then $\psi$ is an isomorphism. 
		
		Moreover, the formation of $\psi$ is compatible with base change in the following sense: if $h:S' \to S$ is a morphism, $X',Y',\ldots$ is the pullback along $h$ of the data in Situation \ref{situation: situation of logarithmic differential using duality}, $m:Y' = S' \times_S Y \to Y$ is the projection map, and $\psi'$ is the analogue of \eqref{equation: pushing forward logarithmic vector fields in general}, then the diagram
		\begin{center}
			\begin{tikzpicture}
				\node (a) at (0,0) {$f'_*((f'^\dsd {\sh L}')(-x'_1-\cdots-x'_n))$};
				\node (b) at (0,1.1) {$m^*f_*((f^\dsd {\sh L})(-x_1-\cdots-x_n))$};
				\node (c) at (5.5,0) {${\sh L}'(-y'_1-\cdots-y'_n)$};
				\node (d) at (5.5,1.1) {$m^*{\sh L}(-y_1-\cdots-y_n)$};
				
				\draw[->] (b) -- (a);
				\draw[->] (b) -- node [midway, above] {$m^*\psi$} (d);
				\draw[->] (a) -- node [midway, above] {$\psi'$} (c);
				\draw[double equal sign distance] (d) -- (c);
			\end{tikzpicture}
		\end{center}
		in which the left vertical map comes from the standard base change map $m^*f_*(\cdot) \to f'_*p^* (\cdot)$ (where $p:X' = S' \times_S X \to X$ is the projection map) and the remark before Definition \ref{definition: dual shriek dual}, is commutative.
	\end{proposition}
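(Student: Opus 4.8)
The plan is to build $\psi$ by dualizing the homomorphism produced in Proposition~\ref{proposition: construction of logarithmic differential using duality} and pushing it forward, thereby reusing that proposition rather than repeating a duality computation. Write $D = x_1 + \cdots + x_n$ and $E = y_1 + \cdots + y_n$. Applying Proposition~\ref{proposition: construction of logarithmic differential using duality} to the invertible sheaf ${\sh L}^\vee$ in place of ${\sh L}$ yields a homomorphism
\[
\phi : f^*({\sh L}^\vee(E)) \longrightarrow (f^!{\sh L}^\vee)(D)
\]
which restricts to the tautological isomorphism over every open $U \subseteq Y$ on which $f$ is an isomorphism. By Lemma~\ref{lemma: upper shriek of invertible is invertible} all the sheaves in sight are invertible, so I may dualize $\phi$; since $(f^!{\sh L}^\vee)^\vee = f^\dsd{\sh L}$ by Definition~\ref{definition: dual shriek dual}, this gives $\phi^\vee : (f^\dsd{\sh L})(-D) \to f^*({\sh L}(-E))$. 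Pushing forward and composing with the inverse of the canonical isomorphism ${\sh L}(-E) \xrightarrow{\ \sim\ } f_*f^*({\sh L}(-E))$ of Remark~\ref{remark: pushforward of pullback of line bundles}, I obtain
\[
\psi : f_*\big((f^\dsd{\sh L})(-D)\big) \xrightarrow{\ f_*\phi^\vee\ } f_*f^*({\sh L}(-E)) \xrightarrow{\ \sim\ } {\sh L}(-E),
\]
and the required agreement with the natural isomorphism over each such $U$ is inherited from that of $\phi$ through these functorial operations.

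For uniqueness, suppose $\delta : f_*((f^\dsd{\sh L})(-D)) \to {\sh L}(-E)$ vanishes on $Y^\circ$; I claim $\delta = 0$. Indeed, for any open $V \subseteq Y$ and any section $g$ of the source over $V$, the section $\delta(g)$ of the invertible sheaf ${\sh L}(-E)$ restricts to $0$ on $V \cap Y^\circ$, and since $Y \setminus Y^\circ$ is finite over $S$ by item \ref{item 2: lemma: dense f-trivial} of Lemma~\ref{lemma: dense f-trivial}, Lemma~\ref{lemma: injective restrictions on prestable curves} forces $\delta(g) = 0$. Applying this to the difference of two homomorphisms with the prescribed restriction yields uniqueness. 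The source plays no role in this argument: it applies to any homomorphism from an arbitrary sheaf into an invertible one, a point I will reuse below. When the fibrewise hypothesis holds, Lemma~\ref{lemma: critrion for log differential to be isomorphism}, whose assumption concerns $\omega_{X/S}$ and $\omega_{Y/S}$ and is therefore insensitive to replacing ${\sh L}$ by ${\sh L}^\vee$, shows that $\phi$ is an isomorphism; hence so are $\phi^\vee$ and $f_*\phi^\vee$, and therefore $\psi$.

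For base change I would not chase the square directly but reduce to $Y'^\circ$. Since the formation of $Y^\circ$ commutes with base change (item \ref{item 4: lemma: dense f-trivial} of Lemma~\ref{lemma: dense f-trivial}) and $f$ restricts to an isomorphism there, the standard base change map $m^*f_*(\cdot) \to f'_*p^*(\cdot)$ is an isomorphism over $Y'^\circ$. Combined with the restriction properties of $m^*\psi$ and of $\psi'$, this shows that the two composites $m^*f_*((f^\dsd{\sh L})(-D)) \rightrightarrows {\sh L}'(-E')$ obtained from the two paths around the square agree over $Y'^\circ$. Their difference is then a homomorphism into the invertible sheaf ${\sh L}'(-E')$ vanishing on $Y'^\circ$, so it is zero by the uniqueness argument of the previous paragraph applied over $S'$, using that $Y' \setminus Y'^\circ$ is finite over $S'$; this gives commutativity of the diagram. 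The steps I expect to demand the most care are the duality bookkeeping in the construction — keeping Lemma~\ref{lemma: upper shriek of invertible is invertible} in hand so that $f^!$, $f^\dsd$, and their duals are manipulated as honest line bundles rather than as complexes — and, relatedly, verifying that the base change map is genuinely an isomorphism over $Y'^\circ$, which is what makes the reduction in the last paragraph legitimate.
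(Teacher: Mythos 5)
Your proposal is correct and follows essentially the same route as the paper's own proof: existence by dualizing the homomorphism of Proposition~\ref{proposition: construction of logarithmic differential using duality} applied to ${\sh L}^\vee$ and pushing forward via Remark~\ref{remark: pushforward of pullback of line bundles}, the isomorphism criterion via Lemma~\ref{lemma: critrion for log differential to be isomorphism}, and both uniqueness and base-change compatibility by restricting to $Y^\circ$ and invoking Lemmas~\ref{lemma: dense f-trivial} and~\ref{lemma: injective restrictions on prestable curves}. No gaps to report.
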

	
	\begin{proof}
		Let $D = \sum_{i=1}^n x_i$ and $E = \sum_{i=1}^n y_i$, and let $f^*({\sh L}^\vee(E)) \to (f^!{\sh L}^\vee)(D)$ obtained by replacing ${\sh L}$ with ${\sh L}^\vee$ in \eqref{equation: logarithmic derivative in general expressed with duality}. Dualizing, we obtain 
		$ (f^\dsd{\sh L})(-D) \to f^*({\sh L}(-E)) $.
		Pushing the last homomorphism forward along $f$ with Remark \ref{remark: pushforward of pullback of line bundles} in mind, we obtain $f_*((f^\dsd {\sh L})(-D)) \to {\sh L}(-E)$, which is \eqref{equation: pushing forward logarithmic vector fields in general}. This proves existence. The isomorphism criterion follows easily from Lemma \ref{lemma: critrion for log differential to be isomorphism} and the construction of $\psi$ above. Both uniqueness and compatibility with base change follow from Lemmas \ref{lemma: dense f-trivial} and \ref{lemma: injective restrictions on prestable curves}. Since uniqueness can be regarded as a sort of special case of compatibility with base change, we only explain the latter. Let $Y^\circ \subseteq Y$ as in Lemma \ref{lemma: dense f-trivial}. It is clear that the restriction of the square diagram to $m^{-1}(Y^\circ)$ is commutative. Then the square diagram commutes a fortiori. Indeed, the two images in ${\sh L}'(-y'_1-\cdots-y'_n)$ of any local section of $f'_*((f'^\dsd {\sh L}')(-x'_1-\cdots-x'_n))$ above some open $V \subseteq Y$ via the two possible routes restrict to the same local section of ${\sh L}'(-y'_1-\cdots-y'_n)$ on $V \cap m^{-1}(Y^\circ)$, so they must coincide on $V$ by Lemma \ref{lemma: injective restrictions on prestable curves}, which applies in our situation in light of items \ref{item 2: lemma: dense f-trivial} and \ref{item 4: lemma: dense f-trivial} in Lemma \ref{lemma: dense f-trivial}.  
	\end{proof}
	
	\section{Bubbling up}\label{section: Bubbling up and the inductive construction}
	
	\subsection{Knudsen stabilization with sections of line bundles}\label{subsection: Knudsen stabilization with sections of line bundles} In \S\ref{subsection: Knudsen stabilization with sections of line bundles}, we generalize to families the operation called `Knudsen stabilization with vector fields' in \S\ref{subsection: Bubbling up and bubbling down}. (We also allow for sections of arbitrary line bundles instead of vector fields, though this won’t be a major improvement in generality.) There is very little to add to Knudsen's well-known work; we will merely check that, if we have a vector field, it lifts uniquely to one with the desired properties.
	
	\begin{situation}\label{situation: Knudsen stabilization with sections of line bundles}
		Assume that $\pi:C \to S$ is a prestable curve, $x$ is a section of $\pi$, ${\sh L}$ is an invertible ${\sh O}_C$-module, $w_1,\ldots,w_m:S \to C$ are disjoint smooth sections of $\pi$, and $\sigma$ is a global section of ${\sh L}(-w_1-\cdots-w_m)$.
	\end{situation}
	
	In \S\ref{subsection: Knudsen stabilization with sections of line bundles}, we consider such data \emph{degenerate} if $\pi$ is singular at $x(s)$ for some $s \in S$, or $x(s) = w_i(s)$, for some $s \in S$ and $i \in \{1,\ldots,m\}$.
	
	In Situation \ref{situation: Knudsen stabilization with sections of line bundles}, we construct $S_\kukp =S$, $C_\kukp$, $\pi_\kukp$, $x_\kukp$, ${\sh L}_\kukp$, $\sigma_\kukp$, $w_{1,\kukp},\ldots,w_{m,\kukp}$ such that the new data satisfies the requirements of Situation \ref{situation: Knudsen stabilization with sections of line bundles} and is nondegenerate, and the construction commutes with base change (in a manner which satisfies the natural `cocycle conditions', at least implicitly). Moreover, the construction will also provide a morphism $f:C_\kukp \to C$ with property R, which satisfies some natural compatibilities: $f x_\kukp = x$, $f w_{i,\kukp} = w_i$, and a few others explained below. If ${\sh L}$ and $\sigma$ weren't in discussion, this would be nothing else but the famous stabilization procedure of Knudsen \cite[\S2]{[Kn83]}.
	
	\begin{example}\label{example: geometric Knudsen stabilization with sections of line bundles}
		We discuss the desired effect of the construction when $S = \spec {\mathbb K}$ and the input data is degenerate, in advance of the general statement. Observation \eqref{equation: crepant condition holds in first bubbling up} in this example will be referred to in Construction \ref{construction: first bubbling up construction}, though it is easy to see that there is no logical circularity, and this order facilitates exposition. 
		
		The local picture of Knudsen stabilization is reviewed in Figure \ref{figure: geometric Knudsen stabilization}. Specifically, if $x$ is a node of $C$ (or $x = w_i$ for some $i$), we insert a component $\Sigma \simeq {\mathbb P}^1$ at $x$, and place $x_\kukp$ (respectively $x_\kukp$ and $w_{i,\kukp}$) on $\Sigma$. 
		\begin{figure}[h]
			\begin{center}
				\begin{tikzpicture}
					\draw (-2,2) -- (2,3); \draw (-2,3) -- (2,2);  \fill[black] (0,2.5) circle (2pt) node[below] {$x$};
					\draw[<-] (2.5,2.5) to node[above] {$f$} (3.5,2.5); 
					\draw (4,2) -- (8,2); 
					\draw (4,3) -- (8,3); 
					\draw (6,2-0.2) -- (6,3.2); 
					\fill[black] (6,2.5) circle (2pt) node[right] {$x_{\kukp}$};
					
					\draw (-2,0) -- (2,0); \fill[black] (0,0) circle (2pt) node[below] {$x = w_j$};
					\draw[<-] (2.5,0.3) to node[above] {$f$} (3.5,0.3); 
					\draw (4,0) -- (8,0); \draw (6,-0.2) -- (6,1);
					\fill[black] (6,0.8) circle (2pt) node[right] {$x_{\kukp}$};
					\filldraw[fill=white] (6,0.4) circle (2pt) node[right] {$w_{j,\kukp}$};
				\end{tikzpicture}
			\end{center}
			\caption{Knudsen stabilization (local picture).}
			\label{figure: geometric Knudsen stabilization}
		\end{figure}
		The line bundle will be ${\sh L}_{\kukp} = f^\dsd {\sh L}$, and the choice of the section $\sigma_\kukp$ is forced by the agreement of $\sigma$ and $\sigma_\kukp$ on the open where $f$ is an isomorphism. Inspecting the two cases in Figure \ref{figure: geometric Knudsen stabilization}, we can also check that
		\begin{equation}\label{equation: crepant condition holds in first bubbling up}
			f^*\omega_C(w_1 + \cdots + w_m) \simeq \omega_{C_\kukp}(w_{1,\kukp} + \cdots + w_{m,\kukp}), 
		\end{equation}
		that is, condition \eqref{equation: crepant condition} is satisfied in the current setup.
		
		Finally, note that if ${\sh L} = \omega_C^\vee$, then ${\sh L}_\kukp = \omega_{C_\kukp}^\vee$, so this construction indeed generalizes the one called `Knudsen stabilization with vector fields' in \S\ref{subsection: Bubbling up and bubbling down}.
	\end{example}
	
	\begin{remark}\label{remark: universal property of projectivizations}
		We will frequently use the description of the functor of points of projectivizations of coherent modules, as written at the beginning of \cite[\S2]{[Kn83]}. This remark serves as an indirect link to the respective property, so, when we say `by Remark \ref{remark: universal property of projectivizations}', we actually mean `by the remark at the beginning of \cite[\S2]{[Kn83]}'.
	\end{remark}
	
	\begin{construction}\label{construction: first bubbling up construction}
		In Situation \ref{situation: Knudsen stabilization with sections of line bundles}, let $\delta:{\sh O}_C \to {\sh O}_C(w_1 + \cdots + w_m) \oplus {\sh I}_{x,C}^\vee$ be the homomorphism of ${\sh O}_C$-modules such that $\delta(1) = 1 \oplus \iota$, where $\iota:{\sh I}_{x,C} \hookrightarrow {\sh O}_C$ is the inclusion. Note that $\delta$ is injective, and let ${\sh K}$ be its cokernel. Then 
		\begin{equation}\label{equation: short exact sequence in first bubbling up}
			0 \to {\sh O}_C \to  {\sh O}_C(w_1 + \cdots +w_m) \oplus {\sh I}_{x,C}^\vee \to {\sh K} \to 0
		\end{equation}
		is exact. Define
		\begin{equation}
			C_\kukp = {\mathbb P} ({\sh K}),
		\end{equation}
		and let $f:C_\kukp \to C$ be the natural projection, and $\pi_\kukp = \pi f$. For the definition of the lifts $x_\kukp,w_{1,\kukp},\ldots,w_{m,\kukp}$ of $x,w_1,\ldots,w_m$, please see \cite[\S2]{[Kn83]}. As in \cite[\S2]{[Kn83]}, 
		\begin{enumerate}
			\item the morphism $\pi_\kukp:C_\kukp \to S$ is a prestable curve; 
			\item for each $s \in S$, $x_\kukp(s) \neq w_{i,\kukp}(s)$ for all $i$, and $\pi_\kukp$ is smooth at $x_\kukp(s)$; 
			\item $f$ has property R (by Lemma \ref{remark: checking rational contraction of fibers} and item \ref{item: Knudsen functoriality} below); 
			\item $f$ induces an isomorphism $f^{-1}(U) \simeq U$ if $U = \pi^{\operatorname{sm}} \backslash \bigcup_{i=1}^m (x \cap w_i)$; and 
			\item\label{item: Knudsen functoriality} the formation of the output data commutes with base change.
		\end{enumerate}
		(Although \cite[\S2]{[Kn83]} operates under the assumption that $(C,w_1,\ldots,w_m)$ is a \emph{stable} $m$-marked curve over $S$, while Situation \ref{situation: Knudsen stabilization with sections of line bundles} ensures \emph{prestability} only, this weaker assumption suffices to establish the claims above using precisely the same arguments as in \cite[\S2]{[Kn83]}. A similar situation occurs in \cite[\S1.3]{[LM00]}.) Let
		\begin{equation}\label{equation: definition of line bundle in first bubbling up}
			{\sh L}_\kukp = f^\dsd {\sh L},
		\end{equation}
		cf. Definition \ref{definition: dual shriek dual}. The homomorphism \eqref{equation: pushing forward logarithmic vector fields in general} from Proposition \ref{proposition: pushing forward vector fields in general} reads
		\begin{equation}\label{equation: logarithmic differential in first bubbling up}
			f_*({\sh L}_\kukp(-w_{1,\kukp}-\cdots-w_{m,\kukp})) \to {\sh L}(-w_1-\cdots-w_m)
		\end{equation}
		in our case, and in fact it is an isomorphism by Proposition \ref{proposition: pushing forward vector fields in general}, since the condition on geometric fibers \eqref{equation: crepant condition} holds, by \eqref{equation: crepant condition holds in first bubbling up}. (Indeed, we already know at least that the behaviour of the curve and markings agree with that described in Example \ref{example: geometric Knudsen stabilization with sections of line bundles}, so we may invoke \eqref{equation: crepant condition holds in first bubbling up}.) Define $\sigma_\kukp$ to be the preimage of $\sigma$ under \eqref{equation: logarithmic differential in first bubbling up}.
	\end{construction}
	
	\begin{remark}\label{remark: projective rational contraction in first type bubbling up}
		Here are a few remarks on Construction \ref{construction: first bubbling up construction}.
		\begin{enumerate}
			\item The map $f$ is projective in the sense of \cite[\S5.5]{[EGAII]}. Indeed, ${\sh I}_{x,C}^\vee$ is a finitely generated ${\sh O}_S$-module since the dual of a finitely generated module over a noetherian ring is finitely generated.
			\item\label{item: vanishing locus item in remark: projective rational contraction in first type bubbling up} The scheme-theoretic vanishing locus of $x_\kukp^*\sigma_\kukp$ coincides with the scheme-theoretic vanishing locus of $x^*\sigma$. We may argue as follows. Applying Lemma \ref{lemma: critrion for log differential to be isomorphism} with ${\sh L}^\vee$ in the role of ${\sh L}$ and dualizing, we obtain an isomorphism ${\sh L}_\kukp(-w_{1,\kukp}-\cdots-w_{m,\kukp}) \simeq f^*{\sh L}(-w_1-\cdots-w_m)$ whose adjoint maps $\sigma_\kukp$ to $\sigma$, by Construction \ref{construction: first bubbling up construction}. Pulling back along $x_\kukp$, we obtain an isomorphism $x_\kukp^*{\sh L}_\kukp(-w_{1,\kukp}-\cdots-w_{m,\kukp}) \simeq x^*{\sh L}(-w_1-\cdots-w_m)$ under which $x_\kukp^*\sigma_\kukp$ corresponds to $x^*\sigma$.
			\item\label{item: scalar multiplication in remark: projective rational contraction in first type bubbling up} If $(S,C,\pi,x,w_{1},\ldots,w_{m},{\sh L},\sigma)$ satisfies Situation \ref{situation: Knudsen stabilization with sections of line bundles}, and $\alpha \in \Gamma(S,{\sh O}_S)$, then $(S,C,\pi,x,w_{1},\ldots,w_{m},{\sh L},\pi^*\alpha\cdot \sigma)$ also satisfies Situation \ref{situation: Knudsen stabilization with sections of line bundles}, and, moreover, the output of Construction \ref{construction: first bubbling up construction} on the second set of input data will be (isomorphic to) $(S,C_\kukp,\pi_\kukp,x_\kukp,w_{1,\kukp},\ldots,w_{m,\kukp},{\sh L}_\kukp,\pi_\kukp^*\alpha \cdot\sigma_\kukp)$. Indeed, if $\psi$ denotes \eqref{equation: logarithmic differential in first bubbling up}, then $ \psi(\pi_\kukp^*\alpha \cdot \sigma_\kukp) = \psi(f^*\pi^*\alpha \cdot \sigma_\kukp) = \pi^*\alpha \cdot \psi(\sigma_\kukp) = \pi^*\alpha \cdot \sigma$, where the second equality is simply by ${\sh O}_C$-linearity.
		\end{enumerate}
	\end{remark}
	
	\begin{proposition}\label{proposition: first bubbling up is functorial}
		Construction \ref{construction: first bubbling up construction} commutes with base change.
	\end{proposition}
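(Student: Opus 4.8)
The plan is to decompose Construction \ref{construction: first bubbling up construction} into its constituent pieces and verify base-change compatibility of each, most of the groundwork having been laid in the preceding sections. Fix a morphism $h:S' \to S$, and write primes for the data obtained by pulling back the input of Situation \ref{situation: Knudsen stabilization with sections of line bundles} along $h$ and then running the construction on $S'$. The underlying pointed prestable curve together with its contraction, namely $(C_\up,\pi_\up,x_\up,w_{1,\up},\ldots,w_{m,\up},f)$, is exactly the output of ordinary Knudsen stabilization applied to $(C,\pi,x,w_1,\ldots,w_m)$, whose functoriality is recorded as item \ref{item: Knudsen functoriality} and established in \cite[\S2]{[Kn83]}. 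Concretely, since $\mathbb{P}(-)=\operatorname{Proj}\operatorname{Sym}(-)$ commutes with base change and $h^*$ is right exact, the claim for the underlying curve $C_\up=\mathbb{P}({\sh K})$ reduces to the identity $h^*{\sh K}={\sh K}'$, i.e.\ to exactness of the pullback of \eqref{equation: short exact sequence in first bubbling up}; this is part of Knudsen's functoriality. Thus only the compatibility of ${\sh L}_\up$ and of $\sigma_\up$ remains to be checked.

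For the line bundle, recall ${\sh L}_\up=f^\dsd{\sh L}=(f^!{\sh L}^\vee)^\vee$ by \eqref{equation: definition of line bundle in first bubbling up} and Definition \ref{definition: dual shriek dual}. By Lemma \ref{lemma: upper shriek of invertible is invertible}, $f^!{\sh L}^\vee$ is a tensor product of $\omega_{C_\up/S}$, $f^*\omega_{C/S}^\vee$ and $f^*{\sh L}^\vee$; since the formation of relative dualizing sheaves commutes with base change and $f$ itself is base-change compatible by the previous paragraph, I would conclude that the canonical comparison map yields an isomorphism $({\sh L}_\up)'\simeq q^*{\sh L}_\up$, where $q:C'_\up\to C_\up$ is the induced map. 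This is precisely the remark recorded immediately after Definition \ref{definition: dual shriek dual}.

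For the section, the decisive input is the base-change compatibility of $\psi$ proved in Proposition \ref{proposition: pushing forward vector fields in general}. By construction $\sigma_\up$ is the preimage of $\sigma$ under the isomorphism \eqref{equation: logarithmic differential in first bubbling up}, which is the instance of $\psi$ with the role of the sections $x_i$ of Situation \ref{situation: situation of logarithmic differential using duality} played by $w_{1,\up},\ldots,w_{m,\up}$ upstairs and $w_1,\ldots,w_m$ downstairs (it is an isomorphism because the fiberwise hypothesis is elementary, as noted in the construction). The commutative square in Proposition \ref{proposition: pushing forward vector fields in general} identifies the pullback of $\psi$ with $\psi'$ under the natural base-change maps; chasing $\sigma$ around it shows that $h^*\sigma_\up$ and $\sigma'_\up$ are both carried to $h^*\sigma=\sigma'$, and, $\psi$ and $\psi'$ being isomorphisms, their preimages coincide. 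Hence $\sigma_\up$ commutes with base change.

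The genuinely delicate point is not any single step but the bookkeeping: one must ensure that the canonical isomorphisms assembled above agree, and that they satisfy the cocycle conditions for a tower $S''\to S'\to S$, as promised `at least implicitly' in Construction \ref{construction: first bubbling up construction}. I would handle this by invoking uniqueness rather than writing out triple composites: $\psi$ is characterized uniquely in Proposition \ref{proposition: pushing forward vector fields in general}, and $f^!$ (hence $f^\dsd$) is canonical by duality, so each comparison isomorphism is pinned down by its restriction to the locus $Y^\circ$ of Lemma \ref{lemma: dense f-trivial} on which $f$ is an isomorphism, together with the injectivity of restriction in Lemma \ref{lemma: injective restrictions on prestable curves}. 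Since $Y^\circ$ is stable under base change (item \ref{item 4: lemma: dense f-trivial} of Lemma \ref{lemma: dense f-trivial}), two canonical isomorphisms determined by the same universal property on this dense open automatically satisfy the cocycle identity, which completes the verification.
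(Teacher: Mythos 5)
Your proof is correct and follows essentially the same decomposition as the paper: the curve, sections and $f$ via Knudsen's stable-reflexivity argument, ${\sh L}_\up$ via Lemma \ref{lemma: upper shriek of invertible is invertible} and base change of relative dualizing sheaves, and $\sigma_\up$ via the base-change compatibility of the duality machinery of \S\ref{section: Rational contractions of prestable curves}. The only cosmetic difference is that for $\sigma_\up$ you invoke the base-change square of Proposition \ref{proposition: pushing forward vector fields in general} where the paper cites Lemma \ref{lemma: critrion for log differential to be isomorphism}; these amount to the same argument, since that square is itself established by the uniqueness-on-$Y^\circ$ reasoning you describe.
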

	
	\begin{proof}
		For $C_\kukp,\pi_\kukp,f,x_\kukp,w_{i,\kukp}$ this is completely analogous to \cite[p. 176]{[Kn83]} (the key is that ${\sh K}$ is stably reflexive, everything else is clear). For ${\sh L}_\kukp$, this is an easy consequence of Lemma \ref{lemma: upper shriek of invertible is invertible}, as noted after Definition \ref{definition: dual shriek dual} too. For $\sigma_\kukp$, this follows from Lemma \ref{lemma: critrion for log differential to be isomorphism}.
	\end{proof}
	
	To summarize, we have the following.
		
	\begin{theorem}\label{theorem: summary of first bubbling up}
		Construction \ref{construction: first bubbling up construction} has the following features:
		\begin{enumerate}
			\item the output data is nondegenerate;
			\item if the input data is nondegenerate, then the output data is isomorphic to the input data;
			\item if $S = \spec {\mathbb K}$, and the input data is degenerate, then the output data is (isomorphic to) that described in Example \ref{example: geometric Knudsen stabilization with sections of line bundles}; and
			\item it commutes with base change.
		\end{enumerate}
	\end{theorem}
	
	\begin{proof}
		The first two items are stated in Construction \ref{construction: first bubbling up construction}, the third item is elementary to check from Construction \ref{construction: first bubbling up construction} and left to the reader (and well-known, being completely analogous to \cite{[Kn83]}), while the last item is Proposition \ref{proposition: first bubbling up is functorial}. 
	\end{proof}
	
	Finally, we state a version of \cite[Lemma 2.5]{[Kn83]}, which will be used later to construct an inverse of this bubbling up operation, under suitable circumstances.

\begin{proposition}\label{proposition: towards up of down = id from Knudsen}
	Let $(S,C,\pi, x, {\sh L}, w_1,\ldots, w_m,\sigma)$ and $(S,Y,\varpi, y, {\sh J}, u_1,\ldots, u_m,\rho)$ satisfy the requirements of Situation \ref{situation: Knudsen stabilization with sections of line bundles}, and assume moreover that the second set of data is nondegenerate, i.e. $y(s) \neq u_i(s)$ for $i=1,\ldots,m$ and $\varpi$ is smooth at $y(s)$, for all $s \in S$. Let $q:Y \to C$ be a morphism with property R such that:
	\begin{enumerate}
		\item $x=qy$, $w_i=qu_i$, ${\sh J} = q^\dsd {\sh L}$, and the homomorphism 
		\[ q_*{\sh J}(-u_1-\cdots-u_m) \to {\sh L}(-w_1-\cdots-w_m) \] 
		obtained as a special case of \eqref{equation: pushing forward logarithmic vector fields in general} from ${\sh J} = q^\dsd {\sh L}$ maps $\rho$ to $\sigma$;
		\item\label{item: geometric fibers assumption in proposition: towards up of down = id from Knudsen} for any geometric point $\overline{s} \to S$, there exists an isomorphism $\alpha: Y_{\overline{s}} \to (C_{\kukp})_{\overline{s}}$ over $\overline{s}$, such that $q_{\overline{s}} = f_{\overline{s}} \alpha$, 
		and $\alpha$ is compatible in the natural sense with all the rest of the data (restrictions of line bundles, sections, etc.).
	\end{enumerate}
	Then, there exists a canonical isomorphism $q_\kukp:Y \to C_\kukp$ over $S$ (compatible with all given sections of $\varpi$ and $\pi_\kukp$, and with $\sigma_\kukp$ and $\rho$), such that $q = fq_\kukp$.
\end{proposition}

To emphasize the nuance of the proposition, the isomorphisms $Y_{\overline{s}} \simeq (C_{\kukp})_{\overline{s}}$ in item \ref{item: geometric fibers assumption in proposition: towards up of down = id from Knudsen} are not required a priori to `fit together' in any nice way, but the conclusion is essentially the statement that they do.

\begin{proof}
	The proof is analogous to the proof of \cite[Lemma 2.5]{[Kn83]}. By Definition \ref{definition: rational contraction}, ${\sh O}_C \simeq q_*{\sh O}_Y$, and, by part \ref{item 1: proposition: construction of logarithmic differential using duality} of Proposition \ref{proposition: construction of logarithmic differential using duality}, ${\sh O}_C(-W) \simeq q_*{\sh O}_Y(-U)$, where $W = \sum_{i=1}^m w_i$ and $U = \sum_{i=1}^m u_i$. Following the analogous arguments in the proof of \cite[Lemma 2.5]{[Kn83]}, the natural pairing
	\[ {\sh I}_{x,C} \otimes q_*{\sh O}_Y(y-U) \to q_*{\sh O}_Y(-y) \otimes q_*{\sh O}_Y(y-U) \to q_*{\sh O}_Y(-U) \simeq {\sh O}_C(-W) \]
	induces a homomorphism $q_*{\sh O}_Y(y - U) \to {\sh I}_{x,C}^\vee(-W)$, which is an isomorphism on geometric fibers by assumption \ref{item: geometric fibers assumption in proposition: towards up of down = id from Knudsen}, and hence an isomorphism. We have a commutative diagram with exact rows:
	\begin{center}
		\begin{tikzpicture}
			\matrix [column sep  = 4mm, row sep = 4mm] {
				\node (t0) {$0$}; &
				\node (t1) {$q_*{\sh O}_Y(-U)$}; &
				\node (t2) {$q_*{\sh O}_Y(y - U) \oplus q_*{\sh O}_Y$}; &
				\node (t3) {$q_*{\sh O}_Y(y)$}; &
				\node (t4) {$0$}; & \\
				\node (b0) {$0$}; &
				\node (b1) {${\sh O}_C(-W)$}; &
				\node (b2) {${\sh I}_{x,C}^\vee(-W) \oplus {\sh O}_C$}; &
				\node (b3) {${\sh K}(-W)$}; &
				\node (b4) {$0.$}; & \\
			};
			\draw[->] (t0) -- (t1);
			\draw[->] (t1) -- (t2);
			\draw[->] (t2) -- (t3);
			\draw[->] (t3) -- (t4);
			
			\draw[->] (b0) -- (b1);
			\draw[->] (b1) -- (b2);
			\draw[->] (b2) -- (b3);
			\draw[->] (b3) -- (b4);
			
			\draw[double equal sign distance] (b1) -- (t1);
			\draw[double equal sign distance] (b2) -- (t2);
			\draw[double equal sign distance] (b3) -- (t3);
		\end{tikzpicture}
	\end{center}
	Exactness of the top row follows from part \ref{item 1: proposition: construction of logarithmic differential using duality} of Proposition \ref{proposition: construction of logarithmic differential using duality}, while the bottom row is simply \eqref{equation: short exact sequence in first bubbling up} twisted by ${\sh O}_C(-W)$. 
	
	As in \cite{[Kn83]}, the next step is to apply \cite[Corollary 1.5]{[Kn83]}; for the reader's convenience, we include the details of verifying the hypotheses of the corollary. We claim that for any closed point $z \in C$ (with residue field denoted by $K$), either $q^{-1}(z) \simeq \spec K$, or $q^{-1}(z) \simeq {\mathbb P}^1_K$ and ${\sh O}_Y(y)|_{q^{-1}(z)} \simeq {\sh O}(1)$. First, let's assume that $z$ is contained in the maximal open of $C$ where $f$ is an isomorphism (Lemma \ref{lemma: dense f-trivial}). Then, in fact, $z$ is in the maximal open of $C$ where $q$ is an isomorphism, by assumption \ref{item: geometric fibers assumption in proposition: towards up of down = id from Knudsen}, the last item in Lemma \ref{lemma: dense f-trivial}, and the fact that two open subsets $U_1,U_2 \subseteq C$ which agree on `geometric fibers' (that is, $U_{1,\overline{s}} = U_{2,\overline{s}}$ as subsets of $C_{\overline{s}}$, for any geometric point $\overline{s} \to S$) must coincide. The last fact follows from the fact that $C_{\overline{s}} \to C_s$ is surjective (on the underlying topological spaces), which in turn follows from $\overline{s} \to s$ faithfully flat, and the fact that `faithfully flat' is preserved by base change. Hence, in this case, $q^{-1}(z) \simeq \spec K$. It remains to deal with the case when $z$ is not contained in the maximal open of $C$ where $f$ is an isomorphism (Lemma \ref{lemma: dense f-trivial}). Then, $z \in x(S)$ and the fiber of $(S,C,\pi,\ldots)$ over $s \in S$ is degenerate. Moreover, if $\pi(z) = s \in S$, then $z = x(s)$ necessarily and, in particular, $s \in S$ has the same residue field $K$ as $z \in C$. Since $(q^{-1}(z))_{\overline{s}}$ and $(f^{-1}(z))_{\overline{s}}$ are the fibers of $z \in C$ by the morphisms $Y_{\overline{s}} \to C$ and $(C_\kukp)_{\overline{s}} \to C$ respectively, assumption \ref{item: geometric fibers assumption in proposition: towards up of down = id from Knudsen} implies that they are isomorphic, so $(q^{-1}(z))_{\overline{s}} \simeq (f^{-1}(z))_{\overline{s}} \simeq {\mathbb P}^1_{\overline{K}}$. However, $q^{-1}(z)$ contains the $K$-point (i.e. rational point) $y(s) \in q^{-1}(z)$ by the degeneracy assumption and assumption \ref{item: geometric fibers assumption in proposition: towards up of down = id from Knudsen}, so $q^{-1}(z) \simeq {\mathbb P}^1_K$ since genus $0$ curves with rational points are projective lines. Moreover, ${\sh O}_Y(y)|_{q^{-1}(z)} \simeq {\sh O}(1)$ since $y(s) \in q^{-1}(z)$, completing the proof of the claims made earlier in this paragraph.
	
	Therefore, ${\sh O}_Y(y)$ satisfies conditions (1) and (2) in \cite[Corollary 1.5]{[Kn83]} relative to $q:Y \to C$.  By \cite[Corollary 1.5]{[Kn83]}, $q^*{\sh K}(-W) = q^*q_*{\sh O}_Y(y) \to {\sh O}_Y(y)$ is surjective. Therefore, by Remark \ref{remark: universal property of projectivizations}, we obtain a morphism 
	\[ q_{\kukp}:Y \to C_\kukp \]
	such that $q = fq_\kukp$. It can be checked from assumption \ref{item: geometric fibers assumption in proposition: towards up of down = id from Knudsen} that $q_{\kukp}$ is an isomorphism on geometric fibers (for the purposes of this claim, we may assume that $S = \spec {\mathbb K}$, when everything is straightforward), and therefore an isomorphism everywhere. All compatibilities are straightforward to check.
\end{proof}
	
	\subsection{Inflating at nonsingular zero of a section}\label{subsection: Bubbling up at nonsingular zero of a section} In \S\ref{subsection: Bubbling up at nonsingular zero of a section}, we generalize the operation `Inflating at zero vector' from \S\ref{subsection: Bubbling up and bubbling down} to families (and arbitrary line bundles).
	
	\begin{situation}\label{situation: Bubbling up at nonsingular zero of a section}
		Assume that $\pi:C \to S$ is a prestable curve, $x$ is a section of $\pi$, ${\sh L}$ is an invertible ${\sh O}_C$-module, and $\sigma$ is a global section of ${\sh L}$, such that $\pi$ is smooth at $x(s)$, for all $s \in S$. 
	\end{situation}
	
	We deem such data \emph{degenerate} if $\sigma$ vanishes at $x(s)$ for some $s \in S$ (vanishes at $x(s)$ means $\sigma_{x(s)} \in {\mathfrak m} {\sh L}_{x(s)}$, where ${\mathfrak m} \subset {\sh O}_{C,x(s)}$ is the maximal ideal).
	
	In Situation \ref{situation: Bubbling up at nonsingular zero of a section}, we will construct $S_\zuzp =S$, $C_\zuzp$, $\pi_\zuzp$, $x_\zuzp$, ${\sh L}_\zuzp$, $\sigma_\zuzp$ such that the new (output) data satisfies the respective requirements of Situation \ref{situation: Bubbling up at nonsingular zero of a section}, $x_\zuzp^*\sigma_\zuzp$ is nowhere vanishing (the output data is nondegenerate), and the construction commutes with base change. Once again, the construction will also provide a map $f:C_\zuzp \to C$ with property R, which satisfies various compatibilities, similarly to \S\ref{subsection: Knudsen stabilization with sections of line bundles}.
	
	\begin{example}\label{example: geometric inflating at nonsingular zero of a section}
		Similarly to Example \ref{example: geometric Knudsen stabilization with sections of line bundles}, we state the effect of the construction when $S = \spec {\mathbb K}$ and the data is degenerate in advance of its general statement. The local picture on the level of curves and sections is very simple, by Figure \ref{figure: inflating at nonsingular zero of a section} below. Specifically, we insert $\Sigma \simeq {\mathbb P}^1$ at $x$, and place $x_\zuzp$ on $\Sigma$.
		\begin{figure}[h]
			\begin{center}
				\begin{tikzpicture}
					\draw (-2,0) -- (2,0); \fill[black] (0,0) circle (2pt) node[below] {$x$};
					\draw (2,0) to node[above] {$\sigma(x) = 0$} (-2,0); 
					\draw[<-] (2.5,0.3) to node[above] {$f$} (3.5,0.3); 
					\draw (4,0) -- (8,0); \draw (6,-0.2) -- (6,1); \fill[black] (6,0.8) circle (2pt) node[right] {$x_{\zuzp}$};
					\draw[dotted] (4,0) to (8,0); \draw[dotted] (6,-0.2) to node[right] {$\sigma_{\zuzp}(x_{\zuzp}) \neq 0$} (6,1);
				\end{tikzpicture}
			\end{center}
			\caption{Inflating at nonsingular zero of a section (local picture).}
			\label{figure: inflating at nonsingular zero of a section}
		\end{figure}
		The line bundle is again ${\sh L}_{\zuzp} = f^{\dsd} {\sh L}$. The section $\sigma_\zuzp$ is required to not vanish at $x_{\zuzp}$ and to agree with $\sigma$ at all generic points of $C$; please see Remark \ref{remark: why is second bubbling up confusing} for further clarifications. In contrast to Example \ref{example: geometric Knudsen stabilization with sections of line bundles}, $f^*\omega_C \not\simeq \omega_{C_\zuzp}$, i.e. condition \eqref{equation: crepant condition} fails. Finally, if ${\sh L} = \omega_C^\vee$, then ${\sh L}_{\zuzp} = \omega_{C_\zuzp}^\vee$, so this construction indeed generalizes the construction called `inflating at zero vector' in \S\ref{subsection: Bubbling up and bubbling down}.
	\end{example}

	\begin{remark}\label{remark: why is second bubbling up confusing}
		The behaviour of $\sigma_\zuzp$ in Example \ref{example: geometric inflating at nonsingular zero of a section} is rather subtle. There are infinitely many sections satisfying the requirements, but they are `indistinguishable' up to automorphisms compatible with all structure (including with $f$). To sketch the argument, let us identify (non-canonically) $\Sigma$ with ${\mathbb P}^1$ such that $x_\zuzp$ maps to $0$ and the node of $C_\zuzp$ on $\Sigma$ maps to $\infty$. Then, the restriction of $\sigma_\zuzp$ to $\Sigma \simeq {\mathbb P}^1$ should correspond to a vector field $v$ on ${\mathbb P}^1$ such that $v(0) \neq 0$, $v(\infty) = 0$, and 
		\[ v = \left( \alpha y + b y^2 \right) \frac{d}{dy} = -\left( \alpha x + b \right) \frac{d}{dx} \quad \text{where $y=\frac{1}{x} = \frac{Y}{X} \in K({\mathbb P}^1$)}, \] 
		for a \emph{specified} $\alpha \in {\mathbb K}$, by Example \ref{example: example of f disappointed} and the considerations at the beginning of \S\ref{section: introduction}. (For instance, if ${\sh L}|_U \simeq \omega_C^\vee|_U$ is an isomorphism in a neighbourhood $U$ of $x$; then ${\sh L}_\zuzp|_{f^{-1}(U)} \simeq \omega_{C_\zuzp}^\vee|_{f^{-1}(U)}$, and the `logarithmic vector field' condition at the node of $C_\zuzp$ on $\Sigma$ shows that $\alpha$ is fixed. Please see Example \ref{example: coresidues in local coordinates} also.) 
		Hence, the claimed indistinguishability amounts to the fact that the standard ${\mathbb K}^\times$-action on ${\mathbb P}^1$, $a \cdot [X:Y] = [aX:Y]$, induces a (free and) transitive action on
		\[ \left\{ \left( c-\alpha x \right) \frac{d}{dx} : c \in {\mathbb K}^\times  \right\} \subset \Gamma({\sh T}_{{\mathbb P}^1}), \]
		which is true. Thus, the fact that a \emph{canonical} choice of $\sigma_\zuzp$ exists (as Construction \ref{construction: second bubbling up construction} shows) is quite remarkable. In Remark \ref{remark: first remark}, we proposed the analogy with similar subtleties in \cite{[Kn83]} as a way to come to terms with these issues.  
	\end{remark}
	
	We will first carry out the required calculations in Proposition \ref{proposition: all of second bubbling up elementary calculations} below, and only then return to Situation \ref{situation: Bubbling up at nonsingular zero of a section} and state the construction.
	
	\begin{proposition}\label{proposition: all of second bubbling up elementary calculations}
		Let $X$ be a (noetherian) scheme and $\sigma$ a section of a rank $2$ locally free ${\sh O}_X$-module ${\sh E}$. Assume that the homomorphism ${\sh O}_X \to {\sh E}$ which maps $1$ to $\sigma$ is injective. Let 
		$$ Y = {\mathbb P}{\sh F} \quad \text{where} \quad {\sh F} = \operatorname{Coker}({\sh O}_X \xrightarrow{\times \sigma} {\sh E}), $$
		$f:Y \to X$ the projection, and ${\sh O}_Y(1)$ the twisting sheaf.
		\begin{enumerate}
			\item\label{item 1: proposition: all of second bubbling up elementary calculations} The following hold.
			\begin{enumerate}
				\item\label{item 1a: proposition: all of second bubbling up elementary calculations} $f$ is a projective (in the sense of \cite[\S5.5]{[EGAII]}) global lci morphism.
				\item\label{item 1b: proposition: all of second bubbling up elementary calculations} The adjoint ${\sh F} \to f_*{\sh O}_Y(1)$ of $f^*{\sh F} \to {\sh O}_Y(1)$ is an isomorphism. Moreover, this isomorphism is compatible with base changes $h:X' \to X$ such that the multiplication by $h^*\sigma$ map ${\sh O}_{X'} \to h^*{\sh E}$ is injective.
				\item\label{item 1c: proposition: all of second bubbling up elementary calculations} There exists an isomorphism
				$f^!{\sh O}_X \simeq (f^*\det {\sh F})(-1)$; in particular, we think of $f^!{\sh O}_X$ as a sheaf. 
				Moreover, if $U$ is an open subset of $X$ such that $\sigma$ vanishes at no point of $U$, then $f$ restricts to an isomorphism $V:=f^{-1}(U) \simeq U$, and there exists a commutative diagram of isomorphisms
				\begin{center}
					\begin{tikzpicture}
						\node (a) at (-3,0) {$(f^!{\sh O}_X)|_V$};
						\node (b) at (0,0) {${\sh O}_V$};
						\node (c) at (3.7,0) {$((f^*\det {\sh F})(-1))|_V$};
						
						\draw [<-] (b) -- node [midway, above] {$\simeq$} (a);
						\draw [<-] (b) -- node [midway, above] {$\simeq$} (c);
						\draw [->] (a) to [out = 15, in = 165] (c);
					\end{tikzpicture}
				\end{center}
				in which the curved arrow is the restriction of the isomorphism above, the left arrow comes from $f^!|_V=f^*|_V$, since $f|_V$ is an isomorphism, and the one on the right commutes with any base change $X' \to X$.
			\end{enumerate}
			\item\label{item 2: proposition: all of second bubbling up elementary calculations} Assume that $g:X \to T$ is a flat morphism, and that for all morphisms $T' \to T$, the multiplication by $h^*\sigma$ map ${\sh O}_{X'} \to h^*{\sh E}$ is injective (where $h:X' = T' \times_T X \to X$ is the projection). Then $fg$ is flat.
			\item\label{item nodal: proposition: all of second bubbling up elementary calculations} If $X$ is a prestable curve over ${\mathbb K}$ and the vanishing locus of $\sigma$ is  a reduced closed subscheme of dimension $0$ contained in the nonsingular locus of $X$, then $Y$ is also a prestable curve over ${\mathbb K}$, and $f$ has property R.
		\end{enumerate}
	\end{proposition}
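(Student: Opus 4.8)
The plan is to realize $Y$ concretely as a divisor in a ${\mathbb P}^1$-bundle. The surjection ${\sh E}\twoheadrightarrow{\sh F}$ induces a closed immersion $\iota:Y={\mathbb P}{\sh F}\hookrightarrow{\mathbb P}{\sh E}$, and I claim it identifies $Y$ with the zero scheme $Z(\sigma)$ of the tautological section $\sigma\in\Gamma({\mathbb P}{\sh E},{\sh O}_{{\mathbb P}{\sh E}}(1))=\Gamma(X,{\sh E})$: indeed $\operatorname{Sym}{\sh E}\to\operatorname{Sym}{\sh F}$ is surjective with kernel the homogeneous ideal generated in degree one by $\operatorname{im}(\sigma)$, which, since $\sigma$ is injective as a map ${\sh O}_X\to{\sh E}$, is the principal ideal $(\sigma)$. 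Writing $p:{\mathbb P}{\sh E}\to X$ for the structure map, the first order of business is the elementary but crucial observation that the hypothesis --- $\sigma_x$ a non-zero-divisor in $\operatorname{Sym}{\sh E}_x$ for every $x$ --- is equivalent to $\sigma$ being a regular section of ${\sh O}_{{\mathbb P}{\sh E}}(1)$. This is checked stalkwise on the two standard charts of ${\mathbb P}{\sh E}$ via McCoy's theorem: in a local frame $\sigma=ae_1+be_2$, all three relevant conditions (non-zero-divisor in $R[e_1,e_2]$, in $R[e_2/e_1]$, and in $R[e_1/e_2]$) amount to $\operatorname{Ann}(a)\cap\operatorname{Ann}(b)=0$. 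Consequently $\iota$ is an effective Cartier divisor with ideal sheaf ${\sh O}_{{\mathbb P}{\sh E}}(-1)$, yielding the Koszul resolution
\begin{equation*}
0\to{\sh O}_{{\mathbb P}{\sh E}}(-1)\xrightarrow{\;\sigma\;}{\sh O}_{{\mathbb P}{\sh E}}\to\iota_*{\sh O}_Y\to0,
\end{equation*}
which drives the rest of the argument. Part (1a) is then immediate: $\iota$ is a regular immersion of codimension one into the smooth projective $X$-scheme ${\mathbb P}{\sh E}$, so $f=p\iota$ is a projective global lci morphism.

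For part (1b), I would twist the Koszul sequence by ${\sh O}(1)$ and push forward along $p$. Using $p_*{\sh O}_{{\mathbb P}{\sh E}}={\sh O}_X$, $p_*{\sh O}_{{\mathbb P}{\sh E}}(1)={\sh E}$ and $R^1p_*{\sh O}_{{\mathbb P}{\sh E}}=0$, the long exact sequence collapses to $0\to{\sh O}_X\xrightarrow{\sigma}{\sh E}\to f_*{\sh O}_Y(1)\to0$. Comparing with the defining sequence of ${\sh F}$ identifies ${\sh F}\xrightarrow{\sim}f_*{\sh O}_Y(1)$ as the adjoint of $f^*{\sh F}\to{\sh O}_Y(1)$. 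Base-change compatibility follows because every ingredient (the identity $p_*{\sh O}(1)={\sh E}$, the vanishing of $R^1p_*{\sh O}$, and right-exactness of pullback computing ${\sh F}'=\operatorname{Coker}(h^*\sigma)$) is stable under the permitted base changes $h$, for which the hypothesis guarantees that the pulled-back sequence is again of the same shape.

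Part (1c) is a duality computation via the factorization $f=p\iota$ and functoriality of $(-)^!$. Here $p^!{\sh O}_X=\omega_{{\mathbb P}{\sh E}/X}[1]=({\sh O}_{{\mathbb P}{\sh E}}(-2)\otimes p^*\det{\sh E})[1]$, while for the Cartier divisor $\iota$ with normal bundle ${\sh O}_Y(1)$ one has $\iota^!(-)=\iota^*(-)\otimes{\sh O}_Y(1)[-1]$; composing gives $f^!{\sh O}_X\simeq{\sh O}_Y(-1)\otimes f^*\det{\sh E}$. Since the two-term resolution $0\to{\sh O}_X\to{\sh E}\to{\sh F}\to0$ yields $\det{\sh F}\simeq\det{\sh E}$ (determinant of a sheaf with finite locally free resolution), this is exactly $(f^*\det{\sh F})(-1)$, and by Lemma \ref{lemma: upper shriek of invertible is invertible} we may regard it as a sheaf in degree $0$. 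For the ``moreover'' clause, over an open $U$ on which $\sigma$ is nowhere zero, ${\sh F}|_U$ is invertible, so $V={\mathbb P}(\text{line bundle})\simeq U$ and $f|_V$ is an isomorphism; there is then a canonical trivialization $((f^*\det{\sh F})(-1))|_V\cong f^*(\det{\sh F}\otimes{\sh F}^{-1})|_U={\sh O}_V$, and the real content is to verify that it agrees with the restriction of the duality isomorphism and is base-change stable. This naturality check is the one genuinely fiddly point, and where most of the care goes.

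Part (2) is a flatness-of-a-divisor argument. The map ${\mathbb P}{\sh E}\to T$ is flat (smooth over $X$, $X$ flat over $T$), the two locally free terms of the Koszul sequence are $T$-flat, and the hypothesis --- applied to $T'=\operatorname{Spec}\kappa(t)$ --- says precisely that $\sigma$ restricts to a non-zero-divisor, hence to an injection ${\sh O}(-1)\to{\sh O}$, on each fibre ${\mathbb P}{\sh E}_t$. Thus $\operatorname{Tor}_1^{{\sh O}_T}(\iota_*{\sh O}_Y,\kappa(t))=0$ for all $t$, and the local criterion of flatness gives that $\iota_*{\sh O}_Y$, equivalently ${\sh O}_Y$, is $T$-flat, i.e.\ $fg$ is flat. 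Finally, for part (3) I would work locally at a zero $z_i$ of $\sigma$: there $R={\sh O}_{X,z_i}$ is a DVR with uniformizer $s$, and reducedness of the vanishing locus at a smooth point forces $\sigma=s\tau$ with $\tau$ a local generator of ${\sh E}$, so in a suitable frame $\sigma=(s,0)$. Then ${\mathbb P}{\sh E}={\mathbb P}^1_R$ and $Y=Z(sE_1)$ is locally $V(sv)$, a node; globally $Y$ is $X$ with a ${\mathbb P}^1$ attached nodally at each $z_i$, and $f$ contracts those lines. This $Y$ is connected, proper, nodal of the same arithmetic genus, hence prestable, and $f_*{\sh O}_Y={\sh O}_X$, $R^1f_*{\sh O}_Y=0$ (the fibres being points or lines), so $f$ is a rational contraction by Lemma \ref{remark: checking rational contraction of fibers}. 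The main obstacle throughout is the regular-section equivalence of the first paragraph, together with the naturality verification in (1c); once $Y$ is known to be the Cartier divisor cut out by $\sigma$ with its Koszul resolution, every assertion reduces to routine bookkeeping with the ${\mathbb P}^1$-bundle and Grothendieck duality.
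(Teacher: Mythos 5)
Your proposal is correct and takes essentially the same route as the paper's own proof: realize $Y$ as the effective Cartier divisor in ${\mathbb P}{\sh E}$ cut out by the image of $p^*\sigma$ in ${\sh O}_{{\mathbb P}{\sh E}}(1)$ (the paper's key Claim), deduce (1a) from the factorization $f=p\iota$, (1b) by pushing the twisted Koszul sequence forward along $p$ and comparing with $0\to{\sh O}_X\to{\sh E}\to{\sh F}\to 0$, (1c) from $f^!=\iota^!p^!$ together with the Euler sequence, (2) from fiberwise regularity of the section (your Tor-and-local-criterion argument is just a proof by hand of the Stacks lemma the paper cites), and (3) by the same local computation exhibiting $Y$ as $X$ with rational tails attached at the zeros of $\sigma$. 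The only cosmetic deviations are your use of McCoy's theorem where the paper localizes at primes to verify the regular-section claim, and your flagged-but-unexecuted naturality check in (1c), which the paper dispatches just as tersely ("the rest is clear").
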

	
	\begin{proof}
		The assumption that the map ${\sh O}_X \to {\sh E}$ such that $1 \mapsto \sigma$ (i.e. the multiplication by $\sigma$ map) is injective in fact implies a seemingly stronger hypothesis which will be used in this proof, that  the germ $\sigma_x$ of $\sigma$ at $x$ is nonzero, and not a zero-divisor in the ring $\operatorname{Sym}{\sh E}_x$, for all $x \in X$. We will sometimes refer to this fact as the `condition on germs'. Indeed, $1 \mapsto \sigma_x$ is an injective homomorphism ${\sh O}_{X,x} \to {\sh E}_x$, and then the claim follows from a theorem of McCoy \cite[Theorem 3]{[Mc42]} that zero-divisors in polynomial rings are in fact annihilated by non-zero scalars (in our case, applied to a linear homogeneous polynomial in two variables).
			
		We have a short exact sequence
		\begin{equation}\label{equation: simple exact sequence in proposition: all of second bubbling up elementary calculations}
			0 \to {\sh O}_X \to {\sh E} \to {\sh F} \to 0.
		\end{equation}
		Let $P = {\mathbb P}{\sh E}$, and $p:P \to X$ the projection which exhibits $P$ as a ${\mathbb P}^1$-bundle over $X$. Since $\operatorname{Sym}^k {\sh E} \to \operatorname{Sym}^k {\sh F}$ is surjective for all $k$, it follows from \cite[\href{https://stacks.math.columbia.edu/tag/07ZK}{Tag 07ZK}]{[stacks]} that the graded ${\sh O}_X$-algebras homomorphism $\operatorname{Sym} {\sh E} \to \operatorname{Sym} {\sh F}$ induces a closed immersion $j:Y \to P$. 
		
		\begin{claim}\label{claim: claim inside proposition: all of second bubbling up elementary calculations}
			If $\psi$ is the image of $p^*\sigma$ under the map $p^*{\sh E} \to {\sh O}_P(1)$, then $\psi$ is a regular section of ${\sh O}_P(1)$, and $j(Y)$ is the Cartier divisor on $P$ cut out by $\psi$.
		\end{claim}
		
		\begin{proof}
			We may assume that $X = \spec R$ and that ${\sh E}$ is trivial. Then $P = \mathrm{Proj}\mathop{}R[T,S]$, and let $\sigma$ and $\psi$ correspond to the linear polynomial $aT+bS \in R[T,S]$. First, we claim that $aT+bS$ is a nonzero non-zero-divisor in $R[T,S]$. Indeed, the `condition on germs' implies that this is the case in $R_{\mathfrak p}[T,S]$ for all prime ideals ${\mathfrak p} \in \spec R$, so any $p \in R[T,S]$ such that $(aT+bS)p(T,S) = 0$ is killed by all homomorphisms $R[T,S] \to R_{\mathfrak p}[T,S]$, and thus must be equal to $0$. Then $at+b$ and $a+bs$ are nonzero non-zero-divisors in $R[t]$ and $R[s]$, so $\psi$ indeed cuts out a Cartier divisor on $P$. Moreover, by construction, this Cartier divisor is $\mathrm{Proj}\mathop{}R[T,S]/(aT+bS) \to \mathrm{Proj}\mathop{}R[T,S]$, which is nothing but $j$.
		\end{proof}
		
		\ref{item 1a: proposition: all of second bubbling up elementary calculations}: The $f=pj$ factorization settles both issues. 
		
		\ref{item 1b: proposition: all of second bubbling up elementary calculations}: Note that ${\sh I}_{j(Y),P} = {\sh O}_P(-1)$ by Claim \ref{claim: claim inside proposition: all of second bubbling up elementary calculations} and $j_*{\sh O}_Y \otimes {\sh O}_P(1) = j_*({\sh O}_Y(1))$ by construction. Twisting $0 \to {\sh I}_{j(Y),P} \to {\sh O}_P \to j_*{\sh O}_Y \to 0$ by ${\sh O}_P(1)$, we obtain the exact sequence
		\begin{equation}\label{equation: other exact sequence in proposition: all of second bubbling up elementary calculations}
			0 \to {\sh O}_P \to {\sh O}_P(1) \to j_*{\sh O}_Y(1) \to 0.
		\end{equation}
		We have $R^1p_*{\sh O}_P = 0$, and ${\sh O}_X \to p_*{\sh O}_P$ and ${\sh E} \to p_*{\sh O}_P(1)$ are isomorphisms (the claims are local on $X$, so they reduce to $X$ affine and ${\sh E}$ trivial), so we have a commutative diagram
		\begin{center}
			\begin{tikzpicture}[scale=1]
				\node (a0) at (0,1) {$0$};
				\node (a1) at (1.2,1) {${\sh O}_X$};
				\node (a2) at (3,1) {${\sh E}$};
				\node (a3) at (5,1) {${\sh F}$};
				\node (a4) at (6.4,1) {$0$};
				\node (b0) at (0,0) {$0$};
				\node (b1) at (1.2,0) {$p_*{\sh O}_P$};
				\node (b2) at (3,0) {$p_*{\sh O}_P(1)$}; 
				\node (b3) at (5,0) {$f_*{\sh O}_Y(1)$}; 
				\node (b4) at (6.4,0) {$0$};
				\draw[->] (b0) to (b1);
				\draw[->] (b1) to (b2);
				\draw[->] (b2) to (b3);
				\draw[->] (b3) to (b4);
				\draw[->] (a0) to (a1);
				\draw[->] (a1) to (a2);
				\draw[->] (a2) to (a3);
				\draw[->] (a3) to (a4);
				\draw[->] (a1) -- node [midway, left] {$\simeq$} (b1);
				\draw[->] (a3) to (b3);
				\draw[->] (a2) -- node [midway, left] {$\simeq$} (b2);	
			\end{tikzpicture}
		\end{center}
		in which the top row is \eqref{equation: simple exact sequence in proposition: all of second bubbling up elementary calculations}, the bottom row is the pushforward of \eqref{equation: other exact sequence in proposition: all of second bubbling up elementary calculations} along $p$, and the left and central vertical arrows are isomorphisms. It follows that the right vertical arrow is also an isomorphism, as desired. For compatibility with base change, we first clarify that the map $h^*f_*{\sh O}_Y(1) \to f'_*{\sh O}_{Y'}(1)$ is the usual map discussed in context of `cohomology and base change'. It is clear that $h^*{\sh F} \cong {\sh F}'$, since \eqref{equation: simple exact sequence in proposition: all of second bubbling up elementary calculations} remains exact after applying $h^*$. The commutativity of the square diagram which expresses that ${\sh F} \to f_* {\sh O}_Y(1)$ commutes with base change is standard; in particular, $h^*f_*{\sh O}_Y(1) \to f'_*{\sh O}_{Y'}$ is a fortiori an isomorphism, since the other three maps in the square are isomorphisms.
		
		\ref{item 1c: proposition: all of second bubbling up elementary calculations}: Taking determinants in the Euler sequence of $P$, we obtain
		\begin{equation}\label{equation: relative dualizing sheaf of rank 2 bundle}
			\omega_{P/X} = p^*\det{\sh E}\otimes {\sh O}_P(-2).
		\end{equation} 
		Keeping in mind that $j$ is the immersion of an effective Cartier divisor, we have
		\begin{equation}\label{equation: upper shriek of structure sheaf in general for projectivization}
			\begin{aligned}
				f^!{\sh O}_X &= j^!p^!{\sh O}_X = j^!(\omega_{P/X}[1]) = j^*\omega_{P/X}[1] \otimes j^!{\sh O}_P \\
				&= j^*\omega_{P/X}[1] \otimes j^*{\sh O}_P(j(Y))[-1] \quad \text{by \cite[\href{https://stacks.math.columbia.edu/tag/0B4B}{Tag 0B4B}]{[stacks]}} \\
				&= f^*\det{\sh E}\otimes {\sh O}_Y(-1) \quad \text{by \eqref{equation: relative dualizing sheaf of rank 2 bundle} and ${\sh O}_P(j(Y)) \cong {\sh O}_P(1)$,} \\
				&= f^*\det{\sh F}\otimes {\sh O}_Y(-1) \quad \text{since \eqref{equation: simple exact sequence in proposition: all of second bubbling up elementary calculations} implies $\det {\sh F} = \det {\sh E}$,}
			\end{aligned}
		\end{equation}
		as desired.
		
		For the claim regarding the restriction to $U$, it is alright to assume that $U = X$ since both \eqref{equation: relative dualizing sheaf of rank 2 bundle} and \eqref{equation: upper shriek of structure sheaf in general for projectivization} obviously commute with restricting to open subschemes of $X$. With this assumption, ${\sh F}$ is invertible and isomorphic to $\det {\sh E}$, and $Y = {\mathbb P}{\sh F} = X$ with ${\sh O}_Y(1) = {\sh F}$. This provides the isomorphism on the right, and the rest is clear.
		
		\ref{item 2: proposition: all of second bubbling up elementary calculations}: Clearly, $P \to T$ is flat, because it is a composition of flat morphisms $P \to X \to T$. Recall from Claim \ref{claim: claim inside proposition: all of second bubbling up elementary calculations} that the section $\psi$ of ${\sh O}_P(1)$ cuts out the effective Cartier divisor $Y$ on $P$. However, the assumption in the statement of \ref{item 2: proposition: all of second bubbling up elementary calculations} implies that the latter remains true after any base change $T' \to T$, so we may conclude by \cite[\href{https://stacks.math.columbia.edu/tag/056Y}{Tag 056Y}]{[stacks]}.
		
		\ref{item nodal: proposition: all of second bubbling up elementary calculations}: Indeed, $j(Y) = \Sigma + p^{-1}(\{\sigma = 0\})$ as divisors on $P$ for some section $\Sigma \subset P$ of the ${\mathbb P}^1$-bundle $P \to X$, and the claim follows easily. (Lemma \ref{remark: checking rational contraction of fibers} is implicitly relied on.)
	\end{proof}
	
	We can now state the main construction.
	
	\begin{construction}\label{construction: second bubbling up construction}
		In Situation \ref{situation: Bubbling up at nonsingular zero of a section}, consider the section $(-\sigma,1)$ of ${\sh L} \oplus {\sh O}_C(x)$. (Note the sign!) We are in a situation covered by Proposition \ref{proposition: all of second bubbling up elementary calculations}, with $C$ in the role of $X$, ${\sh L} \oplus {\sh O}_C(x)$ in the role of ${\sh E}$, and $(-\sigma,1)$ in the role of $\sigma$. 
		
		Let $\gamma:{\sh O}_C \to {\sh L} \oplus {\sh O}_C(x)$ be the homomorphism such that $\gamma(1) = (-\sigma, 1)$. Then
		\begin{equation}\label{equation: initial exact sequence in bubbling up at nonsingular zero of a section}
			0 \to {\sh O}_C \xrightarrow{\gamma} {\sh L} \oplus {\sh O}_C(x) \xrightarrow{\kappa} {\sh K} \to 0
		\end{equation}
		is exact, where ${\sh K}$ is the cokernel of $\gamma$. Define
		\begin{equation}
			C_\zuzp = {\mathbb P} ({\sh K}),
		\end{equation}
		and let $f:C_\zuzp \to C$ be the natural projection, and $\pi_\zuzp = \pi f$. The complex ${\sh O}_S \xrightarrow{x^*\gamma}  x^*{\sh L} \oplus x^*{\sh O}_C(x) \to x^*{\sh O}_C(x) \to 0$ is exact at $x^*{\sh O}_C(x)$, and we get a surjective homomorphism $x^*{\sh K} \to x^*{\sh O}_C(x)$ by the universal property of cokernels, since $x^*$ is right exact. This map induces a lift
		\begin{equation}
			x_\zuzp:S \to C_\zuzp
		\end{equation}
		of $x:S \to C$ by Remark \ref{remark: universal property of projectivizations}. It is clear that all constructions so far commute with base change.
		
		In particular, part \ref{item nodal: proposition: all of second bubbling up elementary calculations} of Proposition \ref{proposition: all of second bubbling up elementary calculations} implies that the geometric fibers of $\pi_\zuzp$ are curves with at worst nodal singularities. On the other hand, part \ref{item 2: proposition: all of second bubbling up elementary calculations} of Proposition \ref{proposition: all of second bubbling up elementary calculations} implies that $\pi_\zuzp$ is flat, since, once more, our constructions are functorial and the non-zero-divisor condition holds universally just as well as it holds in the given case -- simply repeat the first paragraph in this construction after the base change. Thus $\pi_\zuzp$ is a prestable curve. Part \ref{item nodal: proposition: all of second bubbling up elementary calculations} of Proposition \ref{proposition: all of second bubbling up elementary calculations} again, the functoriality of the construction so far, and Lemma \ref{remark: checking rational contraction of fibers} imply that $f$ has property R. Let
		\begin{equation}\label{equation: definition of line bundle in secod bubbling up}
			{\sh L}_\zuzp = f^\dsd {\sh L},
		\end{equation}
		cf. Definition \ref{definition: dual shriek dual} and the remark thereafter that $f^{\dsd} {\sh L}$ is an invertible sheaf. Some preliminary calculations are needed before we can define $\sigma_\zuzp$. We have
		\begin{equation}\label{equation: calculation of upper sigh in second bubbling up}
			\begin{aligned}
				f^\dsd {\sh L} &= f^*{\sh L} \otimes f^\dsd {\sh O}_C \quad \text{by the analogous property for $f^!$} \\
				&= f^*{\sh L} \otimes f^*{\sh L}^\vee(-x) \otimes {\sh O}_{C_\zuzp}(1) \quad \text{by part \ref{item 1c: proposition: all of second bubbling up elementary calculations} of Proposition \ref{proposition: all of second bubbling up elementary calculations} } \\
				&= f^*{\sh O}_C(-x) \otimes {\sh O}_{C_\zuzp}(1) \\
				&= {\sh O}_{{\mathbb P}{\sh K}(-x)}(1) \quad \text{by a well-known fact,}
			\end{aligned} 
		\end{equation}
		and it follows that
		\begin{equation}\label{equation: calculation of pushforward of upper sigh in second bubbling up}
			\begin{aligned}
				f_*f^\dsd {\sh L} &= f_*(f^*{\sh O}_C(-x) \otimes {\sh O}_{C_\zuzp}(1)) \quad \text{by \eqref{equation: calculation of upper sigh in second bubbling up}} \\
				&= {\sh O}_C(-x) \otimes f_* {\sh O}_{C_\zuzp}(1) \quad \text{by the projection formula} \\
				&= {\sh K}(-x) \quad \text{by part \ref{item 1b: proposition: all of second bubbling up elementary calculations} of Proposition \ref{proposition: all of second bubbling up elementary calculations}.}
			\end{aligned}
		\end{equation}
		If we combine the composition ${\sh O}_C \xrightarrow{0 \oplus \operatorname{id}} {\sh L}(-x) \oplus {\sh O}_C \xrightarrow{\kappa \otimes \operatorname{id}} {\sh K}(-x)$ with \eqref{equation: calculation of pushforward of upper sigh in second bubbling up}, we obtain an ${\sh O}_C$-module homomorphism ${\sh O}_C \to f_*f^\dsd{\sh L} = f_*{\sh L}_\zuzp$. Let
		\begin{equation}
			\sigma_\zuzp \in \Gamma(C_\zuzp,{\sh L}_\zuzp)
		\end{equation}
		be the image of $1 \in \Gamma(C,{\sh O}_C)$ under the last homomorphism.
	\end{construction}
	
	\begin{remark}\label{remark: Remarks on construction of second bubbling up}
		Here are some technical remarks on Construction \ref{construction: second bubbling up construction}.
		\begin{enumerate}
			\item\label{item: sign convention in remark: Remarks on construction of second bubbling up} The composition ${\sh L}|_{C\backslash x} \xrightarrow{\operatorname{id} \oplus 0} ({\sh L} \oplus {\sh O}_C(x))|_{C \backslash x} \xrightarrow{\kappa} {\sh K}|_{C \backslash x}$ is an isomorphism ${\sh L}|_{C\backslash x} \simeq {\sh K}|_{C \backslash x}$. The section $1 \in \Gamma(C\backslash x,{\sh O}_C)$ is mapped to $\sigma|_{C\backslash x}$ under the composition 
			\begin{equation*}
				{\sh O}_C|_{C\backslash x}={\sh O}_C(x)|_{C\backslash x} \xrightarrow{0 \oplus \operatorname{id} } ({\sh L} \oplus {\sh O}_C(x))|_{C \backslash x} \to {\sh K}|_{C \backslash x} \simeq {\sh L}|_{C\backslash x},
			\end{equation*} 
			which begins to explain the sign convention in Construction \ref{construction: second bubbling up construction}.
			\item\label{item: isomorphic restriction in remark: Remarks on construction of second bubbling up} The map $f$ restricts to an isomorphism $f^{-1}(C \backslash x) \simeq C \backslash x$ since ${\sh L}|_{C\backslash x} \simeq {\sh K}|_{C \backslash x}$, cf. item \ref{item: sign convention in remark: Remarks on construction of second bubbling up}. 
			\item\label{item: description of adjoint in remark: Remarks on construction of second bubbling up} The adjoint of \eqref{equation: calculation of pushforward of upper sigh in second bubbling up} is the homomorphism $f^*{\sh K}(-x) \to f^!{\sh L}$ which induces (via Remark \ref{remark: universal property of projectivizations}) the isomorphism $C_\zuzp \simeq {\mathbb P}{\sh K}(-x) = {\mathbb P}{\sh K}$.
			\item\label{item: what if no degeneracy in remark: Remarks on construction of second bubbling up} If the data $(S,C,\ldots)$ from Situation \ref{situation: Bubbling up at nonsingular zero of a section} is nondegenerate, then the output data $(S_\zuzp=S,C_\zuzp,\ldots)$ is canonically isomorphic to the input data $(S,C,\ldots)$. We also note that \eqref{equation: initial exact sequence in bubbling up at nonsingular zero of a section} takes the form $0 \to {\sh O}_C \to {\sh L} \oplus {\sh O}_C(x) \to {\sh L}(x) \to 0$ in this situation.
		\end{enumerate}
	\end{remark}
	
	First, let us check that Construction \ref{construction: second bubbling up construction} commutes with base change.  
	
	\begin{proposition}\label{proposition: second bubbling up is functorial}
		The formation of $C_\zuzp$, $\pi_\zuzp$, $x_\zuzp$, ${\sh L}_\zuzp$, $\sigma_\zuzp$, and $f$ commutes with base change.
	\end{proposition}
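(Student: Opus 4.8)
The strategy is that every piece of Construction \ref{construction: second bubbling up construction} is assembled from operations that individually commute with base change, so the plan is to fix a morphism $h:S' \to S$, write $p:C' = S' \times_S C \to C$ for the induced projection, and check base-change compatibility of each ingredient in the order it was built. The operations involved are: forming the cokernel ${\sh K}$ of $\gamma$ (right exact, hence commutes with $p^*$), the projectivization ${\mathbb P}(-)$, the functor $f^\dsd(-)$, and the three specific isomorphisms of Proposition \ref{proposition: all of second bubbling up elementary calculations} that were explicitly asserted to be base-change compatible.

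For the geometric data this is essentially immediate. Applying $p^*$ to \eqref{equation: initial exact sequence in bubbling up at nonsingular zero of a section} yields the analogous exact sequence for $(-p^*\sigma,1)$: right-exactness of $p^*$ identifies $p^*{\sh K}$ with the cokernel, while exactness on the left is preserved because $p^*\gamma$ is again injective --- the non-zero-divisor hypothesis of Proposition \ref{proposition: all of second bubbling up elementary calculations} holds universally, exactly as recorded in the first paragraph of Construction \ref{construction: second bubbling up construction}. Thus $p^*{\sh K} \cong {\sh K}'$ canonically, and since ${\mathbb P}(-)$ commutes with base change we get $C_\up' = {\mathbb P}({\sh K}') = C'\times_C {\mathbb P}({\sh K}) = S'\times_S C_\up$, with $f$ and $\pi_\up$ pulling back to $f'$ and $\pi_\up'$. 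The lift $x_\up$ is produced from the surjection $x^*{\sh K} \twoheadrightarrow x^*{\sh O}_C(x)$ via Remark \ref{remark: universal property of projectivizations}; the surjection commutes with base change (right-exactness once more) and the universal property of the projectivization is itself functorial, so $x_\up$ commutes with base change. Finally ${\sh L}_\up = f^\dsd{\sh L}$ commutes with base change by the remark following Definition \ref{definition: dual shriek dual}.

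The one step that needs genuine attention is $\sigma_\up$, because it is defined by transporting the explicit map ${\sh O}_C \xrightarrow{0\oplus\operatorname{id}} {\sh L}(-x)\oplus{\sh O}_C \xrightarrow{\kappa\otimes\operatorname{id}} {\sh K}(-x)$ across the chain of isomorphisms \eqref{equation: calculation of upper sigh in second bubbling up}--\eqref{equation: calculation of pushforward of upper sigh in second bubbling up} identifying ${\sh K}(-x)$ with $f_*{\sh L}_\up$. I would verify compatibility of that chain link by link: the isomorphism \eqref{equation: calculation of upper sigh in second bubbling up} rests on part \ref{item 1c: proposition: all of second bubbling up elementary calculations} of Proposition \ref{proposition: all of second bubbling up elementary calculations}, whose isomorphism is asserted to commute with any base change; the projection formula commutes with base change; and the final step uses part \ref{item 1b: proposition: all of second bubbling up elementary calculations}, whose isomorphism ${\sh F}\to f_*{\sh O}_{C_\up}(1)$ is likewise asserted compatible. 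This last input is the crux, since the pushforward $f_*$ does not commute with base change in general; I rely on the specific isomorphism of part \ref{item 1b: proposition: all of second bubbling up elementary calculations} rather than any formal statement, and one cannot circumvent it by a uniqueness-on-$Y^\circ$ argument because $\sigma_\up$ also carries nontrivial information along the inserted $\mathbb{P}^1$ components, which lie over $x$.

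Granting these, the composite ${\sh O}_C \to f_*{\sh L}_\up$ commutes with base change (using additionally that $\kappa$ commutes with base change, from the second paragraph above), so applying $p^*$ and taking the image of $1$ gives $p^*\sigma_\up = \sigma_\up'$ under the base-change-compatible identification $\Gamma(C,f_*{\sh L}_\up)=\Gamma(C_\up,{\sh L}_\up)$. This completes the plan; the remaining work is the bookkeeping of these compatible squares, all of which is routine once part \ref{item 1b: proposition: all of second bubbling up elementary calculations} is in hand.
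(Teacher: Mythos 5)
Your overall strategy (ingredient-by-ingredient base-change check) matches the paper's, and most of it is fine: the geometric data, the identification of ${\sh L}_\up$ via the dualizing-sheaf formula, and your reliance on part \ref{item 1b: proposition: all of second bubbling up elementary calculations} of Proposition \ref{proposition: all of second bubbling up elementary calculations} are all unproblematic, since that isomorphism is explicitly stated (and proved) to be compatible with base change. The genuine gap is at the other link of the chain: you assert that the isomorphism of part \ref{item 1c: proposition: all of second bubbling up elementary calculations}, namely $f^!{\sh O}_X \simeq (f^*\det{\sh F})(-1)$, is ``asserted to commute with any base change.'' It is not. Part \ref{item 1c: proposition: all of second bubbling up elementary calculations} only asserts base-change compatibility for the \emph{right-hand arrow} of the diagram there, i.e. for the restriction of the isomorphism to the open locus where the section does not vanish (in Construction \ref{construction: second bubbling up construction}, this is the locus lying over $C\backslash x$). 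No such claim is made for the full isomorphism, nor is one formally available: it is produced by Grothendieck duality, and base change for $f^!$ is precisely the non-formal issue (cf. \cite{[Co00]}). So the step of \eqref{equation: calculation of upper sigh in second bubbling up} that invokes part \ref{item 1c: proposition: all of second bubbling up elementary calculations} --- and hence the base-change compatibility of the identification $f_*{\sh L}_\up \cong {\sh K}(-x)$ and ultimately of $\sigma_\up$ --- is exactly what remains unproved in your proposal. You have, in effect, inverted the crux: the pushforward step (part \ref{item 1b: proposition: all of second bubbling up elementary calculations}) is the safe one, and the duality step is the dangerous one.

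Moreover, the argument that repairs this is the very one you dismiss. The paper compares the pullback along $q:C'_\up \to C_\up$ of the part-\ref{item 1c: proposition: all of second bubbling up elementary calculations} isomorphism with the corresponding isomorphism for the primed data, the two being related through the identification $q^*f^!{\sh O}_C \simeq f'^!{\sh O}_{C'}$ coming from $f^!{\sh O}_C = \omega_{C_\up/S}\otimes f^*\omega_{C/S}^\vee$ and base change of relative dualizing sheaves. The discrepancy around the resulting square is an automorphism of the invertible sheaf $q^*f^!{\sh O}_C$, i.e. a nowhere vanishing \emph{function} on $C'_\up$. By the second half of part \ref{item 1c: proposition: all of second bubbling up elementary calculations}, this function restricts to $1$ on $(f')^{-1}(C'\backslash x')$; since $f'^\#$ is an isomorphism, units on $C'_\up$ are pulled back from $C'$, and $\Gamma(C',{\sh O}_{C'})\to\Gamma(C'\backslash x',{\sh O}_{C'})$ is injective by Lemma \ref{lemma: injective restrictions on prestable curves}; hence the function equals $1$ everywhere and the square commutes. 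Your objection --- that a restriction-to-the-complement argument cannot work because $\sigma_\up$ carries nontrivial information on the inserted ${\mathbb P}^1$'s --- conflates the section $\sigma_\up$ (a section of ${\sh L}_\up$, which indeed is not determined by its restriction away from the exceptional fibers) with the discrepancy automorphism (a unit function, which is so determined). As written, your proposal both omits the key verification and rules out the mechanism that supplies it.
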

	
	\begin{proof}
		The upshot is that the relevant constructions visibly commute with base changes $S' \to S$, with the exception of those involving duality (as customary, we will not check the `cocycle conditions' for compositions $S'' \to S' \to S$ for the isomorphisms which express commutativity with respect to base change). Thus functoriality of $C_\zuzp,\pi_\zuzp,f,x_\zuzp$ is straightforward. The fact that ${\sh L}_\zuzp$ commutes with base change comes from ${\sh L}_\zuzp = f^*{\sh L} \otimes f^*\omega_{C/S} \otimes \omega^\vee_{C_\zuzp/S}$ (by \eqref{equation: definition of line bundle in secod bubbling up}, Definition \ref{definition: dual shriek dual}, and Lemma \ref{lemma: upper shriek of invertible is invertible}) and the fact that the formation of the relative dualizing sheaf of prestable curves commutes with base change, e.g. \cite[\href{https://stacks.math.columbia.edu/tag/0E6R}{Tag 0E6R}]{[stacks]}. 
		
		The functoriality of most ingredients involved in the functoriality of $\sigma_\zuzp$ is quite straightforward and left to the reader, with one exception: the application of part \ref{item 1c: proposition: all of second bubbling up elementary calculations} of Proposition \ref{proposition: all of second bubbling up elementary calculations}. Specifically, given a base change $h:S' \to S$, let $C',\pi',x'_\zuzp,{\sh L}',\sigma'_\zuzp$ be the pullback of the data in Situation \ref{situation: Bubbling up at nonsingular zero of a section} along $h$, if $q:C'_\zuzp \to C_\zuzp$ is the induced morphism, then we need the diagram
		\begin{center}
			\begin{tikzpicture}
				\node (a) at (0,0) {$f'^!{\sh O}_{C'}$};
				\node (b) at (0,1.1) {$q^*f^!{\sh O}_C$};
				\node (c) at (3,0) {$f'^*{\sh L}'(x') \otimes {\sh O}_{C'_\zuzp}(1)$};
				\node (d) at (3,1.1) {$q^*(f^*{\sh L}(x) \otimes {\sh O}_{C_\zuzp}(1))$};
				
				\draw [->] (b) -- (a);
				\draw [->] (d) -- (c);
				\draw [->] (a) -- (c);
				\draw [->] (b) -- (d);
			\end{tikzpicture}
		\end{center}
		in which all sheaves are invertible ${\sh O}_{C_\zuzp}$-modules, and all homomorphisms are isomorphisms (the horizontal ones come from the application of part \ref{item 1c: proposition: all of second bubbling up elementary calculations} of Proposition \ref{proposition: all of second bubbling up elementary calculations}, the left vertical one comes from $f^!{\sh O}_C = \omega_{C_\zuzp/S} \otimes f^* \omega_{C/S}^\vee$ and the fact that the formation of relative dualizing sheaves commutes with base change) to commute. Luckily, we can circumvent  the issue of base change in Grothendieck duality \cite{[Co00]} using the following trick. The commutativity of the square boils down to a statement of the form that an automorphism (say, $\alpha$) of $q^*f^!{\sh O}_C$ is the identity (compose all arrows around the square, two reversed). We have $$ {\sh A}ut(q^*f^!{\sh O}_C) \cong {\sh O}_{C'_\zuzp}^\times \quad \text{(the sheaf of nowhere vanishing functions on $C'_\zuzp$)}$$ 
		since $q^*f^!{\sh O}_C$ is invertible. However, by the second half of part \ref{item 1c: proposition: all of second bubbling up elementary calculations} of Proposition \ref{proposition: all of second bubbling up elementary calculations}, it is clear that the restriction of the square diagram above to $(f')^{-1}(C' \backslash x')$ commutes, so the invertible section corresponding to $\alpha$ restricts to $1$ on $(f')^{-1}(C' \backslash x')$. It then suffices to check that the restriction map on sections of the structure sheaf of ${C'_\zuzp}$ from global sections to sections over $C'_\zuzp \backslash (f')^{-1}(x')$ is injective. Since $f'^\#$ is an isomorphism, this boils down to the statement that the restriction map on sections of ${\sh O}_{C'}$ from global sections to sections over $C' \backslash x'$ is injective, which follows from Lemma \ref{lemma: injective restrictions on prestable curves}.
	\end{proof}

	Second, let us check that $\sigma_\zuzp$ is indeed a lift of $\sigma$.
	
	\begin{lemma}\label{lemma: vector fields match in second bubbling up}
		In the situation of Construction \ref{construction: second bubbling up construction}, the homomorphism \eqref{equation: pushing forward logarithmic vector fields in general} reads 
		\begin{equation}\label{equation: map in lemma: vector fields match in second bubbling up} f_*{\sh L}_\zuzp = f_*f^\dsd{\sh L} \to {\sh L}. \end{equation} 
		Then the map $\Gamma(C_\zuzp,{\sh L}_\zuzp) \to \Gamma(C,{\sh L})$ on global sections maps $\sigma_\zuzp \mapsto \sigma$.
	\end{lemma}
	
	\begin{proof}
		It suffices to check that $f_*{\sh L}_\zuzp \to {\sh L}$ maps the restriction of $\sigma_\zuzp$ to $f^{-1}(C \backslash x)$ to the restriction of $\sigma$ to $C \backslash x$. Indeed, $\Gamma(C,{\sh L}) \to \Gamma(C \backslash x, {\sh L})$ is injective by Lemma \ref{lemma: injective restrictions on prestable curves}, so this suffices. We have ${\sh K}(-x)|_{C \backslash x} \simeq {\sh L}|_{C \backslash x}$ by item \ref{item: sign convention in remark: Remarks on construction of second bubbling up} in Remark \ref{remark: Remarks on construction of second bubbling up} and $f_*f^\dsd{\sh L}|_{C \backslash x} \simeq {\sh L}|_{C \backslash x}$ by item \ref{item: isomorphic restriction in remark: Remarks on construction of second bubbling up} in Remark \ref{remark: Remarks on construction of second bubbling up} compatibly with the restriction of \eqref{equation: calculation of pushforward of upper sigh in second bubbling up}, that is, the resulting triangle is commutative. To justify this compatibility, it is necessary to revisit \eqref{equation: calculation of pushforward of upper sigh in second bubbling up} and \eqref{equation: calculation of upper sigh in second bubbling up}, but everything is clear. Then the diagram
		\begin{center}
			\begin{tikzpicture}
				\node (a) at (0,0) {${\sh O}_C|_{C \backslash x}$};
				\node (b) at (3,0) {${\sh K}(-x)|_{C \backslash x}$};
				\node (c) at (6,0) {${\sh L}|_{C \backslash x}$};
				\node (d) at (4.5,-1.3) {$f_*f^\dsd{\sh L}|_{C \backslash x}$};
				
				\draw[double equal sign distance] (b) -- (c);
				\draw[double equal sign distance] (c) -- (d);
				\draw[double equal sign distance] (b) -- node [midway, left] {$\eqref{equation: calculation of pushforward of upper sigh in second bubbling up}|_{C \backslash x}$} (d);
				\draw [->] (a) -- (b);
				\draw [->] (a) to [out = 300, in = 170] (d);
			\end{tikzpicture}
		\end{center}
		in which the curved arrow is the restriction of the homomorphism ${\sh O}_C \to f_*f^\dsd{\sh L}$ used in Construction \ref{construction: second bubbling up construction} is obviously commutative. It remains to see where $1 \in \Gamma(C \backslash x,{\sh O}_C)$ ends up in the diagram above. On one hand, its image in ${\sh L}|_{C \backslash x}$ is $\sigma|_{C \backslash x}$ by item \ref{item: sign convention in remark: Remarks on construction of second bubbling up} in Remark \ref{remark: Remarks on construction of second bubbling up}. On the other hand, its image in $f_*f^\dsd{\sh L}|_{C \backslash x}$ is the restriction of $\sigma_\zuzp$ to $f^{-1}(C \backslash x)$ by Construction \ref{construction: second bubbling up construction}. Then the claim in the beginning of this proof follows, since the diagram is commutative, and $f_*f^\dsd{\sh L} \to {\sh L}$ restricts on $C\backslash x$ to the isomorphism in the diagram, cf. part \ref{item 2: proposition: construction of logarithmic differential using duality} of Proposition \ref{proposition: construction of logarithmic differential using duality}.
	\end{proof}

\begin{remark}\label{remark: scaling field in second bubbling up}
	Here are some remarks related to Lemma \ref{lemma: vector fields match in second bubbling up}.
	\begin{enumerate}
		\item\label{item: item 1 in remark: scaling field in second bubbling up} It may or may not be the case that there exists a unique global section of ${\sh L}_{\zuzp}$ which maps to $\sigma$ by \eqref{equation: map in lemma: vector fields match in second bubbling up}. 
		\begin{enumerate}
			\item For instance, when $S = \spec {\mathbb K}$ and the data is degenerate, this is not the case, as explained at length in Remark \ref{remark: why is second bubbling up confusing}.
			\item\label{item: item 1b in remark: scaling field in second bubbling up} However, if, for instance, we assume that $C$ and $C_\zuzp$ are integral, and $f$ is a birational morphism, then $\Gamma(C_\zuzp,{\sh L}_{\zuzp}) \to \Gamma(C,{\sh L})$ is injective, so $\sigma$ can have only one preimage. Note also that, in this case, if $\zeta$ and $\zeta_{\zuzp}$ are the generic points of $C$ and $C_\zuzp$ and $K = K(C) = K(C_\zuzp)$ is the field of fractions, then there is a natural identification of the fibers at the generic points $K \otimes {\sh L} \simeq K \otimes {\sh L}_{\zuzp}$ as (one-dimensional) $K$-vector spaces, under which $\sigma(\zeta) = \sigma_{\zuzp}(\zeta_{\zuzp})$. 
		\end{enumerate}
		\item\label{item: item 2 in remark: scaling field in second bubbling up} Note that if $(S,C,\pi,x,{\sh L},\sigma)$ satisfies Situation \ref{situation: Bubbling up at nonsingular zero of a section}, and $\alpha \in \Gamma(S,{\sh O}_S^\times)$, then $(S,C,\pi,x,{\sh L},\pi^*\alpha \cdot \sigma)$ also satisfies Situation \ref{situation: Bubbling up at nonsingular zero of a section}. Let $(S,C'_\zuzp,\pi'_\zuzp,x'_{\zuzp},{\sh L}'_{\zuzp},\sigma'_{\zuzp})$ be the output of Construction \ref{construction: second bubbling up construction} on this data. The commutative diagram 
		\begin{center}
			\begin{tikzpicture}
				\matrix [column sep  = 18mm, row sep = 8mm] {
					\node (t0) {$0$}; &
					\node (t1) {${\sh O}_C$}; &
					\node (t2) {${\sh L} \oplus {\sh O}_C(x)$}; &
					\node (t3) {${\sh K}$}; &
					\node (t4) {$0$}; & \\
					\node (b0) {$0$}; &
					\node (b1) {${\sh O}_C$}; &
					\node (b2) {${\sh L} \oplus {\sh O}_C(x)$}; &
					\node (b3) {${\sh K}$}; &
					\node (b4) {$0$}; & \\
				};
				\draw[->] (t0) -- (t1);
				\draw[->] (t1) to node[above] {\tiny{$1 \mapsto (-\sigma,1)$}} (t2);
				\draw[->] (t2) to node[above] {\tiny{$\kappa$}} (t3);
				\draw[->] (t3) -- (t4);
				
				\draw[->] (b0) -- (b1);
				\draw[->] (b1) to node[above] {\tiny{$1 \mapsto (-\pi^*\alpha \cdot \sigma,1)$}} (b2);
				\draw[->] (b2)to node[above] {\tiny{$\kappa \begin{bmatrix} \pi^*\alpha & 0 \\ 0 & 1  \end{bmatrix}^{-1}$}} (b3);
				\draw[->] (b3) -- (b4);
				
				\draw[double equal sign distance] (b1) -- (t1);
				\draw[<-] (b2) to node[left] {\tiny{$\begin{bmatrix} \pi^*\alpha & 0 \\ 0 & 1  \end{bmatrix}$}} (t2);
				\draw[double equal sign distance] (b3) -- (t3);
			\end{tikzpicture}
		\end{center}
		in which the top row is \eqref{equation: initial exact sequence in bubbling up at nonsingular zero of a section} and the bottom row is the version of \eqref{equation: initial exact sequence in bubbling up at nonsingular zero of a section} for the new set of data, shows that the cokernels ${\sh K}$ for the two constructions are isomorphic, so we have a natural identification $C_{\zuzp} = C'_{\zuzp}$ compatible with the projections to $S$. By \eqref{equation: definition of line bundle in secod bubbling up}, we can also identify ${\sh L}'_{\zuzp} = {\sh L}_{\zuzp}$.  
		
		If we also assume furthermore for simplicity that the hypotheses from item \ref{item: item 1b in remark: scaling field in second bubbling up} (that $C_{\zuzp}$ and $C$ are integral and $f$ is birational) hold, then, by \ref{item: item 1b in remark: scaling field in second bubbling up}, $\sigma'_{\zuzp}(\zeta_{\zuzp}) = \pi^*\alpha \cdot \sigma(\zeta) = \pi^*\alpha \cdot \sigma_{\zuzp}(\zeta_{\zuzp})$, and hence $\sigma'_{\zuzp} = \pi^*\alpha \cdot \sigma_{\zuzp}$ everywhere. It is also clear that $x_{\zuzp}(\zeta) = x'_{\zuzp}(\zeta)$, so $x_{\zuzp} = x'_{\zuzp}$.
	\end{enumerate}   
\end{remark}
	
	Finally, let's check that Construction \ref{construction: second bubbling up construction} eliminated the degeneracy.
	
	\begin{lemma}\label{lemma: second bubbling up eliminated degeneracy}
		In Construction \ref{construction: second bubbling up construction}, $x_\zuzp^*\sigma_\zuzp^*$ is a nowhere vanishing global section of $x_\zuzp^*{\sh L}_\zuzp^*$.
	\end{lemma}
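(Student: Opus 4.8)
The plan is to compute $x_\up^*\sigma_\up$ explicitly and to recognize it, under a canonical trivialization of $x_\up^*{\sh L}_\up$, as the identity of ${\sh O}_S$. First I would pin down the line bundle: the penultimate line of \eqref{equation: calculation of upper sigh in second bubbling up} gives ${\sh L}_\up={\sh O}_{C_\up}(1)\otimes f^*{\sh O}_C(-x)$. Since $x_\up:S\to C_\up={\mathbb P}{\sh K}$ is, by construction, the point corresponding (via Remark \ref{remark: universal property of projectivizations}) to the line-bundle quotient $q\colon x^*{\sh K}\twoheadrightarrow x^*{\sh O}_C(x)$, pulling the tautological surjection $f^*{\sh K}\to{\sh O}_{C_\up}(1)$ back along $x_\up$ recovers $q$ and yields $x_\up^*{\sh O}_{C_\up}(1)=x^*{\sh O}_C(x)$. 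Combined with $fx_\up=x$ this gives a canonical isomorphism
\[
x_\up^*{\sh L}_\up = x^*{\sh O}_C(x)\otimes x^*{\sh O}_C(-x) = x^*{\sh O}_C = {\sh O}_S,
\]
which already suggests why the degeneracy disappears. It then remains to see that $x_\up^*\sigma_\up$ is a unit under this identification.

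Next I would rewrite $\sigma_\up$ as a morphism of sheaves. By Construction \ref{construction: second bubbling up construction}, $\sigma_\up$ is the global section attached to the homomorphism $s\colon{\sh O}_C\to f_*{\sh L}_\up={\sh K}(-x)$ given there, namely ${\sh O}_C\xrightarrow{0\oplus\operatorname{id}}{\sh L}(-x)\oplus{\sh O}_C\xrightarrow{\kappa\otimes\operatorname{id}}{\sh K}(-x)$ followed by the identification \eqref{equation: calculation of pushforward of upper sigh in second bubbling up}. As a map ${\sh O}_{C_\up}\to{\sh L}_\up$, it is therefore the adjoint $\varepsilon\circ f^*s$, where the counit $\varepsilon\colon f^*f_*{\sh L}_\up\to{\sh L}_\up$ is the tautological surjection $f^*{\sh K}(-x)\to{\sh L}_\up$ coming from ${\sh L}_\up={\sh O}_{{\mathbb P}{\sh K}(-x)}(1)$ and \eqref{equation: calculation of pushforward of upper sigh in second bubbling up}. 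Pulling back along $x_\up$ and using $fx_\up=x$ turns $x_\up^*\sigma_\up$ into the composite
\[
{\sh O}_S\xrightarrow{\,x^*s\,} x^*\bigl({\sh K}(-x)\bigr)\xrightarrow{\,q\otimes\operatorname{id}\,} x_\up^*{\sh L}_\up={\sh O}_S,
\]
since $x_\up^*\varepsilon$ is exactly $q$ twisted by $x^*{\sh O}_C(-x)$.

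Finally I would verify this composite is the identity by untwisting by $x^*{\sh O}_C(x)$, which reduces the claim to showing that
\[
x^*{\sh O}_C(x)\xrightarrow{\,x^*s'\,} x^*{\sh K}\xrightarrow{\,q\,} x^*{\sh O}_C(x)
\]
is the identity, where $s'=s\otimes\operatorname{id}_{{\sh O}_C(x)}=\kappa\circ\iota_2$ and $\iota_2\colon{\sh O}_C(x)\hookrightarrow{\sh L}\oplus{\sh O}_C(x)$ is the second inclusion. The key point is that $q$ is, by its very definition, the factorization of $\operatorname{pr}_2\colon x^*({\sh L}\oplus{\sh O}_C(x))\to x^*{\sh O}_C(x)$ through the cokernel $x^*{\sh K}=\operatorname{Coker}(x^*\gamma)$; this factorization exists precisely because $\operatorname{pr}_2\circ x^*\gamma=x^*(\operatorname{pr}_2\circ\gamma)=0$, as $\operatorname{pr}_2\circ\gamma\colon{\sh O}_C\to{\sh O}_C(x)$ is the canonical section of ${\sh O}_C(x)$, which restricts to $0$ on the Cartier divisor $x$. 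Hence $q\circ x^*\kappa=\operatorname{pr}_2$, and therefore $q\circ x^*s'=\operatorname{pr}_2\circ\iota_2=\operatorname{id}$, so $x_\up^*\sigma_\up$ is the identity of ${\sh O}_S$ and in particular nowhere vanishing.

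I expect the main obstacle to be bookkeeping rather than any genuine difficulty. One must keep the twists by ${\sh O}_C(\pm x)$ consistent across the identifications ${\sh L}_\up={\sh O}_{C_\up}(1)\otimes f^*{\sh O}_C(-x)$ and $f_*{\sh L}_\up={\sh K}(-x)$, and confirm that the quotient $q$ defining $x_\up$ is literally the pullback of the counit $\varepsilon$ along $x_\up$. No injectivity arguments (such as Lemma \ref{lemma: injective restrictions on prestable curves}) are needed here, since the entire computation is local at $x$; the only geometric input is that $x$ is an effective Cartier divisor, forcing the canonical section $\operatorname{pr}_2\circ\gamma$ to restrict to $0$ along $x$.
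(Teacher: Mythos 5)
Your proposal is correct and takes essentially the same route as the paper: trivialize $x_\up^*{\sh L}_\up \cong {\sh O}_S$ via the quotient $x^*{\sh K} \to x^*{\sh O}_C(x)$ defining $x_\up$, identify $x_\up^*\sigma_\up$ with the composite ${\sh O}_S \to x^*({\sh K}(-x)) \to {\sh O}_S$, and show that composite is a unit. Your explicit verification that $q \circ x^*\kappa = \operatorname{pr}_2$ (because the canonical section of ${\sh O}_C(x)$ restricts to $0$ along $x$), hence $q \circ x^*s' = \operatorname{pr}_2 \circ \iota_2 = \operatorname{id}$, is exactly the commutativity that the paper's proof asserts as ``not hard to check'' in its diagram, so your write-up fills in that step rather than deviating from it.
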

	
	\begin{proof}
		Let's first check that ${\sh L}_\zuzp = f^\dsd{\sh L}$ is trivial along $x_\zuzp$. We have
		\begin{equation}\label{equation: calculation of restriction along section of upper sigh in second bubbling up}
			\begin{aligned}
				x_\zuzp^*f^\dsd{\sh L} &= x_\zuzp^*{\sh O}_{{\mathbb P}{\sh K}(-x)}(1) \quad \text{by \eqref{equation: calculation of upper sigh in second bubbling up}} \\
				&= x^*{\sh O}_C(-x) \otimes x_\zuzp^*{\sh O}_{{\mathbb P}{\sh K}}(1) \quad \text{taking a step back in \eqref{equation: calculation of upper sigh in second bubbling up}} \\
				&= x^*{\sh O}_C(-x) \otimes x^*{\sh O}_C(x) = {\sh O}_S \quad \text{by the definition of $x_\zuzp$} \\
			\end{aligned}
		\end{equation}
		as desired. Consider the following diagram of ${\sh O}_S$-modules
		\begin{center}
			\begin{tikzpicture}
				\node (a) at (-0.5,0) {${\sh O}_S$};
				\node (b) at (2,0) {$x^*{\sh K}(-x)$};
				\node (c) at (5,0) {$x_\zuzp^*f^*f_*{\sh O}_{{\mathbb P}{\sh K}(-x)}(1)$};
				\node (d) at (2,-1.5) {${\sh O}_S$};
				\node (e) at (5,-1.5) {$x_\zuzp^*{\sh O}_{{\mathbb P}{\sh K}(-x)}(1)$};
				\node (f) at (8,-1.5) {$x_\zuzp^*f^\dsd{\sh L}$};
				
				\draw [->] (a) -- node [midway, below]{$\operatorname{id}_{{\sh O}_S}$} (d);
				\draw [double equal sign distance] (d) -- node[midway, above]{\eqref{equation: calculation of restriction along section of upper sigh in second bubbling up}} (e);
				\draw [double equal sign distance] (e) -- node[midway, above]{\eqref{equation: calculation of restriction along section of upper sigh in second bubbling up}} (f);
				\draw [double equal sign distance] (b) -- (c);
				\draw [->] (a) to (b);
				\draw [->>] (b) to (d);
				\draw [->>] (b) to (e);
				\draw [->] (c) to (e);
			\end{tikzpicture}
		\end{center}
		in which the map ${\sh O}_S \to x^*{\sh K}(-x)$ is the composition ${\sh O}_S \to x^*({\sh L}(-x) \oplus {\sh O}_C) \to x^*{\sh K}(-x)$, the map $x^*{\sh K}(-x) \to {\sh O}_S$ is a twist of the map $x^*{\sh K} \to x^*{\sh O}_C(x)$ used in Construction \ref{construction: second bubbling up construction} to define $x_\zuzp$, the map $x^*{\sh K}(-x) \to x_\zuzp^*{\sh O}_{{\mathbb P}{\sh K}(-x)}(1)$ also comes from the definition of $x_\zuzp$, and the isomorphism $x^*{\sh K}(-x) = x_\zuzp^*f^*f_*{\sh O}_{{\mathbb P}{\sh K}(-x)}(1)$ comes from $x_\zuzp^*f^*=x^*$ and
		\begin{equation}\label{equation: equation in proof of lemma: second bubbling up eliminated degeneracy}
			\begin{aligned}
				f_*{\sh O}_{{\mathbb P}{\sh K}(-x)}(1) &= f_* ( {\sh O}_{{\mathbb P}{\sh K}}(1) \otimes f^*{\sh O}_C(-x)) \\
				&= f_*{\sh O}_{{\mathbb P}{\sh K}}(1) \otimes {\sh O}_C(-x) \quad \text{by the projection formula} \\
				&= {\sh K}(-x) \quad \text{by part \ref{item 1b: proposition: all of second bubbling up elementary calculations} of Proposition \ref{proposition: all of second bubbling up elementary calculations}},
			\end{aligned}
		\end{equation}
		similarly to some steps in \eqref{equation: calculation of pushforward of upper sigh in second bubbling up}. It is not hard to check that the diagram is commutative. 
		
		Consider the image of $1 \in \Gamma(S,{\sh O}_S)$ in $\Gamma(S,x_\zuzp^*f^\dsd{\sh L})$. On one hand, if we take the upper route in the diagram, we see that this image is $x_\zuzp^*\sigma_\zuzp$ by revisiting Construction \ref{construction: second bubbling up construction}. Indeed, the composition of \eqref{equation: equation in proof of lemma: second bubbling up eliminated degeneracy} with the pushforward of \eqref{equation: calculation of upper sigh in second bubbling up} along $f$ is \eqref{equation: calculation of pushforward of upper sigh in second bubbling up}, hence the image of 1 in $\Gamma(S,x_\zuzp^*f^*f_*f^\dsd{\sh L})$ is $x_\zuzp^*f^*\sigma_\zuzp = x^*\sigma_\zuzp$, and the claim follows. On the other hand, the lower route is clearly an isomorphism, so the upper route is also an isomorphism. Hence $x_\zuzp^*\sigma_\zuzp$ is nowhere vanishing. 
	\end{proof}
	
	To summarize, we have the following.
	
	\begin{theorem}\label{theorem: summary of second bubbling up}
		Construction \ref{construction: second bubbling up construction} has the following features:
		\begin{enumerate}
			\item the output data is nondegenerate;
			\item if the input data is nondegenerate, then the output data is isomorphic to the input data;
			\item if $S = \spec {\mathbb K}$, and the input data is degenerate, then the output data is (isomorphic to) that described in Example \ref{example: geometric inflating at nonsingular zero of a section}; and
			\item it commutes with base change.
		\end{enumerate}
	\end{theorem}
	
	\begin{proof}
		The first claim is Lemma \ref{lemma: second bubbling up eliminated degeneracy}, the second claim is item \ref{item: what if no degeneracy in remark: Remarks on construction of second bubbling up} in Remark \ref{remark: Remarks on construction of second bubbling up}, the third claim is elementary and left to the reader, and the last claim is Proposition \ref{proposition: second bubbling up is functorial}.
	\end{proof}
	
	At this point, we consider Theorem \ref{theorem: main theorem abstract} proved.
	
Finally, we prove a technical proposition which contains most of the work needed later to prove that the bubbling down operation constructed in \S\ref{section: Bubbling down and representability} is inverse to bubbling up. 

\begin{proposition}\label{proposition: towards up of down = id}
	Let $(S,C,\pi,x,{\sh L},\sigma)$ and $(S,Y,\varpi,y,{\sh J},\rho)$ both satisfy the requirements of Situation \ref{situation: Bubbling up at nonsingular zero of a section}, and assume in addition that $y^*\rho$ is a nowhere vanishing section of $y^*{\sh J}$. Let $q:Y \to C$ with property R, such that:
	\begin{enumerate}
		\item\label{item: item 1 in proposition: towards up of down = id} $x = qy$, ${\sh J} = q^\dsd {\sh L}$ and $q_*{\sh J} = q_*q^\dsd{\sh L} \to {\sh L}$, cf. \eqref{equation: pushing forward logarithmic vector fields in general}, maps $\rho \mapsto \sigma$;
		\item\label{item: geometric fibers assumption in proposition: towards up of down = id} for any geometric point $\overline{s} \to S$, there exists an isomorphism $\alpha: Y_{\overline{s}} \to (C_{\zuzp})_{\overline{s}}$ over $\overline{s}$, such that $q_{\overline{s}} = f_{\overline{s}} \alpha$, 
		and $\alpha$ is compatible in the natural sense with all the rest of the data (restrictions of line bundles, sections, etc.).
	\end{enumerate}
	Then there exists a canonical isomorphism $q_\zuzp:Y \to C_\zuzp$ such that $q = fq_\zuzp$, $x_\zuzp = q_\zuzp y$, and $q_\zuzp^*\sigma_\zuzp \mapsto \rho$ under the isomorphism $q_\zuzp^*f^\dsd{\sh L} \simeq q^\dsd{\sh L}$.
\end{proposition}

\begin{proof}
	By item \ref{item 1: proposition: construction of logarithmic differential using duality} in Proposition \ref{proposition: construction of logarithmic differential using duality}, we have an isomorphism ${\sh O}_C(-x) \simeq q_*{\sh O}_Y(-y)$. Proposition \ref{proposition: construction of logarithmic differential using duality} gives a homomorphism $q^*({\sh L}^\vee(x)) \to (q^!{\sh L}^\vee)(y)$. By assumption \ref{item: geometric fibers assumption in proposition: towards up of down = id} and Example \ref{example: example of f upper shriek}, this homomorphism is an isomorphism on geometric fibers, and hence an isomorphism. Dualizing, we obtain
	\begin{equation}\label{equation: equation 0 in technical prop}
		q^*({\sh L}(-x)) \simeq (q^{\dsd}{\sh L})(-y).
	\end{equation}
	By \eqref{equation: equation 0 in technical prop} and Remark \ref{remark: pushforward of pullback of line bundles},
	\begin{equation}\label{equation: equation 1 in technical prop}
		q_*[(q^\dsd{\sh L}(-y))] = q_*q^* {\sh L}(-x) = {\sh L}(-x).
	\end{equation}
	We claim that the following diagram in which the top row is induced by $\sigma$ and the bottom one by $\rho$ is commutative.
	\begin{center}
		\begin{tikzpicture}
			\matrix [column sep  = 9mm, row sep = 5mm] {
				\node (nw) {${\sh O}_C(-x)$}; &
				\node (ne) {${\sh L}(-x)$};  \\
				\node (sw) {$q_*{\sh O}_Y(-y)$}; &
				\node (se) {$q_*(q^\dsd{\sh L})(-y)$}; \\
			};
			\draw[->] (nw) -- (ne);
			\draw[double equal sign distance] (ne) -- node [midway, right] {\eqref{equation: equation 1 in technical prop}} (se);
			\draw[->] (nw) --  node [midway, left] {$\simeq$} (sw);
			\draw[->] (sw) -- (se);
		\end{tikzpicture}
	\end{center}
	Indeed, the claim amounts to a statement of the form that two elements of the group
	$ \operatorname{Hom}({\sh O}_C(-x),  {\sh L}(-x) ) = \Gamma(C,{\sh L}) $
	coincide. However, it is clear that they coincide over $C \backslash x$ by assumption \ref{item: item 1 in proposition: towards up of down = id}, the fact that $q$ induces an isomorphism $q^{-1}(C\backslash x) \simeq C \backslash x$, and part \ref{item: isomorphic restriction in remark: Remarks on construction of second bubbling up} of Remark \ref{remark: Remarks on construction of second bubbling up}, so they must coincide on $C$ by Lemma \ref{lemma: injective restrictions on prestable curves}. 
	
	Since $(S,Y,\ldots)$ is nondegenerate in the sense of the current section, part \ref{item: what if no degeneracy in remark: Remarks on construction of second bubbling up} of Remark \ref{remark: Remarks on construction of second bubbling up} gives a short exact sequence $	0 \to {\sh O}_Y \to {\sh J} \oplus {\sh O}_Y(y) \to {\sh J}(y) \to 0$.
	If we twist this sequence by ${\sh O}_Y(-y)$ and push forward along $q$, we obtain a short exact sequence
	\begin{equation}\label{equation: equation 2 in technical prop}
		0 \to q_*{\sh O}_Y(-y) \to q_*[q^\dsd({\sh L}(-y))] \oplus q_*{\sh O}_Y \to q_*q^\dsd{\sh L} \to 0,
	\end{equation}
	since $R^1q_*{\sh O}_Y(-y) = 0$ by item \ref{item 1: proposition: construction of logarithmic differential using duality} in Proposition \ref{proposition: construction of logarithmic differential using duality} once more, since $qy=x$ is a smooth section. We have a (only solid arrow, for now) diagram with exact rows
	\begin{equation}\label{equation: 10-term diagram}
		\begin{tikzpicture}
			\def\aa{0.8};
			\node (a0) at (-1.9,1.6-\aa) {$0$};
			\node (a1) at (-0.3,1.6-\aa) {${\sh O}_C(-x)$};
			\node (a2) at (2.8,1.6-\aa) {${\sh L}(-x) \oplus {\sh O}_C$};
			\node (a3) at (6.2,1.6-\aa) {${\sh K}(-x)$};
			\node (a4) at (7.6,1.6-\aa) {$0$};
			\node (b0) at (-1.9,0-\aa) {$0$};
			\node (b1) at (-0.3,0-\aa) {$q_*{\sh O}_Y(-y)$};
			\node (b2) at (2.8,0-\aa) {$q_*(q^\dsd{\sh L})(-y) \oplus q_*{\sh O}_Y$}; 
			\node (b3) at (6.2,0-\aa) {$q_*q^\dsd{\sh L}$}; 
			\node (b4) at (7.6,0-\aa) {$0$};
			\draw[->] (b0) to (b1);
			\draw[->] (b1) to (b2);
			\draw[->] (b2) -- node [midway, above] {$\left[\begin{smallmatrix} -1 & \rho \end{smallmatrix} \right]$} (b3);
			\draw[->] (b3) to (b4);
			\draw[->] (a0) to (a1);
			\draw[->] (a1) to (a2);
			\draw[->] (a2) to (a3);
			\draw[->] (a3) to (a4);
			\draw[double equal sign distance] (a1) -- (b1);
			\draw[->, dashed] (a3) -- node [midway, right] {$\simeq$} (b3);
			\draw[double equal sign distance] (a2) -- node [midway, left] {$\left[\begin{smallmatrix} \eqref{equation: equation 1 in technical prop} & 0 \\ 0 & q^\# \end{smallmatrix} \right]$} (b2);	
		\end{tikzpicture}
	\end{equation}
	The top row is \eqref{equation: initial exact sequence in bubbling up at nonsingular zero of a section} twisted by ${\sh O}_C(-x)$, the bottom row is \eqref{equation: equation 2 in technical prop}. This diagram is commutative, by the commutativity of the first diagram in this proof, and part \ref{item 1: proposition: construction of logarithmic differential using duality} of Proposition \ref{proposition: construction of logarithmic differential using duality}. A trivial diagram chase shows that there exists a (unique) homomorphism
	${\sh K}(-x) \to q_*q^\dsd{\sh L}$
	that makes the diagram above commute if assigned as the dashed arrow and moreover, it is an isomorphism ${\sh K}(-x) \simeq q_*q^\dsd{\sh L}$.  We claim that its adjoint 
	\begin{equation}\label{equation: important adjoint in technical prop}
		q^*{\sh K}(-x) \to q^\dsd{\sh L}
	\end{equation} 
	is surjective. Consider the `adjoint' diagram of \eqref{equation: 10-term diagram}.
	\begin{equation}\label{equation: 9-term diagram}
		\begin{tikzpicture}[scale=1.1]
			\node (a1) at (0,1) {$q^*{\sh O}_C(-x)$};
			\node (a2) at (2.5,1) {$q^*{\sh L}(-x) \oplus {\sh O}_Y$};
			\node (a3) at (5,1) {$q^*{\sh K}(-x)$};
			\node (a4) at (6.4,1) {$0$};
			\node (b0) at (-1.4,0) {$0$};
			\node (b1) at (0,0) {${\sh O}_Y(-y)$};
			\node (b2) at (2.5,0) {$(q^\dsd{\sh L})(-y) \oplus {\sh O}_Y$}; 
			\node (b3) at (5,0) {$q^\dsd{\sh L}$}; 
			\node (b4) at (6.4,0) {$0$};
			\draw[->] (b0) to (b1);
			\draw[->] (b1) to (b2);
			\draw[->] (b2) to (b3);
			\draw[->] (b3) to (b4);
			\draw[->] (a1) to (a2);
			\draw[->] (a2) to (a3);
			\draw[->] (a3) to (a4);
			\draw[->] (a1) to (b1);
			\draw[->] (a3) -- (b3);
			\draw[double equal sign distance] (a2) to (b2);	
		\end{tikzpicture}
	\end{equation}
	The central vertical map is still an isomorphism by \eqref{equation: equation 0 in technical prop}. The map $q^*{\sh K}(-x) \to q^\dsd{\sh L}$ we are interested in is the right vertical map in the diagram, and it is surjective by the snake lemma. Twisting, we obtain a surjection $q^*{\sh K} \to  q^*{\sh O}_C(x) \otimes q^\dsd{\sh L}$. By Remark \ref{remark: universal property of projectivizations}, this induces an $S$-morphism $q_\zuzp:Y \to C_\zuzp$ such that $fq_\zuzp = q$. 
	
	Let's first check that $q_\zuzp y = x_\zuzp$. In light of Remark \ref{remark: universal property of projectivizations}, $q_\zuzp y$ corresponds to the surjection $x^*{\sh K} \to x^*{\sh O}_C(x) \otimes y^*q^\dsd{\sh L}$. Earlier, $x_\zuzp$ was defined to correspond to the natural map $x^*{\sh K} \to x^*{\sh O}_C(x)$. Since $y^*\rho$ is nowhere vanishing, the homomorphism ${\sh O}_S \to y^*{\sh J}$ is an isomorphism, and hence so is its twist $x^*{\sh O}_C(x) \to x^*{\sh O}_C(x) \otimes y^*q^\dsd{\sh L}$. It remains to check that the triangle with the three homomorphisms among $x^*{\sh K}$, $x^*{\sh O}_C(x)$, $x^*{\sh O}_C(x) \otimes y^*q^\dsd{\sh L}$ is commutative. To this end, consider the diagram
	\begin{center}
		\begin{tikzpicture}[scale = 1.1]
			\node (sw) at (0,0) {$y^*[(q^\dsd{\sh L})(-y)] \oplus {\sh O}_S$};
			\node (se) at (5,0) {$y^*(q^\dsd{\sh L})$};
			\node (nw) at (0,1) {$x^*{\sh L}(-x) \oplus {\sh O}_S$}; 
			\node (n) at (3,1) {$x^*{\sh K}(-x)$}; 
			\node (ne) at (5,2) {${\sh O}_S$};
			\draw [->] (sw) -- node [midway, above] {$ \begin{bmatrix} 0 & y^*\rho \end{bmatrix} $} (se);
			\draw [->] (sw) -- node [midway, left] {$\simeq$} (nw);
			\draw [->>] (nw) -- (n);
			\draw [->>] (n) -- (ne);
			\draw [->] (ne) -- node [midway, right] {$\simeq$} (se);
			\draw [->>] (n) -- (se); 
			\draw [->] (nw) to [out = 30, in = 180] (ne);
		\end{tikzpicture}
	\end{center}
	in which all arrows in the bottom (trapezoidal) face are obtained as pullbacks along $y$ of homomorphisms in \eqref{equation: 9-term diagram}, the $x^*{\sh K}(-x) \to {\sh O}_S$ arrow is a twist of the map $x^*{\sh K} \to x^*{\sh O}_C(x)$ discussed earlier, the curved arrow is projection to the second factor, and the right vertical arrow is the map discussed earlier which maps $1 \mapsto y^*\rho$. We argue commutativity as follows. First, the top (curved, triangular) face commutes, essentially by construction. Second, the bottom (trapezoidal) face commutes, thanks to \eqref{equation: 9-term diagram}. Third, the convex hull commutes. Indeed, we may check this separately on the submodule $y^*[(q^\dsd{\sh L})(-y)] \oplus 0$ and the section $(0,1)$ of $y^*[(q^\dsd{\sh L})(-y)] \oplus {\sh O}_S$; via either route to $y^*(q^\dsd {\sh L})$, the former is mapped to $0$, and the latter to $y^*\rho$.
	Since $x^*{\sh L}(-x) \oplus {\sh O}_S \to x^*{\sh K}(-x)$ is surjective, the three points above imply that the right triangular face commutes as well. After twisting by $x^*{\sh O}_C(x)$, this is precisely what had to be checked.
	
	Now it is easy to check that the morphism $q_\zuzp:Y \to C_\zuzp$ over $S$ constructed earlier is an isomorphism on geometric fibers, and hence an isomorphism.
	
	It remains to check that $q_\zuzp^*\sigma_\zuzp \mapsto \rho$ under the isomorphism $q_\zuzp^*f^\dsd{\sh L} \simeq q^\dsd{\sh L}$. This isomorphism can be obtained, for instance, from $q_\zuzp^*f^\dsd = q_\zuzp^\dsd f^\dsd = q^\dsd$. Moreover, we have maps $q^*{\sh K}(x) = q_\zuzp^*f^*{\sh K}(-x) \to q_\zuzp^* f^\dsd {\sh L}$ coming from \eqref{equation: calculation of pushforward of upper sigh in second bubbling up} and the adjoint property, and $q^*{\sh K}(-x) \to q^\dsd {\sh L}$, cf. \eqref{equation: important adjoint in technical prop}. We claim that these $3$ homomorphisms fit it a commutative triangle. We will give an indirect argument. The starting point is part \ref{item: description of adjoint in remark: Remarks on construction of second bubbling up} in Remark \ref{remark: Remarks on construction of second bubbling up}. Pulling back along $q_\zuzp$, we see that 
	\[ q^*{\sh K}(-x) = q_\zuzp^*f^*{\sh K}(-x) \to q_\zuzp^*f^\dsd{\sh L} \] 
	corresponds to $q_\zuzp:Y \to C_\zuzp = {\mathbb P}{\sh K}(-x)$ in the sense of Remark \ref{remark: universal property of projectivizations}. On the other hand, by the same remark, $q_\zuzp:Y \to {\mathbb P}{\sh K}(-x)$ also corresponds to \eqref{equation: important adjoint in technical prop}, since \eqref{equation: important adjoint in technical prop} is a twist of the morphism defining $q_\zuzp$ as a morphism to ${\mathbb P}{\sh K}$. It follows that there exists an isomorphism $q_\zuzp^*f^\dsd{\sh L} \simeq q^\dsd{\sh L}$, such that the triangle diagram with this isomorphism and the maps $q^*{\sh K}(-x) \to q_\zuzp^*f^\dsd {\sh L}$ and $q^*{\sh K}(-x) \to q^\dsd {\sh L}$ commutes. Although there is no obvious formal reason why this isomorphism $q_\zuzp^*f^\dsd{\sh L} \simeq q^\dsd{\sh L}$ coincides with the earlier isomorphism $q_\zuzp^*f^\dsd{\sh L} \simeq q^\dsd{\sh L}$, this is true, and the reason is the following. Since all geometry becomes trivial over $C \backslash x$, it is straightforward to check that the two isomorphisms above coincide over $q^{-1}(C\backslash x) \simeq C\backslash x$. However, ${\sh A}ut(q^\dsd{\sh L}) \simeq {\sh O}_Y^\times$ and $\Gamma(Y,{\sh O}_Y^\times) \to \Gamma(Y \backslash y, {\sh O}_Y^\times)$ is injective since $\Gamma(Y,{\sh O}_Y) \to \Gamma(Y \backslash y, {\sh O}_Y)$ is injective by Lemma \ref{lemma: injective restrictions on prestable curves}, so the equality of the isomorphisms a fortiori extends to all of $Y$. This concludes the proof of the desired commutativity. Let $\tau \in \Gamma(Y,q^*{\sh K}(-x))$ be the image of $1 \in \Gamma(Y,{\sh O}_Y)$ under the composition 
	\[ {\sh O}_Y \to q^*({\sh L}(-x) \oplus {\sh O}_C) \to q^*{\sh K}(-x). \]
	Then the image of $\tau$ in $q_\zuzp^*f^\dsd {\sh L}$ is $q_\zuzp^*\sigma_\zuzp^*$ by Construction \ref{construction: second bubbling up construction}, while the image of $\tau$ in $q^\dsd{\sh L}$ is $\rho$ by the commutativity of the right square in \eqref{equation: 9-term diagram}. Then the commutativity of the triangle above concludes the proof.
\end{proof}

	\section{Bubbling down}\label{section: Bubbling down and representability}
	
	Recall the categories of curves ${\cat C}^{c,+}_{g,m,n}$ and ${\cat C}^c_{g,m,n}$ from Definition \ref{definition: categories of curves}.

	\begin{proposition}\label{proposition: statement that the bubbling up functors were constructed}
		Constructions \ref{construction: first bubbling up construction} and \ref{construction: second bubbling up construction} induce functors
		\begin{equation}\label{equation: sequence of raw up functors}
			{\cat C}^{1,+}_{g,m,n} \xrightarrow{\kukp} {\cat C}^{2,+}_{g,m,n} \xrightarrow{\zuzp} {\cat C}^{3,+}_{g,m,n},
		\end{equation}
		which, in turn, restrict to functors
		\begin{equation}\label{equation: sequence of up functors}
			{\cat C}^1_{g,m,n} \xrightarrow{\kukp} {\cat C}^2_{g,m,n} \xrightarrow{\zuzp} {\cat C}^3_{g,m,n}.
		\end{equation}
	\end{proposition}
	
	\begin{proof} We take ${\sh L} = \omega_{C/S}^\vee$ in Construction \ref{construction: first bubbling up construction} (resp. Construction \ref{construction: second bubbling up construction}); then $(\omega_{C/S}^\vee)_\kukp = \omega_{C_\kukp/S_\kukp}^\vee$ (resp. $(\omega_{C/S}^\vee)_\zuzp = \omega_{C_\zuzp/S_\zuzp}^\vee$), by \eqref{equation: definition of line bundle in first bubbling up}, \eqref{equation: definition of line bundle in secod bubbling up}, Definition \ref{definition: dual shriek dual}, and Lemma \ref{lemma: upper shriek of invertible is invertible}. To be very accurate, in Construction \ref{construction: first bubbling up construction}, $\sigma$ is the restriction $\omega_{C/S} \to {\sh O}_C(-w_1-\cdots-w_m)$ of $\phi$ rather than $\phi$ itself. In Construction \ref{construction: second bubbling up construction}, $\sigma$ is simply $\phi$. The apparent problem is that $\overline{x}$ is not specified in Construction \ref{construction: first bubbling up construction}, and neither $\overline{x}$ nor $\overline{w}$ is specified in Construction \ref{construction: second bubbling up construction}. However, the conditions in Definitions \ref{definition: categories of curves} and \ref{definition: more categories of curves} force these sections to be contained in the maximal open subset $U$ such that $f^{-1}(U) \simeq U$ (this is the open in Lemma \ref{lemma: dense f-trivial}), and then lifting them is a trivial matter. Propositions \ref{proposition: first bubbling up is functorial} and \ref{proposition: second bubbling up is functorial} state that $\kukp$ and $\zuzp$ commute with base change. Finally, the functors in \eqref{equation: sequence of raw up functors} restrict to the functors in \eqref{equation: sequence of up functors}. Indeed, condition \ref{item: item 3 in definition: more categories of curves} in Definition \ref{definition: more categories of curves} may be checked on geometric fibers, and is then elementary.
	\end{proof}
	
	The two operations above suffice to construct the degeneration in Theorem \ref{theorem: isotrivial degeneration theorem}. However, they do not suffice to interpret the central fiber in any meaningful way, and, in particular, to prove Theorem \ref{theorem: Pn bar as moduli space}. 
	
	Therefore, in this section, we construct inverse operations to the two operations above, and in particular prove Theorem \ref{theorem: universal curve theorem simplified statement}. Artificial as it may be, it is often feasible to treat the two inverse operations simultaneously.
	
	We start with some geometric preliminaries. Recall that a coherent ${\sh O}_X$-module ${\sh F}$ is \emph{normally generated} if the canonical map $\Gamma(X,{\sh F})^{\otimes k} \to \Gamma(X,{\sh F}^{\otimes k})$ is surjective for all $k \geq 1$ (e.g. \cite[Definition 1.7]{[Kn83]}).
	
	\begin{proposition}\label{proposition: normal generation in geometric case}
		Let $g,m,n,c \geq 0$ be integers such that $2g+2n+m+c \geq 4$ and $c \in \{1,2\}$. Let $(S =\spec {\mathbb K},C,\ldots)$ be an object of ${\cat C}^c_{g,m,n}({\mathbb K})$ with notation as in Definition \ref{definition: more categories of curves}, and
		$$ {\sh L} = \omega_{C} (w_1+\cdots+w_m + 2x_1+\cdots+2x_n + (c-1)x). $$  
		Then:
		\begin{enumerate}
			\item\label{item: item 1 in proposition: normal generation in geometric case} ${\sh L}^{\otimes k}$ is generated by global sections if $k \geq 2$;
			\item\label{item: item 2 in proposition: normal generation in geometric case} $\operatorname{H}^1(C,{\sh L}^{\otimes k}) = 0$ if $k \geq 2$;
			\item\label{item: item 3 in proposition: normal generation in geometric case} ${\sh L}^{\otimes k}$ is normally generated if $k \geq 3$.
		\end{enumerate}
	\end{proposition}

	\begin{proof}
		First, let us prove the proposition in the case when all components of $C$ on which $\phi$ is not identically zero are rational tails. Let $T_1,\ldots,T_a$ ($a \geq 0$) be these components. Let $Y$ be the nodal curve obtained by contracting $T_1,\ldots,T_a$; then we have both a contraction $C \to Y$, and an immersion $j:Y \hookrightarrow C$. Note that 
		\begin{equation}\label{equation: position of w in proposition: normal generation in geometric case}
			w_1,\ldots,w_m \notin T_1 \cup \cdots \cup T_a
		\end{equation}
		unless $a=1$ and $C=T_1$ because $\phi$ must vanish at the point where $T_\alpha$ is attached to the rest of the curve, and hence cannot vanish anywhere else on $T_\alpha$, as $\deg \omega_C^\vee|_{T_\alpha} = 1$. We assume \eqref{equation: position of w in proposition: normal generation in geometric case} holds since the alternative is trivial. We have 
		$ x_1,\ldots,x_n \in T_1 \cup \cdots \cup T_a $
		since $\phi$ cannot vanish at $x_1,\ldots,x_n$. Let $y_1,\ldots,y_r$ be the following points on $Y$:
		\begin{itemize}
			\item $w_1,\ldots,w_m$ (or $j^{-1}(w_1),\ldots,j^{-1}(w_m)$ to be exceedingly accurate);
			\item the images of the tails $T_1,\ldots,T_a$ under the contraction $C \to Y$; and
			\item $x$, but only in case $c=2$ and $x \notin T_1 \cup \cdots \cup T_a$.
		\end{itemize} 
		Thus, $r=m+a+1$ if $c=2$ and $x \notin T_1 \cup \cdots \cup T_a$, and $r=m+a$ otherwise. Note that $y_1,\ldots,y_r$ are smooth points of $Y$, and that 
		$ \omega_Y(y_1 + \cdots + y_r) \simeq j^*{\sh L} $. 
		Then $(Y,y_1,\ldots,y_r) \in \overline{\modu M}_{g,r}({\mathbb K})$ since ${\sh L}$ is ample by condition \ref{item: item 3 in definition: more categories of curves} in Definition \ref{definition: more categories of curves} and the fact that ample restricts to ample. The analogues of items \ref{item: item 1 in proposition: normal generation in geometric case} -- \ref{item: item 3 in proposition: normal generation in geometric case} above for $Y,y_1,\ldots,y_r$ hold by \cite[Theorem 1.8]{[Kn83]}. All items follow inductively, by reattaching the tails $T_1,\ldots,T_a$ back to $Y$ one by one. The inductive step is essentially the argument in the last paragraph in the proof of \cite[Theorem 1.8]{[Kn83]}.
		
		It remains to consider the case when $\phi$ is not identically $0$ on some component $\Sigma$ of $C$ which is not a rational tail. Such cases occur extremely rarely. Note that, if $\Sigma_1 \neq \Sigma_2$ are irreducible components of $C$ such that $\Sigma_1 \cap \Sigma_2 \neq \emptyset$, $\phi|_{\Sigma_1} \equiv 0$, and $\phi|_{\Sigma_2} \not\equiv 0$, then $\Sigma_2$ is a rational tail intersecting $\Sigma_1$. (By the elementary description of fields in \S\ref{section: introduction}, $\phi|_{\Sigma_2}$ regarded as a global section of $T_{\Sigma_2}$ vanishes \emph{doubly} (by the assumption $\phi|_{\Sigma_1} \equiv 0$) at any point of $\Sigma_1 \cap \Sigma_2$ and also vanishes at all other nodes of $C$ on $\Sigma_2$, so the claim follows by simple degree considerations.) However, this scenario with $\Sigma_2 = \Sigma$ is ruled out, so the only possible conclusion is that $\phi$ doesn't vanish identically on any irreducible component of $C$. The only nodal curves $C$ which admit (logarithmic) vector fields which don't vanish identically on any irreducible component are smooth genus $1$ curves, rational chains, and rational cycles, so it remains to check all items in all these cases. If $C$ is irreducible, then all claims amount to well-known facts about elliptic curves. In the remaining cases (rational cycles or chains), it straightforward to check that there exist smooth, \emph{distinct} points $p_1,\ldots,p_N \in C$ such that ${\sh O}_C(w_1+\cdots+w_m + 2x_1+\cdots+2x_n + (c-1)x) \simeq {\sh O}_C(p_1+\cdots+p_N) $, or equivalently, ${\sh L} \simeq \omega_C(p_1+\cdots+p_N)$. Since ${\sh L}$ is ample, $(C,p_1,\ldots,p_N) \in \overline{{\modu M}}_{g,N}({\mathbb K})$, and the claims follow from \cite[Theorem 1.8]{[Kn83]}.
	\end{proof}
	
	\begin{definition}\label{definition: bubbling down}
		Let $g,m,n,c \geq 0$ be integers such that $2g+2n+m+c \geq 5$ and $c \in \{2,3\}$. Let ${\obj o}=(S,C,\pi,\overline{w},\overline{x},x,\phi)$ be an object of ${\cat C}_{g,m,n}^c$. A \emph{bubbling down} of ${\obj o}$ is an object ${\obj o}_\down = (S,C_\down,\pi_\down,\overline{w}_\down,\overline{x}_\down,x_\down,\phi_\down)$ of ${\cat C}^{c-1}_{g,m,n}$, together with a morphism $h:C \to C_\down$ with property R, such that:
		\begin{enumerate}
			\item\label{item: item 1 in definition: bubbling down} $\pi_\down h = \pi$, $hx_j=x_{j,\down}$ for all $j$, $hx=x_\down$, $hw_i=w_{i,\down}$ for all $i$; and
			\item\label{item: item 2 in definition: bubbling down} $\phi_\down$ is the image of $\phi$ under  
			$ h_*\omega^\vee_{C/S} \to \omega_{C_\down/S}^\vee$,
			which is a special case of \eqref{equation: pushing forward logarithmic vector fields in general}, in light of Definition \ref{definition: dual shriek dual} and Lemma \ref{lemma: upper shriek of invertible is invertible}.
		\end{enumerate} 
	\end{definition}
	
	\begin{remark}\label{remark: alternative statement of condition 2}
		If $\phi$ and $\phi_\down$ are regarded as sections of $\omega^\vee_{C/S}(- w_1 - \cdots - w_n)$ and $\omega_{C_\down/S}^\vee(- w_{1,\down} - \cdots - w_{n,\down})$, condition \ref{item: item 2 in definition: bubbling down} in Definition \ref{definition: bubbling down} is equivalent to asking that the homomorphism
		$ h_*\omega^\vee_{C/S}(- \sum_{i=1}^n w_i) \to \omega_{C_\down/S}^\vee(- \sum_{i=1}^n w_{i,\down})$,
		which is also an example of \eqref{equation: pushing forward logarithmic vector fields in general}, maps $\phi$ to $\phi_\down$.
	\end{remark}
	
	A base change of a bubbling down is a bubbling down by Proposition \ref{proposition: pushing forward vector fields in general}.
	
	\begin{proposition}\label{proposition: existence and uniqueness of bubbling down}
		Let $g,m,n,c$ as in Definition \ref{definition: bubbling down}. The bubbling down of any object of ${\cat C}_{g,m,n}^c$ exists and is unique up to unique isomorphism in ${\cat C}_{g,m,n}^{c-1}$ compatible with the maps with property R.
	\end{proposition}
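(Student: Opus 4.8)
The plan is to build $C_\down$ as a relative Proj attached to data intrinsic to $(C,{\sh M})$, and to extract both existence and uniqueness from the canonicity of that construction. Write ${\sh L} = \omega_{C/S}(w_1+\cdots+w_m+2x_1+\cdots+2x_n+(c-1)x)$ for the $\pi$-ample bundle carried by ${\obj o}\in{\cat C}^c_{g,m,n}$, and set
\[
{\sh M} = {\sh L}(-x) = \omega_{C/S}(w_1+\cdots+w_m+2x_1+\cdots+2x_n+(c-2)x).
\]
As recorded just before the statement, on a geometric fibre ${\sh M}$ is nef, of degree $0$ precisely on the (at most one) component $\Sigma$ to be contracted and ample elsewhere; the classical contraction $h_{\overline s}$ of $\Sigma$ produces a prestable curve $C_{\down,\overline s}$ which, with the images of the markings and the pushed-forward field, is an object of ${\cat C}^{c-1}_{g,m,n}$ over $\overline s$. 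Since the hypothesis $2g+2n+m+c\ge 5$ is exactly $2g+2n+m+(c-1)\ge 4$, Proposition \ref{proposition: normal generation in geometric case} applies to each such fibre with parameter $c-1$, its ample bundle being the descent of ${\sh M}$; pulling its cohomology back through $h_{\overline s}$ (for which $Rh_{\overline s,*}{\sh O}={\sh O}$) yields, on $C_{\overline s}$, that ${\sh M}^{\otimes k}$ is globally generated and $H^1$-acyclic for $k\ge 2$ and normally generated for $k\ge 3$.

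With these fibrewise facts the family-level construction is standard, following \cite{[Kn83], [BM96]}. Cohomology and base change make $\pi_*{\sh M}^{\otimes k}$ locally free and compatible with base change for $k\ge 2$. Taking $k_0\ge 3$, normal generation (item \ref{item: item 3 in proposition: normal generation in geometric case}) presents the $k_0$-Veronese section algebra as a quotient generated in degree one, realising $C_\down := \operatorname{Proj}_S\bigoplus_{j\ge0}\pi_*{\sh M}^{\otimes jk_0}$ as a closed subscheme of ${\mathbb P}(\pi_*{\sh M}^{\otimes k_0})$; it is flat over $S$ with geometric fibres the $C_{\down,\overline s}$ above, and its formation commutes with base change. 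In particular $\pi_\down\colon C_\down\to S$ is a prestable curve of genus $g$, the surjection $\pi^*\pi_*{\sh M}^{\otimes k_0}\twoheadrightarrow{\sh M}^{\otimes k_0}$ furnished by global generation induces a contraction $h\colon C\to C_\down$, and Lemma \ref{remark: checking rational contraction of fibers} shows $h$ is a rational contraction since each $h_{\overline s}$ contracts a single ${\mathbb P}^1$. Because ${\sh M}$ is trivial on the fibres of $h$, the descent ${\sh L}_\down := h_*{\sh M}$ is a $\pi_\down$-ample line bundle with $h^*{\sh L}_\down\simeq{\sh M}$.

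I then define the remaining data and check membership in ${\cat C}^{c-1}_{g,m,n}$. The sections $w_{i,\down}=hw_i$, $x_{j,\down}=hx_j$, $x_\down=hx$ avoid the contracted node fibrewise, so they are disjoint smooth sections in the required pattern; when $c-1=2$ one verifies on fibres that $\pi_\down$ is smooth at $x_\down$ with $w_{i,\down}\ne x_\down$. The field $\phi_\down$ is the image of $\phi$ under $h_*\omega_{C/S}^\vee(-\textstyle\sum w_i)\to\omega_{C_\down/S}^\vee(-\sum w_{i,\down})$ of Proposition \ref{proposition: pushing forward vector fields in general} (Remark \ref{remark: alternative statement of condition 2}); since $h^*\omega_{C_\down/S}(\cdots)\simeq\omega_{C/S}(\cdots)$ on fibres, Lemma \ref{lemma: critrion for log differential to be isomorphism} makes this map an isomorphism compatible with base change. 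The equalities $w_{i,\down}^*\phi_\down=0$ and "$x_{j,\down}^*\phi_\down$ an isomorphism" follow from the corresponding facts for $\phi$, as $\phi$ and $\phi_\down$ agree away from $\Sigma$, where all the $w_i,x_j$ live, and injectivity of restriction (Lemma \ref{lemma: injective restrictions on prestable curves}) upgrades these open checks. Finally $\omega_{C_\down/S}(w_{1,\down}+\cdots+2x_{n,\down}+(c-2)x_\down)$ pulls back under $h$ to ${\sh M}$ as well, so by injectivity of $h^*$ on $\mathrm{Pic}$ (Remark \ref{remark: pushforward of pullback of line bundles}) it coincides with ${\sh L}_\down$ and is $\pi_\down$-ample.

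For uniqueness, observe that $C_\down$ was built from $(C,{\sh M})$ alone. Given any bubbling down $({\obj o}_\down,h)$ in the sense of Definition \ref{definition: bubbling down}, its ample bundle ${\sh L}_\down=\omega_{C_\down/S}(w_{1,\down}+\cdots+2x_{n,\down}+(c-2)x_\down)$ satisfies $h^*{\sh L}_\down\simeq{\sh M}$ and $h_*{\sh O}_C={\sh O}_{C_\down}$, so the projection formula gives $\pi_{\down,*}{\sh L}_\down^{\otimes k}\simeq\pi_*{\sh M}^{\otimes k}$ as graded algebras, whence a canonical isomorphism $C_\down\simeq\operatorname{Proj}_S\bigoplus_k\pi_*{\sh M}^{\otimes k}$ identifying $h$ with the contraction above; compatibility with the sections and with $\phi_\down$ is forced by conditions \ref{item: item 1 in definition: bubbling down} and \ref{item: item 2 in definition: bubbling down} of Definition \ref{definition: bubbling down}, and its own uniqueness again by Lemma \ref{lemma: injective restrictions on prestable curves}. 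The main obstacle is precisely the family-level input of the second paragraph: showing that the relative Proj is flat with the expected nodal fibres and that its formation commutes with base change. This is exactly where the acyclicity and normal generation of Proposition \ref{proposition: normal generation in geometric case}, and the numerical hypothesis $2g+2n+m+c\ge 5$ that licenses its use on the contracted fibres, do all the work.
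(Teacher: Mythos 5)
Your existence half is essentially the paper's argument: the same $\operatorname{Proj}_S$ of the section algebra of ${\sh M}$, with the fibrewise generation/vanishing/normal-generation statements obtained by applying Proposition \ref{proposition: normal generation in geometric case} to the elementary contraction of each geometric fibre and pulling back through it. The genuine gap is in the uniqueness half, and it also infects your ampleness check. For an \emph{arbitrary} bubbling down $({\obj o}_\down,h)$ you assert $h^*{\sh L}_\down\simeq{\sh M}$, and everything afterwards (the graded-algebra identification $\pi_{\down,*}{\sh L}_\down^{\otimes k}\simeq\pi_*{\sh M}^{\otimes k}$, hence the canonical identification of $C_\down$ with the Proj) hangs on this. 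But Definition \ref{definition: bubbling down} does not give you this: it only says that $h$ is a rational contraction compatible with the sections and the field. The pullback identity encodes precisely the statement that $h$ contracts, on each geometric fibre, exactly the components on which ${\sh M}$ has degree zero; this must be \emph{deduced} from the requirement that ${\obj o}_\down$ lies in ${\cat C}^{c-1}_{g,m,n}$ (ampleness of its twisted dualizing sheaf, disjointness and smoothness of its sections, the conditions on $\phi_\down$), and it is where the real content of uniqueness sits. This is the step the paper's proof does first (the independence of $h^{-1}(h(x))$ of the choice of bubbling down, checked on geometric fibres), before invoking \cite[Lemma 2.2]{[BM96]} to produce the comparison morphism. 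Moreover, even granting the fibrewise identity, in families one only gets $h^*{\sh L}_\down\simeq{\sh M}\otimes\pi^*{\sh N}$ for some invertible ${\sh N}$ on $S$, since fibrewise triviality of $h^*{\sh L}_\down\otimes{\sh M}^\vee$ determines it only up to a pullback from the base. That twist is harmless for the Proj comparison, but it breaks your Pic-injectivity argument identifying $\omega_{C_\down/S}(\cdots)$ with $h_*{\sh M}$ as written; and you cannot close it by quoting Lemma \ref{lemma: critrion for log differential to be isomorphism}, because Situation \ref{situation: situation of logarithmic differential using duality} requires pairwise disjoint multiplicity-one sections, while the relevant divisor contains $2x_1+\cdots+2x_n$ with possibly colliding $x_j$. (The paper sidesteps all of this by checking $\pi_\down$-ampleness directly on geometric fibres.)

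A smaller but genuine error points at the same oversight: you claim Lemma \ref{lemma: critrion for log differential to be isomorphism} makes $h_*\omega^\vee_{C/S}(-w_1-\cdots-w_m)\to\omega^\vee_{C_\down/S}(-w_{1,\down}-\cdots-w_{m,\down})$ an isomorphism. For $c=3$ this is false whenever a component is actually contracted: that component is a rational tail containing $x$ and none of the $w_i$, so $\omega_{C_{\overline s}}(w_{1,\overline s}+\cdots+w_{m,\overline s})$ has degree $-1$ on it, the hypothesis of Lemma \ref{lemma: critrion for log differential to be isomorphism} fails, and the pushforward genuinely has a kernel (fields vanishing off the tail); this non-invertibility is exactly why `inflating at zero vector' is a nontrivial operation to invert. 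The slip is not load-bearing where you use it, since Definition \ref{definition: bubbling down} only needs $\phi_\down$ to be the image of $\phi$, and base-change compatibility of that map holds by Proposition \ref{proposition: pushing forward vector fields in general} with no isomorphism assertion; but together with the main point above it shows the proposal treats as formal consequences of the definitions exactly the geometric facts the proof is required to establish.
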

	
	\begin{proof} To prove existence, we follow the proof of \cite[Proposition 3.10]{[BM96]} extremely closely (which in turn relates to \cite[\S1]{[Kn83]}). Let
		\begin{equation*}
			{\sh L} = \omega_{C/S}( w_1+\cdots+w_m + 2x_1 + \cdots + 2x_n + (c-2)x ),
		\end{equation*}
		and define
		\begin{equation}\label{equation: construction of bubbling down as Proj}
			C_\down = \mathrm{Proj}_S\mathop{}{\sh S} \quad \text{where} \quad {\sh S} = \bigoplus_{k \geq 0} {\sh S}_k = \bigoplus_{k \geq 0} \pi_*{\sh L}^{\otimes k},
		\end{equation}
		with structure map $\pi_\down:C_\down \to S$.
		
		First, we analyse in depth the case $S = \spec {\mathbb K}$. An irreducible component $\Sigma$ of $C$ is a \emph{component to be contracted} if $\Sigma \simeq {\mathbb P}^1$, $x \in \Sigma$, $x_1,\ldots,x_n \notin \Sigma$, and the following conditions dependent on $c$ are also satisfied:
		\begin{itemize}
			\item if $c=3$, then $\Sigma$ contains precisely one node of $C$, and $w_1,\ldots,w_m \notin \Sigma$;
			\item if $c=2$, then \emph{either}
			\begin{enumerate}
				\item $\Sigma$ contains precisely one node of $C$, and one of $w_1,\ldots,w_m$; \emph{or}
				\item $\Sigma$ contains precisely two nodes of $C$, and $w_1,\ldots,w_m \notin \Sigma$.
			\end{enumerate}
		\end{itemize}
		Clearly, there is at most one component to be contracted. We construct $\chi:C \to Y$ which contracts the component to be contracted (if there is one) and nothing else, such that $Y$ is nodal. The construction is by elementary means, similar to \cite{[BM96]}. Then, $Y$ with the markings $\chi(\overline{w})$, $\chi(\overline{x})$, $\chi(x)$, and the image of $\phi$ under the homomorphism $\chi_* \omega_C^\vee \to \omega_Y^\vee$ (an example of \eqref{equation: pushing forward logarithmic vector fields in general}) is a bubbling down of the given data (note also that $\chi$ has property R). Let
		\[ {\sh J} = \omega_{Y}(  \chi(w_1) + \cdots + \chi(w_m) + 2 \chi(x_1)+\cdots+ 2 \chi(x_n) + (c-2)\chi(x) ). \]
		
		\begin{claim}\label{claim: various elementary things about C to Y}
			For all $k \geq 0$, $\chi^*{\sh J}^{\otimes {k}} \simeq {\sh L}^{\otimes k}$, ${\sh J}^{\otimes {k}} \simeq \chi_*{\sh L}^{\otimes {k}}$, and $R^1\chi_*{\sh L}^{\otimes k} = 0$.
		\end{claim}
		
		\begin{proof}
			The proof is straightforward. We will only mention that the second claim follows from the first claim and Remark \ref{remark: pushforward of pullback of line bundles}.
		\end{proof}
		
		\begin{claim}\label{claim: claim inside of proposition: existence and uniqueness of bubbling down}
			The following hold:
			\begin{enumerate}
				\item\label{item: item 1 in claim: claim inside of proposition: existence and uniqueness of bubbling down} ${\sh L}^{\otimes k}$ is generated by global sections if $k \geq 2$;
				\item\label{item: item 2 in claim: claim inside of proposition: existence and uniqueness of bubbling down} $\operatorname{H}^1(C,{\sh L}^{\otimes k}) = 0$ if $k \geq 2$;
				\item\label{item: item 3 in claim: claim inside of proposition: existence and uniqueness of bubbling down} ${\sh L}^{\otimes k}$ is normally generated if $k \geq 3$.
			\end{enumerate}
		\end{claim}
		
		\begin{proof}
			Apply Proposition \ref{proposition: normal generation in geometric case} with $c-1$ in the role of $c$, $Y$ in the role of $C$, and ${\sh J}$ in the role of ${\sh L}$ from the statement of the proposition. The claim follows from this and Claim \ref{claim: various elementary things about C to Y}. (Please compare also with Lemma 1.6, Theorem 1.8, and Corollary 1.10 in \cite{[Kn83]} on one hand, and with Proposition 3.9 and Lemmas 3.11 and 3.12 in \cite{[BM96]}, on the other hand.)
		\end{proof}
		
		\begin{claim}\label{claim: recovering bubbling down in elementary case}
			With $C_\down$ defined as in \eqref{equation: construction of bubbling down as Proj}, $Y \simeq C_\down$. Moreover, in the language of \cite[(3.7.1)]{[EGAII]}, the composition $C \xrightarrow{\chi} Y \simeq C_{\down}$ coincides with the morphism associated to ${\sh L}$ and ${\sh O}_C \otimes \bigoplus_{k \geq 0} \Gamma(C,{\sh L}^{\otimes k}) \to \bigoplus_{k \geq 0} {\sh L}^{\otimes k}$. 
		\end{claim}
		
		\begin{proof}
			By \eqref{equation: construction of bubbling down as Proj}, Claim \ref{claim: various elementary things about C to Y}, \cite[Proposition (4.6.3)]{[EGAII]}, and the ampleness of ${\sh J}$,
			\[
			C_\down = \operatorname{Proj}\mathop{} \bigoplus_{k \geq 0} \Gamma(C,{\sh L}^{\otimes k}) = \operatorname{Proj}\mathop{} \bigoplus_{k \geq 0} \Gamma(Y,\chi_*{\sh L}^{\otimes k}) = \operatorname{Proj}\mathop{} \bigoplus_{k \geq 0} \Gamma(Y,{\sh J}^{\otimes k}) = Y.
			\]
			The second claim is trivial if $\chi$ is an isomorphism, so let us assume this is not the case, and let $\Sigma \subset C$ be the component to be contracted. By Claim \ref{claim: various elementary things about C to Y}, we have maps ${\sh O}_C \otimes \Gamma(Y,{\sh J}^{\otimes k}) = {\sh O}_C \otimes \Gamma(C,{\sh L}^{\otimes k}) \to {\sh L}^{\otimes k} $ for all $k \geq 0$. Then, by \cite[(3.7.1)]{[EGAII]}, the claim amounts to the statement that the composition
			\begin{equation}\label{equation: equation 1 in proof of claim: recovering bubbling down in elementary case} 
				C= \mathrm{Proj}_C \bigoplus_{k \geq 0} {\sh L}^{\otimes k} \xrightarrow{j} \mathrm{Proj}_C \bigoplus_{k \geq 0} {\sh O}_C \otimes \Gamma(Y,{\sh J}^{\otimes k}) = C \times Y \to Y 
			\end{equation}
			equals $\chi$. Note that $j$ is a closed immersion (in particular, defined everywhere) by \cite[\href{https://stacks.math.columbia.edu/tag/07ZK}{Tag 07ZK}]{[stacks]}, the general fact that $\mathrm{Proj}_X \bigoplus_{k \geq 0} {\sh A}_k \cong \mathrm{Proj}_X \bigoplus_{k \geq 0} {\sh A}_{k_0k}$ ($X$ is a scheme, $\bigoplus_{k \geq 0} {\sh A}_k$ is a graded ${\sh O}_X$-algebra, and $k_0 > 0$ is an integer), the graded ring isomorphism $\bigoplus_{k \geq 0} \Gamma(C,{\sh L}^{\otimes 3k}) \cong \bigoplus_{k \geq 0} \Gamma(Y,{\sh J}^{\otimes 3k})$, and item \ref{item: item 3 in claim: claim inside of proposition: existence and uniqueness of bubbling down} in Claim \ref{claim: claim inside of proposition: existence and uniqueness of bubbling down}. The fact that \eqref{equation: equation 1 in proof of claim: recovering bubbling down in elementary case} coincides with $\chi$ follows from the easily checked fact that the restriction of \eqref{equation: equation 1 in proof of claim: recovering bubbling down in elementary case} to $C \backslash \Sigma$ coincides with $C \backslash \Sigma \simeq Y \backslash \chi(\Sigma) \hookrightarrow Y$. Indeed, the explicit description of $Y$ and $\chi$ clarifies that this a priori weaker statement suffices.
		\end{proof}
		
		Let us now consider the general case, when the base $S$ is arbitrary. 
		
	\begin{claim}\label{claim: various things about bubbling down over any base}
		The following hold:
		\begin{enumerate}
			\item\label{item: item 1 inside proof of proposition: existence and uniqueness of bubbling down} the formation of $C_\down$ commutes with base change;
			\item\label{item: item 2 inside proof of proposition: existence and uniqueness of bubbling down} $\pi_\down$ is flat and projective;
			\item\label{item: item 3 inside proof of proposition: existence and uniqueness of bubbling down} the morphism associated to ${\sh L}$ and $\pi^*{\sh S} \to \bigoplus_{k \geq 0} {\sh L}^{\otimes k}$ (in the language of \cite[(3.7.1)]{[EGAII]}) is defined everywhere; and
			\item\label{item: item 4 inside proof of proposition: existence and uniqueness of bubbling down} the formation of the morphism $h:C \to C_\down$ from item \ref{item: item 3 inside proof of proposition: existence and uniqueness of bubbling down} commutes with base change.
		\end{enumerate}
	\end{claim}
	
	\begin{proof}
		The proof of the first three items is entirely analogous to the proofs of Claims 1--3 inside the proof of \cite[Proposition 3.10]{[BM96]}, but we will briefly review the arguments for the reader's convenience. (Claim \ref{claim: claim inside of proposition: existence and uniqueness of bubbling down} ensures that the required hypotheses are satisfied, so the arguments in \cite{[BM96]} can be repeated almost verbatim -- indeed, the analogue of item (4) in Lemma 3.12 in loc. cit. is not used in Claims 1--3.) As in \cite{[BM96]}, the idea is to take $d \gg 0$ (in fact, $d=3$ suffices), and to use the fact that 
		\begin{equation}
			C_\down = \mathrm{Proj}_S {\sh S}^{(d)}, \quad \text{where} \quad {\sh S}^{(d)} = \bigoplus_{d|k} {\sh S}_k.
		\end{equation}
		First, note that whenever $d|k$ (or more generally, either $k=0$ or $k \gg 0$), $ {\sh S}_k$ is locally free and its formation commutes with base change. Indeed, this is trivial for $k=0$ since $\pi_*{\sh O}_C = {\sh O}_S$, and follows from Claim \ref{claim: claim inside of proposition: existence and uniqueness of bubbling down} and the `cohomology and base change' theorem otherwise. Hence, the formation of $C_\down$ commutes with base change, and, as in \cite{[BM96]}, flatness of $\pi_\down$ follows from local freeness. 
		
		Moreover, if $d|k$, the homomorphism
		\begin{equation}\label{equation: map that will turn out surjective in proof of claim: various things about bubbling down over any base} 
			\pi^* {\sh S}_k = \pi^*\pi_* {\sh L}^{\otimes k} \to {\sh L}^{\otimes k}
		\end{equation}
		is surjective. Indeed, we have established that $\pi_* {\sh L}^{\otimes k}$ commutes with base change, so the surjectivity may be checked on (geometric) fibers, when it follows from item \ref{item: item 1 in claim: claim inside of proposition: existence and uniqueness of bubbling down} in Claim \ref{claim: claim inside of proposition: existence and uniqueness of bubbling down}. 
		
		We claim that ${\sh S}^{(d)}$ is generated by ${\sh S}_1^{(d)}= {\sh S}_d$ over ${\sh S}^{(d)}_0 = {\sh O}_S$. Since all summands are locally free and commute with base change, in view of Nakayama's lemma, the claim can be checked on fibers and hence on geometric fibers, when it follows from item \ref{item: item 3 in claim: claim inside of proposition: existence and uniqueness of bubbling down} in Claim \ref{claim: claim inside of proposition: existence and uniqueness of bubbling down}. It also follows that the graded ${\sh O}_C$-algebra $\pi^*{\sh S}^{(d)}$ is generated by $\pi^*{\sh S}^{(d)}_1=\pi^*{\sh S}_d$ over ${\sh O}_C$. Projectivity of $\pi_\down$ follows from the claim at the beginning of this paragraph and \cite[Proposition 5.5.1]{[EGAII]}, as in \cite{[BM96]}.
		
		Unwinding the meaning of the definition of $h$ (using \cite[(3.7.1)]{[EGAII]}), we see that $h$ is equal to the composition
		\begin{equation}\label{equation: explicit writing of contraction as morphism associated to something}
			C = \mathrm{Proj}_C \bigoplus_{d|k}  {\sh L}^{\otimes k} \xrightarrow{j} \mathrm{Proj}_C \pi^* {\sh S}^{(d)} = C \times_S C_\down \to C_\down.
		\end{equation}
		The fact that the map denoted $j$ is defined everywhere (in fact, a closed immersion) follows from the surjectivity of \eqref{equation: map that will turn out surjective in proof of claim: various things about bubbling down over any base}, the fact that the graded ${\sh O}_C$-algebra $\pi^*{\sh S}^{(d)}$ is generated by $\pi^*{\sh S}_1^{(d)}$ over ${\sh O}_C$, and \cite[\href{https://stacks.math.columbia.edu/tag/07ZK}{Tag 07ZK}]{[stacks]}. Then, it is clear from \eqref{equation: explicit writing of contraction as morphism associated to something} that the formation of $h$ commutes with base change.
	\end{proof}
		
		By Claims \ref{claim: recovering bubbling down in elementary case} and \ref{claim: various things about bubbling down over any base}, $\pi_\down:C_\down \to S$ is a prestable curve (recall that $Y$ is nodal by construction), and, using Lemma \ref{remark: checking rational contraction of fibers} as well, $h$ has property R.
		
		It is straightforward to construct the rest of the data in the bubbling down. For the sections of $\pi_\down$, we compose with $h$, i.e. $x_\down = hx$, etc., then we simply define $\phi_\down$ to be the image required by Definition \ref{definition: bubbling down}. It remains to check that the data defines indeed an object of ${\cat C}_{g,m,n}^{c-1}$, i.e. that
		$ \omega_{C_\down}(  w_{1,\down} + \cdots + w_{m,\down} + 2 x_{1,\down}+\cdots+ 2 x_{n,\down} + (c-2)x_{\down} ) $
		is $\pi_\down$-ample, and, if $c=3$, that $\pi_\down$ is smooth at $x_\down$ and $w_{i,\down} \cap x_\down = \emptyset$ for all $i$. Both can be checked on geometric fibers, completing the proof of existence.
		
		It remains to prove uniqueness. First, we claim that $U:=h^{-1}(C_\down \backslash x_\down) = C \backslash h^{-1}(h(x))$ is independent of the choice of bubbling down. This claim can obviously be checked on fibers. In fact, it can be checked even on geometric fibers, when it becomes elementary. Indeed, if $s \in S$ and $\overline{s} \to S$ is the corresponding geometric point, then the surjectivity of $C_{\overline{s}} \to C_s$ can be used to show that, set-theoretically, $h^{-1}_s(h_s(x_s))$ is the image of $h^{-1}_{\overline{s}}(h_{\overline{s}}(x_{\overline{s}}))$ under $C_{\overline{s}} \to C_s$. Second, by Lemma \ref{lemma: dense f-trivial} part \ref{item 3: lemma: dense f-trivial}, $h$ restricts to an isomorphism $h|_U:U \to C_\down \backslash x_\down$. Let 
		$$ {\obj o}'_\down = (S,C'_\down,\pi'_\down,\overline{w}'_\down,\overline{x}'_\down,x'_\down,\phi'_\down) \quad \text{with} \quad h':C \to C'_\down $$
		be another bubbling down of ${\obj o}$. By \cite[Lemma 2.2]{[BM96]}, there exists an $S$-morphism $r:C_\down \to C'_\down$ such that $rh=h'$. It is straightforward to check that $r$ must be an isomorphism on the geometric fibers, and hence simply an isomorphism by flatness (this trick is used frequently in \cite{[Kn83]}). The compatibility of $r$ with all sections is elementary. In particular, if $U$ is as above, we have a commutative diagram
		\begin{center}
			\begin{tikzpicture}[scale = 1]
				\node (lu) at (2,1.6) {$U$};
				\node (ru) at (6,1.6) {$C$};
				\node (lc) at (0.3,0.8) {$C_\down \backslash x_\down$};
				\node (rc) at (4.3,0.8) {$C_\down$};
				\node (ld) at (2,0) {$C_\down' \backslash x_\down'$};
				\node (rd) at (6,0) {$C_\down'$};
				
				\draw [right hook->] (lu) to (ru);
				
				\draw [right hook->] (ld) to (rd);
				
				\draw [->] (lu) --  (lc);
				\draw [->] (ru) -- node[right, midway] {$h'$} (rd);
				\draw [->] (ru) -- node[below, midway] {$h$} (rc);
				\draw [->] (rc) -- node[above, midway] {$r$} (rd);
				\draw [->] (lc) --  (ld);
				\draw [right hook->] (lc) to (rc);
				\filldraw[white] (2,0.8) circle (3pt);
				
				\draw [->] (lu) --  (ld);
			\end{tikzpicture}
		\end{center}
		in which the left triangular face consists of isomorphisms only. It remains to check that the isomorphism $r$ is compatible with $\phi_\down$ and $\phi'_\down$, that is, that $\phi_\down$ is mapped to $\phi'_\down$ by the isomorphism $\Gamma(C_\down \backslash x_\down,\omega^\vee_{C_\down/S}) \simeq \Gamma(C_\down' \backslash x_\down',\omega^\vee_{C'_\down/S'})$ induced by $r$. The commutative diagram above clearly induces a commutative diagram on the level of vector fields. The map
		$ \Gamma(C_\down',\omega^\vee_{C_\down'/S}) \to \Gamma(C_\down' \backslash x_\down',\omega^\vee_{C_\down'/S}) $ is injective by Lemma \ref{lemma: injective restrictions on prestable curves}.
		\begin{center}
			\begin{tikzpicture}[scale = 1]
				\node (lu) at (2,2) {$\Gamma(U,\omega^\vee_{C/S})$};
				\node (ru) at (6+1,2) {$\Gamma(C,\omega^\vee_{C/S})$};
				\node (lc) at (0.3-1,1) {$\Gamma(C_\down \backslash x_\down,\omega^\vee_{C_\down/S})$};
				\node (rc) at (4.3-1+1,1) {$\Gamma(C_\down,\omega^\vee_{C_\down/S})$};
				\node (ld) at (2,0) {$\Gamma(C_\down' \backslash x_\down',\omega^\vee_{C'_\down/S'})$};
				\node (rd) at (6+1,0) {$\Gamma(C_\down',\omega^\vee_{C'_\down/S'})$};
				
				\draw [<-] (lu) to (ru);
				
				\draw [<-left hook] (ld) to (rd);
				
				\draw [->] (lu) --  (lc);
				\draw [->] (ru) -- node[right, midway] {\eqref{equation: pushing forward logarithmic vector fields in general}} (rd);
				\draw [->] (ru) -- node[below, midway] {\eqref{equation: pushing forward logarithmic vector fields in general}} (rc);
				\draw [->] (rc) -- (rd);
				\draw [->] (lc) --  (ld);
				\draw [<-left hook] (lc) to (rc);
				\filldraw[white] (2,1) circle (3pt);
				
				\draw [->] (lu) --  (ld);
			\end{tikzpicture}
		\end{center}
		The claim then follows by a simple diagram chase. Uniqueness of the isomorphism is equivalent to triviality of all `automorphisms' of any given bubbling down, and, in fact, there are even no nontrivial automorphisms of $C_\down$ compatible with $h$.
	\end{proof}
	
	If ${\obj o}$ is an object of ${\cat C}_{g,m,n}^1$ (resp. ${\cat C}_{g,m,n}^2$), then ${\obj o}$ together with the morphism $f:C_\kukp \to C$ (resp. $f:C_\zuzp \to C$) provided by Construction \ref{construction: first bubbling up construction} (resp. Construction \ref{construction: second bubbling up construction}), satisfies all the requirements of Definition \ref{definition: bubbling down}. Note that besides minor verifications, this also relies on the quite nontrivial Lemma \ref{lemma: vector fields match in second bubbling up} (keeping Remark \ref{remark: alternative statement of condition 2} in mind). Hence Proposition \ref{proposition: existence and uniqueness of bubbling down} has the following important consequence. 
	
	\begin{corollary}\label{corollary: down of up}
		Let $g,m,n,c \geq 0$ be integers such that $2g+2n+m+c \geq 4$ and $c \in \{1,2\}$, and let ${\cat C}^c_{g,m,n} \xrightarrow{\up} {\cat C}^{c+1}_{g,m,n}$, such that, with the notation in \eqref{equation: sequence of up functors},
		\[  \up = \begin{cases}
			\kukp & \text{if $c=1$} \\
			\zuzp & \text{if $c=2$}. \\
		\end{cases} \]
		Then, the composition $\down \circ \up$ is naturally isomorphic to the identity on ${\cat C}^c_{g,m,n}$.
	\end{corollary}
	
	It remains to prove the other direction.
	
	\begin{proposition}\label{proposition: up of down}
		Let $g,m,n,c \geq 0$ be integers such that $2g+2n+m+c \geq 5$ and $c \in \{2,3\}$, and let ${\cat C}^{c-1}_{g,m,n} \xrightarrow{\up} {\cat C}^c_{g,m,n}$, such that, with the notation in \eqref{equation: sequence of up functors},
		\[  \up = \begin{cases}
			\kukp & \text{if $c=2$} \\
			\zuzp & \text{if $c=3$}. \\
		\end{cases} \]
		Then, the composition $\up \circ \down$ is naturally isomorphic to the identity on ${\cat C}_{g,m,n}^c$.
	\end{proposition}
	
	\begin{proof}
		We apply Proposition \ref{proposition: towards up of down = id from Knudsen} if $c=2$, respectively Proposition \ref{proposition: towards up of down = id} if $c=3$, with the current $(C_\down,\ldots)$, $(C,\ldots)$, and $h$ in the roles of $(C,\ldots)$, $(Y,\ldots)$, and $q$ from the respective propositions. In either case (Proposition \ref{proposition: towards up of down = id from Knudsen} \emph{or} Proposition \ref{proposition: towards up of down = id}), the first condition is automatic by construction, while the second condition follows from the fact that $\up$ and $\down$ are inverses when $S = \spec {\mathbb K}$, which is elementary (by Examples \ref{example: geometric Knudsen stabilization with sections of line bundles} and \ref{example: geometric inflating at nonsingular zero of a section}, Definition \ref{definition: bubbling down}, and the basic considerations on components to be contracted in the proof of Proposition \ref{proposition: existence and uniqueness of bubbling down}). 
	\end{proof}
	
	Corollary \ref{corollary: down of up} and Proposition \ref{proposition: up of down} complete the proof of Theorem \ref{theorem: the main universal curve theorem}, and hence also of Theorem \ref{theorem: universal curve theorem simplified statement}.
	
	\section{Constructing moduli spaces}\label{section: moduli spaces}
	
	We now turn our attention towards Theorems \ref{theorem: Pn bar as moduli space} and \ref{theorem: isotrivial degeneration theorem}.
	
	\subsection{Coresidues}\label{subsection: coresidues} The map to ${\mathbb A}^1$ that exhibits the space $X$ in Theorem \ref{theorem: isotrivial degeneration theorem} as a degeneration is obtained from the following construction, which is essentially the `dual' of taking residues at poles of $1$-forms (please see also Example \ref{example: coresidues in local coordinates} below). Recall that, if $\pi:C \to S$ is a prestable curve and $x:S \to C$ is any smooth section, then $x^*\Omega_{C/S} \cong x^*\omega_{C/S}$ since $\Omega_{C/S}$ and $\omega_{C/S}$ agree on the open where $\pi$ is smooth \cite[p. 163]{[Kn83]}, and $x^*{\sh O}_C(-x(S)) \cong x^*\Omega_{C/S} $, since $x(S)$ is a relative effective Cartier divisor on $C$ over $S$, and both sides are canonically the conormal sheaf of $x$. 
	
	\begin{definition}\label{definition: coresidue and anticoresidue morphisms} 
		Let $\pi:C \to S$ be a prestable curve, $x:S \to C$ a smooth section, and $\phi$ a homomorphism $\omega_{C/S} \to {\sh O}_S(-x)$. The element
		\begin{equation*}
			x^*{\sh O}_C(-x) \cong x^*\Omega_{C/S} \cong x^*\omega_{C/S} \xrightarrow{-x^*\phi} x^*{\sh O}_C(-x)
		\end{equation*}
		(note the sign) of $\operatorname{End}_{{\sh O}_S}(x^*{\sh O}_C(-x)) = \Gamma({\sh O}_S)$ induces a map $S \to \spec {\mathbb Z}[t]$, called the \emph{negative coresidue (NCR)} morphism of the datum. 
		
		In particular, if ${\obj o}=(S,C,\ldots)$ is an object of ${\cat V}^+_{0,1,n}$ or ${\cat C}^+_{0,1,n}$ (with notation as in Definition \ref{definition: categories of curves}), then the NCR morphism of ${\obj o}$ is the NCR morphism relative to the section $w_1:S \to C$, and $\phi$ regarded as a homomorphism $\omega_{C/S} \to {\sh O}_S(-w_1)$.
	\end{definition}
	
	Note that the formation of the NCR map commutes with base change. 
	
	\begin{example}\label{example: coresidues in local coordinates}
		If $p$ is a smooth point on a curve $C$ over ${\mathbb K}$, and $v$ is a vector field which vanishes at $p$, then,
		\begin{equation}\label{equation: vanishing vector field in local coordinates} v = \left( c_1x+c_2x^2+c_3x^3+\cdots \right) \frac{d}{dx} \quad \text{for some $c_1,c_2,c_3,\ldots \in {\mathbb K}$} \end{equation}
		in local coordinates $x$ near $p$. Then, the `coresidue' of $v$ at $p$ is $c_1$. Indeed, if we change coordinates to $u = g(x)$, where $g \in {\mathbb K}[[x]]$, $g(0) = 0$, $g'(0) \neq 0$, then a calculation shows that a formula similar to \eqref{equation: vanishing vector field in local coordinates} with the same $c_1$ (different $c_2,c_3,\ldots$) holds in the $u$ coordinate.  For instance, if $v = (7x+3x^3)\frac{d}{dx}$ and $u=2x$, then $v = (7u+\frac{3}{4}u^3)\frac{d}{du}$ since $\frac{d}{dx} = 2\frac{d}{du}$ (assuming $\mathrm{char}({\mathbb K}) \neq 2$). In this example, the coresidue at $p$ ($x=0$) is $7$ and the negative coresidue is $-7$.
	\end{example}
	
	\begin{theorem}\label{theorem: the main degeneration as a moduli space theorem}
		The category ${\cat V}_{0,1,n}$ admits a terminal object. The NCR morphism of the terminal object is a flat, projective, local complete intersection (lci), geometrically integral morphism of relative dimension $n-1$ to $\spec {\mathbb Z}[t]$.
	\end{theorem}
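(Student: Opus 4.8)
The plan is to induct on $n$, using the equivalence ${\cat V}_{0,1,n+1} \simeq {\cat C}_{0,1,n}$ of Theorem \ref{theorem: universal curve theorem simplified statement} (valid since $2g+2n+m = 2n+1 \geq 3$ for $n \geq 1$) together with the observation that ${\cat C}_{0,1,n}$ is the universal curve over ${\cat V}_{0,1,n}$. Write $B_n$ for the base of a terminal object of ${\cat V}_{0,1,n}$ and $\sigma_n\colon \mathcal C_n \to B_n$ for its universal curve. Since an object of ${\cat C}_{0,1,n}$ over $S$ is exactly an object of ${\cat V}_{0,1,n}$ over $S$ together with an arbitrary section $S \to C$, the fine moduli of ${\cat C}_{0,1,n}$ is represented by $\mathcal C_n$ itself, with universal family $\mathcal C_n \times_{B_n} \mathcal C_n$ and diagonal section. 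Transporting along the equivalence yields a terminal object of ${\cat V}_{0,1,n+1}$ with base $B_{n+1} \cong \mathcal C_n$. Thus, granting the base case, all terminal objects exist and are built as iterated universal curves, and it remains to track the four properties of the NCR morphism of Definition \ref{definition: coresidue and anticoresidue morphisms} through this recursion.

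For the base case $n=1$ I would first observe that no object of ${\cat V}_{0,1,1}$ has a reducible or singular fibre: on a genus $0$ curve $\deg \omega_{C/S}(w_1+2x_1)=1$ on each fibre, and a degree count on a hypothetical nodal fibre (a leaf meeting neither $w_1$ nor $x_1$ would carry nonpositive degree) contradicts the $\pi$-ampleness in Definition \ref{definition: categories of curves}. Hence every object is a ${\mathbb P}^1$-bundle with two disjoint sections and a field $\phi$ vanishing simply along $w_1$ and nowhere along $x_1$. I would then exhibit the universal family on ${\mathbb P}^1_{{\mathbb Z}[t]}$ with $w_1=\{0\}$, $x_1$ a fixed free point, and $\phi$ of coresidue $t$ at $w_1$, and prove that the NCR functor ${\cat V}_{0,1,1}\to(\mathrm{Sch}/\spec{\mathbb Z}[t])$ is an equivalence: essential surjectivity is pullback of this family, while full faithfulness is the rigidity statement that, once the coresidue is fixed, the flow of $\phi$ (a ${\mathbb G}_m$- or ${\mathbb G}_a$-action according to whether $t$ is invertible or zero) rigidifies the curve, so that fixing $w_1$ and the single point $x_1$ leaves no nontrivial automorphism. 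This gives $B_1\cong\spec{\mathbb Z}[t]$ with NCR morphism the identity, which is trivially flat, projective, lci, geometrically integral of relative dimension $0$.

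For the inductive step the central point is that the NCR morphism of $B_{n+1}$ factors as $\mathcal C_n \xrightarrow{\sigma_n} B_n \xrightarrow{\mathrm{NCR}_n}\spec{\mathbb Z}[t]$, because the coresidue at $w_1$ is preserved by both bubbling-up operations. The only delicate case is Knudsen stabilization at $x=w_1$, where a new ${\mathbb P}^1$ is inserted and $\phi$ is lifted: matching residues across the node forces the coresidue on the new component at the attaching node to be $-t$, and since the two coresidues of a vector field at its simple zeros on ${\mathbb P}^1$ sum to $0$, the coresidue at the transported marking $w_{1,\up}$ is again $t$ (all other bubblings take place away from the smooth point $w_1$ and are isomorphisms there). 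Granting this factorization, flatness, projectivity, the lci property, and relative dimension $1+(n-1)=n$ all follow by composing $\sigma_n$ (a prestable curve, hence flat, projective and lci of relative dimension $1$) with $\mathrm{NCR}_n$, which has the corresponding properties by the inductive hypothesis.

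The main obstacle is geometric integrality. By the factorization the geometric fibre of $\mathrm{NCR}_{n+1}$ over a point $t_0$ is $F_{n+1,t_0}=\sigma_n^{-1}(F_{n,t_0})$, a genus $0$ prestable curve over $F_{n,t_0}$, which is integral by induction. Reducedness of the total space follows since a flat finite-type morphism with geometrically reduced fibres over a reduced base has reduced total space, and nodal curves are geometrically reduced. For irreducibility I would argue that the locus $F^\circ_{n,t_0}\subseteq F_{n,t_0}$ where the curve is a single smooth ${\mathbb P}^1$ is open and dense: it is the configuration space of $n$ points on the free locus of the flow modulo that flow, of dimension $n-1=\dim F_{n,t_0}$, hence a nonempty open inside the irreducible $F_{n,t_0}$; over it $\sigma_n$ is smooth with ${\mathbb P}^1$ fibres, so $\sigma_n^{-1}(F^\circ_{n,t_0})$ is integral. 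Since $\sigma_n$ is flat, every irreducible component of $F_{n+1,t_0}$ dominates $F_{n,t_0}$ and therefore meets the irreducible generic fibre ${\mathbb P}^1$, forcing a single component; hence $F_{n+1,t_0}$ is integral, completing the induction. The point demanding most care is ensuring that $F^\circ_{n,t_0}$ is genuinely dense, that is, that the reducible strata are of strictly smaller dimension, which I would extract from the dimension count together with the irreducibility of $F_{n,t_0}$ already provided by the inductive hypothesis.
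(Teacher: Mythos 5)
Your proposal is correct and follows essentially the same route as the paper's proof: induction on $n$ via the equivalence of Theorem \ref{theorem: universal curve theorem simplified statement}, the observation that the terminal object of ${\cat C}_{0,1,n}$ has base the universal curve $\mathcal C_n$ (the ``obvious fact'' in the proof of Theorem \ref{theorem: terminal object theorem most explicit version}), an explicit rigidity analysis of the base case ${\cat V}_{0,1,1}$ (Lemma \ref{lemma: base case of induction in theorem: the main degeneration as a moduli space theorem}), the factorization of $\mathrm{NCR}_{n+1}$ through $\mathrm{NCR}_n$ because bubbling up preserves coresidues (Proposition \ref{proposition: future terminal objects are nice}), and the flatness/generic-fibre argument for geometric integrality (Remark \ref{remark: main constuction has integral bases and curves}). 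One small caution: projectivity of $\sigma_n$ is not automatic for prestable curves, but follows from the $\pi$-ampleness of $\omega_{C/S}(w_1+2x_1+\cdots+2x_n)$ built into Definition \ref{definition: categories of curves}, together with the Hartshorne-versus-EGA projectivity bookkeeping the paper carries out in Proposition \ref{proposition: future terminal objects are nice}.
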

	
	The base of the terminal object in Theorem \ref{theorem: the main degeneration as a moduli space theorem} will turn out to be precisely the space of the degeneration of $\overline{L}_n$ to $\overline{P}_n$ from Theorem \ref{theorem: isotrivial degeneration theorem}. Theorem \ref{theorem: the main degeneration as a moduli space theorem} will be proved towards the end of \S\ref{subsection: a closer look at}.
	
	\subsection{Iterating bubbling up}\label{subsection: Iterating bubbling up: examples} To obtain interesting spaces from Constructions \ref{construction: first bubbling up construction} and \ref{construction: second bubbling up construction}, we usually proceed as follows: start with a simple setup, then do:
\begin{enumerate}
\item a base change to the universal curve at the last step;
\item one application of Knudsen stabilization with vector fields;
\item one application of Inflating at zero vector;
\end{enumerate}
and repeat. This is stated formally in Construction \ref{construction: main inductive construction} below.
	
	\begin{construction}\label{construction: main inductive construction}
		Given an object ${\obj v}_{n_0}$ of ${\cat V}^+_{g,m,n_0}$ (referred to as the \emph{initial data}), we construct inductively a sequence of objects as follows:
		\begin{enumerate}
			\item ${\obj c}_{n,1} = \varpi^*{\obj v}_n$ in ${\cat C}^{1,+}_{g,m,n}$ where $\varpi$ is the projection (denoted by $\pi$ in Definition \ref{definition: categories of curves}) in ${\obj v}_n$;
			\item\label{item: item 2 in construction: main inductive construction} ${\obj c}_{n,2} = \kukp{\obj c}_{n,1}$ in ${\cat C}^{2,+}_{g,m,n}$;
			\item\label{item: item 3 in construction: main inductive construction} ${\obj c}_{n,3} = \zuzp{\obj c}_{n,2}$ in ${\cat C}^{3,+}_{g,m,n}$; and
			\item ${\obj v}_{n+1}$ corresponds to ${\obj c}_{n,3}$ under ${\cat V}^{+}_{g,m,n+1} \cong {\cat C}_{g,m,n}^{3,+}$.
		\end{enumerate}
		We refer to ${\obj v}_n$ as the $n$th object, to its base as the $n$th space, etc.
	\end{construction}
	Thus, the only choice is the choice of the initial data. Note also that, if any of the objects in the sequence belongs to ${\cat C}^{c}_{g,m,n}$ or ${\cat V}_{g,m,n}$ (that is, we can drop the `+'), then we can always drop the `+' thereafter.
	
	Below, we discuss four examples of what can be obtained by applying Construction \ref{construction: main inductive construction}, all four of which are relevant to Theorems \ref{theorem: Pn bar as moduli space} or \ref{theorem: isotrivial degeneration theorem}. Two other examples not relevant to our main results will be sketched in \S\ref{subsection: positive genus examples}. As agreed in \S\ref{subsection: Configurations on a line modulo scaling or translation}, $\overline{L}_n$ denotes the `Losev-Manin space over $\spec {\mathbb Z}$', that is, the space obtained by repeating the construction in \cite[1.3 and 2.1]{[LM00]} over $\spec {\mathbb Z}$. (Alternatively, $\overline{L}_n$ can be obtained as a blowup, similarly to $\overline{L}_{n,{\mathbb C}}$, though for our purposes, it is quicker and more convenient to take the inductive construction as the definition.) The first two examples below are artificial, but they are meant to prepare the more interesting examples that follow.
	
	\subsubsection{The Losev--Manin space: first artificial construction}\label{example: examples with initial data: Losev-Manin I} Run Construction \ref{construction: main inductive construction} with the following initial data in ${\cat V}_{0,2,1}$: 
	\begin{equation*}
		S=\spec {\mathbb Z}, \quad C={\mathbb P}^1_{\mathbb Z}, \quad w_1 = [1:0], \quad w_2=[0:1], \quad x_1=[1:1], \quad \phi = x\frac{d}{dx},
	\end{equation*}
	in the chart $Y \neq 0$, where $x = X/Y$ and $[X:Y]$ are the projective coordinates on ${\mathbb P}^1_{\mathbb Z}$. The resulting $n$th space is $\overline{L}_n$. This is clear because, at all steps, the vector field will only vanish at $w_1$ and $w_2$, so the application of Construction \ref{construction: second bubbling up construction} (that is, of step \ref{item: item 3 in construction: main inductive construction} in Construction \ref{construction: main inductive construction}) will never have any effect by \eqref{item: what if no degeneracy in remark: Remarks on construction of second bubbling up} in Remark \ref{remark: Remarks on construction of second bubbling up}. Thus we recover the inductive construction of $\overline{L}_n$.
	
	\subsubsection{The Losev--Manin space: second artificial construction}\label{example: examples with initial data: Losev-Manin II} Run Construction \ref{construction: main inductive construction} with the following initial data in ${\cat V}_{0,1,1}$: 
	\begin{equation*}
		S=\spec {\mathbb Z}, \quad C={\mathbb P}^1_{\mathbb Z}, \quad w_1 = [1:0], \quad x_1=[1:1], \quad \phi = x\frac{d}{dx},
	\end{equation*}
	with notation and conventions as in \ref{example: examples with initial data: Losev-Manin I}. We claim that by applying Construction \ref{construction: main inductive construction}, the resulting $n$th space will once again be $\overline{L}_n$. We briefly argue this inductively, comparing with \ref{example: examples with initial data: Losev-Manin I}. Assume inductively that this is the case for a given $n$, and that the object denoted by ${\obj c}_{n,1}$ in Construction \ref{construction: main inductive construction} has base $\overline{L}_{n+1}$ and prestable curve $\overline{L}_{n+1} \times_{\overline{L}_n} \overline{L}_{n+1} \to \overline{L}_{n+1}$, just like in the situation of \ref{example: examples with initial data: Losev-Manin I}. The only difference between these objects is that, in \ref{example: examples with initial data: Losev-Manin II}, the marking $w_2$ is not given, and that point is just a nameless isolated vanishing point of the vector field, the only one besides the marking $w_1$. We claim that after applying Constructions \ref{construction: first bubbling up construction} and \ref{construction: second bubbling up construction} we will obtain essentially the same objects, again with the sole difference that $w_2$ is not considered marked in \ref{example: examples with initial data: Losev-Manin II} (though after applying just Construction \ref{construction: first bubbling up construction}, the curves are truly different).
	\begin{figure}[h]
		\begin{center}
			\begin{tikzpicture}	
				\def\aa{0};
				\def\bb{-3};
				
				\draw (\aa,1) -- (\aa+1,2.5);
				\draw (\aa+0.5,2.5) -- (\aa+1.5,1); 
				\draw (\aa+1,1) -- (\aa+2,2.5);
				\filldraw[dashed, fill=white] (\aa + 0.2,1.3) circle (4pt) node[anchor=east] {$w_2$};
				\filldraw[fill=white] (\aa + 1.8,2.2) circle (4pt) node[anchor=west] {$w_1$};
				
				\fill (\aa + 0.2,1.3) circle (2pt);
				
				\draw [->] (\aa+1,0.7) -- (\aa+1,-0.3);
				
				\node at (\aa+2.2,0.4) {by $\kukp$ in \ref{example: examples with initial data: Losev-Manin I}};
				\node at (\aa+2.2,0) {by $\zuzp$ in \ref{example: examples with initial data: Losev-Manin II}};
				
				\def\aa{0.2}
				\draw (\aa,1+\bb) -- (\aa+1,2.5++\bb);
				\draw (\aa+0.5,2.5+\bb) -- (\aa+1.5,1+\bb); 
				\draw (\aa+1,1+\bb) -- (\aa+2,2.5+\bb);
				\filldraw[fill=white] (\aa + 1.8,2.2+\bb) circle (4pt) node[anchor=west] {$w_1$};
				\draw [dashed] (\aa+0.5-1,2.5+\bb) -- (\aa+1.5-1,1+\bb); 
				\filldraw[dashed, fill=white] (\aa - 0.3,2.2+\bb) circle (4pt) node[anchor=east] {$w_2$};
				
				\def\aa{3.5};
				\def\bb{-3};
				
				\draw (\aa,1) -- (\aa+1,2.5);
				\draw (\aa+0.5,2.5) -- (\aa+1.5,1); 
				\draw (\aa+1,1) -- (\aa+2,2.5);
				\filldraw[dashed, fill=white] (\aa + 0.2,1.3) circle (4pt) node[anchor=east] {$w_2$};
				\filldraw[fill=white] (\aa + 1.8,2.2) circle (4pt) node[anchor=west] {$w_1$};
				
				\fill (\aa + 1.8,2.2) circle (2pt);
				
				\draw [->] (\aa+1,0.7) -- (\aa+1,-0.3);
				
				\node at (\aa+1.5,0.2) {by $\kukp$};
				
				\def\aa{3.2};
				\draw (\aa,1+\bb) -- (\aa+1,2.5++\bb);
				\draw (\aa+0.5,2.5+\bb) -- (\aa+1.5,1+\bb); 
				\draw (\aa+1,1+\bb) -- (\aa+2,2.5+\bb);
				\filldraw[dashed, fill=white] (\aa + 0.2,1.3+\bb) circle (4pt) node[anchor=east] {$w_2$};
				\draw [dashed] (\aa+0.5+1,2.5+\bb) -- (\aa+1.5+1,1+\bb);
				\filldraw[fill=white] (\aa + 2.3,1.3+\bb) circle (4pt) node[anchor=west] {$w_1$};

				\def\aa{7};
				\def\bb{-3};
				
				\draw (\aa,1) -- (\aa+1,2.5);
				\draw (\aa+0.5,2.5) -- (\aa+1.5,1); 
				\draw (\aa+1,1) -- (\aa+2,2.5);
				\filldraw[dashed, fill=white] (\aa + 0.2,1.3) circle (4pt) node[anchor=east] {$w_2$};
				\filldraw[fill=white] (\aa + 1.8,2.2) circle (4pt) node[anchor=west] {$w_1$};
				
				\fill (\aa + 1.25,1.37) circle (2pt);
				
				\draw [->] (\aa+1,0.7) -- (\aa+1,-0.3);
				
				\node at (\aa+1.5,0.2) {by $\kukp$};
				
				\def\aa{7.2}
				\def\cc{0.5}
				\draw (\aa-\cc,1+\bb) -- (\aa+1-\cc,2.5++\bb);
				\draw (\aa+0.5-\cc,2.5+\bb) -- (\aa+1.5-\cc,1+\bb); 
				\draw (\aa+1+\cc,1+\bb) -- (\aa+2+\cc,2.5+\bb);
				\filldraw[fill=white] (\aa + 1.8+\cc,2.2+\bb) circle (4pt) node[anchor=west] {$w_1$};
				\filldraw[dashed, fill=white] (\aa + 0.2-\cc,1.3+\bb) circle (4pt) node[anchor=east] {$w_2$};
				\draw [dashed] (\aa+1.6+\cc,1.3+\bb) -- (\aa+0.5,1.3+\bb);
			\end{tikzpicture}
			\caption{Comparison between \ref{example: examples with initial data: Losev-Manin I} and \ref{example: examples with initial data: Losev-Manin II} (the $3$ nontrivial cases). The newly added marking is drawn as `$\bullet$'; the other $x$-markings are omitted in the picture. The point $w_2$ is a marking in \ref{example: examples with initial data: Losev-Manin I}, but only an unmarked vanishing point of the vector field in \ref{example: examples with initial data: Losev-Manin II}. In the situation on the left, although a ${\mathbb P}^1$ component is inserted at $w_2$ both in \ref{example: examples with initial data: Losev-Manin I} and \ref{example: examples with initial data: Losev-Manin II}, this happens in different ways: in the former case, by Construction \ref{construction: first bubbling up construction} (the degeneracy is inserting a new marking over a $w$ marking); in the latter case, by Construction \ref{construction: second bubbling up construction}  (the degeneracy is the vanishing of the field at the newly inserted marking).}
			\label{figure: comparison between artificial LM constructions}
		\end{center}
	\end{figure}
	Indeed, this may be checked on the open subset of the base where $x \neq w_1$ and $x$ is smooth, since both Construction \ref{construction: first bubbling up construction} and Construction \ref{construction: second bubbling up construction} agree in \ref{example: examples with initial data: Losev-Manin I} and \ref{example: examples with initial data: Losev-Manin II} over a neighbourhood of the complement, such as the open subset of the base where $x \neq w_2$. However, on the former open subset, Construction \ref{construction: second bubbling up construction} is trivial in \ref{example: examples with initial data: Losev-Manin I} and Construction \ref{construction: first bubbling up construction} is trivial in \ref{example: examples with initial data: Losev-Manin II}, so we just need to compare Construction \ref{construction: first bubbling up construction} in \ref{example: examples with initial data: Losev-Manin I} with Construction \ref{construction: second bubbling up construction} in \ref{example: examples with initial data: Losev-Manin II}. It is easy to check that the respective cokernels to be projectivized, cf. \eqref{equation: short exact sequence in first bubbling up} and \eqref{equation: initial exact sequence in bubbling up at nonsingular zero of a section}, agree up to a twist by an invertible sheaf, hence their projectivizations coincide. The other details are straightforward and skipped. In Figure \ref{figure: comparison between artificial LM constructions}, \ref{item: item 2 in construction: main inductive construction} and \ref{item: item 3 in construction: main inductive construction} refer to the items in Construction \ref{construction: main inductive construction}, i.e. Constructions \ref{construction: first bubbling up construction} and \ref{construction: second bubbling up construction}.
	
	\subsubsection{Compactifying the space of configurations modulo translation}\label{example: examples with initial data: Pn first announcement} Run Construction \ref{construction: main inductive construction} with the following initial data in ${\cat V}_{0,1,1}$: 
	\begin{equation*}
		S=\spec {\mathbb Z}, \quad C={\mathbb P}^1_{\mathbb Z}, \quad w_1 = [1:0], \quad x_1=[0:1], \quad \phi = \frac{d}{dx},
	\end{equation*}
	with the usual notation (so the vector field vanishes doubly at $w_1$, but nowhere else). We will prove in \S\ref{subsection: a closer look at} (see the proof of Theorem \ref{theorem: Pn bar as moduli space} towards the end of \S\ref{subsection: a closer look at}) that the resulting $n$th space is the moduli space $\overline{P}_n$ from \S\ref{subsection: Configurations on a line modulo scaling or translation}. For now, we take this explicit construction to be the definition of $\overline{P}_n$, and we will prove in \S\ref{subsection: a closer look at} that it represents the functor in Theorem \ref{theorem: Pn bar as moduli space}.
	
	\subsubsection{The first glimpse of the degeneration in Theorem \ref{theorem: isotrivial degeneration theorem}}\label{example: examples with initial data: isotrivial degeneration first announcement}
	Run Construction \ref{construction: main inductive construction} with the following initial data in ${\cat V}_{0,1,1}$: 
	\begin{equation}\label{equation: data in terminal object of V011}
		S=\spec {\mathbb Z}[t], \quad C={\mathbb P}^1_{{\mathbb Z}[t]}, \quad w_1 = [1:0], \quad x_1=[0:1], \quad \phi = (1+tx)\frac{\partial}{\partial x},
	\end{equation}
	with the usual notation, and $\pi$ is the projection ${\mathbb P}^1_{{\mathbb Z}[t]} \to \spec {\mathbb Z}[t]$. Throughout the application of Construction \ref{construction: main inductive construction}, the bases will retain the morphisms to $\spec {\mathbb Z}[t]$. 
	
	\begin{remark}\label{remark: main constuction has integral bases and curves}
		All objects that occur throughout the application of Construction \ref{construction: main inductive construction} in this situation have the property that their base (what is usually denoted by $S$) and their curve (what is usually denoted by $C$) are integral. This is straightforward to check inductively. Flatness of the curve over the base guarantees that all generic points of the curve map to generic points of the base. On the other, there exists a nonempty open $U \subset S$ such that $\pi^{-1}(U) \simeq {\mathbb P}^1_U$ over $U$, and Cohen-Macaulay-ness rules out embedded components.
	\end{remark}
	
	Construction \ref{construction: main inductive construction} commutes with base change, because Constructions \ref{construction: first bubbling up construction} and \ref{construction: second bubbling up construction} do, cf. Propositions \ref{proposition: first bubbling up is functorial} and \ref{proposition: second bubbling up is functorial}. Moreover, in our situation, it also commutes with multiplying the vector field with the pullback of an invertible regular function on the base by item \ref{item: scalar multiplication in remark: projective rational contraction in first type bubbling up} in Remark \ref{remark: projective rational contraction in first type bubbling up}, item \ref{item: item 2 in remark: scaling field in second bubbling up} in Remark \ref{remark: scaling field in second bubbling up}, and Remark \ref{remark: main constuction has integral bases and curves}. Then we can conclude the following regarding our current construction:
	\begin{enumerate}
		\item The fibers over $(t) \in \spec {\mathbb Z}[t]$ mimic \ref{example: examples with initial data: Pn first announcement}.
		\item\label{item: item 2 in example: examples with initial data: isotrivial degeneration first announcement} After the change of coordinates $X'=tX+Y$, $Y'=Y$ on ${\mathbb P}^1_{{\mathbb Z}[t,t^{-1}]}$, the preimages of $\spec {\mathbb Z}[t,t^{-1}] \subset \spec {\mathbb Z}[t]$ mimic the direct product of \ref{example: examples with initial data: Losev-Manin II} with $\spec {\mathbb Z}[t,t^{-1}]$, with the sole exception that $\phi$ is multiplied by $t$. (In the new coordinates, $\displaystyle \phi = tx'\frac{\partial}{\partial x'}$ where $x' = X'/Y'$.) In particular, if $X_n$ (temporary notation) is the $n$-th space obtained this way, then 
		\begin{equation}\label{equation: restriction above nonzero is trivially LM}
			\spec {\mathbb Z}[t,t^{-1}] \times_{\spec {\mathbb Z}[t]} X_n \simeq \spec {\mathbb Z}[t,t^{-1}] \times \overline{L}_n 
		\end{equation} 
		by \S\ref{example: examples with initial data: Losev-Manin II} and the considerations above.
	\end{enumerate}
	This hints at Theorem \ref{theorem: isotrivial degeneration theorem}. Indeed, we will see (essentially in Corollary \ref{corollary: down of up}) that the space $X_n$ above is $X$ from Theorem \ref{theorem: isotrivial degeneration theorem}.
	
	\subsection{The modular interpretation of the degeneration}\label{subsection: a closer look at} We continue the analysis of the construction in \ref{example: examples with initial data: isotrivial degeneration first announcement}.
	
	\begin{definition}\label{definition: construction of the terminal objects}
		Let ${\obj t}_n$ (respectively ${\obj t}_{n,c}$) be the objects of ${\cat V}_{0,1,n}$ (respectively ${\cat C}_{0,1,n}^c$) constructed in \ref{example: examples with initial data: isotrivial degeneration first announcement}. Let $\eta_n$ (respectively $\eta_{n,c}$) be the morphism from the base of ${\obj t}_n$ (respectively ${\obj t}_{n,c}$) to $\spec {\mathbb Z}[t]$ constructed in \ref{example: examples with initial data: isotrivial degeneration first announcement}.
	\end{definition}
	
	We will prove that ${\obj t}_n$, ${\obj t}_{n,c}$ are terminal objects. 
	
	For now, we state some geometric properties that follow from our construction. In the statement and proof of Proposition \ref{proposition: future terminal objects are nice} below, we agree to write $S[{\obj o}]$, $C[{\obj o}]$, etc. for the base, curve, etc. of an object ${\obj o}$ when there is possibility of confusion, respecting the letters used in Definitons \ref{definition: categories of curves} and \ref{definition: more categories of curves}. 
	
	\begin{proposition}\label{proposition: future terminal objects are nice}
		The following hold.
		\begin{enumerate}
			\item\label{item: first item in corollary: terminal objects are nice} Let ${\obj t}_*$ be either ${\obj t}_n$ or ${\obj t}_{n,c}$. Then the schemes $S[{\obj t}_*]$ and $C[{\obj t}_*]$ are normal and integral, as well as geometrically integral, separated, flat, of finite type, and lci over both $\spec {\mathbb Z}[t]$ and $\spec {\mathbb Z}$, and projective over $\spec {\mathbb Z}[t]$. The geometric generic fiber of $\pi[{\obj t}_*]$ is integral.
			\item\label{item: second item in corollary: terminal objects are nice} 	\begin{enumerate}
				\item\label{item: item 1 in corollary: terminal objects are nice} $x[{\obj t}_{n,1}] \neq w_1[{\obj t}_{n,1}]$ and $x[{\obj t}_{n,1}] \cap w_1[{\obj t}_{n,1}]$ is integral.
				\item\label{item: item 2 in corollary: terminal objects are nice} The scheme-theoretic vanishing locus of $x[{\obj t}_{n,2}]^*\phi[{\obj t}_{n,2}]$ (viewed as a global section of $ x[{\obj t}_{n,2}]^*\omega^\vee_{C[{\obj t}_{n,2}]/S[{\obj t}_{n,2}]} $) is an integral (prime) Cartier divisor on $S[{\obj t}_{n,2}]$.
			\end{enumerate}
			\item\label{item: third item in corollary: terminal objects are nice} The morphism $\eta_{n,c}$ from Definition \ref{definition: construction of the terminal objects} is the NCR morphism of ${\obj t}_{n,c}$, cf. Definition \ref{definition: coresidue and anticoresidue morphisms}, and a similar statement holds for $\eta_n$.
		\end{enumerate}
	\end{proposition}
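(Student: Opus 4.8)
The plan is to induct on $n$, exploiting the inductive structure of Construction \ref{construction: main inductive construction}. The crucial bookkeeping observation is that the $(n+1)$st space \emph{is} the $n$th curve: unwinding Construction \ref{construction: main inductive construction}, each of $\obj t_{n,1},\obj t_{n,2},\obj t_{n,3}$ has base $C[\obj t_n]$ (Constructions \ref{construction: first bubbling up construction} and \ref{construction: second bubbling up construction} leave $S$ unchanged), the curve of $\obj t_{n,1}=\varpi^\ast\obj t_n$ is the pullback family $C[\obj t_n]\times_{S[\obj t_n]}C[\obj t_n]$, and $C[\obj t_{n+1}]=C[\obj t_{n,3}]$. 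Thus all of the total spaces $C[\obj t_{n,c}]$, including $C[\obj t_{n+1}]=S[\obj t_{n+2}]$, are prestable curves over the single base $C[\obj t_n]=S[\obj t_{n+1}]$, obtained from it either by pullback or by one of the two bubbling-up operations. Since $S[\obj t_1]=\spec{\mathbb Z}[t]$ and $C[\obj t_1]={\mathbb P}^1_{{\mathbb Z}[t]}$ manifestly enjoy all the asserted properties, it suffices to show that each property passes from a base $B=C[\obj t_n]$ (satisfying the inductive hypothesis) to the total space of a prestable curve $\pi\colon\sh C\to B$ with $\sh C$ integral and $\pi$ generically smooth. Integrality of $\sh C$ (and of $B$) is exactly Remark \ref{remark: main constuction has integral bases and curves}; the same argument produces a dense open $U\subseteq B$ with $\pi^{-1}(U)\simeq{\mathbb P}^1_U$, so $\pi$ is generically smooth and its generic geometric fibre is ${\mathbb P}^1$, hence integral.

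For part \ref{item: first item in corollary: terminal objects are nice}, flatness, properness (hence separatedness and finite type) and projectivity of $\pi$ are built into the bubbling-up constructions (Remark \ref{remark: projective rational contraction in first type bubbling up} and part \ref{item 1a: proposition: all of second bubbling up elementary calculations} of Proposition \ref{proposition: all of second bubbling up elementary calculations}), while a prestable curve is an lci morphism; composing $\pi$ with the structure map $B\to\spec{\mathbb Z}[t]$ (which carries these properties by induction) and with the smooth map $\spec{\mathbb Z}[t]\to\spec{\mathbb Z}$ yields flatness, projectivity and lci-ness over both $\spec{\mathbb Z}[t]$ and $\spec{\mathbb Z}$. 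Geometric integrality over either base reduces, by the base-change compatibility of the construction (Propositions \ref{proposition: first bubbling up is functorial} and \ref{proposition: second bubbling up is functorial}), to integrality of the bases and curves of the analogous construction run over an algebraically closed field, respectively over $\mathbb K[t]$, where the argument of Remark \ref{remark: main constuction has integral bases and curves} applies verbatim. Normality is then checked by Serre's criterion: the $S_2$ condition follows from lci-ness over $\spec{\mathbb Z}$ (hence Cohen--Macaulayness), and for $R_1$ I examine a codimension-one point $\eta\in\sh C$ with $b=\pi(\eta)$. Flatness gives $\dim\sh O_{\sh C,\eta}=\dim\sh O_{B,b}+\dim\sh O_{\sh C_b,\eta}=1$, so either $b$ is the generic point of $B$ and $\eta$ is a point of the (smooth) generic fibre, or $b$ is a codimension-one point of $B$ and $\eta$ is a generic point of the (generically reduced) fibre $\sh C_b$; in both cases $\sh O_{B,b}$ is regular (by the inductive $R_1$ for $B$) and the fibre is regular at $\eta$, whence $\sh O_{\sh C,\eta}$ is regular by the flat local criterion. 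The relative nodes, being finite over $B$, lie in codimension $\geq 2$ and are invisible to $R_1$; this dimension count is the one point one must not overlook, and it is exactly what lets the total space acquire the singularities of $\overline P_n$ while staying normal.

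For part \ref{item: second item in corollary: terminal objects are nice}, in $\obj t_{n,1}=\varpi^\ast\obj t_n$ the extra section $x$ is the diagonal of $C[\obj t_n]\times_{S[\obj t_n]}C[\obj t_n]$ while $w_1$ is the pullback of $w_1[\obj t_n]$, so $x\cap w_1$ is the image of the section $w_1[\obj t_n]$, canonically isomorphic to $S[\obj t_n]=C[\obj t_{n-1}]$ and hence integral by part \ref{item: first item in corollary: terminal objects are nice}, and $x\neq w_1$ away from this locus; this proves \ref{item: item 1 in corollary: terminal objects are nice}. For \ref{item: item 2 in corollary: terminal objects are nice}, item \ref{item: vanishing locus item in remark: projective rational contraction in first type bubbling up} of Remark \ref{remark: projective rational contraction in first type bubbling up} identifies the vanishing scheme of $x[\obj t_{n,2}]^\ast\phi[\obj t_{n,2}]$ with that of $x[\obj t_{n,1}]^\ast\phi[\obj t_{n,1}]$, which by the pullback description of $\obj t_{n,1}$ is the zero scheme of the vector field $\phi[\obj t_n]$ away from $w_1$; for $n=1$ this is the divisor $\{tX+Y=0\}\subset{\mathbb P}^1_{{\mathbb Z}[t]}$, and in general it is the corresponding codimension-one locus on $C[\obj t_n]$. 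As $C[\obj t_n]$ is integral and Cohen--Macaulay, this is an effective Cartier divisor, and its integrality follows from the same base-change reduction to Remark \ref{remark: main constuction has integral bases and curves} used in part \ref{item: first item in corollary: terminal objects are nice}, applied to the induced family of this divisor over $\spec{\mathbb Z}[t]$.

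For part \ref{item: third item in corollary: terminal objects are nice}, the negative coresidue of $\phi=(1+tx)\frac{\partial}{\partial x}$ at $w_1=[1:0]$ is, by a one-line computation in the coordinate $u=1/x$ (where $\phi=-u(u+t)\frac{\partial}{\partial u}$), equal to $t$; hence the NCR morphism of $\obj t_1$ (Definition \ref{definition: coresidue and anticoresidue morphisms}) is the identity of $\spec{\mathbb Z}[t]$, i.e. $\eta_1$. The coresidue at $w_1$ is then preserved by both bubbling-up operations: for Construction \ref{construction: second bubbling up construction} the modification takes place along $x\neq w_1$, so $f$ is an isomorphism near $w_1$ and Lemma \ref{lemma: vector fields match in second bubbling up} matches the fields; for Construction \ref{construction: first bubbling up construction} the isomorphism \eqref{equation: logarithmic differential in first bubbling up} carries $\sigma_\up$ to $\sigma$ and $\phi_\up(w_{i,\up})=0$, which forces equality of the linear terms along $w_1$ (the $w_1$-analogue of item \ref{item: vanishing locus item in remark: projective rational contraction in first type bubbling up} of Remark \ref{remark: projective rational contraction in first type bubbling up}). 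By induction the coresidue at $w_1$ stays equal to $t$ at every step, so $\eta_{n,c}$ and $\eta_n$ coincide with the NCR morphism. Across the whole proposition, the only genuinely delicate point is the $R_1$ verification in part \ref{item: first item in corollary: terminal objects are nice}, which is settled by the codimension count for the relative nodes.
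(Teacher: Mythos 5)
Your inductive frame (the bookkeeping $S[{\obj t}_{n+1}]=C[{\obj t}_n]$, so that every scheme in sight is a prestable curve over the previous curve) is the paper's, and your part \ref{item: first item in corollary: terminal objects are nice} is essentially correct; in particular your $R_1$ check --- a pointwise analysis at codimension-one points via the flat local criterion, split according to whether the point lies over the generic point or over a codimension-one point of the base --- is a valid alternative to the paper's argument, which instead exhibits an explicit open subset (the smooth locus of $\pi$ over the smooth locus of $S \to \spec{\mathbb Z}[t]$) with complement of codimension two. Part \ref{item: item 1 in corollary: terminal objects are nice} also matches the paper. But there are two genuine gaps.

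First, in part \ref{item: item 2 in corollary: terminal objects are nice} you correctly reduce to the integrality of $Z:=\{\phi_0[{\obj t}_n]=0\}$, but then assert that this follows from ``the same base-change reduction to Remark \ref{remark: main constuction has integral bases and curves} applied to the induced family of this divisor over $\spec {\mathbb Z}[t]$.'' No fibrewise reduction can work here: for $n\geq 2$ the fibre of $Z$ over $(t)$ is \emph{not} integral --- it contains the $w_1$-section of the central fibre (where $\phi$ vanishes doubly, so $\phi_0$ still vanishes) together with entire components of curves over boundary strata of $\overline{P}_n$ on which the field vanishes identically. Moreover the inputs that would make the Remark \ref{remark: main constuction has integral bases and curves} pattern run for $Z \to \spec{\mathbb Z}[t]$ (no component of $Z$ contained in the central fibre; irreducibility of $Z$ away from $t=0$) are precisely what you never establish. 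The paper argues differently: $Z$ is a Cartier divisor not containing the irreducible central fibre $C[{\obj t}_n]_{(t)}$, so by Krull every component of $Z$ meets $\{t \neq 0\}$, where $Z$ is a \emph{smooth section} of $\pi[{\obj t}_n]$ (this uses that a logarithmic field, as a section of $\omega^\vee$, does not vanish at nodes, so over the Losev--Manin locus the vanishing locus of $\phi_0$ is exactly the ``$0$''-section); hence $Z$ is irreducible, and generically reduced plus Cohen--Macaulay gives reduced.

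Second, in part \ref{item: third item in corollary: terminal objects are nice} the step through Construction \ref{construction: first bubbling up construction} is not justified. You claim that since \eqref{equation: logarithmic differential in first bubbling up} carries $\sigma_\up$ to $\sigma$, ``equality of the linear terms along $w_1$'' is forced. The NCR is a regular function on $S$, defined via the canonical identification $w_1^*{\sh O}_C(-w_1) \cong w_1^*\omega_{C/S}$ of Definition \ref{definition: coresidue and anticoresidue morphisms}; knowing that \emph{some} isomorphism of line bundles matches the two sections yields at most equality of their vanishing loci (which is all that the $w_1$-analogue of item \ref{item: vanishing locus item in remark: projective rational contraction in first type bubbling up} of Remark \ref{remark: projective rational contraction in first type bubbling up} can give), not equality of the two functions, unless one also shows that this isomorphism is compatible with the canonical conormal/dualizing identifications on both sides. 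That compatibility is exactly the issue over the collision locus $\{x = w_1\}$, where Construction \ref{construction: first bubbling up construction} modifies the curve at $w_1$ and $w_{1,\up}$ lands on the new bubble, so $f$ is not an isomorphism near $w_1$ and nothing is tautological. The paper closes this by density: the equality of NCR morphisms is tautological over the dense open of the base where the construction is trivial, and then holds everywhere because the base is reduced and separated (alternatively, one could compute coresidues on geometric fibres over the collision locus and again invoke reducedness). Your base case and the $c=2$, $c=3$ steps are fine.
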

	
	\begin{proof}
		\ref{item: first item in corollary: terminal objects are nice}: The claims that the bases and curves are integral has already been established in Remark \ref{remark: main constuction has integral bases and curves}. The other claims concerning (geometric) integrality follow using the same techniques. The flatness, projectivity, lci-ness, and normality claims can be proved comfortably by induction, following through the construction in \ref{example: examples with initial data: isotrivial degeneration first announcement}, though we quote the relevant results for the reader's convenience. 
		
		\emph{Projective:} Since projective in the sense of \cite{[EGAII]} is the same as projective in the sense of \cite{[Ha77]} if the base is itself quasi-projective over an affine scheme \cite[p. 103]{[Ha77]}, and composition of Hartshorne-projective morphisms is Hartshorne-projective, it follows inductively from these observations and Remark \ref{remark: projective rational contraction in first type bubbling up}, Constructions \ref{construction: first bubbling up construction} and \ref{construction: second bubbling up construction}, and item \ref{item 1a: proposition: all of second bubbling up elementary calculations} in Proposition \ref{proposition: all of second bubbling up elementary calculations} that `everything in sight' is Hartshorne-projective over $\spec {\mathbb Z}[t]$. 
		
		\emph{Local complete intersection:} Prestable curves are lci over their base \cite[Corollary 13.2.7]{[Ol16]}, compositions of lci morphisms are lci \cite[\href{https://stacks.math.columbia.edu/tag/069J}{Tag 069J}]{[stacks]}, and flat base changes of lci morphisms are lci \cite[\href{https://stacks.math.columbia.edu/tag/069I}{Tag 069I}]{[stacks]}.
		
		\emph{Normal:} We need to check the $R_1$ and $S_2$ conditions. First, $S_2$ is automatic: both the bases and the curves are flat and lci over $\spec {\mathbb Z}[t]$, hence Gorenstein over $\spec {\mathbb Z}[t]$  \cite[\href{https://stacks.math.columbia.edu/tag/0C15}{Tag 0C15}]{[stacks]}, hence Cohen-Macaulay over $\spec {\mathbb Z}[t]$ \cite[\href{https://stacks.math.columbia.edu/tag/0C06}{Tag 0C06}]{[stacks]}, hence absolutely (over $\spec {\mathbb Z}$) Cohen-Macaulay \cite[\href{https://stacks.math.columbia.edu/tag/0C0W}{Tag 0C0W}]{[stacks]}, hence satisfy all $S_k$ conditions \cite[\href{https://stacks.math.columbia.edu/tag/0342}{Tag 0342}]{[stacks]}. For $R_1$, if $\pi:C \to S$ denotes the prestable curve in question, consider the open subset $U \subset C$ defined as the intersection of $\pi^{\mathrm{sm}}$ with the preimage under $\pi$ of the open subset of $S$ where $S \to \spec {\mathbb Z}[t]$ is smooth. It is clear that $C$ is regular at all points of $U$, and moreover, $C \backslash U$ has codimension at least $2$, and the $R_1$ condition follows too.
		
		\ref{item: item 1 in corollary: terminal objects are nice}: Recall that $C[{\obj t}_{n,1}] = C[{\obj t}_n] \times_{S[{\obj t}_n]} C[{\obj t}_n]$, $x[{\obj t}_{n,1}]$ the diagonal of this fiber square, and $w_1[{\obj t}_{n,1}]$ is the pullback of $w_1[{\obj t}_n]$. Thus $x[{\obj t}_{n,1}] \cap w_1[{\obj t}_{n,1}] \simeq S[{\obj t}_n]$, which we know to be integral.
		
		\ref{item: item 2 in corollary: terminal objects are nice}: We write $\phi_0$ for $\phi$ regarded as a section of $\omega^\vee_{C/S}(-w_1)$ (rather than $\omega^\vee_{C/S}$) for clarity. Then, if we write $\{\cdots = 0\}$ for the vanishing loci, we have
		\begin{equation*}
			\begin{aligned}
				\{x[{\obj t}_{n,2}]^*\phi[{\obj t}_{n,2}] = 0\} &= \{x[{\obj t}_{n,2}]^*\phi_0[{\obj t}_{n,2}] = 0 \} \quad \text{since $x[{\obj t}_{n,2}] \cap w_1[{\obj t}_{n,2}] = \emptyset$} \\
				&= \{x[{\obj t}_{n,1}]^*\phi_0[{\obj t}_{n,1}] = 0\} \quad \text{by item \ref{item: vanishing locus item in remark: projective rational contraction in first type bubbling up} in Remark \ref{remark: projective rational contraction in first type bubbling up}} \\
				&\cong \{\phi_0[{\obj t}_n] = 0\} \quad \text{by construction,}
			\end{aligned}
		\end{equation*}
		so it suffices to check that the last one is integral. The restriction to the complement of the central fiber, $\{ \phi_0[{\obj t}_n]) = 0 \} \backslash C[{\obj t}_n]_{(t)}$, is a smooth section of the respective restriction of $\pi[{\obj t}_n]$, hence integral. Moreover, $\{ \phi_0[{\obj t}_n] = 0 \}$ doesn't contain $C[{\obj t}_n]_{(t)}$ (which is irreducible), thus $\{ \phi_0[{\obj t}_n] = 0 \}$ is irreducible and also reduced at all generic points, hence it is reduced everywhere since embedded components are ruled out by Cohen-Macaulay-ness. 
		
		\ref{item: third item in corollary: terminal objects are nice}: We proceed inductively. If $y=Y/X=1/x$, then
		\begin{equation*}
			\phi[{\obj t}_1] = (1 + tx)\frac{\partial}{\partial x} = \left(1 + tx \right)\frac{dy}{dx} \frac{\partial}{\partial y} = -\frac{1+tx}{x^2} \frac{\partial}{\partial y} = -y \left( t + y \right) \frac{\partial}{\partial y}.
		\end{equation*}
		Therefore, 
		\begin{equation*}
			w_1[{\obj t}_1]^*\left( \frac{\phi[{\obj t}_1]}{y} \right) = -t \frac{\partial}{\partial y},
		\end{equation*}
		so the NCR morphism of ${\obj t}_1$ is the identity. Inductively, we have the following. If $c=1$, then the NCR morphism doesn't change at the next step: the claim is clear over the dense open subset of the base where Construction \ref{construction: first bubbling up construction} is trivial (density is left to the reader), so it's true everywhere by reducedness. If $c=2$, then the NCR morphism doesn't change at the next step. This is actually obvious because, in Construction \ref{construction: second bubbling up construction}, the sections $w_1,\ldots,w_m$ are always contained in the open subset of the source where the operation is trivial. If $c=3$, then the NCR morphism at the next step simply pulls back together with the curve and the rest of the data, completing the proof.
	\end{proof}
	
	Finally, we prove Theorem \ref{theorem: the main degeneration as a moduli space theorem} inductively using Theorem \ref{theorem: the main universal curve theorem}. The only missing ingredient is the base case of the induction.
	
	\begin{lemma}\label{lemma: base case of induction in theorem: the main degeneration as a moduli space theorem}
		The object ${\obj t}_1$ (cf. Definition \ref{definition: construction of the terminal objects}) is a terminal object of ${\cat V}_{0,1,1}$. For any object ${\obj o} = (S,C,\ldots)$ of ${\cat V}_{0,1,1}$, the morphism $S \to \spec {\mathbb Z}[t]$ that exhibits ${\obj o}$ as a pullback of ${\obj t}_1$ is the NCR morphism of ${\obj o}$, cf. Definition \ref{definition: coresidue and anticoresidue morphisms}.
	\end{lemma}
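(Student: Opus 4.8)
The plan is to show that the defining conditions of ${\cat V}_{0,1,1}$, together with the $\pi$-ampleness of $\omega_{C/S}(w_1+2x_1)$ from Definition~\ref{definition: categories of curves}, force $C\to S$ to be a \emph{trivial} ${\mathbb P}^1$-bundle carrying precisely the structure pulled back from ${\obj t}_1$, and that the classifying map to $\spec {\mathbb Z}[t]$ is forced to be the NCR morphism of Definition~\ref{definition: coresidue and anticoresidue morphisms}.

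First I would analyze the geometric fibers. Over an algebraically closed field, $C$ is a genus $0$ prestable curve, i.e.\ a tree of ${\mathbb P}^1$'s, and I claim it must be irreducible. On any leaf $\Sigma$ (a component meeting the rest of $C$ in a single node) one has $\deg\omega_C|_\Sigma = 1-2 = -1$, so $\pi$-ampleness forces $\Sigma$ to contain $x_1$: a leaf containing only $w_1$ would give degree $-1+1=0$, which is not positive. But a tree with at least two vertices has at least two leaves, while $x_1$ lies on only one component, a contradiction; hence $C$ is a single smooth ${\mathbb P}^1$. (The vector field plays no role here.) Thus every geometric fiber of $\pi$ is a smooth ${\mathbb P}^1$, so $\pi$ is smooth and proper with geometrically integral genus $0$ fibers; together with the section $w_1$ this makes $C\to S$ a ${\mathbb P}^1$-bundle.

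Next I would rigidify using the vector field. The two disjoint sections exhibit $C = {\mathbb P}({\sh L}_0\oplus {\sh L}_\infty)$ with $x_1 = {\mathbb P}({\sh L}_0)$ and $w_1 = {\mathbb P}({\sh L}_\infty)$, and the relative Euler sequence identifies $\phi\in\Gamma(C,\omega_{C/S}^\vee) = \Gamma(S,\mathcal{E}nd({\sh L}_0\oplus {\sh L}_\infty)/{\sh O}_S)$ with an endomorphism $M$ modulo scalars. Condition~(3), that $\phi$ vanishes along $w_1$, says ${\sh L}_\infty$ is $M$-invariant, so $M = \left(\begin{smallmatrix} a & 0 \\ \gamma & d\end{smallmatrix}\right)$ in the basis $({\sh L}_0,{\sh L}_\infty)$ with $\gamma\in\Gamma(S,{\sh L}_0^\vee\otimes {\sh L}_\infty)$. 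In a relative coordinate $z = u/s$ (so $\{z=0\} = x_1$ and $\{z=\infty\} = w_1$) the field reads $\phi = (\gamma + (d-a)z)\partial_z$, hence $\phi|_{x_1} = \gamma$; condition~(4) then says $\gamma:{\sh L}_0\to {\sh L}_\infty$ is an \emph{isomorphism}. Using $\gamma$ to identify ${\sh L}_0\cong {\sh L}_\infty$ trivializes the bundle, yielding a canonical isomorphism $C\cong {\mathbb P}^1_S$ under which $x_1\mapsto[0:1]$, $w_1\mapsto[1:0]$, and $\phi = (1 + t\,z)\partial_z$ with $t := d-a\in\Gamma(S,{\sh O}_S)$. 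Letting $h:S\to\spec{\mathbb Z}[t]$ classify $t$ exhibits ${\obj o}$ as $h^*{\obj t}_1$, proving existence of a morphism ${\obj o}\to{\obj t}_1$.

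Finally I would establish uniqueness and identify $h$ with the NCR morphism. Since the NCR morphism is functorial and equals the identity on ${\obj t}_1$ (by Proposition~\ref{proposition: future terminal objects are nice}, where $\eta_1$ is computed to be the NCR morphism of ${\obj t}_1$ and is the identity), any morphism ${\obj o}\to{\obj t}_1$ with base map $h'$ satisfies $\mathrm{NCR}({\obj o}) = h'$; thus $h'$ is forced to be $\mathrm{NCR}({\obj o})$, which by the local computation at $w_1$ (the linearization of $(1+tz)\partial_z$ at $z=\infty$ is $-t$, matching the sign in Definition~\ref{definition: coresidue and anticoresidue morphisms}) coincides with the $h$ produced above. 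Once $h$ is fixed, the cartesian condition forces the curve map $r$ to be an $S$-isomorphism $C\cong {\mathbb P}^1_S$ preserving $x_1$, $w_1$, and $\phi$; the only $S$-automorphisms of ${\mathbb P}^1_S$ fixing $[0:1]$ and $[1:0]$ are the scalings $z\mapsto\lambda z$, and such a scaling preserves the normalized field $(1+tz)\partial_z$ only for $\lambda = 1$, so $r$ is unique. The main obstacle is the middle step: setting up the endomorphism dictionary in families and checking that the nonvanishing of $\phi$ at $x_1$ forces $\gamma$ to be an isomorphism (thereby trivializing the ${\mathbb P}^1$-bundle), while keeping the residue/eigenvalue signs consistent with the definition of the NCR morphism.
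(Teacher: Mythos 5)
Your proposal is correct and follows the same skeleton as the paper's proof: the geometric fibers are irreducible by the same ampleness/leaf argument, $\pi$ is then a ${\mathbb P}^1$-bundle which splits as ${\mathbb P}({\mathcal L}_0\oplus{\mathcal L}_\infty)$ thanks to the two disjoint sections, the condition at $x_1$ trivializes the bundle, and uniqueness of the base map comes from functoriality of NCR morphisms together with the fact (Proposition \ref{proposition: future terminal objects are nice}) that the NCR morphism of ${\obj t}_1$ is the identity. The one step you carry out genuinely differently is the matching of vector fields. The paper chooses an arbitrary isomorphism $C\simeq \eta^*C_0$ compatible with the sections, uses the leftover ${\mathcal O}_S^\times$-ambiguity to arrange $x_1^*\phi=x_1^*\eta^*\phi_0$, and then kills the difference $\phi-\eta^*\phi_0$, a section of $\omega^\vee_{C/S}(-2w_1-x_1)$, by the vanishing $\pi_*\omega^\vee_{C/S}(-2w_1-x_1)=0$ (cohomology and base change). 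You instead expand $\phi$ through the dictionary $\pi_*\omega_{C/S}^\vee\cong{\mathcal E}nd({\mathcal L}_0\oplus{\mathcal L}_\infty)/{\mathcal O}_S$, note that $w_1^*\phi=0$ kills one off-diagonal block while $x_1^*\phi$ is the other block $\gamma$, and use $\gamma$ itself as a canonical trivialization, so that the field becomes literally $(1+tz)\partial_z$ with $t=d-a$; this produces an explicit, choice-free isomorphism with $h^*{\obj t}_1$ and makes the identification $h=\mathrm{NCR}({\obj o})$ a transparent sign check, at the cost of some matrix bookkeeping that the paper's vanishing argument avoids. One point you should phrase carefully: when $H^1(S,{\mathcal O}_S)\neq 0$, a global section of the quotient sheaf ${\mathcal E}nd({\mathcal E})/{\mathcal O}_S$ need not lift to a global endomorphism $M$, so the argument should be run with the canonical splitting ${\mathcal E}nd({\mathcal E})/{\mathcal O}_S\cong{\mathcal O}_S\oplus{\mathcal H}om({\mathcal L}_0,{\mathcal L}_\infty)\oplus{\mathcal H}om({\mathcal L}_\infty,{\mathcal L}_0)$, where the first factor records $d-a$; the three quantities you actually use are exactly the components of this splitting, hence globally defined, so no lifting is ever needed. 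Finally, your explicit rigidity computation (a scaling $z\mapsto\lambda z$ preserves $(1+tz)\partial_z$ only if $\lambda=1$) is a useful unwinding of the uniqueness of the curve-level map, which the paper leaves implicit in the phrase that the remaining ${\mathcal O}_S^\times$-ambiguity is ``precisely what is needed.''
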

	
	\begin{proof} In this proof, let ${\obj t}_1 = (S_0,C_0,\ldots)$, and let ${\obj o} = (S,C,\ldots)$ be an arbitrary object of ${\cat V}_{0,1,1}$. We claim that $C \simeq {\mathbb P}_S^1$ over $S$. This requires a few steps. 
		\begin{itemize}	
			\item All geometric fibers of $\pi$ are irreducible by a simple combinatorial argument using the ampleness of $\omega_{C/S}(w_1+2x_1)$. Otherwise, a leaf of the dual graph will correspond to a component which doesn't contain $x_1$, and this is destabilizing. 
			\item The morphism $\pi:C \to S$ is actually a projective ${\mathbb P}^1$-bundle since it is a geometrically integral conic bundle with a section. This is well-known, we leave as hint that $C \cong {\mathbb P}(\pi_*{\sh O}_C(x_1))$ in fact.
			\item The projective bundle $\pi:C \to S$ is the projectivization of the direct sum of two line bundles because $w_1$ and $x_1$ are disjoint sections.
			\item We have $C \simeq {\mathbb P}_S^1$ by the previous step and $x_1^*{\omega}^\vee_{C/S}$ trivial. 
		\end{itemize}
		
		If ${\obj o}$ is isomorphic to the pullback of ${\obj t}_1$ along a morphism $S \xrightarrow{\alpha} \spec {\mathbb Z}[t]$, then $\alpha$ must be the NCR morphism of ${\obj o}$ by the functoriality of NCR maps and item \ref{item: third item in corollary: terminal objects are nice} in Proposition \ref{proposition: future terminal objects are nice}. Conversely, to see that ${\obj o}$ is indeed isomorphic to the pullback of ${\obj t}_1$ along $\eta$, form the fiber product $S \times_{{\mathbb Z}[t]} C_0 \simeq {\mathbb P}_S^1$ relative to the NCR morphism $\eta:S \to \spec {\mathbb Z}[t]$ of ${\obj o}$ and $\pi_0:C_0 \to \spec {\mathbb Z}[t]$, choose an isomorphism
		$ C \simeq S \times_{{\mathbb Z}[t]} C_0 $ over $S$ 
		compatible with $x_1,w_1,x_{1,0},w_{1,0}$, and note that the remaining ${\sh O}_S^\times$-ambiguity is precisely what is needed to make the vector fields $\phi$ and $\eta^*\phi_0$ match, since they already both vanish on $w_1$ with equal `coresidues', and both are nowhere vanishing on $x_1$. Indeed, after composing with a suitable automorphism which fixes both $x_1$ and $w_1$, we may arrange so that $x_1^*\phi = x_1^*\eta^*\phi_0$, and then $\phi - \eta^*\phi_0$ will be a global section of $\omega_{C/S}^\vee(-2w_1-x_1)$, but 
		$ \pi_*\omega_{C/S}^\vee(-2w_1-x_1) = 0 $ 
		by the cohomology and base change theorem, completing the proof.
	\end{proof}
	
	\begin{theorem}\label{theorem: terminal object theorem most explicit version}
	The object ${\obj t}_n$ (respectively ${\obj t}_{n,c}$) from Definition \ref{definition: construction of the terminal objects} is a terminal object of ${\cat V}_{0,1,n}$ (respectively ${\cat C}_{0,1,n}$).
	\end{theorem}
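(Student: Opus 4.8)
The plan is to prove the two assertions simultaneously by induction on $n$, using as the engine the equivalences of Theorem \ref{theorem: the main universal curve theorem} together with the universal-curve structure relating the ${\cat V}$ and ${\cat C}$ categories. The base case $n=1$ is exactly Lemma \ref{lemma: base case of induction in theorem: the main degeneration as a moduli space theorem}, which furnishes a terminal object ${\obj t}_1$ of ${\cat V}_{0,1,1}$. For the inductive step, I assume ${\obj t}_n$ is terminal in ${\cat V}_{0,1,n}$ and propagate terminality along the chain
$$ {\obj t}_n \rightsquigarrow {\obj t}_{n,1} \xrightarrow{\ \up\ } {\obj t}_{n,2} \xrightarrow{\ \up\ } {\obj t}_{n,3} \cong {\obj t}_{n+1}, $$
where the objects are those produced by Construction \ref{construction: main inductive construction}, hence those of Definition \ref{definition: construction of the terminal objects}. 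Since $g=0$ and $m=1$, the numerical hypotheses $2g+2n+m\ge 3$ and $2g+2n+m\ge 2$ of Theorem \ref{theorem: the main universal curve theorem} both reduce to $n\ge 1$, so all the equivalences below are available for every $n\ge 1$.

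The key step is the passage from ${\cat V}_{0,1,n}$ to ${\cat C}_{0,1,n}={\cat C}^1_{0,1,n}$ (Remark \ref{remark: only one category is new}), which I would phrase as the statement that ${\cat C}_{0,1,n}$ is the universal curve over ${\cat V}_{0,1,n}$. Concretely, by Definition \ref{definition: categories of curves} an object of ${\cat C}_{0,1,n}$ over $S$ is precisely an object of ${\cat V}_{0,1,n}$ over $S$ together with one further, otherwise unconstrained, section $x$ of its curve (the ampleness condition is the same divisor in both cases, as $c-1=0$). Here ${\obj t}_{n,1}=\varpi^*{\obj t}_n$ by construction, so its base is $C[{\obj t}_n]$, its curve is $C[{\obj t}_n]\times_{S[{\obj t}_n]}C[{\obj t}_n]$, and its extra section $x$ is the diagonal (cf. item \ref{item: item 1 in corollary: terminal objects are nice} of Proposition \ref{proposition: future terminal objects are nice}); I claim it is terminal in ${\cat C}_{0,1,n}$. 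Given any object ${\obj o}$ of ${\cat C}_{0,1,n}$, forgetting $x$ yields an object of ${\cat V}_{0,1,n}$, to which the inductive hypothesis associates a unique morphism to ${\obj t}_n$ and hence a canonical identification of the curve of ${\obj o}$ with the base change of $C[{\obj t}_n]$. Under this identification the section $x$ of ${\obj o}$ corresponds to a unique map from the base of ${\obj o}$ into $C[{\obj t}_n]$ over $S[{\obj t}_n]$; this is the base morphism of the unique arrow ${\obj o}\to{\obj t}_{n,1}$, the diagonal being exactly the tautological section recording ``the marked point itself''. Unwinding the universal property of the fiber product shows this arrow exists and is unique, giving the required terminality.

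The remaining steps are formal. The functors $\up$ in \eqref{equation: sequence of up functors} are equivalences by Theorem \ref{theorem: the main universal curve theorem}, and equivalences preserve terminal objects; since ${\obj t}_{n,2}=\up{\obj t}_{n,1}$ and ${\obj t}_{n,3}=\up{\obj t}_{n,2}$ by Construction \ref{construction: main inductive construction}, terminality of ${\obj t}_{n,1}$ in ${\cat C}^1_{0,1,n}$ gives terminality of ${\obj t}_{n,2}$ in ${\cat C}^2_{0,1,n}$ and of ${\obj t}_{n,3}$ in ${\cat C}^3_{0,1,n}$. Finally, ${\cat C}^3_{0,1,n}$ is isomorphic to ${\cat V}_{0,1,n+1}$ by Remark \ref{remark: only one category is new}, and under this isomorphism ${\obj t}_{n,3}$ corresponds to ${\obj t}_{n+1}$ by the last step of Construction \ref{construction: main inductive construction}; hence ${\obj t}_{n+1}$ is terminal in ${\cat V}_{0,1,n+1}$, closing the induction and simultaneously yielding all the ${\obj t}_{n,c}$.

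I expect the only real content to lie in the universal-curve step of the second paragraph: verifying carefully that a morphism in ${\cat C}_{0,1,n}$ to $\varpi^*{\obj t}_n$ is the same datum as a morphism in ${\cat V}_{0,1,n}$ to ${\obj t}_n$ plus a lift of the extra section through the cartesian square, and that the diagonal plays the role of the universal section. Everything else reduces to the bookkeeping fact that equivalences and the isomorphism ${\cat C}^3_{0,1,n}\cong{\cat V}_{0,1,n+1}$ carry terminal objects to terminal objects, for which no geometry beyond Theorem \ref{theorem: the main universal curve theorem} is needed; in particular the geometric regularity statements of Proposition \ref{proposition: future terminal objects are nice} are not required for terminality itself.
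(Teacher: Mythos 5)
Your proposal is correct and takes essentially the same route as the paper: the paper's proof likewise combines Lemma \ref{lemma: base case of induction in theorem: the main degeneration as a moduli space theorem} (base case), Theorem \ref{theorem: the main universal curve theorem} (the two equivalences, with the same numerical check), and what it calls the ``obvious fact'' that pulling back a terminal object of ${\cat C}^3_{0,1,n-1}\simeq{\cat V}_{0,1,n}$ along the projection from its curve to its base gives a terminal object of ${\cat C}_{0,1,n}\simeq{\cat C}^1_{0,1,n}$. Your second paragraph merely spells out that ``obvious fact'' (the Knudsen-style identification of an extra section with a lift of the base morphism through the cartesian square), which the paper states without proof.
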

	
	\begin{proof}
		Follows from Lemma \ref{lemma: base case of induction in theorem: the main degeneration as a moduli space theorem}, Theorem \ref{theorem: the main universal curve theorem} and the obvious fact that the pullback of a terminal object of ${\cat C}^3_{0,1,n-1} \simeq {\cat V}_{0,1,n}$ along the projection from the curve to the base is a terminal object of ${\cat C}_{0,1,n} \simeq {\cat C}^1_{0,1,n}$.
	\end{proof}
	
	Theorem \ref{theorem: the main degeneration as a moduli space theorem} follows from Theorem \ref{theorem: terminal object theorem most explicit version} and Proposition \ref{proposition: future terminal objects are nice}.  
	
	\begin{proof}[Proof of Theorem \ref{theorem: Pn bar as moduli space}]
	We will prove that $F$ is represented by the space $\overline{P}_n$ constructed in \ref{example: examples with initial data: Pn first announcement}. Indeed, $F$ corresponds to the subcategory of ${\cat V}_{0,1,n}$ whose objects have identically $0$ NCR morphisms. Since NCR morphisms commute with base change, it follows from Theorems \ref{theorem: the main degeneration as a moduli space theorem} and \ref{theorem: terminal object theorem most explicit version} that $F$ is represented by $\eta_n^{-1}(\spec {\mathbb Z})$, where $\eta_n: S[{\obj t}_n] \to \spec {\mathbb Z}[t]$ is the NCR morphism of ${\obj t}_n$, and $\spec {\mathbb Z} \hookrightarrow \spec {\mathbb Z}[t]$ by ${\mathbb Z}[t] \to {\mathbb Z}[t]/(t) = {\mathbb Z}$. By the remarks in \ref{example: examples with initial data: isotrivial degeneration first announcement} and Proposition \ref{proposition: future terminal objects are nice}, this is nothing but $\overline{P}_n$. The fact that $\overline{P}_n$ is projective, local complete intersection, flat, and geometrically integral over $\spec {\mathbb Z}$ follows from Proposition \ref{proposition: future terminal objects are nice}.
	\end{proof}

	\begin{corollary}\label{corollary: explicit statement that Ln degenerates to Pn}
		Let $X = S[{\obj t}_n]$ be the base of the terminal object ${\obj t}_n$ of ${\cat V}_{0,1,n}$ cf. Theorem \ref{theorem: terminal object theorem most explicit version}, and $\eta_n:X \to \spec {\mathbb Z}[t]$ its NCR morphism. Then,
		\[ \eta_n^{-1}(\spec {\mathbb Z}) \simeq \overline{P}_n \quad \text{and} \quad \eta_n^{-1}(\spec {\mathbb Z}[t,t^{-1}]) \simeq \spec {\mathbb Z}[t,t^{-1}] \times \overline{L}_n, \]
		the latter over $\spec {\mathbb Z}[t,t^{-1}]$, where $\spec {\mathbb Z} \hookrightarrow \spec {\mathbb Z}[t]$ by ${\mathbb Z}[t] \to {\mathbb Z}[t]/(t) = {\mathbb Z}$. 
	\end{corollary}
	
	\begin{proof}
		To review, (1) ${\obj t}_n$ is both the terminal object of ${\cat V}_{0,1,n}$ and the object constructed in \S\ref{example: examples with initial data: isotrivial degeneration first announcement} (Theorem \ref{theorem: terminal object theorem most explicit version}), (2) $\overline{P}_n$ is both the scheme which represents the functor $F$ and the space constructed in \S\ref{example: examples with initial data: Pn first announcement} (proof of Theorem \ref{theorem: Pn bar as moduli space} above), and (3) $\eta_n$ is both the NCR morphism of ${\obj t}_n$, and the morphism to $\spec {\mathbb Z}[t]$ which comes from the inductive construction in \S\ref{example: examples with initial data: isotrivial degeneration first announcement} (item \ref{item: third item in corollary: terminal objects are nice} in Proposition \ref{proposition: future terminal objects are nice}). Then, the fact that $\eta_n^{-1}(\spec {\mathbb Z}) \simeq \overline{P}_n$ comes directly from the proof of Theorem \ref{theorem: Pn bar as moduli space} above (the third and fourth sentences in the respective proof). The fact that $\eta_n^{-1}(\spec {\mathbb Z}[t,t^{-1}]) \simeq \spec {\mathbb Z}[t,t^{-1}] \times \overline{L}_n$ is simply \eqref{equation: restriction above nonzero is trivially LM}.
 	\end{proof}
		
	By Corollary \ref{corollary: explicit statement that Ln degenerates to Pn}, we know that $\overline{L}_n$ degenerates to $\overline{P}_n$. To complete the proof of Theorem \ref{theorem: isotrivial degeneration theorem}, it only remains to deal with the group actions.
	
	\section{Actions of $\ga$ and $\gm$ on curves and their moduli}\label{section: ga and gm actions on curves and their moduli}
	
	\subsection{Preliminaries} In this subsection, we review some generalities. These may be skipped and referred back to as necessary.
	
	\subsubsection{Review of Weil divisors and associated reflexive sheaves}\label{subsection: review of Weil divisors and associated reflexive sheaves} Let $X$ be an integral normal separated excellent scheme. For any Weil divisor $D$ on $X$, let ${\sh O}_X(D)$ be the sheaf associated to the divisor $D$, that is, 
	$ \Gamma(U,{\sh O}_X(D)) = \{f \in K(X): (\operatorname{div}(f)+D)|_U \geq 0\}$.  
	However, in other sections we will reserve the notation ${\sh O}_X(D)$ for the Cartier case, unless specified otherwise. We collect here some basic facts regarding associated sheaves. I have been unable to find a suitable published reference, but the excellent notes \cite{[Sch]} contain what is needed. 
	\begin{enumerate}
		\item\label{item mW: associated sheaf is reflexive} ${\sh O}_X(D)$ is a rank $1$ reflexive coherent ${\sh O}_X$-module \cite[Proposition 3.4]{[Sch]}; 
		\item\label{item mW: reflexive sheaf is associated} conversely, if ${\sh F}$ is a rank $1$ reflexive coherent ${\sh O}_X$-module, then there exists a Weil divisor $D$ such that ${\sh F} \simeq {\sh O}_X(D)$ \cite[Propositions 3.6 and 3.7]{[Sch]}; 
		\item\label{item mW: ideal sheaf} if $D$ is prime, then ${\sh I}_{D,X} = {\sh O}_X(-D)$ \cite[Proposition 3.4]{[Sch]};
		\item\label{item mW: dual ideal sheaf} if $D$ is prime, then ${\sh I}^\vee_{D,X} = {\sh O}_X(D)$, by \ref{item mW: associated sheaf is reflexive}, \ref{item mW: ideal sheaf}, and \cite[Proposition 3.13.(b)]{[Sch]};
		\item\label{item mW: End and Aut} if ${\sh F}$ is a rank $1$ reflexive coherent ${\sh O}_X$-module, then ${\sh E}nd({\sh F}) = {\sh O}_X$ by item \ref{item mW: reflexive sheaf is associated} and \cite[Proposition 3.13.(c)]{[Sch]}. In particular, ${\sh A}ut({\sh F}) = {\sh O}^\times_X$.
	\end{enumerate}
	
	\begin{lemma}\label{lemma: pullback for Weil divisors lazy}
		Let $X$ and $X'$ be integral normal separated excellent schemes, and $f:X' \to X$ a flat morphism. Let $D \subset X$ be a prime Weil divisor such that $D' = f^{-1}(D)$ is a prime Weil divisor on $X'$. Then $f^*{\sh O}_X(D) \simeq {\sh O}_{X'}(D')$.
	\end{lemma}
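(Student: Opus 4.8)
The plan is to reduce the statement to the two identities ${\sh O}_X(D) = {\sh I}_{D,X}^\vee$ and ${\sh O}_{X'}(D') = {\sh I}_{D',X'}^\vee$ supplied by item \ref{item mW: dual ideal sheaf} of \S\ref{subsection: review of Weil divisors and associated reflexive sheaves}, and then to move the flat pullback $f^*$ past the dualization. Concretely, I would assemble the chain of isomorphisms
\begin{equation*}
	f^*{\sh O}_X(D) = f^*({\sh I}_{D,X}^\vee) \cong (f^*{\sh I}_{D,X})^\vee = {\sh I}_{D',X'}^\vee = {\sh O}_{X'}(D'),
\end{equation*}
where the outer equalities are item \ref{item mW: dual ideal sheaf} applied to $D$ and to $D'$, the middle isomorphism is the compatibility of pullback with sheaf-Hom, and the remaining equality $f^*{\sh I}_{D,X} = {\sh I}_{D',X'}$ comes from flatness combined with the hypothesis on $D'$.

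First I would identify $f^*{\sh I}_{D,X}$ with the ideal sheaf of $D'$. Since $f$ is flat, applying $f^*$ to the short exact sequence $0 \to {\sh I}_{D,X} \to {\sh O}_X \to {\sh O}_D \to 0$ keeps it exact, so $f^*{\sh I}_{D,X} \hookrightarrow {\sh O}_{X'}$ is injective with image the ideal ${\sh I}_{D,X}\cdot{\sh O}_{X'}$ cutting out the scheme-theoretic preimage $f^{-1}(D) = X' \times_X D$. The hypothesis that $D' = f^{-1}(D)$ is a \emph{prime} Weil divisor says precisely that this preimage is integral, hence reduced, so its ideal is the (radical) prime-divisor ideal ${\sh I}_{D',X'}$; that is, $f^*{\sh I}_{D,X} = {\sh I}_{D',X'}$ as subsheaves of ${\sh O}_{X'}$.

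Next I would invoke the standard fact that, for a finitely presented ${\sh O}_X$-module ${\sh F}$ and a flat morphism $f$, the natural map $f^*{\sh H}om_{{\sh O}_X}({\sh F},{\sh O}_X) \to {\sh H}om_{{\sh O}_{X'}}(f^*{\sh F}, {\sh O}_{X'})$ is an isomorphism. Here ${\sh F} = {\sh I}_{D,X}$ is coherent and $X$ is noetherian (being excellent), hence of finite presentation, so $f^*({\sh I}_{D,X}^\vee) \cong (f^*{\sh I}_{D,X})^\vee$. Plugging in the previous step and item \ref{item mW: dual ideal sheaf} completes the chain displayed above; note that this route never requires us to check reflexivity of $f^*{\sh O}_X(D)$ by hand, since the computation lands directly on the reflexive sheaf ${\sh O}_{X'}(D')$.

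The one genuinely delicate point is the identification $f^*{\sh I}_{D,X} = {\sh I}_{D',X'}$, which uses the reducedness of the scheme-theoretic preimage in an essential way. If $f^{-1}(D)$ were only set-theoretically irreducible but carried a thickening of multiplicity $m > 1$ along $D'$ (as already happens for the squaring map ${\mathbb A}^1 \to {\mathbb A}^1$), the same computation would instead yield $f^*{\sh O}_X(D) \cong {\sh O}_{X'}(mD')$, which in general is not isomorphic to ${\sh O}_{X'}(D')$. Thus the hypothesis that the preimage is a prime — hence reduced and irreducible — Weil divisor is exactly what forces multiplicity one and makes the lemma true, and I would be careful to read $f^{-1}(D)$ scheme-theoretically throughout. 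Everything else in the argument is formal.
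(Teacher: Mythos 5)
Your proof is correct and takes essentially the same route as the paper's: both arguments identify $f^*{\sh I}_{D,X}$ with ${\sh I}_{D',X'}$ using flatness (the paper via a commutative diagram and flat base change, you by computing the image ideal of the scheme-theoretic preimage directly), then pass $f^*$ through the dual (the paper citing Hartshorne's reflexive-sheaves paper, you citing the standard finite-presentation-plus-flatness compatibility of pullback with sheaf Hom), and finish by applying the identity ${\sh I}_{D,X}^\vee = {\sh O}_X(D)$ for prime divisors. The differences are purely presentational.
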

	
	\begin{proof} First, we claim that $f^*{\sh I}_{D,X} \simeq {\sh I}_{D',X'}$. This is a consequence of flatness, as follows. We have the following solid arrow commutative diagram
		\begin{center}
			\begin{tikzpicture}[scale=0.8]
				\node (a0) at (-1.7,0) {$0$};
				\node (a1) at (0,0) {${\sh I}_{D',X'}$};
				\node (a2) at (2.8,0) {${\sh O}_{X'}$};
				\node (a3) at (5.8,0) {$\iota'_*{\sh O}_{D'}$};
				\node (a4) at (7.5,0) {$0$};
				\node (b0) at (-1.7,1.4) {$0$};
				\node (b1) at (0,1.4) {$f^*{\sh I}_{D,X}$};
				\node (b2) at (2.8,1.4) {$f^*{\sh O}_X$}; 
				\node (b3) at (5.8,1.4) {$f^*\iota_*{\sh O}_D$}; 
				\node (b4) at (7.5,1.4) {$0$};
				\draw[->] (b0) to (b1);
				\draw[->] (b1) to (b2);
				\draw[->] (b2) to (b3);
				\draw[->] (b3) to (b4);
				\draw[->] (a0) to (a1);
				\draw[->] (a1) to (a2);
				\draw[->] (a2) to (a3);
				\draw[->] (a3) to (a4);
				\draw[<-, dashed] (a1) -- node [midway, right] {$\simeq$} (b1);
				\draw[<-] (a3) -- node [midway, right] {$\simeq$} (b3);	
				\draw[double equal sign distance] (a2) -- (b2);	
			\end{tikzpicture}
		\end{center}
		in which the top row is exact because $f$ is flat. The right vertical isomorphism comes from the flat cohomology and base change theorem \cite[Proposition 9.3]{[Ha77]}. Then there must exist a dashed arrow which makes the diagram commute, proving $f^*{\sh I}_{D,X} \simeq {\sh I}_{D',X'}$. However, as in the proof of \cite[Proposition 1.8]{[Ha80]}, $f^*$ commutes with dualizing, so $f^*{\sh I}^\vee_{D,X} \simeq {\sh I}^\vee_{D',X'}$. Then item \ref{item mW: dual ideal sheaf} completes the proof.
	\end{proof} 
	
	\subsubsection{Some calculations on $\overline{L}_n$}\label{subsubsection: some calculations on the Losev-Manin space over Z} The Losev-Manin space has already been reviewed in \S\ref{subsection: Configurations on a line modulo scaling or translation} and discussed in \S\ref{example: examples with initial data: Losev-Manin I} and \S\ref{example: examples with initial data: Losev-Manin II}. As agreed in \S\ref{subsection: Configurations on a line modulo scaling or translation}, $\overline{L}_n$ is the Losev-Manin space over $\spec {\mathbb Z}$.  \cite[Theorem 2.2]{[LM00]} still holds over $\spec {\mathbb Z}$, and $\overline{L}_{n+1}$ is still the universal curve over $\overline{L}_n$. In particular, $\overline{L}_n$ is smooth over $\spec {\mathbb Z}$. Indeed, it is clear inductively that it is flat and of finite type, since $\overline{L}_{n+1} \to \overline{L}_n$ is flat and of finite type, and the fact that the geometric fibers are regular is stated on page 446 of \cite{[LM00]} -- nothing changes in positive characteristic. (Since no details on smoothness are provided in \cite{[LM00]}, we mention an alternative argument: use the moduli space interpretation and deformation theory. The usual deformation theory of marked curves still applies.) 
	
	Moreover, there is a natural action of $\overline{L}_n \times {\mathbb G}_m$ on $\overline{L}_{n+1}$ over $\overline{L}_n$ -- on the open subset corresponding to Losev-Manin chains of length $1$, this is just the ${\mathbb G}_{m,L_n}$-action on ${\mathbb P}^1_{L_n}$ fixing $0$ and $\infty$. We denote it by $\nu:{\mathbb G}_m \times \overline{L}_{n+1} \to \overline{L}_{n+1}$. This also follows from \cite{[LM00]}, although it is worth noting that we will actually recover this as a byproduct of an inductive argument below (it will take one moment of thought to convince ourselves that there is no logical circularity here).
	
	Let $Y_n = \overline{L}_{n+1} \times_{\overline{L}_n} \overline{L}_{n+1}$. The action $\nu:{\mathbb G}_m \times \overline{L}_{n+1} \to \overline{L}_{n+1}$ pulls back along $Y_n \to \overline{L}_{n+1}$ to an action $\mu:{\mathbb G}_m \times Y_n \to Y_n$. Let $\Delta \subset Y_n$ be the diagonal of the fiber product, and $p_2:{\mathbb G}_m \times Y_n \to Y_n$ the projection to the second factor. 
	
	First, we show that $Y_n$ admits a small resolution (over $\spec {\mathbb Z}$).
	
	\begin{lemma}\label{lemma: small resolution of singular space related to Losev-Manin space}
		Let $R_n = {\mathbb P}{\sh I}^\vee_{\Delta,Y_n}$, and $\lambda: R_n \to Y_n$ the natural projection. Then $R_n$ is smooth over $\spec {\mathbb Z}$, and there exists a closed subscheme $N \subset Y_n$ such that:
		\begin{enumerate}
			\item $N$ has relative (over $\spec {\mathbb Z}$) codimension $3$ in $Y_n$;
			\item $\lambda^{-1}(N)$ has relative codimension $2$ in $R_n$; and
			\item $\lambda$ restricts to an isomorphism $R_n \backslash \lambda^{-1}(N) \simeq Y_n \backslash N$.
		\end{enumerate} 
	\end{lemma}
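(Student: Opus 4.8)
The plan is to identify $Y_n$ as a family of nodal curves that is smooth over $\spec {\mathbb Z}$ away from a small locus of conifold (ordinary double point) singularities, and then to recognize $R_n = {\mathbb P}{\sh I}^\vee_{\Delta,Y_n}$ as the classical small resolution obtained by ``blowing up'' the Weil divisor $\Delta$.

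First I would record the global structure of $Y_n$. Viewing $Y_n \to \overline{L}_{n+1}$ through the first projection exhibits $Y_n$ as the pullback of the universal curve $\overline{L}_{n+1}\to\overline{L}_n$, hence as a prestable curve over the base $\overline{L}_{n+1}$, which is smooth over $\spec{\mathbb Z}$ (cf. \S\ref{subsubsection: some calculations on the Losev-Manin space over Z}). Since prestable curves are lci over their base \cite[Corollary 13.2.7]{[Ol16]} and $\overline{L}_{n+1}$ is smooth over $\spec{\mathbb Z}$, the scheme $Y_n$ is lci, in particular Gorenstein and Cohen--Macaulay, over $\spec{\mathbb Z}$; it is connected (connected fibers over a connected base) and of relative dimension $n+1$ over $\spec{\mathbb Z}$.

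Next comes the local analysis of the singularities. \'Etale- (or formal-) locally near a node of a fiber, $\overline{L}_{n+1}$ is $\{uv = \delta\}$ with $\delta$ a smoothing coordinate on $\overline{L}_n$; a direct computation of the fiber product then shows that $Y_n$ is smooth over $\spec{\mathbb Z}$ at a point $(P,Q)$ unless $P$ and $Q$ are the \emph{same} node of their common fiber (when $P,Q$ are distinct nodes, or one of them is a smooth point, no relation survives), in which case the local model is the conifold $\{x_1 y_1 = x_2 y_2\}$ times a smooth factor. Thus the singular locus of $Y_n$ is exactly the locus $N$ of pairs $(P,P)$ with $P$ a node, which has relative dimension $n-2$, i.e. relative codimension $3$ over $\spec{\mathbb Z}$ --- this is claim (1). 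Because $Y_n$ is $S_2$ and regular in relative codimension $\ge 2$, it is normal, hence integral; in particular the reflexive-sheaf dictionary of \S\ref{subsection: review of Weil divisors and associated reflexive sheaves} applies and gives $\mathcal{I}^\vee_{\Delta,Y_n} = \mathcal{O}_{Y_n}(\Delta)$ by items \ref{item mW: ideal sheaf} and \ref{item mW: dual ideal sheaf}, where $\Delta$ is the prime Weil divisor cut out locally by $(x_1 - x_2,\, y_1 - y_2)$.

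The technical heart is the identification of $R_n$. Away from $N$ the scheme $Y_n$ is regular, so $\Delta$ is Cartier, $\mathcal{O}_{Y_n}(\Delta)$ is invertible, and $\lambda$ restricts to an isomorphism over $Y_n\setminus N$; this is claim (3). Along $N$ I would compute in the conifold model: writing $A = {\mathbb Z}[x_1,y_1,x_2,y_2]/(x_1y_1 - x_2y_2)$ and $I = (x_1 - x_2,\, y_1 - y_2)$, the dual $\operatorname{Hom}_A(I,A)$ is generated by the inclusion $\iota$ together with one further homomorphism $\psi$ (given by $x_1-x_2 \mapsto -x_1$, $y_1 - y_2 \mapsto y_2$), and the syzygies of $(\iota,\psi)$ yield two bilinear equations $\{x_1 W = x_2 V,\ y_2 W = y_1 V\} \subset {\mathbb P}^1_A$ cutting out the standard small resolution of the conifold, whose two affine charts are polynomial rings over the base (hence smooth over $\spec{\mathbb Z}$) and over each conifold point carry a single ${\mathbb P}^1$. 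This simultaneously shows that $R_n$ is smooth over $\spec{\mathbb Z}$ and that $\lambda^{-1}(N)\to N$ is a ${\mathbb P}^1$-bundle, of relative dimension $n-1$ and thus relative codimension $2$, giving claim (2). The step needing the most care is exactly this local computation: one must verify that $\operatorname{Proj}\operatorname{Sym}$ of the dual ideal (formed from the tensor, not the reflexive, powers, per the convention ${\mathbb P}{\sh F}=\operatorname{Proj}_X\operatorname{Sym}{\sh F}$) acquires no extraneous components over the singular point --- that is, that the two displayed syzygies generate the full relation ideal --- so that ${\mathbb P}{\sh I}^\vee_\Delta$ \emph{equals} the smooth threefold rather than merely mapping onto it. This is a finite, elementary check, but it is the crux; everything else reduces to the classical conifold small resolution and to the smoothness of $\overline{L}_n$ over $\spec{\mathbb Z}$ already recorded in \S\ref{subsubsection: some calculations on the Losev-Manin space over Z}.
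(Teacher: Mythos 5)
Your route is genuinely different from the paper's. The paper does no local analysis of singularities at all: it recalls that the inductive Losev--Manin construction of \cite{[LM00]} realizes $\overline{L}_{n+2} = {\mathbb P}{\sh K}$ with ${\sh K} = \operatorname{Coker}\bigl({\sh O}_{Y_n} \to {\sh I}^\vee_{\Delta,Y_n} \oplus {\sh O}_{Y_n}(y_0+y_\infty)\bigr)$, so that $R_n$ and $\overline{L}_{n+2}$ agree over $Y_n \backslash (y_0 \cup y_\infty)$, and it imports smoothness of $\overline{L}_{n+2}$ over $\spec {\mathbb Z}$ from \S\ref{subsubsection: some calculations on the Losev-Manin space over Z}; near $y_0 \cup y_\infty$ it instead uses that $Y_n$ itself is smooth there and that $\lambda$ is an isomorphism off $N$, since $N \cap (y_0 \cup y_\infty) = \emptyset$. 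Your proof trades this bootstrapping for a direct \'etale-local conifold computation; it is self-contained, identifies the singular locus of $Y_n$ exactly, and exhibits $\lambda^{-1}(N) \to N$ as a ${\mathbb P}^1$-bundle -- more information than the paper extracts. Your global reductions (the local model $\{x_1y_1 = x_2y_2\}$ times a smooth factor, the codimension counts, and claim (3) via $\Delta$ being Cartier off $N$) are correct.

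However, the explicit algebra at your self-declared crux is wrong as written. Your $\psi$ is indeed a well-defined homomorphism -- as a fractional ideal element it is $f = -x_1/(x_1-x_2) = y_2/(y_1-y_2)$ -- and $(\iota,\psi)$ do generate $\operatorname{Hom}_A(I,A)$. But the displayed equations $x_1 W = x_2 V$, $y_2 W = y_1 V$ are \emph{not} relations of $(\iota,\psi)$: the first would force $f = x_2/x_1$, and in fact $x_1\psi - x_2\iota$ corresponds to $-(x_1^2 + x_1x_2 - x_2^2)/(x_1-x_2) \neq 0$. With $V$ dual to $\iota$ and $W$ dual to $\psi$, the relation module is generated by
\begin{equation*}
x_1 V + (x_1-x_2)W \qquad\text{and}\qquad y_2 V - (y_1-y_2)W,
\end{equation*}
as one checks from $x_1 + (x_1-x_2)f = 0$ and $y_2 - (y_1-y_2)f = 0$; that these two generate \emph{all} relations (the completeness check you flag but do not perform) reduces to computing the syzygies of the pair $(x_1-x_2,\,x_2)$ in $A$, which are generated by the Koszul syzygy and $(y_1,\,-(y_1-y_2))$. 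With the corrected relations the two charts of $\operatorname{Proj}\operatorname{Sym}{\sh I}^\vee_{\Delta}$ are the polynomial rings ${\mathbb Z}[x_1,\,y_2,\,V/W]$ and ${\mathbb Z}[x_1-x_2,\,y_1-y_2,\,W/V]$, visibly smooth over $\spec{\mathbb Z}$ with fiber ${\mathbb P}^1$ over the conifold point, so your conclusion survives. (Your displayed equations are in fact the relations of the basis $(\psi,\,\iota+\psi)$ rather than $(\iota,\psi)$, which is why they too cut out a smooth small resolution; the slip is one of bookkeeping, not of geometric substance -- but as written the crux computation does not cohere and must be redone.)
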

	
	\begin{proof}
		Roughly, the point is that $R_n$ resolves the singularities of $Y_n$ in the same way $\overline{L}_{n+2}$ does. Let $y_0,y_\infty: \overline{L}_{n+1} \to Y_n$ be $0$ and $\infty$ sections, i.e. the pullbacks of the $0$ and $\infty$ sections of $\overline{L}_{n+1}$ over $\overline{L}_n$. As reviewed earlier, the Losev-Manin spaces are constructed inductively in \cite[1.3 and 2.1]{[LM00]} by $\overline{L}_{n+2} = {\mathbb P}{\sh K}$, where 
		\begin{equation}
			{\sh K} = \operatorname{Coker}\left({\sh O}_{Y_n} \xrightarrow{b \mapsto (b,b)} {\sh I}^\vee_{\Delta,Y_n} \oplus {\sh O}_{Y_n}(y_0 + y_\infty) \right).
		\end{equation}
		Then $\overline{L}_{n+2}$ and $R_n$ are isomorphic above $Y_n \backslash (y_0 \cup y_\infty)$, and hence $R_n \to \spec {\mathbb Z}$ is smooth everywhere in $\lambda^{-1}(Y_n \backslash (y_0 \cup y_\infty))$. Let $N \subset \Delta \simeq \overline{L}_{n+1}$ correspond to the locus where $\overline{L}_{n+1} \to \overline{L}_n$ fails to be smooth, taken with, say, the reduced closed subscheme structure. It is clear that $\lambda$ is an isomorphism above $Y_n \backslash N$. Since $\overline{L}_{n+1} \to \spec {\mathbb Z}$ is smooth, $Y_n \to \spec {\mathbb Z}$ is clearly smooth at all points where at least one of the two projection maps $Y_n \to \overline{L}_{n+1}$ is smooth, and in particular, it is smooth in a neighbourhood of $y_0 \cup y_\infty$. Therefore, $R_n \to \spec {\mathbb Z}$ is also smooth in a neighborhood of $\lambda^{-1}(y_0) \cup \lambda^{-1}(y_\infty)$ as $N \cap (y_0 \cup y_\infty) = \emptyset$, completing the proof of the smoothness claim. The dimension claims are straightforward.
	\end{proof}
	
	\begin{proposition}\label{proposition: linear equivalence on singular space related to Losev-Manin space}
		$\mu^{-1}(\Delta) \sim p_2^{-1}(\Delta)$ on ${\mathbb G}_m \times Y_n$.
	\end{proposition}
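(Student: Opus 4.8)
The plan is to reduce the asserted linear equivalence of Weil divisors to an isomorphism of two reflexive sheaves pulled back from $Y_n$, and then to kill the difference by a homotopy-invariance argument for class groups, detecting the relevant class on the fibre over $1 \in \mathbb{G}_m$. The whole argument is insensitive to the precise form of the action $\mu$; it uses only that $\mu$ is a group action.

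First I would set up the ambient geometry. The scheme $Y_n$ is Cohen--Macaulay, being flat with nodal (hence Cohen--Macaulay) fibres over the regular scheme $\overline{L}_n$, and by Lemma \ref{lemma: small resolution of singular space related to Losev-Manin space} it is regular away from a closed subset of codimension $\geq 3$; thus it is $R_1+S_2$, i.e.\ integral, normal, separated and excellent, and therefore so is $\mathbb{G}_m \times Y_n$. Consequently the divisor/reflexive-sheaf dictionary of \S\ref{subsection: review of Weil divisors and associated reflexive sheaves} applies on $\mathbb{G}_m \times Y_n$. Next, the action map $\mu$ is flat: writing $\Phi(\alpha,y)=(\alpha,\mu(\alpha,y))$ for the shear automorphism of $\mathbb{G}_m \times Y_n$, one has $\mu = p_2\circ\Phi$, and $p_2$ is flat as a base change of $\mathbb{G}_m \to \operatorname{Spec}\mathbb{Z}$. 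The diagonal $\Delta \simeq \overline{L}_{n+1}$ is an integral (prime) divisor, $p_2^{-1}(\Delta)=\mathbb{G}_m\times\Delta$ is integral, and $\mu^{-1}(\Delta)=\Phi^{-1}(\mathbb{G}_m\times\Delta)$ is integral as the image of an integral scheme under an automorphism; both are prime Weil divisors. Lemma \ref{lemma: pullback for Weil divisors lazy} then gives ${\sh O}(\mu^{-1}\Delta)\simeq \mu^*{\sh O}_{Y_n}(\Delta)$ and ${\sh O}(p_2^{-1}\Delta)\simeq p_2^*{\sh O}_{Y_n}(\Delta)$, so it suffices to prove $\mu^*{\sh O}_{Y_n}(\Delta)\simeq p_2^*{\sh O}_{Y_n}(\Delta)$, equivalently $[\mu^{-1}\Delta]=[p_2^{-1}\Delta]$ in $\operatorname{Cl}(\mathbb{G}_m\times Y_n)$.

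For the equality of classes I would invoke homotopy invariance of the class group of a normal Noetherian scheme: the projection induces an isomorphism $\operatorname{Cl}(Y_n)\xrightarrow{\sim}\operatorname{Cl}(\mathbb{A}^1\times Y_n)$, and since $\{0\}\times Y_n=\operatorname{div}(t)$ is principal on $\mathbb{A}^1\times Y_n$, the localization sequence upgrades this to an isomorphism $p_2^*:\operatorname{Cl}(Y_n)\xrightarrow{\sim}\operatorname{Cl}(\mathbb{G}_m\times Y_n)$, whose inverse is restriction to the unit fibre $\{1\}\times Y_n$ (because $p_2^*E$ restricts to $E$). Both $\mu^{-1}\Delta$ and $p_2^{-1}\Delta$ meet $\{1\}\times Y_n$ properly, with traces $\{1\}\times\Delta$, since $\mu(1,-)=\operatorname{id}$; writing $[\mu^{-1}\Delta]=p_2^*[E]$ and restricting to the unit fibre yields $[E]=[\Delta]$, whence $[\mu^{-1}\Delta]=p_2^*[\Delta]=[p_2^{-1}\Delta]$. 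This is exactly the desired equivalence.

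The hard part is not the unit-fibre computation but legitimizing the divisor theory on the singular space $\mathbb{G}_m\times Y_n$: one must know $Y_n$ is normal and that both the flat-pullback lemma and the restriction-to-a-fibre map behave correctly for classes that are only reflexive, not locally free, along the singular locus. If one prefers to avoid the singular space altogether, the small resolution $\lambda:R_n\to Y_n$ of Lemma \ref{lemma: small resolution of singular space related to Losev-Manin space} furnishes a clean alternative: since $\lambda$ is an isomorphism in codimension $1$, pullback gives an isomorphism $\operatorname{Cl}(Y_n)\xrightarrow{\sim}\operatorname{Pic}(R_n)$, so one may pull the two divisors back along $\operatorname{id}\times\lambda$ to the smooth scheme $\mathbb{G}_m\times R_n$, run the identical homotopy/unit-fibre argument there with honest line bundles, and descend. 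Either route reduces the entire statement to the tautology $\mu(1,-)=\operatorname{id}$.
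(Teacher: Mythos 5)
Your small-resolution argument is correct, but it is genuinely different from the paper's proof, and cleaner in its middle step. The paper also passes to $\mathbb{G}_m \times R_n$ via Lemma \ref{lemma: small resolution of singular space related to Losev-Manin space} and descends at the end through $\operatorname{Cl}(\mathbb{G}_m \times R_n) \cong \operatorname{Cl}(\mathbb{G}_m \times Y_n)$ (codimension $\geq 2$, compatible with proper transforms), exactly as you do; but in between it argues fiberwise: for each closed point $z \in \mathbb{G}_m$ (finite residue field $K$) it proves $D_z \sim D'_z$ on $Y_{n,K}$ using \cite[Proposition 1.6]{[Fu98]} applied to the family over $\mathbb{G}_{m,K}$ extended to $\mathbb{P}^1_K$, lifts this to $E_z \sim E'_z$ on $R_{n,K}$ via the fiberwise isomorphism $\operatorname{Cl}(R_{n,K}) \cong \operatorname{Cl}(Y_{n,K})$, and then globalizes by a semicontinuity/Grauert argument showing $q_*\mathcal{O}(E'-E)$ and $q_*\mathcal{O}(E-E')$ are trivial line bundles on $\mathbb{G}_m$. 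You replace that entire fiberwise-plus-globalization step by a single global one: homotopy invariance of the class group (\cite[Proposition 6.6]{[Ha77]}) together with the localization sequence (\cite[Proposition 6.5.c]{[Ha77]}) gives $\operatorname{Pic}(\mathbb{G}_m \times R_n) = p_2^*\operatorname{Pic}(R_n)$ with inverse the unit-section pullback, and both proper transforms restrict on the unit fiber to the proper transform of $\Delta$ because $\mu(1,\cdot) = \operatorname{id}$. Your route avoids Fulton's rational-equivalence lemma over the finite residue fields, cohomology-and-base-change, and the geometric integrality of the fibers of $R_n$; and the key tool is one the paper itself endorses over $\operatorname{Spec}\mathbb{Z}$, since the proof of Proposition \ref{proposition: equivariance of dual ideal sheaf} invokes precisely $\operatorname{Cl}(X) \cong \operatorname{Cl}(\ell^* \times X)$ via \cite[Propositions 6.5.c and 6.6]{[Ha77]}. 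The paper's fiberwise route, in exchange, only needs divisor-class statements over fields plus standard relative machinery.

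One caution: your first route, carried out directly on the singular $\mathbb{G}_m \times Y_n$, is not complete as stated, for the reason you yourself flag. There is no a priori ``restriction to the unit fibre'' map on $\operatorname{Cl}$ of a non-factorial scheme: a Weil divisor class has no canonical intersection with the Cartier divisor $\{1\} \times Y_n$ unless one imports Gysin maps from intersection theory over a Dedekind base (Fulton, Chapter 20), which you do not cite. So the small-resolution version, where the two divisors become honest Cartier divisors and restriction is just pullback of line bundles along the unit section, should be taken as the actual proof; with that reading, the argument is complete.
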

	
	\begin{proof}
		Let $D = p_2^{-1}(\Delta)$ and $D' = \mu^{-1}(\Delta)$, and $E$ and $E'$ the `proper transforms' of $D$ and $D'$ on ${\mathbb G}_m \times R_n$, that is, with notation as in Lemma \ref{lemma: small resolution of singular space related to Losev-Manin space}, $E$ is the closure of $(\operatorname{id}_{{\mathbb G}_m} \times \lambda)^{-1}(D \backslash ({\mathbb G}_m \times N))$ with the reduced structure, and $E'$ is the closure of $(\operatorname{id}_{{\mathbb G}_m} \times \lambda)^{-1}(D' \backslash ({\mathbb G}_m \times N))$ with the reduced structure. Let $p_1:{\mathbb G}_m \times Y_n \to {\mathbb G}_m$ and $q_1:{\mathbb G}_m \times R_n \to {\mathbb G}_m$ be the projections to the first factors. Since ${\mathbb G}_m \times R_n$ is regular, $E$ and $E'$ are effective Cartier divisors.
		
		Let $z = (f,p) \in {\mathbb G}_m = \spec {\mathbb Z}[x,x^{-1}]$ be a closed point, where $p$ is a prime number, and $f \in {\mathbb Z}[x]$ is irreducible mod $p$, $f \not\equiv_p x$. First, we claim that
		\begin{equation}\label{equation: fiberwise linear equivalence on fiber square}
			D_z \sim D'_z
		\end{equation}
		on $p_{1}^{-1}(z)$. Let $K = {\mathbb F}_p[x]/(f)$, the residue field of ${\mathbb G}_m$ at $z$. As usual, write $\square_K=\spec K \times_{\spec {\mathbb Z}} \square$. Let $w \in {\mathbb G}_{m,K}(K)$ be a preimage of $z \in {\mathbb G}_m(K)$, and let $1$ denote the point $(t-1) \in {\mathbb G}_{m,K}(K)$. We have $D'_z = D'_{K,w} \sim D'_{K,1} = D_{K,1} = D_{K,w} = D_z$ in $p_1^{-1}(z) \cong Y_{n,K}$, where the linear equivalence comes from \cite[Proposition 1.6]{[Fu98]} (the family over ${\mathbb G}_{m,K}$ may be extended to a family over ${\mathbb P}_K^1$ simply by taking the closure of $D_K$ in ${\mathbb P}_K^1 \times Y_{n,K}$), which proves \eqref{equation: fiberwise linear equivalence on fiber square}. Second, we claim that
		\begin{equation}\label{equation: fiberwise linear equivalence on small resolution}
			E_z \sim E'_z
		\end{equation}
		on $q_1^{-1}(z)$. Note that $E_z$ and $E'_z$ are the proper transforms of $D_z$ and $D'_z$ under the small resolution $\lambda_K:q_1^{-1}(z) = R_{n,K} \to Y_{n,K} = p_{1}^{-1}(z)$. (Since $((\operatorname{id}_{{\mathbb G}_m} \times \lambda)^{-1}(D'))_z$ contains $E'_z$ and has a unique component of codimension $1$ in $({\mathbb G}_m \times R_n)_z$ as $D'_z$ is irreducible and $\operatorname{id}_{{\mathbb G}_{m,K}} \times \lambda_K$ is an isomorphism in codimension $2$, and $E'_z$ is cut out by a single equation (line bundle section) on $({\mathbb G}_m \times R_n)_z$ being the pullback of the Cartier divisor $E$, it follows that $E'_z$ is a fortiori the codimension $1$ irreducible component above, and also that it is the proper transform of $D'_z$. Similarly for $E_z$.) By Lemma \ref{lemma: small resolution of singular space related to Losev-Manin space} and \cite[Proposition 6.5, part b]{[Ha77]}, we have $\operatorname{Cl}(R_{n,K}) \cong \operatorname{Cl}(Y_{n,K})$ compatible with taking proper transforms, so \eqref{equation: fiberwise linear equivalence on small resolution} follows from \eqref{equation: fiberwise linear equivalence on fiber square}.  
		
		Let ${\sh J} = {\sh O}_{{\mathbb G}_m \times R_n}(E'-E)$. We will show that ${\sh J} \simeq {\sh O}_{{\mathbb G}_m \times R_n}$ using a standard argument. First, we claim that 
		\begin{equation}\label{equation: pushforward of difference sheaf is trivial} q_*{\sh J} \simeq {\sh O}_{{\mathbb G}_m}. \end{equation}
		By \eqref{equation: fiberwise linear equivalence on small resolution}, $\dim_{\kappa(z)} \Gamma({\sh J}_z) = 1$, for all closed points $z \in {\mathbb G}_m$. Since all closed and all open subsets of $|{\mathbb G}_m|$ contain at least one closed point, the semicontinuity theorem \cite[III, Theorem 12.8]{[Ha77]} implies that $\dim_{\kappa(z)} \Gamma({\sh J}_z) = 1$ holds for all $z \in {\mathbb G}_m$, not just the closed points. By Grauert's Theorem \cite[III, Corollary 12.9]{[Ha77]}, $q_*{\sh J}$ is invertible. Then \eqref{equation: pushforward of difference sheaf is trivial} follows as $\operatorname{Pic}({\mathbb G}_m)$ is trivial. However, $\dim_{\kappa(z)} \Gamma({\sh J}^\vee_z) = 1$ for all closed points $z \in {\mathbb G}_m$ follows equally well from \eqref{equation: fiberwise linear equivalence on small resolution}, so a completely analogous argument shows that 
		\begin{equation}\label{equation: pushforward of difference sheaf is trivial dual}  q_*({\sh J}^\vee) \simeq {\sh O}_{{\mathbb G}_m}. \end{equation}
		Let $s_1$ and $s_2$ be the global sections of ${\sh J}$ and ${\sh J}^\vee$ which correspond to the section $1$ of ${\sh O}_{{\mathbb G}_m}$ under the isomorphisms \eqref{equation: pushforward of difference sheaf is trivial} and \eqref{equation: pushforward of difference sheaf is trivial dual}. Clearly, $s_1 \otimes s_2 \neq 0$, so $s_1 \otimes s_2$ is nowhere vanishing as ${\sh J} \otimes {\sh J}^\vee \simeq {\sh O}_{{\mathbb G}_m \times R_n}$. Then $s_1$ is nowhere vanishing, so $ {\sh J} \simeq {\sh O}_{{\mathbb G}_m \times R_n}$.  
		Hence $E \sim E'$ on ${\mathbb G}_m \times R_n$. By Lemma \ref{lemma: small resolution of singular space related to Losev-Manin space} and \cite[Proposition 6.5, part b]{[Ha77]}, $\operatorname{Cl}({\mathbb G}_m \times R_n) \cong \operatorname{Cl}({\mathbb G}_m \times Y_n)$ compatible with taking proper transforms, and $D \sim D'$ follows.
	\end{proof}

	\subsubsection{Projectivization of equivariant sheaves} Finally, we review the fact that group actions lift to projectivizations of equivariant sheaves.
	
	\begin{lemma}\label{lemma: lifting actions}
		Let $G$ be an $S$-group scheme, $Y$ an $S$-scheme, and $\alpha$ an action of $G$ on $Y$ relative to $S$. Let ${\sh F}$ be a $G$-equivariant coherent ${\sh O}_Y$-module, $ X = {\mathbb P}{\sh F}$, and $f:X \to Y$ the natural projection map. Then there exists a $G$-action on $X$ over $S$, relative to which $f$ is $G$-equivariant.
	\end{lemma}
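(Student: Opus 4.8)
The plan is to obtain the action by transporting the equivariant structure on ${\sh F}$ through the base-change functoriality of $\mathbb P(\cdot)=\operatorname{Proj}\operatorname{Sym}(\cdot)$. Recall that a $G$-equivariant structure on ${\sh F}$ amounts to an isomorphism $\theta\colon\alpha^*{\sh F}\xrightarrow{\sim}\pr_Y^*{\sh F}$ of ${\sh O}_{G\times_S Y}$-modules, where $\pr_Y\colon G\times_S Y\to Y$ is the projection, subject to the usual cocycle condition on $G\times_S G\times_S Y$ and the normalization that its restriction along the unit section $e$ is the identity. First I would record the two ingredients I need: that $\operatorname{Sym}$ commutes with arbitrary pullback, so that $\operatorname{Proj}\operatorname{Sym}$ commutes with base change, i.e.\ for every $g\colon Y'\to Y$ there is a canonical isomorphism $b_g\colon \mathbb P(g^*{\sh F})\xrightarrow{\sim}Y'\times_Y X$ over $Y'$, carrying ${\sh O}(1)$ to the pullback of ${\sh O}_X(1)$; and that these isomorphisms are compatible with composition of morphisms (their own cocycle condition).

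Second, I would construct the action. Applying $b$ to $g=\pr_Y$ gives $\mathbb P(\pr_Y^*{\sh F})\cong G\times_S X$, and applying it to $g=\alpha$ gives $\mathbb P(\alpha^*{\sh F})\cong (G\times_S Y)\times_{\alpha,Y}X$. I then set
\[
a\colon G\times_S X \xrightarrow{b_{\pr_Y}^{-1}} \mathbb P(\pr_Y^*{\sh F}) \xrightarrow{\mathbb P(\theta)} \mathbb P(\alpha^*{\sh F}) \xrightarrow{b_\alpha} (G\times_S Y)\times_{\alpha,Y}X \xrightarrow{\pr_X} X,
\]
where $\mathbb P(\theta)$ is the isomorphism of projectivizations induced by $\operatorname{Sym}\theta$. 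All maps but the last lie over $\operatorname{id}_{G\times_S Y}$, while $f\circ\pr_X$ equals $\alpha$ composed with the projection to $G\times_S Y$; hence $f\circ a=\alpha\circ(\operatorname{id}_G\times f)$, which is exactly the asserted $G$-equivariance of $f$.

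It then remains to check that $a$ is a group action. The unit axiom follows immediately from the normalization of $\theta$ along $e$, since all the base-change isomorphisms $b_g$ restrict to the identity there. Associativity is the main work: pulling the defining composite of $a$ back along $m\times\operatorname{id}_X$ and along $\operatorname{id}_G\times a$ produces two morphisms $G\times_S G\times_S X\to X$, and I would identify them by a diagram chase that feeds the cocycle condition for $\theta$ into the cocycle compatibility of the $b_g$ over $G\times_S G\times_S Y$. I expect this bookkeeping of nested base changes to be the only real obstacle; conceptually it is the statement that $\mathbb P(\cdot)$ sends a sheaf on the action groupoid $G\times_S Y\rightrightarrows Y$ to a scheme over that groupoid, and no hypothesis beyond those in the statement (flatness, finiteness, etc.) enters. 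The same construction simultaneously produces a $G$-linearization of ${\sh O}_X(1)$, though only the action itself is asserted.
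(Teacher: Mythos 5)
Your proposal is correct and takes essentially the same approach as the paper: the paper's proof simply notes that ${\mathbb P}$ commutes with base change and defers the "purely formal" verification to the discussion in Mumford's GIT (p.~31), which is precisely the construction you spell out — transporting the cocycle datum $\theta$ through the base-change isomorphisms of $\operatorname{Proj}\operatorname{Sym}$ and checking the unit and associativity axioms against the cocycle condition. You merely make explicit the bookkeeping that the paper leaves to the cited reference.
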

	
	\begin{proof}
		Given that ${\mathbb P}$ commutes with base change, this is purely formal and surely well-known. In essence, the discussion in \cite[p. 31]{[MFK94]} still applies -- it doesn't truly matter that we have ${\mathbb P} = \operatorname{Proj}\mathop{}\operatorname{Sym}$ instead of $\operatorname{Spec}\mathop{}\operatorname{Sym}$, and that \cite{[MFK94]} operates in a more restrictive setup (invertible sheaf, etc.).
	\end{proof}
	
	\subsection{The $G$-action on the universal curve}\label{subsection: the G-action on the universal curve} We return to the main logical thread of the paper, and deal with the last remaining aspect, the group actions. The main goal of \S\ref{subsection: the G-action on the universal curve} is to `integrate' the vector field on the universal curve in the terminal object of ${\cat V}_{0,1,n}$ to obtain a group action. 
	
Let $\gamma:G \to \spec {\mathbb Z}[t]$ as in \S\ref{subsection: Configurations on a line modulo scaling or translation}, $e:\spec {\mathbb Z}[t] \to G$ the identity section, and ${\mathfrak g} = e^*{\sh T}_{G/{\mathbb Z}[t]}$ its Lie algebra, where ${\sh T}_{G/{\mathbb Z}[t]}$ is the relative tangent bundle of $\gamma$. For simplicity, let $\tline = \spec {\mathbb Z}[t]$ and $\tline^* = \spec {\mathbb Z}[t,t^{-1}]$. We will use the notation $X[\varepsilon] = \spec {\mathbb Z}[\varepsilon]/(\varepsilon^2) \times_{\spec {\mathbb Z}} X$. Since ${\mathfrak g}$ is the normal sheaf of $\ell \xrightarrow{e} G$, $\frac{\partial}{\partial x} \in \Gamma({\mathfrak g})$ gives a first order thickening $j: \ell[\varepsilon] \to G$ of $e$ by standard deformation theory. 

Let ${\obj o} = (S,C,\ldots,\phi)$ be an object of ${\cat V}^+_{0,1,n}$ or ${\cat C}^+_{0,1,n}$, with notation as in Definition \ref{definition: categories of curves}. Consider the pullback $G \times_{\tline} S$ of $G$ along the NCR morphism $S \to \ell$. An action $\alpha$ of $G \times_{\tline} S$ on $C$ over $S$ induces an automorphism of $G \times_{\ell} C$ as follows: $\alpha: (G \times_\ell S) \times_S C = G \times _{\ell} C \to C$ and the projection to the first factor $ G \times _{\ell} C \to G$ induce a endomorphism of $G \times _{\ell} C$, which is easily checked to be an automorphism (over $G \times_{\ell} S$). This restricts to an automorphism of $C[\varepsilon]$ over $S[\varepsilon]$ (equal to  $\operatorname{id}_C$ on $C$), by $C[\varepsilon] = \ell[\varepsilon] \times_\ell C \subset G \times_\ell C$ via $j:\ell[\varepsilon] \to G$ and similarly for $S[\varepsilon]$. In turn, by standard deformation theory, this automorphism gives an element of $\operatorname{Hom}(\Omega_{C/S},{\sh O}_C)$. The discussion can be summarized in the following diagram.
\begin{center}
	\begin{tikzpicture}[scale = 0.8]
		\node (a) at (-3,2) {$\left\{\text{\begin{tabular}{c}
					actions of $G \times_{\tline} S$ \\ on $C$ over $S$
			\end{tabular}} \right\}$};
		\node (b) at (4,2) {$\left\{\text{\begin{tabular}{c}
					automorphisms of $C[\varepsilon]$ over $S[\varepsilon]$ \\ which restrict to $\operatorname{id}_C$ on $C$
			\end{tabular}} \right\}$};
		\node (c) at (4,0.5) {$\operatorname{Hom}(\Omega_{C/S},{\sh O}_C)$};
		\node (d) at (-1.5,0.5) {$\phi \in \operatorname{Hom}(\omega_{C/S},{\sh O}_C)$};
		\draw [->] (a) to (b);
		\draw [->] (b) to (c); 
		\draw [->] (d) to (c);
	\end{tikzpicture}
\end{center}
The lower horizontal map is obtained by dualizing the map $\Omega_{C/S} \to \omega_{C/S}$ from e.g. \cite[\S1]{[Kn83]}. Henceforth, $G \times_\tline S$, $G \times_\tline C$, etc. are relative to the NCR morphism $S \to \tline$.

\begin{definition}\label{definition: action compatible with vector field definition}
	In the situation above, we say that an action $\alpha$ of $G \times_{\tline} S$ on $C$ over $S$ is \emph{compatible with $\phi$} if the images of $\phi$ and $\alpha$ in $\operatorname{Hom}(\Omega_{C/S},{\sh O}_C)$ in the diagram above coincide. If only the restrictions of these images to an open $U \subset C$ coincide, we say the compatibility holds on $U$.
\end{definition}

\begin{theorem}\label{theorem: group actions on universal curve}
	An action compatible with the respective field exists for the terminal object ${\obj t}_n$ of ${\cat V}_{0,1,n}$, cf. Theorem \ref{theorem: terminal object theorem most explicit version}, and fixes the respective section $w_1$.
\end{theorem}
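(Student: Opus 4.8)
The plan is to induct on $n$ along the inductive Construction \ref{construction: main inductive construction}, which produces $\obj{t}_{n+1}$ from $\obj{t}_n$, and to integrate the field one bubbling-up at a time. Throughout I identify $\obj{t}_n$ with the terminal object of Theorem \ref{theorem: terminal object theorem most explicit version}, so its base is $C[\obj{t}_{n-1}]$ and $\phi$ is the field produced in \ref{example: examples with initial data: isotrivial degeneration first announcement}. Since the NCR morphism pulls back along base change, the relevant group scheme at each stage is $G\times_\tline(\text{base})$, and `compatible with $\phi$' always refers to the correspondence of Definition \ref{definition: action compatible with vector field definition}.

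For the base case $n=1$ I would write the action down by hand. Here $S[\obj{t}_1]=\tline$, the NCR morphism is the identity (Lemma \ref{lemma: base case of induction in theorem: the main degeneration as a moduli space theorem}), so $G\times_\tline S=G$, and $C[\obj{t}_1]=\mathbb P^1_\tline$ with $\phi=(1+tx)\frac{\partial}{\partial x}$ in the affine chart. I take the $G$-action $a\cdot x=(1+ta)x+a$, i.e.\ the projective-linear action by $\left(\begin{smallmatrix}1+ta&a\\0&1\end{smallmatrix}\right)$. A direct computation gives $a\cdot(b\cdot x)=(a*b)\cdot x$ with $a*b=a+b+tab$, which is exactly the group law encoded by the comultiplication $x\mapsto x\otimes 1+1\otimes x+tx\otimes x$ of $H$, so this is an action of $G$; differentiating it along the identity section returns the vector field $(1+tx)\frac{\partial}{\partial x}$, i.e.\ the image of $\phi$, which gives compatibility; and the transformation is upper-triangular, hence fixes $w_1=[1:0]$.

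For the inductive step I assume $\obj{t}_n$ carries a compatible action over its base fixing $w_1$. Passing to $\obj c_{n,1}=\varpi^*\obj{t}_n$ pulls the action back to the fibre direction of $C[\obj{t}_n]\times_{S[\obj{t}_n]}C[\obj{t}_n]$ over the new base $C[\obj{t}_n]$, and it remains to carry it through the two bubbling-ups $\obj c_{n,1}\xrightarrow{\up}\obj c_{n,2}\xrightarrow{\up}\obj c_{n,3}$. Each $C_\up$ is a projectivization $\mathbb P(\sh K)$ of the cokernel $\sh K$ of Construction \ref{construction: first bubbling up construction} (resp.\ \ref{construction: second bubbling up construction}), so by Lemma \ref{lemma: lifting actions} it suffices to equip $\sh K$ with a $G$-equivariant structure. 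As $\sh K$ is built from $\sh O(w_1)$, $\sh I_{x}^\vee$, $\omega^\vee$ and the section $\phi$, and the action fixes $w_1$ and preserves $\omega^\vee$ and $\phi$, the only real point is the equivariance of $\sh O(x)$ (equivalently $\sh I_x^\vee$) for the tautological section $x$. The subtlety is that the action \emph{moves} $x$: on fibres it is the flow of $\phi$, which translates the marked points. The resolution is that $\sh O(x)$ is nonetheless $G$-equivariant because the flow moves $x$ within its own linear-equivalence class, i.e.\ the pullback of $x$ under the action is linearly equivalent to its pullback under the projection; this is precisely the content of a direct analogue of Proposition \ref{proposition: linear equivalence on singular space related to Losev-Manin space}, which I would re-run on the relevant fibre product. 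Granting this, Lemma \ref{lemma: lifting actions} lifts the action to $C[\obj{t}_{n+1}]=C[\obj c_{n,3}]$; compatibility with $\phi$ is preserved because the field is transported canonically under bubbling up (Lemma \ref{lemma: vector fields match in second bubbling up}), and $w_1$ stays fixed as it lies in the open locus on which the contractions are isomorphisms.

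I expect the main obstacle to be exactly this equivariance of $\sh O(x)$, that is, the linear equivalence of $\text{(action)}^{-1}(x)$ with $\text{(projection)}^{-1}(x)$ on the appropriate product — the terminal-object counterpart of Proposition \ref{proposition: linear equivalence on singular space related to Losev-Manin space} — since everything else is formal once it is in hand. The remaining verifications — that the lifted map is genuinely a group action, that it is over the base, and that it is compatible with $\phi$ and fixes $w_1$ — may all be checked on the dense open locus where the geometric fibres are irreducible, on which the action is the explicit affine action of the base case, and then propagated to the whole (normal, integral) total space by the injectivity of restriction to dense opens, Lemma \ref{lemma: injective restrictions on prestable curves}, together with the normality of $C[\obj{t}_{n+1}]$ from Proposition \ref{proposition: future terminal objects are nice}.
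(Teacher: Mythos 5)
Your skeleton is the same as the paper's: induction along Construction \ref{construction: main inductive construction}, the explicit upper-triangular action on ${\mathbb P}^1_\tline$ for the base case ${\obj t}_1$, lifting through each bubbling-up by applying Lemma \ref{lemma: lifting actions} to a twist of ${\sh K}$, and the recognition that equivariance of ${\sh I}_{x,C}^\vee$, obtained from the linear equivalence $\alpha^{-1}(x)\sim\varpi_2^{-1}(x)$ in the style of Proposition \ref{proposition: linear equivalence on singular space related to Losev-Manin space}, is a central difficulty. That last point is exactly the paper's Proposition \ref{proposition: equivariance of dual ideal sheaf} (which, for what it is worth, does not re-run the argument on the terminal object's fiber product but reduces to Proposition \ref{proposition: linear equivalence on singular space related to Losev-Manin space} by restricting to $t\neq 0$ and using class-group isomorphisms; and the passage from linear equivalence to an honest equivariant structure is not formal --- one must normalize the isomorphism to be unitary and then prove the cocycle condition via ${\sh A}ut(q_3^*{\sh F})\cong{\sh O}^\times$ and the description of units on ${\mathbb G}_m^2$).

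The genuine gap is your reduction ``the pieces of ${\sh K}$ are equivariant, hence so is ${\sh K}$,'' and in particular the claim that \emph{both} bubbling-ups hinge only on equivariance of ${\sh O}_C(x)$. The presentations of ${\sh K}$ are defined by the sections $\delta(1)=1\oplus\iota$ (Construction \ref{construction: first bubbling up construction}) and $\gamma(1)=(-\sigma,1)$ (Construction \ref{construction: second bubbling up construction}), whose relevant components vanish exactly along $x$; since the action moves $x$, no equivariant structure on ${\sh O}_C(x)$ (equivalently ${\sh I}_{x,C}^\vee$) can make these sections invariant --- an isomorphism $\alpha^*{\sh O}_C(x)\simeq\varpi_2^*{\sh O}_C(x)$ carries the canonical section to one with divisor $\alpha^{-1}(x)\neq\varpi_2^{-1}(x)$ --- so $\delta$ and $\gamma$ are not maps of equivariant sheaves and equivariance does not pass to the cokernel this way. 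The paper instead identifies a twist of ${\sh K}$ outright with a sheaf that is equivariant for a geometric reason: for the Knudsen-type step, ${\sh K}(-x-w_1)\simeq{\sh I}_{x\cap w_1,C}$ near $x\cap w_1$ (equivariant because $w_1$, hence $x\cap w_1$, is fixed) and ${\sh K}(-w_1)\simeq{\sh I}_{x,C}^\vee$ where $x\cap w_1=\emptyset$ (this is where Proposition \ref{proposition: equivariance of dual ideal sheaf} enters, as in Lemma \ref{lemma: G-action for first bubbling up}); and, crucially, for the inflation-at-zero-vector step, ${\sh K}\otimes\omega_{C/S}(-x)\simeq{\sh I}_{x(Z),C}$ with $Z=\{x^*\phi=0\}$, whose equivariance follows from the fact that the action \emph{fixes} the subscheme $x(Z)$ --- the marked points at which the field vanishes are frozen by the flow (Lemma \ref{lemma: G-action for second bubbling up}, which needs the integrality of $Z$ from Proposition \ref{proposition: future terminal objects are nice}). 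This idea, which is what actually handles the second bubbling-up, is absent from your proposal: ${\sh K}$ there depends on $\phi$ itself (its non-locally-free locus is $x(Z)$), so no linear-equivalence statement about ${\sh O}_C(x)$ can substitute for it.
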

	
	We recall the following elementary fact which will be used soon.

	\begin{remark}\label{remark: Koszul complex of rank 2 geometrically}
		Let $D, E \subset X$ be effective Cartier divisors such that $D|_E = D \cap E$ is Cartier on $E$, that is, the restriction ${\sh O}_E \to ({\sh O}_X(D))|_E$ of ${\sh O}_X \to {\sh O}_X(D)$ is injective. Then we have a short exact sequence $0 \to {\sh I}_{D+E,X} \to {\sh I}_{D,X} \oplus {\sh I}_{E,X} \to {\sh I}_{D \cap E,X} \to 0$. Indeed, the elements cutting out $D$ and $E$ locally form a regular sequence, and then the exactness of the sequence is essentially the exactness of the Koszul complex.
	\end{remark}
	
	\begin{lemma}\label{lemma: G-action for first bubbling up}
		Let ${\obj o} = (S,C,\pi,w_1,\overline{x},x,\phi)$ be an object of ${\cat C}^1_{0,1,n}$ such that 
		\begin{enumerate}
			\item $C$ and $S$ are integral and separated;
			\item $x \neq w_1$, that is, there exists a point $s \in S$ such that $x(s) \neq w_1(s)$; and
			\item the scheme theoretic intersection $x \cap w_1$ is integral.
		\end{enumerate}   For simplicity, we write $w=w_1$. Let $\alpha$ be a $G \times_{\tline} S$-action compatible with $\phi$ which fixes $w$. Assume that ${\sh I}_{x,C}^\vee$ is $G \times_{\tline} S$-equivariant. Let $\kukp {\obj o} = (S,C_\kukp,\ldots)$ cf. \eqref{equation: sequence of up functors}. Then there exists an action $\alpha_\kukp$ of $G \times_{\tline} S$ on $C_\kukp$ compatible with $\phi_\kukp$, which fixes $w_{1,\kukp}$. 
	\end{lemma}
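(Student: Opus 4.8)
The plan is to lift the action through the explicit presentation of $C_\up$ in Construction \ref{construction: first bubbling up construction}. Write $G_S = G\times_\tline S$ and recall that $C_\up = {\mathbb P}(\sh K)$, where $\sh K = \operatorname{Coker}(\delta)$ for $\delta:\sh O_C\to\sh O_C(w)\oplus\sh I^\vee_{x,C}$, $\delta(1)=1\oplus\iota$. Since an action lifts to the projectivization of an equivariant sheaf (Lemma \ref{lemma: lifting actions}), it suffices to equip $\sh K$ with a $G_S$-equivariant structure refining $\alpha$; the resulting action on ${\mathbb P}(\sh K)=C_\up$, relative to which $f$ is equivariant, will be $\alpha_\up$. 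Throughout, let $\alpha,\pr:G_S\times_S C\to C$ be the action and projection maps, so that an equivariant structure on a sheaf $\sh F$ is an isomorphism $\alpha^*\sh F\simeq\pr^*\sh F$ satisfying the cocycle condition.

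The heart of the matter is to make the sequence $0\to\sh O_C\to\sh O_C(w)\oplus\sh I^\vee_{x,C}\to\sh K\to 0$ equivariant. The left term is trivially equivariant; $\sh O_C(w)$ is equivariant because $\alpha$ fixes $w$ (so $\alpha^{-1}(w)=\pr^{-1}(w)$ and the canonical section $1$ is invariant); and $\sh I^\vee_{x,C}$ is equivariant by hypothesis. The subtlety is that $\delta$ is \emph{not} equivariant for the diagonal structure on the middle term: $\iota$ cuts out $x$, which the flow moves, so $\alpha^*\iota$ and $\pr^*\iota=:\iota'$ vanish on the distinct divisors $\alpha^{-1}(x)$ and $\pr^{-1}(x)$. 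I would repair this with a unipotent correction. Seeking the isomorphism $\Theta$ of middle terms as a lower-triangular matrix $\left[\begin{smallmatrix}1&0\\\theta_{21}&\theta_{22}\end{smallmatrix}\right]$, with $\theta_{22}$ the given equivariance of $\sh I^\vee_{x,C}$ (a unit, reflecting $\alpha^{-1}(x)\sim\pr^{-1}(x)$), the requirement $\Theta\circ\alpha^*\delta=\pr^*\delta$ forces $\theta_{21}=\iota'-\theta_{22}\,\alpha^*\iota$, a section of $\pr^*\sh I^\vee_{x,C}$. The crucial point is that this section vanishes along $w_{G_S}:=\pr^{-1}(w)$, so that $\theta_{21}$ becomes a genuine off-diagonal bundle map $\alpha^*\sh O_C(w)\to\pr^*\sh I^\vee_{x,C}$: because $\alpha$ fixes $w$, one checks the identity $\alpha^{-1}(x)\cap w_{G_S}=\pr^{-1}(x)\cap w_{G_S}=G_S\times_S(x\cap w)$, whence $\alpha^*\iota|_{w_{G_S}}$ and $\iota'|_{w_{G_S}}$ have the same zero divisor and differ by a unit on $w_{G_S}\cong G_S$; since pullback identifies global units on $G_S\times_S C$ with those on $w_{G_S}$ (connected fibers of a prestable curve, plus hypotheses (1) and (3)), I can rescale $\theta_{22}$ by a global unit so that $\theta_{21}|_{w_{G_S}}=0$. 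With $\theta_{21}$ so chosen, $\Theta$ is an isomorphism descending to $\alpha^*\sh K\simeq\pr^*\sh K$.

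It remains to verify the cocycle condition and to read off the two asserted properties. The cocycle identity is an equality of isomorphisms of line bundles over the dense open $\{x\neq w\}\subset G_S\times_S C$ — there $\sh K$ is invertible and $\Theta$ is literally the tensor of the given equivariant structures on $\sh O_C(w)$ and $\sh I^\vee_{x,C}$, for which the cocycle holds — so it propagates to all of $G_S\times_S C$ by integrality (hypotheses (1), (3)) and Lemma \ref{lemma: injective restrictions on prestable curves}. Lemma \ref{lemma: lifting actions} then yields $\alpha_\up$ on $C_\up$ with $f$ equivariant. I would check the two properties on the dense open $f^{-1}(U)\simeq U$ of Construction \ref{construction: first bubbling up construction}, where $\alpha_\up$, $w_{1,\up}$, $\phi_\up$ restrict to $\alpha$, $w$, $\phi$, and extend: that $\alpha_\up$ fixes $w_{1,\up}$ is a closed condition on the separated $C_\up$ holding on a dense open, hence everywhere as the base is integral; and compatibility with $\phi_\up$ is an equality of two homomorphisms $\Omega_{C_\up/S}\to\sh O_{C_\up}$ (equivalently two sections of the line bundle $\omega^\vee_{C_\up/S}$) which agree over $f^{-1}(U)$ by $\phi$-compatibility of $\alpha$, hence agree globally since $C_\up$ is integral (it is birational to the integral $C$, the inserted bubble lying over the proper closed locus $x\cap w$).

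I expect the genuine obstacle to be exactly the construction of the off-diagonal correction $\theta_{21}$ and the proof that it vanishes along $w_{G_S}$: this is the single place where the equivariance of $\sh I^\vee_{x,C}$, the invariance of $w$, and the integrality of $x\cap w$ must interact, and it is what renders the non-invariant section $\iota$ harmless. Everything else is propagation from a dense open, powered by integrality and Lemma \ref{lemma: injective restrictions on prestable curves}.
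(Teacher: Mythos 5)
Your overall architecture is viable, but it takes a genuinely different route from the paper, and as written it has one real gap, precisely at the cocycle verification. For context: the paper never constructs the equivariance isomorphism by hand. It first notes that it suffices to put an equivariant structure on \emph{some} twist ${\sh K} \otimes {\sh J}$ (projectivization is insensitive to twisting by line bundles), and then, locally on $S$ in two cases ($x$ inside the smooth locus of $\pi$, respectively $x \cap w = \emptyset$), it identifies ${\sh K}(-x-w) \simeq {\sh I}_{x \cap w, C}$ by comparing \eqref{equation: short exact sequence in first bubbling up} with the Koszul sequence of Remark \ref{remark: Koszul complex of rank 2 geometrically}, respectively ${\sh K}(-w) \simeq {\sh I}^\vee_{x,C}$. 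Since $\alpha$ fixes $w$, it fixes the closed subscheme $x \cap w$, so ${\sh I}_{x\cap w,C}$ is the kernel of ${\sh O}_C \to (x\cap w)_*{\sh O}_{x \cap w}$ \emph{in the category of equivariant sheaves}; the cocycle condition is inherited from this intrinsic description and never needs to be checked. You keep ${\sh K}$ untwisted and build $\alpha^*{\sh K} \simeq \pr^*{\sh K}$ explicitly via the triangular matrix $\Theta$, so the cocycle becomes your burden.

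Here is the gap. To make $\theta_{21}$ a bundle map you replaced $\theta_{22}$ by $\tilde u^{-1}\theta_{22}$, where $\tilde u$ is a global unit. Consequently, on your dense open the induced map $\overline\Theta$ on ${\sh K}$ restricts to multiplication by the \emph{rescaled} unit $\tilde u^{-1}\theta_{22}$, not by the given equivariant structure; so the assertion that ``there $\Theta$ is literally the tensor of the given equivariant structures, for which the cocycle holds'' is false for the $\Theta$ you actually constructed. Rescaling an equivariant structure by an arbitrary global unit destroys the cocycle in general (two equivariant structures on a sheaf differ by a character, not by an arbitrary unit), so this step cannot be waved through. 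The gap is fillable, and the fix is short: restrict the cocycle identity for $\theta_{22}$ to $G_S \times_S G_S \times_S w$. There all the relevant maps to $C$ coincide (the action is trivial on $w$), all pullbacks of ${\sh I}^\vee_{x,C}$ are canonically identified, and the cocycle becomes exactly the statement that $u := \theta_{22}|_{w_{G_S}}$ is multiplicative in the group variable, i.e.\ a character of $G_S$. Hence its pullback $\tilde u$ (constant along the curve direction) satisfies $\pr_{23}^*\tilde u \cdot (\operatorname{id}\times\alpha)^*\tilde u = (\mu\times\operatorname{id})^*\tilde u$, so $\tilde u^{-1}\theta_{22}$ is again a genuine equivariant structure, and your density argument then goes through. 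Relatedly, you can simplify (and make rigorous) the step before: since $\alpha|_{w_{G_S}} = \pr|_{w_{G_S}}$, the sections $(\alpha^*\iota)|_{w_{G_S}}$ and $\iota'|_{w_{G_S}}$ are \emph{canonically equal}, so the whole discrepancy is the unit $u$; your ``same zero divisor, hence differ by a unit'' argument is shakier than that (it needs normality of $w_{G_S}\cong G_S$, which is not among the hypotheses).

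Two smaller inaccuracies. First, ${\sh K}$ is not invertible on all of $\{x \neq w\}$: objects of ${\cat C}^1_{0,1,n}$ allow $x$ to meet the non-smooth locus of $\pi$, and there ${\sh K}$ has rank-two fibers; take the (still dense) open $C \backslash (x \cup w)$ instead. Second, Lemma \ref{lemma: injective restrictions on prestable curves} is a statement about invertible sheaves, so it does not directly propagate an identity between maps of pullbacks of ${\sh K}$; for that propagation you should instead use that ${\sh K}$ is stably reflexive (cf.\ the proof of Proposition \ref{proposition: first bubbling up is functorial}), hence torsion-free, together with integrality of the ambient scheme.
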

	
	Formally, the requirement that $\alpha$ fixes $w$ means that the restriction of $\alpha: G \times_S C \to C$ to $G \times_S w$ is the composition of the projection $G \times_S w \to w$ with the closed immersion $w \hookrightarrow C$. 
	
	\begin{proof}
		We will first show that if some twist ${\sh K} \otimes {\sh J}$ of ${\sh K}$ (cf. Construction \ref{construction: first bubbling up construction}) by a line bundle admits an equivariant structure relative to $\alpha$, then the conclusion holds. Indeed, Lemma \ref{lemma: lifting actions} then produces an action $\alpha_\kukp$ of $G \times_{\tline} S$ on $C_\kukp = {\mathbb P}({\sh K} \otimes {\sh J})$ such that $f:C_\kukp \to C$ (Construction \ref{construction: first bubbling up construction}) is $G \times_{\tline} S$-equivariant. The compatibility of $\alpha_\kukp$ with $\phi_\kukp$ holds at least over a dense open subset above which $f$ is an isomorphism, and then it holds everywhere a fortiori, since $\Omega^\vee_{C_\kukp/S}$ is torsion free. Moreover, $\alpha_\kukp$ fixes $w_\kukp \backslash x_\kukp$, and hence it must fix $w_\kukp$.
		
		Since the claim is local on the base, it suffices to analyze separately two situations: $x$ is contained in the open subset of $C$ where $\pi$ is smooth, and $x \cap w = \emptyset$. In the first case, $x$ is an effective Cartier divisor on $C$ and ${\sh I}^\vee_{x,C} = {\sh O}_C(x)$. We have a (solid arrow) commutative diagram with exact rows
		\begin{center}
			\begin{tikzpicture}[scale=1.2]
				\node (a0) at (-0.6,1) {$0$};
				\node (a1) at (1.2,1) {${\sh O}_C(-x-w)$};
				\node (a2) at (4.1,1) {${\sh O}_C(-x) \oplus {\sh O}_C(-w)$};
				\node (a3) at (6.8,1) {${\sh I}_{x \cap w,C}$};
				\node (a4) at (8.6,1) {$0$};
				\node (b0) at (-0.6,0) {$0$};
				\node (b1) at (1.2,0) {${\sh O}_C(-x-w)$};
				\node (b2) at (4.1,0) {${\sh O}_C(-x) \oplus {\sh O}_C(-w)$}; 
				\node (b3) at (6.8,0) {${\sh K}(-x-w)$}; 
				\node (b4) at (8.6,0) {$0$};
				\draw[->] (b0) to (b1);
				\draw[->] (b1) to (b2);
				\draw[->] (b2) to (b3);
				\draw[->] (b3) to (b4);
				\draw[->] (a0) to (a1);
				\draw[->] (a1) to (a2);
				\draw[->] (a2) to (a3);
				\draw[->] (a3) to (a4);
				\draw[double equal sign distance] (a1) -- (b1);
				\draw[->, dashed] (a3) -- node [midway, right] {$\simeq$} (b3);	
				\draw[double equal sign distance] (a2) -- (b2);	
			\end{tikzpicture}
		\end{center}
		in which the top row comes from Remark \ref{remark: Koszul complex of rank 2 geometrically}, and the bottom row is \eqref{equation: short exact sequence in first bubbling up} twisted by ${\sh O}_C(-x-w)$. Then there exists a unique dashed isomorphism ${\sh I}_{x \cap w,C} \simeq {\sh K}(-x-w) $ which makes the diagram commute. However, ${\sh I}_{x \cap w,C}$ is $G \times_{\tline} S$-equivariant because $\alpha$ fixes $w$ and hence $x \cap w$ too, and then ${\sh I}_{x \cap w,C}$ is the kernel of the homomorphism ${\sh O}_C \to (x \cap w \hookrightarrow C)_*{\sh O}_{x \cap w}$ in the category of $G \times_{\tline} S$-equivariant coherent ${\sh O}_C$-modules. In the second case, $ {\sh K}(-w) = {\sh I}_{x,C}^\vee $, and we are done since we are assuming that ${\sh I}_{x,C}^\vee$ is equivariant.
	\end{proof}
	
	\begin{lemma}\label{lemma: G-action for second bubbling up}
		Let ${\obj o} = (S,C,\pi,w_1,\overline{x},x,\phi)$ be an object of ${\cat C}^2_{0,1,n}$ such that
		\begin{enumerate}
			\item $C$ and $S$ are integral and separated; and
			\item if $Z = \{x^*\phi = 0\} \subseteq S$ is the scheme-theoretic vanishing locus of $x^*\phi$, then $Z$ is an integral (prime) effective Cartier divisor on $S$.
		\end{enumerate}
		Let $w=w_1$. Let $\alpha$ be a $G \times_{\tline} S$-action on $C$ compatible with $\phi$ which fixes $w$. Let $\zuzp {\obj o} = (S,C_\zuzp,\ldots)$, cf. \eqref{equation: sequence of up functors}. Then there exists a $G \times_{\tline} S$-action $\alpha_\zuzp$ on $C_\zuzp$ compatible with $\phi_\zuzp$, which fixes $w_{1,\zuzp}$.
	\end{lemma}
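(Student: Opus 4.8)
The plan is to follow the proof of Lemma \ref{lemma: G-action for first bubbling up} at the structural level: produce a $G \times_\tline S$-equivariant structure on a line-bundle twist of the sheaf $\sh K$ of Construction \ref{construction: second bubbling up construction}, and then appeal to Lemma \ref{lemma: lifting actions}. Suppose $\sh K \otimes \sh J$ carries a $G \times_\tline S$-equivariant structure relative to $\alpha$, for some invertible $\sh J$. Since $C_\up = {\mathbb P}(\sh K) = {\mathbb P}(\sh K \otimes \sh J)$, Lemma \ref{lemma: lifting actions} then yields an action $\alpha_\up$ of $G \times_\tline S$ on $C_\up$ making $f \colon C_\up \to C$ equivariant. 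Compatibility of $\alpha_\up$ with $\phi_\up$ holds over the dense open $f^{-1}(C \setminus x) \simeq C \setminus x$ of item \ref{item: isomorphic restriction in remark: Remarks on construction of second bubbling up} of Remark \ref{remark: Remarks on construction of second bubbling up}, and extends to all of $C_\up$ because $\Omega^\vee_{C_\up/S}$ is torsion free; and $\alpha_\up$ fixes $w_{1,\up}$ since it fixes $w_{1,\up} \setminus x_\up$, exactly as in Lemma \ref{lemma: G-action for first bubbling up}. So everything reduces to exhibiting the twist $\sh J$.

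To find $\sh J$, set $w = w_1$, $\sh L = \omega^\vee_{C/S}$, $\sigma = \phi$, and let $s_x \in \Gamma(C,\sh O_C(x))$ be the canonical section cutting out $x$. The defining sequence \eqref{equation: initial exact sequence in bubbling up at nonsingular zero of a section} reads $0 \to \sh O_C \xrightarrow{(-\sigma,\, s_x)} \sh L \oplus \sh O_C(x) \xrightarrow{\kappa} \sh K \to 0$. The homomorphism $\sh L \oplus \sh O_C(x) \to \sh L(x)$, $(a,b) \mapsto s_x \cdot a + \sigma \cdot b$, annihilates the image of $(-\sigma, s_x)$, hence descends to a map $\bar\beta \colon \sh K \to \sh L(x)$ whose image is $\sh L(x) \otimes (\sh I_{x,C} + \sh I_{\{\phi=0\},C})$. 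I claim $\bar\beta$ is an isomorphism onto its image: the hypothesis that $Z = \{x^*\phi = 0\}$ is a prime Cartier divisor on the integral scheme $S$ means precisely that $x^*\sigma$ is a nonzerodivisor, so $(s_x,\sigma)$ is a regular sequence along $x$, and the Koszul relation (in the spirit of Remark \ref{remark: Koszul complex of rank 2 geometrically}) identifies $\sh K$ with the relevant ideal sheaf. Since $x \cap w = \emptyset$, one computes $\sh I_{x,C} + \sh I_{\{\phi=0\},C} = \sh I_{W,C}$, where $W = x \cap \{\phi=0\} = x \cap \pi^{-1}(Z) = x|_Z$ is the reduced copy of $Z$ sitting inside $x$ (locally $(s_x,\sigma) = (s_x, x^*\sigma)$). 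Thus $\sh K \otimes \omega_{C/S}(-x) \simeq \sh I_{W,C}$, and it remains to equivariantize $\sh I_{W,C}$.

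The crux is that $W$ is $G \times_\tline S$-stable --- the analogue of the observation in Lemma \ref{lemma: G-action for first bubbling up} that $\alpha$ fixes $x \cap w \subseteq w$. Because the geometric fibers of $G$ are $\gm$ or $\ga$, the group $G$ is abelian, so the generating vector field $\phi$ is an $\alpha$-invariant global section of the canonically equivariant sheaf $\sh L = \omega^\vee_{C/S}$; consequently its zero scheme $\{\phi=0\}$ is $\alpha$-invariant. The divisor $\pi^{-1}(Z)$ is $\alpha$-invariant as well, being pulled back from the base, so $\{\phi=0\} \cap \pi^{-1}(Z)$, and hence its reduction, is $\alpha$-invariant. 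As $x \cap w = \emptyset$, this reduced scheme is a disjoint union $w|_Z \sqcup W$, so $W$ is one of its connected components; since $G$ is connected, $\alpha$ stabilizes each component, and in particular $W$. Therefore $\sh I_{W,C}$ is the kernel of the $G \times_\tline S$-equivariant surjection $\sh O_C \to (W \hookrightarrow C)_* \sh O_W$, hence is equivariant, which is what the first paragraph required.

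The main obstacle is the stability of $W$ in the third paragraph. The delicate points are that $\phi$ is genuinely $\alpha$-invariant (this is where commutativity of $G$ enters, and ought to be checked against the defining diagram of Definition \ref{definition: action compatible with vector field definition}), and that the reduced scheme $W$ produced by the Koszul identification really is the connected component of $(\{\phi=0\} \cap \pi^{-1}(Z))_{\mathrm{red}}$ disjoint from $w$ --- a local verification near $x|_Z$ using that $x^*\sigma$ cuts out the reduced integral divisor $Z$. The reduction step of the first paragraph and the computation of the twist are otherwise routine once the regular-sequence property is in hand.
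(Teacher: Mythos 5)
Your first two paragraphs agree, in substance, with the paper's own proof: the reduction to producing a $G\times_\tline S$-equivariant structure on a line-bundle twist of ${\sh K}$ (then Lemma \ref{lemma: lifting actions}, torsion-freeness of $\Omega^\vee_{C_\up/S}$, and fixing $w_{1,\up}$ via $w_{1,\up}\setminus x_\up$), and the identification ${\sh K}\otimes\omega_{C/S}(-x)\simeq{\sh I}_{x(Z),C}$, which the paper obtains by comparing \eqref{equation: initial exact sequence in bubbling up at nonsingular zero of a section} with the Koszul sequence of Remark \ref{remark: Koszul complex of rank 2 geometrically}. The genuine gap is in your third paragraph, and the point you flag as ``delicate'' is in fact false. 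The decomposition $(\{\phi=0\}\cap\pi^{-1}(Z))_{\mathrm{red}}=w|_Z\sqcup W$ fails, because $\{\phi=0\}$ (the zero scheme of $\phi$ as a section of $\omega^\vee_{C/S}$) contains entire components of reducible geometric fibers. Concretely, in the intended application ${\obj o}={\obj t}_{n,2}$: take $n\geq 3$ and a geometric point $\overline s$ lying over $(t)\in\tline$ whose curve $C_{\overline s}$ is a stable $\ga$-type configuration containing a ``trunk'' component $\Sigma$ with no markings and meeting at least three other components, with $x(\overline s)$ a general point of $\Sigma$. Then $\deg(\omega^\vee_{C_{\overline s}}|_\Sigma)=2-\#(\text{nodes on }\Sigma)<0$ forces $\phi_{\overline s}|_\Sigma\equiv 0$, so $\overline s\in Z$, and $\Sigma$ is a positive-dimensional vertical component of $\{\phi=0\}\cap\pi^{-1}(Z)$ passing through the point $x(\overline s)\in W$. (Such $\overline s$ exist: $Z$ is closed, proper over $\tline$, and dominates $\tline$, hence surjects onto it.) Thus $W$ is not disjoint from the rest of the reduced intersection, it is not a connected component of it, and the ``connected group stabilizes connected components'' step collapses. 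Your decomposition is correct only over the dense open of $Z$ lying over $t\neq 0$, where the fibers are Losev--Manin strings and the zeros of $\phi$ are just the two poles.

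What is needed --- and what the paper does instead --- is a direct proof that $x(Z)$ is fixed by $\alpha$: the claim is that the restriction of $\alpha$ to $G\times_\tline x(Z)$ equals the projection onto $x(Z)$ followed by the inclusion, i.e.\ that the induced morphism $G\times_\tline x(Z)\to C\times_S C$ factors through the diagonal. The paper checks this on geometric fibers over $S$ (where it is elementary) and then propagates it to all of $G\times_\tline x(Z)$ using that this scheme is \emph{integral} --- this is exactly where the hypothesis that $Z$ is integral enters --- via the observation that the relevant image lands in $\pi^{\mathrm{sm}}\times_S\pi^{\mathrm{sm}}$, where the diagonal is a Cartier divisor, so the claim becomes the vanishing of a section of a line bundle on a reduced scheme, which can be checked fiberwise. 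A repair of your argument in this spirit is possible (over the dense open of $Z$ where your decomposition does hold, deduce set-theoretic invariance of the corresponding piece of $W$, then pass to closures and use reducedness of $G\times_\tline W$), but this ``verify on a dense locus, extend by integrality'' mechanism is precisely what your write-up lacks, and it is the heart of the lemma. Separately, your claim that $\phi$ is $\alpha$-invariant because $G$ is commutative is plausible but left unproven; note that the paper's argument never needs it.
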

	
	\begin{proof}
		As in the proof of Lemma \ref{lemma: G-action for first bubbling up}, if some twist ${\sh K} \otimes {\sh J}$ of ${\sh K}$ (cf. Construction \ref{construction: second bubbling up construction}) by a line bundle admits an equivariant structure relative to $\alpha$, then we are done. We have a commutative diagram
		\begin{center}
			\begin{tikzpicture}[scale=1.1]
				\node (a0) at (-1.4,1) {$0$};
				\node (a1) at (0.3,1) {$\omega_{C/S}(-x)$};
				\node (a2) at (3.1,1) {${\sh O}_C(-x) \oplus \omega_{C/S}$};
				\node (a3) at (6,1) {${\sh I}_{x(Z),C}$};
				\node (a4) at (7.8,1) {$0$};
				\node (b0) at (-1.4,0) {$0$};
				\node (b1) at (0.3,0) {$\omega_{C/S}(-x)$};
				\node (b2) at (3.1,0) {${\sh O}_C(-x) \oplus \omega_{C/S}$}; 
				\node (b3) at (6,0) {${\sh K} \otimes \omega_{C/S}(-x)$}; 
				\node (b4) at (7.8,0) {$0$};
				\draw[->] (b0) to (b1);
				\draw[->] (b1) to (b2);
				\draw[->] (b2) to (b3);
				\draw[->] (b3) to (b4);
				\draw[->] (a0) to (a1);
				\draw[->] (a1) to (a2);
				\draw[->] (a2) to (a3);
				\draw[->] (a3) to (a4);
				\draw[double equal sign distance] (a1) -- (b1);
				\draw[->, dashed] (a3) -- node [midway, right] {$\simeq$} (b3);	
				\draw[double equal sign distance] (a2) -- (b2);	
			\end{tikzpicture}
		\end{center}
		in which the top row comes from Remark \ref{remark: Koszul complex of rank 2 geometrically}, and the bottom row is \eqref{equation: initial exact sequence in bubbling up at nonsingular zero of a section} twisted by $\omega_{C/S}(-x)$. It follows that ${\sh K} \otimes \omega_{C/S}(-x) \simeq {\sh I}_{x(Z),C}$. 
		
		We claim that $x(Z) \subset C$ is fixed by $\alpha$. Formally, this means that the restriction of $\alpha: G \times_\tline C \to C$ to $G \times_\tline x(Z)$ is the composition of the projection $G \times_\tline x(Z) \to x(Z)$ with the closed immersion $x(Z) \hookrightarrow C$. We are thus claiming that two $S$-morphisms $G \times_\tline x(Z) \to C$ coincide, or equivalently, that a morphism $G \times_\tline x(Z) \to C \times_S C$ factors through the diagonal immersion $C \hookrightarrow C \times_S C$. It is elementary to check on (geometric) fibers over $S$ that the image of the restriction of $\alpha$ to $G \times_\tline x(Z)$ is contained in $\pi^{\mathrm{sm}} \subseteq C$, the open subset where $\pi$ is smooth, and it follows that the image of the morphism $G \times_\tline x(Z) \to C \times_S C$ is contained in the open subset $\pi^{\mathrm{sm}} \times_S \pi^{\mathrm{sm}}$. However, the diagonal $\Delta$ of $\pi^{\mathrm{sm}} \times_S \pi^{\mathrm{sm}}$ is a Cartier divisor, and our claim boils down to the statement that the section $1$ of ${\sh O}_{\pi^{\mathrm{sm}} \times_S \pi^{\mathrm{sm}}}(\Delta)$ pulls back to $0$ on $G \times_\tline x(Z)$. However, the claim, and in particular the vanishing of the section above, are elementary to check on geometric fibers over $S$; then they hold on the fibers of $S$, and hence everywhere since $G \times_\tline x(Z)$ is integral as a consequence of the assumption that $x(Z)$ is integral. Then ${\sh I}_{x(Z),C} = \operatorname{Ker}({\sh O}_C \to x_*{\sh O}_Z)$ is equivariant (as in the proof of Lemma \ref{lemma: G-action for first bubbling up}), which completes the proof.
	\end{proof}
	
	To prove Theorem \ref{theorem: group actions on universal curve}, we need to show that the assumption that the dual ideal sheaf in Lemma \ref{lemma: G-action for first bubbling up} is equivariant holds for the terminal object of ${\cat C}^1_{0,1,n}$.
	
	\begin{proposition}\label{proposition: equivariance of dual ideal sheaf} 
		Let ${\obj t}_{n,1}=(S,C,\pi,w_1,\overline{x},x,\phi)$ be the terminal object of ${\cat C}^1_{0,1,n}$ cf. Definition \ref{definition: construction of the terminal objects} and Theorem \ref{theorem: terminal object theorem most explicit version}, and $\alpha$ an action of $G \times_{\tline} S$ on $C$ over $S$ compatible with $\phi$, cf. Definition \ref{definition: action compatible with vector field definition}. Then ${\sh I}_{x,C}^\vee$ is $\alpha$-equivariant. 
	\end{proposition}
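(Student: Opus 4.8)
The plan is to convert the asserted $\alpha$-equivariance into a statement about Weil divisor classes, and then to prove the requisite linear equivalence by exploiting the isotrivial degeneration, reducing to the Losev--Manin computation already carried out in Proposition \ref{proposition: linear equivalence on singular space related to Losev-Manin space}.

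First I would collect the geometric input. By Proposition \ref{proposition: future terminal objects are nice} the curve $C=C[{\obj t}_{n,1}]=C[{\obj t}_n]\times_{S[{\obj t}_n]}C[{\obj t}_n]$ is integral, normal, separated and excellent, and $x$ is the diagonal of this fibre square; being isomorphic to the integral scheme $C[{\obj t}_n]$ (cf.\ \ref{item: item 1 in corollary: terminal objects are nice} and Remark \ref{remark: main constuction has integral bases and curves}), $x$ is a prime Weil divisor on $C$. Hence $\mathcal{I}_{x,C}^\vee=\mathcal{O}_C(x)$ is the rank one reflexive sheaf associated to the prime divisor $x$, by \ref{item mW: dual ideal sheaf} in \S\ref{subsection: review of Weil divisors and associated reflexive sheaves}. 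Write $\alpha,p_2\colon G\times_\tline C\to C$ for the action and the projection, using $(G\times_\tline S)\times_S C=G\times_\tline C$. Then $p_2$ is flat (a base change of the flat $G\to\tline$), and $\alpha=p_2\circ\Phi$ with $\Phi\colon(g,c)\mapsto(g,\alpha(g,c))$ an $S$-automorphism, so $\alpha$ is flat too; moreover $G\times_\tline C$ is integral and normal, since $G\to\tline$ is smooth with geometrically integral fibres over the integral normal $C$, and the preimages $p_2^{-1}(x)=G\times_\tline x$ and $\alpha^{-1}(x)=\Phi^{-1}(G\times_\tline x)$ are prime divisors. Lemma \ref{lemma: pullback for Weil divisors lazy} now yields $\alpha^*\mathcal{O}_C(x)=\mathcal{O}_{G\times_\tline C}(\alpha^{-1}(x))$ and $p_2^*\mathcal{O}_C(x)=\mathcal{O}_{G\times_\tline C}(p_2^{-1}(x))$, so exhibiting an equivariant structure amounts to producing an isomorphism of these two sheaves, i.e.\ to the linear equivalence $\alpha^{-1}(x)\sim p_2^{-1}(x)$ on $G\times_\tline C$.

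To prove this equivalence I would split $\tline$ into its two loci. Over $\tline^*=\spec{\mathbb Z}[t,t^{-1}]$ the change of coordinates of \ref{example: examples with initial data: isotrivial degeneration first announcement} identifies $C|_{\tline^*}$ with $\tline^*\times Y_n$ (where $Y_n=\overline{L}_{n+1}\times_{\overline{L}_n}\overline{L}_{n+1}$), the section $x$ with the diagonal $\Delta$, $G|_{\tline^*}$ with $\gm$, and $\alpha|_{\tline^*}$ with the canonical $\gm$-action $\mu$ fixing $0,\infty$; Proposition \ref{proposition: linear equivalence on singular space related to Losev-Manin space} then gives $\alpha^{-1}(x)\sim p_2^{-1}(x)$ over $\tline^*$ after the harmless flat pullback along $\tline^*\times\gm\times Y_n\to\gm\times Y_n$. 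Thus the class $\delta=[\alpha^{-1}(x)]-[p_2^{-1}(x)]\in\operatorname{Cl}(G\times_\tline C)$ restricts to $0$ on the dense open $U=(G\times_\tline C)|_{\tline^*}$. Its complement is the central fibre $\{t=0\}$, which is the reduced principal divisor $\operatorname{div}(t)$, and which is integral because $G_{(t)}\simeq\ga$ is geometrically integral while the central fibre of $C\to\tline$ is integral by Remark \ref{remark: main constuction has integral bases and curves} and Proposition \ref{proposition: future terminal objects are nice}. By the excision sequence for class groups \cite[Proposition II.6.5]{[Ha77]}, the kernel of $\operatorname{Cl}(G\times_\tline C)\to\operatorname{Cl}(U)$ is generated by $[\{t=0\}]=\operatorname{div}(t)=0$, so this map is injective and $\delta=0$. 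This is the desired equivalence, whence an isomorphism $\theta\colon\alpha^*\mathcal{O}_C(x)\xrightarrow{\sim}p_2^*\mathcal{O}_C(x)$ exists.

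Finally I would upgrade $\theta$ to a genuine linearization. Normalizing $\theta$ so that its restriction to the identity section $e\times_\tline C$, where $\alpha$ is the identity and both pullbacks are canonically $\mathcal{O}_C(x)$, is the identity pins $\theta$ down uniquely: the sheaf of automorphisms of a rank one reflexive sheaf is $\mathcal{O}^\times$ by \ref{item mW: End and Aut}, and on the integral schemes at hand an invertible function is determined by its restriction to a dense open. The two composites in the cocycle identity over $G\times_\tline G\times_\tline C$ are then both normalized isomorphisms and so coincide. I expect the only genuinely delicate step to be the identification over $\tline^*$ of the pair $(\alpha|_{\tline^*},x)$ with the Losev--Manin pair $(\mu,\Delta)$ of Proposition \ref{proposition: linear equivalence on singular space related to Losev-Manin space} — matching the diagonal and the canonical $\gm$-action precisely, a possible reparametrization of $\gm$ by $t$ being irrelevant to the divisor class — together with the verification that the central fibre is integral; everything downstream is formal.
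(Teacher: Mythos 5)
Up to the point where you produce an isomorphism $\theta\colon \alpha^*{\sh I}^\vee_{x,C}\simeq p_2^*{\sh I}^\vee_{x,C}$, your proposal follows the paper's own proof essentially step by step: the same Weil-divisor dictionary (items \ref{item mW: dual ideal sheaf} and \ref{item mW: End and Aut} of \S\ref{subsection: review of Weil divisors and associated reflexive sheaves} together with Lemma \ref{lemma: pullback for Weil divisors lazy}), the same reduction of the linear equivalence $\alpha^{-1}(x)\sim p_2^{-1}(x)$ to the complement of the integral, principal central fibre, and the same appeal to Proposition \ref{proposition: linear equivalence on singular space related to Losev-Manin space} after identifying $(\alpha|_{\tline^*},x)$ with the Losev--Manin pair $(\mu,\Delta)$ (the identification you flag as delicate is indeed carried out in the paper, by a uniqueness-of-actions argument). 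This part of your proposal is sound.

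The genuine gap is in your final paragraph. First, normalization along the identity section does \emph{not} pin $\theta$ down uniquely, and your justification --- ``an invertible function is determined by its restriction to a dense open'' --- does not apply, because $e\times_\tline C$ is a proper closed subscheme, not a dense open. Concretely, since $G\times_\tline C\to G$ is the base change of the flat projective morphism $C\to\tline$ with geometrically integral fibres, one has $\Gamma(G\times_\tline C,{\sh O}^\times)=\Gamma(G,{\sh O}^\times)=\{\pm(1+tx)^k:k\in{\mathbb Z}\}$, and every $(1+tx)^k$ restricts to $1$ on $e\times_\tline C$; so the unitary $\theta$ is unique only up to an infinite group. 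Second, and for the same reason, ``both composites in the cocycle identity are normalized and so coincide'' is not a proof. What unitarity actually gives is that the two composites agree on the closed subscheme $B\times_\tline C$, where $B=(G\times_\tline\{e\})\cup(\{e\}\times_\tline G)$; their discrepancy is then an invertible function on $G\times_\tline G\times_\tline C$ restricting to $1$ on $B\times_\tline C$, and concluding that it equals $1$ everywhere is \emph{not} formal. This is exactly where the paper does real work: it passes to the geometric generic point $\spec{\mathbb K}\to C$, where $(G\times_\tline G\times_\tline C)_{\mathbb K}\simeq{\mathbb G}^2_{m,{\mathbb K}}$, and uses that units of ${\mathbb G}^2_{m,{\mathbb K}}$ are constants times monomials, so a unit restricting to $1$ on \emph{both} axes must be $1$. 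Agreement on one axis, or at the identity alone, would not suffice: the unit $1+tx_1$ on $G\times_\tline G\times_\tline C$ equals $1$ on $\{e\}\times_\tline G\times_\tline C$ but not on $G\times_\tline\{e\}\times_\tline C$. Your proposal supplies neither the full locus $B\times_\tline C$ nor the monomial structure of units, so the sentence ``everything downstream is formal'' is precisely where the argument breaks.
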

	
	\begin{proof} Throughout this proof, we rely (sometimes implicitly) on Proposition \ref{proposition: future terminal objects are nice}. All of $C$, $G \times_\tline C$, and $G \times_\tline G \times_\tline C$ are dense open in some ${\mathbb A}^k \times C$, hence normal and integral by Proposition \ref{proposition: future terminal objects are nice}. Clearly, we may think of $\alpha$ as an action of $G$ on $C$ over $\tline$, in view of the identification $G \times_{\tline} C = (G \times_{\tline} S) \times_S C$. Let $\varpi_2: G \times_{\tline} C \to C$ be the projection to the second factor. First, we will show that
		\begin{equation}\label{equation: equation linear equivalence in proposition: equivariance of dual ideal sheaf}
			\alpha^{-1}(x) \sim \varpi_2^{-1}(x).
		\end{equation} 
		Let $G^* = \tline^* \times_{\tline} G \cong \tline^* \times {\mathbb G}_m$. With notation as in \S\ref{subsubsection: some calculations on the Losev-Manin space over Z}, $\tline^* \times_{\tline} C \cong \tline^* \times Y_n$ and this isomorphism restricts on $\tline^* \times_{\tline} x$ to an isomorphism $\tline^* \times_{\tline} x \cong \tline^* \times \Delta$. With $\mu$ also as in \S\ref{subsubsection: some calculations on the Losev-Manin space over Z}, we have the following commutative diagram.
		\begin{center}
			\begin{tikzpicture}
				\node (a1) at (0.5,0) {$C$};
				\node (a2) at (2.5,0) {$\tline^* \times_{\tline} C$};
				\node (a3) at (5,0) {$\tline^* \times Y_n$};
				\node (b1) at (0.5,1.2) {$G \times_{\tline} C$};
				\node (b2) at (2.5,1.2) {$G^* \times_{\tline} C$};
				\node (b3) at (5,1.2) {$\tline^* \times {\mathbb G}_m \times Y_n$};
				
				\draw [left hook->] (a2) -- (a1);
				\draw [left hook->] (b2) -- (b1);
				\draw [->] (a2) -- node [midway, above] {$\simeq$} (a3);
				\draw [->] (b2) -- node [midway, above] {$\simeq$} (b3);
				\draw [->] (b1) -- node [midway, left] {$\alpha$} (a1);
				\draw [->] (b2) -- (a2);
				\draw [->] (b3) -- node [midway, right] {$(\mathrm{id}_{\tline^*}, \mu)$} (a3);
			\end{tikzpicture}
		\end{center}
		(We briefly explain this claim. A priori, $\alpha$ restricts to some action of ${\mathbb G}_m \times (S \backslash S_{(t)})$ on $C \backslash C_{(t)} \simeq \tline^* \times Y_n$ over $S \backslash S_{(t)}$. If we restrict to, say, the fiber of $(t-1) \in \tline^*$, we recover the ${\mathbb G}_m \times \overline{L}_{n+1}$ action $\mu$ on $Y_n$ in \S\ref{subsubsection: some calculations on the Losev-Manin space over Z}. All the ${\mathbb G}_m$ actions in discussion satisfy various elementary properties (such as having weight $1$ on generic fibers and fixing the two suitable sections) that determine them uniquely, so there is no concern that we have obtained a different action. The same type of uniqueness argument establishes commutativity of the right half of the diagram.) Note that 
		$ \operatorname{Cl}(G^* \times_{\tline} C) \cong \operatorname{Cl}(G \times_{\tline} C) $
		by \cite[Proposition 6.5, part c]{[Ha77]} and $C_{(t)} \sim 0$ (as $C_{(t)}$ is integral by Proposition \ref{proposition: future terminal objects are nice}), so \eqref{equation: equation linear equivalence in proposition: equivariance of dual ideal sheaf} may be checked on the restriction to $G^* \times_\tline C$, i.e. the complement of the fiber over $(t) \in \ell$. Then it becomes 
		$ \tline^* \times \mu^{-1}(\Delta) \sim \tline^* \times p_2^{-1}(\Delta)$,  
		and it follows from Proposition \ref{proposition: linear equivalence on singular space related to Losev-Manin space} and the elementary fact that, for any (integral, normal, separated, noetherian) $X$, $\operatorname{Cl}(X) \cong \operatorname{Cl}(\tline^* \times X)$ via $[D] \mapsto [\tline^* \times D]$ by e.g. \cite[Propositions 6.5.c and 6.6]{[Ha77]} and their proofs. 
		
		Let's temporarily reinstate the ${\sh O}$ notation for associated sheaves from \S\ref{subsection: review of Weil divisors and associated reflexive sheaves}. We have
		${\sh O}_{G \times_{\tline} C} (\varpi_2^{-1}(x)) \simeq \varpi_2^*{\sh O}_C(x)$ and ${\sh O}_{G \times_{\tline} C} (\alpha^{-1}(x)) \simeq \alpha^*{\sh O}_C(x)$ by Lemma \ref{lemma: pullback for Weil divisors lazy}. On the other hand, $ {\sh O}_{G \times_{\tline} C} (\alpha^{-1}(x))  \simeq {\sh O}_{G \times_{\tline} C} (\varpi_2^{-1}(x)) $ by \eqref{equation: equation linear equivalence in proposition: equivariance of dual ideal sheaf}, so $\alpha^*{\sh O}_C(x) \simeq \varpi_2^*{\sh O}_C(x)$, or $\alpha^*{\sh I}^\vee_{x,C} \simeq \varpi_2^*{\sh I}^\vee_{x,C}$ by item \ref{item mW: dual ideal sheaf}. Let ${\sh F} = {\sh I}^\vee_{x,C}$ for simplicity, so 
		$ \alpha^*{\sh F} \simeq \varpi_2^*{\sh F}$. 
		We will see that what we've done so far suffices for proving Proposition \ref{proposition: equivariance of dual ideal sheaf}.
		
		First, we claim that we there exists an isomorphism $\psi: \alpha^*{\sh F} \simeq \varpi_2^*{\sh F}$ which is `unitary', that is, $(e_G \times_{\tline} \mathrm{id}_C)^*\phi = \mathrm{id}_{\sh F}$. Let $\psi_0: \alpha^*{\sh F} \simeq \varpi_2^*{\sh F}$ be an isomorphism. Note that $(e_G \times_{\tline} \mathrm{id}_C)^*\psi_0$ is an automorphism of ${\sh F}$, and then it is easy to check that if $\psi$ is the composition
		\begin{equation*}
			\alpha^*{\sh F} \xrightarrow{\psi_0} \varpi_2^*{\sh F} \xrightarrow{(\varpi_2^*(e_G \times_{\tline} \mathrm{id}_C)^*\psi_0)^{-1}} \varpi_2^*{\sh F},
		\end{equation*}
		then $\psi$ is unitary.
		
		We will show that $\psi$ actually satisfies the `cocycle condition' automatically. The cocycle condition is a statement that two isomorphisms between two sheaves isomorphic to $q_3^*{\sh F}$ coincide, where $q_3:G \times_{\tline} G \times_{\tline} C \to C$ is the projection to the third factor. However, since $\psi$ is unitary, these isomorphisms at least coincide over $B \times_{\tline} C$, where $B = G \times_{\tline} \{e_G\} \cup \{e_G\} \times_{\tline} G \subset G \times_{\tline} G$. Then it suffices to prove that the only automorphism of $q_3^*{\sh F}$ which restricts to the identity on $B \times_{\tline} X$ is the identity. Note that $q_3^*{\sh F}$ is reflexive by \cite[Proposition 1.8]{[Ha80]}. It follows from item \ref{item mW: End and Aut} in the review of Weil divisors (\S\ref{subsection: review of Weil divisors and associated reflexive sheaves}) that 
		${\sh A}ut(q_3^*{\sh F}) \cong {\sh O}^\times_{G \times_{\tline} G \times_{\tline} C}$. Let $\spec {\mathbb K} \to C$ be the geometric generic point of $X$. Because $C$ is integral and $(G \times_{\tline} G \times_{\tline} C)_{\mathbb K} \simeq {\mathbb G}^2_{m,{\mathbb K}}$, to conclude the proof, it suffices to check the following: the only (multiplicatively) invertible regular function on ${\mathbb G}^2_{m,{\mathbb K}}$ which restricts to the constant $1$ on ${\mathbb G}_{m,{\mathbb K}} \times \{1\} \cup \{1\} \times {\mathbb G}_{m,{\mathbb K}}$ is the constant $1$. Indeed, the invertible regular functions are all monomials, so the claim is clear.
	\end{proof}
	
	\begin{proof}[Proof of Theorem \ref{theorem: group actions on universal curve}]
		With the notation in Definition \ref{definition: construction of the terminal objects}, we will prove by induction that ${\obj t}_n$ and ${\obj t}_{n,c}$ admit group actions compatible with their vector fields in the sense of Definition \ref{definition: action compatible with vector field definition}. For ${\obj t}_1$, it can be checked that the standard action of $G$ on itself extends to an action on the relative compactification $G \hookrightarrow \ell \times {\mathbb P}^1$, and that the extended $G$-action is compatible with the vector field. Going from ${\obj t}_n$ (or equivalently, ${\obj t}_{n-1,3}$) to ${\obj t}_{n,1}$ is elementary, just pull back the action accordingly. Going from ${\obj t}_{n,1}$ to ${\obj t}_{n,2}$ is Lemma \ref{lemma: G-action for first bubbling up} backed up by Propositions \ref{proposition: equivariance of dual ideal sheaf} and \ref{proposition: future terminal objects are nice}, while going from ${\obj t}_{n,2}$ to ${\obj t}_{n,3}$ is Lemma \ref{lemma: G-action for second bubbling up} backed up by Proposition \ref{proposition: future terminal objects are nice}.
	\end{proof}
	
	\begin{remark}\label{remark: Pn bar as a moduli space low tech}
		It is elementary to see that there is a bijection between the set of stable $n$-marked ${\mathbb G}_a$-rational trees over ${\mathbb K}$ (\S\ref{subsection: Configurations on a line modulo scaling or translation}) up to isomorphism and the set of objects of ${\cat V}_{0,1,n}({\mathbb K})$ with $0$ NCR morphisms up to isomorphism. It is then obvious from Theorem \ref{theorem: Pn bar as moduli space} that the fibers over the closed points of $\overline{P}_{n,{\mathbb K}}$ are the set of stable $n$-marked ${\mathbb G}_a$-rational trees over ${\mathbb K}$.
	\end{remark}
	
	\subsection{The $G^n_\tline$-action on the moduli space} Finally, we make use of the modular interpretation to `transfer' the group actions on the curves in Theorem \ref{theorem: group actions on universal curve} to group actions on the moduli spaces, thereby completing the proof of Theorem \ref{theorem: isotrivial degeneration theorem}. Let ${\obj t}_n=(S,C,\pi,\overline{x},w_1,\phi)$ be the terminal object of ${\cat V}_{0,1,n}$, cf. Theorem \ref{theorem: terminal object theorem most explicit version} and Definition \ref{definition: construction of the terminal objects}.
	
	Let's construct the object ${\obj t}_n^\circ = (S^\circ,C^\circ,\ldots)$ of ${\cat V}_{0,1,n}$ which corresponds to the open stratum of the terminal object where the parametrized curves are integral:
	\begin{itemize}
		\item $S^\circ = G_{\tline}^{n-1}$, $C^\circ = G^{n-1}_{\tline} \times {\mathbb P}^1$, $\pi^\circ$ is the projection to the first factor;
		\item $w_1^\circ$ is the constant $[1:0]$ section;
		\item $x_1^\circ$ is the constant $[0:1]$ section, for $i \geq 2$, $x_i^\circ$ is the graph of the map 
		$$ G_\tline^{n-1} \xrightarrow{\text{projection to $(i-1)$-st factor}} G \to \tline \times {\mathbb P}^1 \to {\mathbb P}^1; $$
		\item $\displaystyle \phi^\circ = (1+tx) \frac{\partial}{\partial x}$, where $[X:Y]$ are the coordinates on ${\mathbb P}^1$ and $x = X/Y$.
	\end{itemize}
	The details of the verification that this is indeed the open stratum are skipped. There is a tautological $G_\tline^{n-1}$-action on $S^\circ$, but if we are to think of ${\obj t}_n^\circ$ more symmetrically, we should extend this to a $G_\tline^n$-action as follows: the composition 
	$$ G \xrightarrow{\operatorname{inverse}} G \xrightarrow{\operatorname{diagonal}} G_\tline^{n-1} $$ and the tautological action produce a $G$-action on $S^\circ$, and we may combine the latter action and the tautological action into a $G_\tline^n = G \times_\tline G^{n-1}_\tline$-action on $S^\circ$, called the natural action on $S^\circ$.
	
	\begin{proposition}\label{proposition: big group action on the big moduli space}
		With notation as above, there exists a $G^n_{\tline}$-action on $S$ over $\tline$ which extends the natural $G^n_{\tline}$-action on $S^\circ$. 
	\end{proposition}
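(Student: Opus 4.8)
The plan is to exploit the terminality of ${\obj t}_n$ (Theorem \ref{theorem: terminal object theorem most explicit version}) together with the $G$-action on the universal curve produced in Theorem \ref{theorem: group actions on universal curve}. Writing ${\obj t}_n = (S,C,\pi,\overline{x},w_1,\phi)$ and letting $\alpha: G \times_\tline C \to C$ be the action compatible with $\phi$ and fixing $w_1$ (here $G \times_\tline C = (G\times_\tline S)\times_S C$, relative to the NCR morphism $S \to \tline$), I would produce the action morphism $a: G^n_\tline \times_\tline S \to S$ by constructing an object of ${\cat V}_{0,1,n}$ over $G^n_\tline \times_\tline S$ and invoking terminality. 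Concretely, pull the universal curve back to $\tilde{C} = G^n_\tline \times_\tline C$ over $G^n_\tline \times_\tline S$, retain the pulled-back $w_1$ and $\phi$, and replace each section $x_i$ by the section $\tilde{x}_i$ obtained by letting the $i$-th copy of $G$ act on $x_i$ through $\alpha$.

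The key geometric point, which I expect to be the crux, is that this moved data is again an object of ${\cat V}_{0,1,n}$. Since $x_i^*\phi$ is an isomorphism and $\alpha$ integrates $\phi$, the orbit of $x_i$ is the integral curve of $\phi$ through $x_i$; on each geometric fibre this orbit is contained in the component of $x_i$ and avoids the nodes and the fixed locus (in particular $w_1$), because nodes and the zeros of $\phi$ are fixed by $\alpha$. Hence $\tilde{x}_i$ remains a smooth section lying on the same component as $x_i$, disjoint from $w_1$, with $\tilde{x}_i^*\phi$ still an isomorphism. Because the combinatorial type — the number of marked points on each component — is unchanged, the ampleness of $\omega_{\tilde{C}/(G^n_\tline\times_\tline S)}(w_1 + 2\tilde{x}_1 + \cdots + 2\tilde{x}_n)$ defining ${\cat V}_{0,1,n}$ is preserved. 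By terminality this object induces the desired morphism $a$, and a similar construction over $\{e\}\times_\tline S \cong S$ recovers ${\obj t}_n$ itself, so $a(e,\cdot) = \operatorname{id}_S$.

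It then remains to verify the associativity axiom and the compatibility on $S^\circ$. Associativity follows formally: both $a \circ (\operatorname{mult} \times \operatorname{id}_S)$ and $a \circ (\operatorname{id} \times a)$ are morphisms $G^n_\tline \times_\tline G^n_\tline \times_\tline S \to S$, and by the uniqueness of morphisms to the terminal object ${\obj t}_n$ they agree as soon as the two objects they classify are isomorphic, which reduces to the identity $g_i \cdot (h_i \cdot x_i) = (g_i h_i) \cdot x_i$ — exactly the statement that $\alpha$ is an action. For the compatibility I would restrict $a$ to the integral stratum, where $C^\circ = G^{n-1}_\tline \times {\mathbb P}^1$ and $\phi^\circ = (1+tx)\tfrac{\partial}{\partial x}$ are explicit; integrating $\phi^\circ$ moves the sections $x_i^\circ$ precisely as prescribed, and the fact that the diagonal $G \hookrightarrow G^n_\tline$ acts trivially — applying the same $g$ to all the $x_i$ reproduces the object up to the automorphism $\alpha(g,\cdot)$ of $C$, which preserves $\phi$ — matches the ``inverse-then-diagonal'' description of the natural $G^n_\tline$-action on $S^\circ$, completing the identification.
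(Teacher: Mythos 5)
Your construction is the paper's: your moved object is exactly the object ${\obj y}_n$ the paper builds from $p_2^*{\obj t}_n$ by letting the $i$-th factor of $G^n_\tline$ act on the $i$-th section via the action $\alpha$ of Theorem \ref{theorem: group actions on universal curve}, and the classifying morphism $a=\beta$ comes from terminality (Theorem \ref{theorem: terminal object theorem most explicit version}); your identity-axiom argument and the identification over $S^\circ$ are also fine. The gap is in the associativity step, which you claim ``follows formally'' and ``reduces to $g_i\cdot(h_i\cdot x_i)=(g_ih_i)\cdot x_i$.'' Of the two objects being compared, only $(\mu^n_\tline\times\operatorname{id}_S)^*{\obj y}_n$ is visibly the double-moved object. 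The other one is $(\operatorname{id}\times a)^*{\obj y}_n$, and to describe its sections in terms of $\alpha$ you must transport them through the comparison isomorphism $r:G^n_\tline\times_\tline C\to C$ (lying over $a$) that terminality supplies with ${\obj y}_n\cong a^*{\obj t}_n$: its $i$-th section involves $\alpha\bigl(g_i,r(h,\alpha(h_i,x_i(s)))\bigr)$, whereas the double-moved section, transported by $r$, is $r\bigl(h,\alpha(g_i,\alpha(h_i,x_i(s)))\bigr)$. So your reduction needs the fibrewise equivariance $r(h,\alpha(g_i,c))=\alpha(g_i,r(h,c))$, and this is \emph{not} formal: being an arrow in ${\cat V}_{0,1,n}$, $r$ is only known to respect the markings, $w_1$, and the vector field $\phi$, and preserving $\phi$ does not force equivariance, because a vector field does not determine the action integrating it in positive characteristic --- on ${\mathbb P}^1$ over $\overline{\mathbb F}_p$, both $t\cdot x=tx$ and $t\cdot x=t^{p+1}x$ are $\gm$-actions whose derivative is $x\,d/dx$. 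Since $S$ lives over $\spec{\mathbb Z}[t]$ and has positive-characteristic points, this cannot be waved away.

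The missing ingredient is a density argument, and that is exactly how the paper closes this step: one verifies the needed identity (the paper does it directly for the cocycle equation \eqref{equation: group action condition in proposition: big group action on the big moduli space}, but one could equally do it for the equivariance of $r$) after restricting to the dense open stratum $G^n_\tline\times_\tline G^n_\tline\times_\tline S^\circ$, where $\beta$ is checked to restrict to the natural action, and then extends it everywhere because everything in sight is reduced and separated (Proposition \ref{proposition: future terminal objects are nice}). So your outline is salvageable, but the sentence ``associativity follows formally'' is precisely the point where a genuine argument --- the paper's reduced-and-separated extension from $S^\circ$ --- is required.
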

	
	\begin{proof}
		Consider the object ${\obj y}_n$ of ${\cat V}_{0,1,n}$ which coincides with the pullback $p_2^*{\obj t}_n$, where $p_2:G^n_{\tline} \times_{\tline} S \to S$ is the projection to the second factor, with the sole exception that the $i$-th section (denoted by $x_i$ in Definition \ref{definition: categories of curves}) is the composition
		\begin{equation*}
			G^n_{\tline} \times_{\tline} S \xrightarrow{(p_1,x_i)} G^n_{\tline} \times_{\tline} C \xrightarrow{(q_1,\alpha_i)} G^n_{\tline} \times_{\tline} C,
		\end{equation*}
		where $p_1,q_1$ are the projections to the first factors respectively, and $\alpha_i$ uses the $i$-th factor of $G^n_{\tline}$ to act on $C$ via the action provided by Theorem \ref{theorem: group actions on universal curve}. Note that ${\obj y}_n$ is indeed an object in ${\cat V}_{0,1,n}$. By Theorem \ref{theorem: terminal object theorem most explicit version}, there exists a morphism
		$ \beta: G^n_{\tline} \times_{\tline} S \to S $ such that ${\obj y}_n = \beta^* {\obj t}_n$. 
		
		We only sketch the verification that $\beta$ is a $G^n_{\tline}$-action on $S$ over $\tline$. Let $\pr_i$ be the projection to the $i$th factor of $G_\tline^n \times_{\tline} G_\tline^n \times_{\tline} S$, and let
		$\mu:G \times_\tline G \to G$ and $\mu_\tline^n: G^n_{\tline} \times_{\tline} G^n_{\tline} \to G^n_{\tline}$
		be the group laws on $G$ and $G^n_{\tline}$. We need to check that
		\begin{equation}\label{equation: group action condition in proposition: big group action on the big moduli space}
			\beta \circ (\mu_\tline^n,\pr_3) = \beta \circ (\pr_1,\pr_1^*\beta)
		\end{equation}
		as morphisms $G_\tline^n \times_{\tline} G_\tline^n \times_{\tline} S \to S$. It is straightforward to check that $\beta$ restricts to the natural action on $S^\circ$. Then the restriction of \eqref{equation: group action condition in proposition: big group action on the big moduli space} to $G_\tline^n \times_{\tline} G_\tline^n \times_{\tline} S^\circ$ must hold, so \eqref{equation: group action condition in proposition: big group action on the big moduli space} must hold a fortiori since everything in sight is reduced and separated.
	\end{proof}
	
	Proposition \ref{proposition: big group action on the big moduli space} also shows that ${\overline P}_{n,{\mathbb K}}$ is a ${\mathbb G}_{a,{\mathbb K}}^{n-1}$-variety, as claimed in \S\ref{subsection: Configurations on a line modulo scaling or translation}.
	
	Theorem \ref{theorem: isotrivial degeneration theorem} follows from the examples in \S\ref{example: examples with initial data: Losev-Manin II}, \S\ref{example: examples with initial data: Pn first announcement}, and \S\ref{example: examples with initial data: isotrivial degeneration first announcement}, Theorem \ref{theorem: terminal object theorem most explicit version}, Theorem \ref{theorem: Pn bar as moduli space} (serving as the definition of $\overline{P}_n$), and Proposition \ref{proposition: big group action on the big moduli space}.
	
	\section{Concluding remarks and questions}\label{section: concluding remarks}
	
	\subsection{Further examples of iterative constructions}\label{subsection: positive genus examples} We briefly discuss two more applications of Construction \ref{construction: main inductive construction}, this time in strictly positive genus. Their details will be left to the interested reader: for clarity, \emph{anything not proved or not stated precisely in \S\ref{subsection: positive genus examples} is included in Figure \ref{figure: Yn} and Remarks \ref{remark: Yn} and \ref{remark: deforms to power of E}}.
	
	\subsubsection{An example in arbitrary genus}\label{example: examples with initial data: positive genus example} This example is very similar to \cite[Example 4.2.(e)]{[Wo15]}. Let $Y$ be a smooth, projective, connected curve over ${\mathbb K}$ of genus $g \geq 1$. Run Construction \ref{construction: main inductive construction} with the following initial data in ${\cat V}^+_{g,0,0}$: 
	\begin{equation*}
		S=\spec {\mathbb K}, \quad C=Y, \quad \phi = 0,
	\end{equation*}
	and denote the resulting $n$-th space by $Y\{n\}$.
	\begin{figure}[h]
		\begin{center}
			\begin{tikzpicture}[scale = 0.7]
				\draw [thick] (-2,0) -- (7,0) node[anchor=west] {$Y$}; 
				\fill[black] (3.6,1) circle (3pt);
				\draw[black] (3.6,1) circle (5pt);
				\draw[black] (3.6,1) circle (7pt);
				\draw [thick] (3.6,0.3) -- (3.6,2.3);
				\draw [thick] (4.3,0.3) -- (4.3,2.3);
				\fill[black] (4.3,0.6) circle (3pt);
				\fill[black] (4.3,1.4) circle (3pt);
				\draw[thick] (3.2,2) -- (6.7,2);
				\draw [thick] (5.3,-0.3) -- (5.3,2.3);
				\draw [thick] (5,1) -- (6.7,1);
				\fill[black] (6.2,1) circle (3pt);
				\draw [thick] (0.4,1) -- (2.8,1);
				\draw[thick] (0.4,2) -- (2.8,2);
				\draw [thick] (1.3,-0.3) -- (1.3,2.3);
				\fill[black] (2,2) circle (3pt);
				\fill[black] (2,1) circle (3pt);
				\draw[black] (2,1) circle (5pt);
				\draw [thick] (0,-0.3) -- (0,2.3);
				\fill[black] (0,1.4) circle (3pt);
				\draw [thick] (-1,-0.3) -- (-1,2.3);
				\fill[black] (-1,1) circle (3pt);
				\fill[black] (-1,2) circle (3pt);
				\draw[black] (-1,2) circle (5pt);
			\end{tikzpicture}
			\caption{An object parametrized by $Y\{13\}$. One can think of it as $4$ rational trees, in $\overline{P}_{3,{\mathbb K}}$, $\overline{P}_{1,{\mathbb K}}$, $\overline{P}_{3,{\mathbb K}}$, and $\overline{P}_{6,{\mathbb K}}$, attached to a copy of $Y$, on which the vector field is $0$.}
			\label{figure: Yn}
		\end{center}
	\end{figure}
	
	\begin{remark}\label{remark: Yn} It is possible to establish a modular interpretation of $Y\{n\}$, similar to that of $\overline{P}_n$ in Theorem \ref{theorem: Pn bar as moduli space}, using essentially the same methods, though we also need to pescribe a map to $Y$ satisfying suitable properties as part of the data, and suitably update the stability condition. (In the stability/ampleness condition in item \ref{item: item 3 in definition: more categories of curves} in Definition \ref{definition: more categories of curves}, we need to twist our line bundle by the pullback of an ample line bundle on $Y$.) As Figure \ref{figure: Yn} suggests, the relation between $\overline{P}_n$ and $Y\{n\}$ is reminiscent of the relation between $\overline{M}_{0,n+1}$ and the Fulton--MacPherson space $Y[n]$. However, $Y\{n\}$ only parametrizes reducible curves for $n \geq 1$, and $Y\{n\}$ is reducible of pure dimension $n$ for $n \geq 2$.
	\end{remark}
	
	\subsubsection{A feature of \ref{example: examples with initial data: positive genus example} in genus $1$}\label{example: examples with initial data: a genus 1 example}
	Let $E$ be a smooth genus $1$ curve over ${\mathbb K}$ with a nonzero vector field denoted rather abusively by $\frac{\partial}{\partial z}$. Run Construction \ref{construction: main inductive construction} with the following initial data in ${\cat V}^+_{1,0,0}$: 
	\begin{equation*}
		S=\spec {\mathbb K}[t] = {\mathbb A}^1, \quad C={\mathbb A}^1 \times E, \quad \phi = t\frac{\partial}{\partial z},
	\end{equation*}
	and denote the resulting $n$-th space by $W$. As in \ref{example: examples with initial data: isotrivial degeneration first announcement}, the bases retain projective flat morphisms to ${\mathbb A}^1$ throughout the application of Construction \ref{construction: main inductive construction}. Then:
	\begin{itemize}
		\item $W_0 \simeq E\{n\}$, cf. \S\ref{example: examples with initial data: positive genus example}.
		\item $W \backslash W_0 \simeq ({\mathbb A}^1 \backslash \{0\}) \times E^n$ over ${\mathbb A}^1 \backslash \{0\}$, parametrizing simply $n$-tuples of points on $E$ together with a nonzero vector field on $E$.
	\end{itemize}
	The reason for the second point is that, in the complement of the fiber over $0$, the applications of Constructions \ref{construction: first bubbling up construction} and \ref{construction: second bubbling up construction} inside Construction \ref{construction: main inductive construction} are always trivial -- the bubbling up will simply never get off the ground. 
	
	In conclusion, $E\{n\}$ is a degeneration of $E^n$.
	
	\begin{remark}\label{remark: deforms to power of E}
		Accepting the interpretation of $E\{n\}$ from Remark \ref{remark: Yn}, we may form the quotient $E\{n\}/S_n$ by permuting the markings, and we may even restrict to a fixed linear system $|{\sh L}|$ for the divisor on $E$ equal to the sum of the markings. Then, the resulting space $E\{ {\sh L} \}$ is a flat degeneration of ${\mathbb P}^{n-1}$.
	\end{remark}
	
	\subsection{Primitive linear systems on abelian (and K3) surfaces} The author's motivation to study the constructions in the current paper originally came from \cite[Question 4.14]{[Za21]}. Although the techniques developed here do not answer this question, they might hint at some progress, according to the speculations below.
	
	As in \cite{[Za21]}, consider a family $\{(V_t,{\sh L}_t):t \in \Delta\}$ of $(1,d)$-polarized abelian surfaces specializing over $0 \in \Delta$ to $V_0 = E \times F$, a product of two smooth genus $1$ curves, polarized by ${\sh L}_0 = {\sh J}_E \boxtimes {\sh J}_F$, with $\deg {\sh J}_E = d$ and $\deg {\sh J}_F = 1$. \cite[Question 4.14]{[Za21]} proposes to try to construct an alternate degeneration of the space of stable maps, in the style of the well-known work of J. Li \cite{[Li01]} or Kim, Kresh, and Oh \cite{[KKO14]}, by allowing $V_0$ to `expand' as in \cite[Figure A]{[Za21]}.
	
	Let us consider a similar task: to find an alternate degeneration ${\mathfrak L} \neq |{\sh L}_0|$ of the primitive linear systems $\{ |{\sh L}_t|: t \in \Delta, t \neq 0 \}$ based on this idea, by taking the limit divisors $D$ to live on expansions $W_0$ of $V_0$ (and imposing the `correct' conditions, which will not be discussed here). We expect an isomorphism 
	\begin{equation*} 
		{\mathfrak L} \simeq E \{ {\sh J}_E \}
	\end{equation*}
	given by $D \mapsto (\varpi^{-1}(p), D \cap \varpi^{-1}(p),v)$, where $\varpi: W_0 \to F$ is the projection map, $p \in F$ is such that ${\sh J}_F \not\cong {\sh O}_F(p)$ ($D \cap \varpi^{-1}(p)$ are the unordered markings on the curve $\varpi^{-1}(p)$), and the vector field $v$ is related to the ${\mathbb G}_a$-action on the ruled surface in \cite[\S3]{[Za21]}. Then ${\mathfrak L}$, and hence $E \{ {\sh J}_E \}$, needs to be a degeneration of ${\mathbb P}^{d-1}$, which is a very strong requirement. However, by Remark \ref{remark: deforms to power of E}, this very strong requirement is actually satisfied! Unfortunately, despite this, it is still not clear (to the author, at least) how to construct the desired degeneration, or if it even exists.
	
	\begin{question}\label{problem: degeneration of primitive linear system}
		Does an alternate degeneration of $|{\sh L}_t|$ ($t \neq 0$) with the features suggested above exist?
	\end{question}
	
	Similar ideas might also apply to K3 surfaces specializing to elliptically fibered K3s with $24$ nodal fibers, as in e.g. \cite{[BL00]}.
	
	\subsection{Other toric-to-${\mathbb G}_a^d$ degenerations} Recall the program in \cite{[HT99]} of classifying ${\mathbb G}_a^d$-varieties, and comparing the resulting picture with toric geometry. Although \cite{[HT99]} makes it clear that no direct analogy with toric geometry holds, Theorem \ref{theorem: isotrivial degeneration theorem} raises the following question.  
	
	\begin{question}\label{problem: question about other such degenerations}
		Which projective toric varieties of dimension $d$ degenerate isotrivially to ${\mathbb G}^d_a$-varieties in a manner compatible with the group actions? Classify all such degenerations.
	\end{question}
	
	For instance, the toric variety associated with the permutohedron (the Losev-Manin space) has the property in Problem \ref{problem: question about other such degenerations}, by Theorem \ref{theorem: isotrivial degeneration theorem}.

\end{document}